\ProvideTextCommandDefault{\cprime}{\tprime}
\newtheorem{theorem}{Theorem}[section]
\newtheorem{lemma}[theorem]{Lemma}
\newtheorem{prop}[theorem]{Proposition}
\newtheorem{cor}[theorem]{Corollary}
\newtheorem{question}[theorem]{Question}
\newtheorem{claim}[theorem]{Claim}
\newtheorem{definition}[theorem]{Definition}
\newtheorem{example}[theorem]{Example}
\renewcommand{\ge}{\geqslant}
\renewcommand{\geq}{\geqslant}
\renewcommand{\le}{\leqslant}
\renewcommand{\leq}{\leqslant}
\DeclareMathOperator{\esssup}{esssup}
\DeclareMathOperator{\essinf}{essinf}
\newtheorem{remark}[theorem]{Remark}
\numberwithin{equation}{section}
\def \N {\mathbb N}
\def \Z {\mathbb Z}
\def \M {\mathcal M}
\def \N {\mathbb N}
\def \Z {\mathbb Z}
\def \M {\mathcal M}
\begin{document}

\title[curves in smooth surface systems with no measure of maximal entropy]{smooth surface systems may contain smooth curves which have no measure of maximal entropy}

\address{\vskip 2pt \hskip -12pt Xulei Wang}

\address{\hskip -12pt School of Mathematical Sciences, Fudan University, Shanghai 200433, China}

\email{20110180049@fudan.edu.cn}

\author{Xulei Wang and Guohua Zhang}

\address{\vskip 2pt \hskip -12pt Guohua Zhang}

\address{\hskip -12pt School of Mathematical Sciences and Shanghai Center for Mathematical Sciences, Fudan University, Shanghai 200433, China}

\email{chiaths.zhang@gmail.com}

\subjclass[2020]{Primary: 28D20, 37B40; Secondary: 60B05.}

\keywords{packing and Bowen topological entropies; measure-theoretical upper and lower entropies; variational principle of topological entropy; Borel probability measures of maximal entropy; $h$-expansive and asymptotically $h$-expansive systems; increasing countable slice of topological entropy.}

\parindent=10pt
\begin{abstract}
In this paper, we study Borel probability measures of maximal entropy for analytic subsets in a dynamical system. It is well known that higher smoothness of the map over smooth space plays important role in the study of invariant measures of maximal entropy.
A famous theorem of Newhouse states that smooth diffeomorphisms on compact manifolds without boundary have invariant measures of maximal entropy.
However, we show that the situation becomes completely different when we study measures of maximal entropy for analytic subsets.
Namely, we shall construct a smooth surface system such that it contains a smooth curve which has no Borel probability measure of maximal entropy.
Another evidence to show this difference (between the study of measures of maximal entropy for analytic sets and that of invariant measures of maximal entropy for dynamical systems) is that, once an analytic set has one measure of maximal entropy, then the set has many measures of maximal entropy (no matter if we consider packing or Bowen entropy).

For a general dynamical system with positive entropy $h_\mathrm{top}(T)$, we shall show that the system contains not only a Borel subset such that the subset has Borel probability measures of maximal entropy and has entropy sufficiently close to $h_\mathrm{top}(T)$, but also a Borel subset such that the subset has no Borel probability measures of maximal entropy and has entropy equal to the arbitrarily given positive real number which is at most $h_\mathrm{top}(T)$.

We also provide in all $h$-expansive systems a full characterization for analytic subsets which have Borel probability measures of maximal entropy. Consequently, if let $Z\subset \mathbb{R}^n$ be any analytic subset with positive Hausdorff dimension in Euclidean space, then the set $Z$ either has a measure of full lower Hausdorff dimension, or it can be partitioned into a union of countably many analytic sets $\{Z_i\}_{i\in \mathbb{N}}$ with $\dim_{\mathcal{H}} (Z_i) < \dim_{\mathcal{H}} (Z)$ for each $i\in \mathbb{N}$.
\end{abstract}

\maketitle

\setcounter{tocdepth}{1}

\tableofcontents

\section{Introduction}

By a \emph{topological dynamical system} (TDS for short) we mean a pair $(X,T)$, where $X$ is a compact metric space and $T:X\rightarrow X$ is a continuous map.
\emph{Throughout the whole paper, we let $(X, T)$ be a TDS with a compatible metric $d$ if without any statement.}

Let $\M(X)$ and $\M(X,T)$ denote respectively the sets of all Borel probability measures and $T$-invariant Borel probability measures on $X$. And denote by $h_\mathrm{top} (T)$ and $h_\mu (T)$, respectively, topological entropy of $(X, T)$ and measure-theoretical entropy of $\mu\in \mathcal{M} (X, T)$.
The classical variational principle  for topological entropy gives the basic relationship between topological and measure-theoretical entropies of a dynamical system
\begin{equation} \label{VP}
h_\mathrm{top} (T)\ =\ \sup \{h_\mu (T) : \mu \in \mathcal{M} (X, T)\}\ =\ \sup \{h_\mu (T) : \mu \in \mathcal{M} (X, T)\ \text{is ergodic}\},
\end{equation}
which is due to Goodman, Goodwyn, and Dinaburg.

\smallskip

In 1989 Newhouse proved his famous theorem that, for any smooth self-map $f$ on compact manifolds without boundary $M$, the function of measure-theoretical entropy $h_\mu (f)$ is upper semicontinuous with respect to an invariant measure $\mu$ and hence the system $(M, f)$ has invariant measures of maximal entropy \cite{Newhouse1989}. Since then people paid much attention to the study of upper semicontinuity of the function of measure-theoretical entropy and invariant measures of maximal entropy particularly for smooth systems.

It turns out that higher smoothness of the continuous self-map over smooth spaces plays an important role in the study of invariant measures of maximal entropy for a dynamical system.
By exploring symbolic dynamics for surface diffeomorphisms Sarig proved in 2013 that, for any $C^{1+\beta}$ diffeomorphism $f$ (with $0< \beta< 1$) of a compact smooth surface with positive topological entropy, the diffeomorphism $f$ possesses at most countably many ergodic measures of maximal entropy \cite{Sarig2013}; furthermore, by generalizing Smale's spectral decomposition theorem, Buzzi, Crovisier and Sarig answered in \cite{Buzzi-Crovisier-Sarig2022-AM} a question of Newhouse, where they proved that $C^\infty$-surface diffeomorphisms with positive topological entropy have finitely many ergodic measures of maximal entropy in general, and exactly one in the topologically transitive case; however, Buzzi constructed on the disk for any $1 \le r < \infty$ a $C^r$-diffeomorphism which has no measure of maximal entropy \cite{Buzzi2014} (we remark that in fact Misiurewicz constructed in 1973 a $C^r$ diffeomorphism with some finite $r$ on a compact manifold which has no invariant measure of maximal entropy \cite{Misiurewicz1973}).
Boyle and Buzzi proved in 2017 various structural results for symbolic systems and surface diffeomorphisms under so-called almost Borel isomorphisms \cite{Boyle-Buzzi2017}; these results extend Hochman's work in \cite{Hochman2013}, and mostly pertain to countable-state topological Markov shifts and the treatment of surface diffeomorphisms following Sarig's symbolic covers.
Berger studied properties of invariant measures of maximal entropy for an abundant class of strongly regular non-uniformly hyperbolic $C^2$ H$\acute{e}$non-like diffeomorphisms which correspond to Benedicks-Carleson parameters \cite{Berger2019}; Burguet proved that periodic asymptotical expansiveness implies the equidistribution of periodic points with respect to measures of maximal entropy \cite{Burguet2020-JEMS}; Buzzi, Fisher and Tahzibi proved a dichotomy for measures of maximal entropy near time-one maps of transitive Anosov flows, that is, either all of the measures of maximal entropy are non-hyperbolic or there are exactly two ergodic measures of maximal entropy \cite{Buzzi-Fisher-Sarig2022}.
In 2022 Buzzi, Crovisier and Sarig proved entropic continuity properties of Lyapunov exponents
for $C^\infty$ surface diffeomorphisms \cite{Buzzi-Crovisier-Sarig2022-IM}, whose finite smooth version was established by Burguet soon later in \cite{Burguet2024-AHP}.

\smallskip

Motivated by the famous Brin-Katok formula \cite{Brin-Katok1981}, Feng and Huang introduced in \cite{Feng-Huang2012} the concepts of measure-theoretical upper and lower entropies, and established a variational principle between packing (Bowen, respectively) topological entropy of an \emph{analytic subset}\footnote{\ A subset of a Polish space is called an \emph{analytic set} or a \emph{Suslin set}, if it is a continuous image of the space $\mathcal{N}$, where $\mathcal{N}$ denotes the set of all infinite sequences of natural
	numbers (with the product topology), equivalently, it is a continuous image of a Borel set in a Polish space.

In a Polish space $X$, the analytic subsets are closed under countable unions and intersections,
continuous images, and inverse images. However, the complement of an analytic set is not necessarily analytic.
 Any Borel set is analytic, and that any analytic set $S$ is universally measurable (that is, $S$ is $\mu$-measurable for each $\mu\in \mathcal{M} (X)$). For more details see for example 
 \cite[14.A and 21.10]{Kechris1995}.} in a dynamical system and corresponding measure-theoretical upper (lower, respectively) entropy. Namely, for every analytic set $\emptyset\neq Z\subset X$,
if denote by $h_{\mathrm {top }}^P(T, Z)$ and $h_{\mathrm {top }}^B(T, Z)$, respectively, packing and Bowen topological entropies\footnote{\ For brevity, from now on we shall just write packing entropy and Bowen entropy, respectively.} of $Z$, and by $\overline{h}_\mu(T)$ and $\underline{h}_\mu(T)$, respectively, measure-theoretical upper and lower entropies of $\mu \in \M(X)$, then we have
\begin{equation*}
h_{\mathrm {top }}^P(T, Z)\ = \ \sup_{\mu \in \M(X), \mu(Z)=1} \ \overline{h}_\mu(T)\quad \text{and} \quad h_{\mathrm {top }}^B(T, Z)\ = \ \sup_{\mu \in \M(X), \mu(Z)=1} \ \underline{h}_\mu(T).
\end{equation*}

In this paper we study measures of maximal entropy for subsets in a dynamical system.

\begin{definition}
Let $Z\subset X$ be an analytic set. We say that the subset $Z$ has
\begin{enumerate}

\item \emph{Borel probability measures of maximal packing entropy} or just \emph{measures of maximal packing entropy}, if $\mu (Z) = 1$ and $\overline{h}_\mu(T) = h_{\mathrm {top }}^P(T, Z)> 0$ for some $\mu\in \mathcal{M} (X)$.

\item \emph{Borel probability measures of maximal Bowen entropy} or just \emph{measures of maximal Bowen entropy}, if $\mu (Z) = 1$ and $\underline{h}_\mu(T) = h_{\mathrm {top }}^B(T, Z)> 0$ for some $\mu\in \mathcal{M} (X)$.
\end{enumerate}
\end{definition}

Based on the famous Brin-Katok formula \cite{Brin-Katok1981}, it is easy to see from the definitions that, given a TDS $(X, T)$ with invariant measures of maximal entropy, the space $X$ has measures of maximal both Packing and Bowen entropies.
We may wonder which kind of analytic subsets in which kind of topological dynamical system may have measures of maximal packing or Bowen entropy if the subset has positive packing or Bowen entropy.

\smallskip

We have seen that higher smoothness of the continuous self-map ensures existence of invariant measures of maximal entropy for a smooth system. And so it seems reasonable to expect that, when we consider measures of maximal packing and Bowen entropies for analytic subsets in a dynamical system, smoothness of the map and smooth structure of the state space will also help to ensure such phenomenon for some or even all analytic sets.

However, the following example tells us that, even we consider smooth curves in a smooth surface system, it may happen that the curves have neither measures of maximal packing entropy nor measures of maximal Bowen entropy.

\begin{example} \label{submanifold counterexample}
There exists a $C^\infty$ self-map $T$ over $X = [0,1]^2$, such that the smooth curve $K\doteq \{(r, r): 0\le r < 1\}\subset X$ has positive values for both packing and Bowen entropies, however, it has neither measures of maximal packing entropy nor measures of maximal Bowen entropy.
\end{example}

There is another evidence to show that the situation of considering measures of maximal entropy for analytic sets is completely different from that of the study of invariant measures of maximal entropy for dynamical systems. Note that, for systems with invariant measures of maximal entropy, an important aspect in its study is to estimate the number of such invariant measures (i.e., with maximal entropy). It is shown that there is often a unique invariant measure of maximal entropy for many smooth systems. While, as explained by following result, for each analytic set with measures of maximal entropy, the set has many measures of maximal entropy (no matter if we consider packing or Bowen entropy).

\begin{prop} \label{uniqueness}
	Let $Y\subset Z$ both be analytic sets in $X$, and $\mu\in\M(X)$ with $\mu(Z)=1$ and $\mu (Y) > 0$. Let $\nu$ be the normalized measure of $\mu$ restricted onto $Y$.
 \begin{enumerate}

 \item Assume that $\mu$ is a measure of maximal packing entropy for the set $Z$. Then $\mu$ is non-atomic, and
  $\nu$ is also a measure of maximal packing entropy for the set $Z$, in particular, the set $Z$ has many measures of maximal packing entropy.

 \item Assume that $\mu$ is a measure of maximal Bowen entropy for the set $Z$. Then $\mu$ is non-atomic, and
  $\nu$ is also a measure of maximal Bowen entropy for the set $Z$, in particular, the set $Z$ has many measures of maximal Bowen entropy.
 \end{enumerate}
\end{prop}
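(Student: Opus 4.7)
My strategy is to extract a single local lemma that implies all three conclusions of part (1); part (2) will then follow by the same argument with $\limsup_n$ replaced by $\liminf_n$ throughout. Set $M := h_{\mathrm{top}}^P(T,Z) > 0$ and let $\mu$ be as in the hypothesis of part (1). The key lemma is that the local upper entropy is essentially constant along $\mu$: namely, $\overline{h}_\mu(T,x) = M$ for $\mu$-a.e.\ $x$.

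To prove the lemma I would argue by contradiction. Since $\int \overline{h}_\mu(T,x)\, d\mu(x) = \overline{h}_\mu(T) = M$, the negation would produce an $\eta > 0$ such that the Borel set $B_\eta := \{x \in X : \overline{h}_\mu(T,x) > M+\eta\}$ has $\mu(B_\eta) > 0$ (the function $\overline{h}_\mu(T,\cdot)$ is Borel as a countable limsup of Borel functions). Form the normalized restriction $\nu := \mu|_{B_\eta}/\mu(B_\eta)$, which satisfies $\nu(Z)=1$. The elementary bound $\nu(B_n(x,\eps)) \leq \mu(B_n(x,\eps))/\mu(B_\eta)$ valid for $x\in B_\eta$, after taking $-\tfrac{1}{n}\log$, passing to $\limsup_n$, and sending $\eps\to 0$ (the additive constant $\tfrac{1}{n}\log\mu(B_\eta)$ dies in the limit), yields the pointwise inequality $\overline{h}_\nu(T,x)\geq \overline{h}_\mu(T,x)>M+\eta$ for every $x\in B_\eta$. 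Integrating against $\nu$ gives $\overline{h}_\nu(T)\geq M+\eta$, contradicting the Feng--Huang variational principle applied to $Z$. Letting $\eta\to 0$ then forces $\overline{h}_\mu(T,x)\leq M$ for $\mu$-a.e.\ $x$, and the integral constraint $\int \overline{h}_\mu\, d\mu = M$ upgrades this to equality a.e.

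Granting the lemma, the three claims of part (1) follow at once. For non-atomicity: any atom $x_0$ of $\mu$ would give $\mu(B_n(x_0,\eps))\geq \mu(\{x_0\})>0$ for all $n$ and $\eps$, hence $\overline{h}_\mu(T,x_0) = 0 < M$, contradicting essential equality to $M$. For the maximality of $\nu=\mu|_Y/\mu(Y)$: one has $\nu(Z)=1$ since $Y\subset Z$, and applying the same comparison inequality as in the lemma (with $Y$ in place of $B_\eta$) gives $\overline{h}_\nu(T,x)\geq \overline{h}_\mu(T,x) = M$ for $\nu$-a.e.\ $x$; so $\overline{h}_\nu(T)\geq M$, and the reverse inequality is automatic from the variational principle. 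For ``many measures'': non-atomicity of $\mu$ implies that $Y\mapsto \mu(Y)$ assumes every value in $[0,1]$ as $Y$ ranges over Borel subsets of $Z$, yielding uncountably many pairwise distinct normalized restrictions and hence uncountably many distinct measures of maximal packing entropy for $Z$.

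The principal subtlety I anticipate is purely measure-theoretic bookkeeping rather than substance: verifying that $\overline{h}_\mu(T,\cdot)$ is Borel measurable (so that $B_\eta$ is a legitimate Borel set), and that $\mu|_Y/\mu(Y)$ is well defined for analytic $Y$. Both facts are standard: the first because the local entropy is a countable limsup of Borel functions, and the second because analytic sets are universally measurable, as recalled in the footnote preceding the proposition.
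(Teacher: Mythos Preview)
Your proposal is correct and follows essentially the same route as the paper: the paper's Proposition~\ref{dynamical density thm} is exactly your ``key lemma'' (proved by the same restriction-and-variational-principle contradiction), and the paper's Lemma~\ref{subset} is your application of that lemma to the restriction $\nu$. The only cosmetic difference is that the paper works directly with the set $\{x\in Z:\overline{h}_\mu(T,x)>M\}$ rather than introducing the parameter $\eta$, but the argument is the same.
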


It seems natural to ask if there exists a TDS with positive topological entropy such that all of its analytic sets with positive topological entropy have measures of maximal topological entropy, and ask similarly if there exists another TDS with positive topological entropy such that all of its analytic sets with positive topological entropy have no measures of maximal topological entropy. But, as shown below, both situations will never happen.

\begin{theorem} \label{general-system}
	Assume that TDS $(X,T)$ has positive topological entropy. Then
\begin{enumerate}

\item \label{exists} There exists a Borel subset $Z\subset X$ such that, it has measures that are of maximal packing and Bowen entropies at the same time, and that both $h_\mathrm{top}^B(T,Z)$ and $h_\mathrm{top}^P(T,Z)$ take a common strictly positive value sufficiently close to $h_\mathrm{top} (T)$.

\item \label{exists-not}
For each $0 < h \leq h_\mathrm{top}(T)$, there exists a Borel subset $Z\subset X$, with $h_\mathrm{top}^B(T,Z)=h_\mathrm{top}^P(T,Z)=h$, such that $Z$ has no measure of maximal packing or Bowen entropy.
\end{enumerate}
\end{theorem}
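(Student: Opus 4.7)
The plan is to build both sets from ergodic invariant measures and their sets of generic points, relying on the classical variational principle~\eqref{VP}, the Brin--Katok entropy formula, the Feng--Huang variational principles for packing and Bowen entropies stated in the introduction, and the countable stability of $h_\mathrm{top}^P$ and $h_\mathrm{top}^B$ on analytic sets.

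For (1), fix a small $\eta>0$ and, by \eqref{VP}, pick an ergodic $\mu\in\mathcal{M}(X,T)$ with $h_\mu(T)>h_\mathrm{top}(T)-\eta$ (in particular $h_\mu(T)>0$). Let $G_\mu$ be the Borel set of $\mu$-generic points, so $\mu(G_\mu)=1$. Brin--Katok gives, for $\mu$-a.e.\ $x$, $\overline{h}_\mu(x)=\underline{h}_\mu(x)=h_\mu(T)$; integrating yields $\overline{h}_\mu(T)=\underline{h}_\mu(T)=h_\mu(T)$. I then claim that $Z:=G_\mu$ satisfies $h_\mathrm{top}^B(T,Z)=h_\mathrm{top}^P(T,Z)=h_\mu(T)$: the Bowen identity $h_\mathrm{top}^B(T,G_\mu)=h_\mu(T)$ is Feng--Huang's extension of Bowen's classical theorem on generic sets, and the parallel packing identity $h_\mathrm{top}^P(T,G_\mu)=h_\mu(T)$ follows from the packing variational principle once one verifies that every $\nu\in\mathcal{M}(X)$ with $\nu(G_\mu)=1$ satisfies $\overline{h}_\nu(T)\le h_\mu(T)$. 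Consequently $\mu$ is a common measure of maximal packing and Bowen entropy for $Z$ with value $h_\mu(T)>h_\mathrm{top}(T)-\eta>0$.

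For (2), by \eqref{VP} pick pairwise distinct ergodic measures $\{\mu_n\}\subset\mathcal{M}(X,T)$ with $h_{\mu_n}(T)=h_n<h$ and $h_n\uparrow h$. Set $Y_n:=G_{\mu_n}$, disjointify $Z_n:=Y_n\setminus\bigcup_{m<n}Y_m$, and let $Z:=\bigsqcup_n Z_n$. Distinct ergodic measures are mutually singular on their generic sets, so $\mu_n(Z_n)=1$; the argument of (1) then gives $h_\mathrm{top}^B(T,Z_n)=h_\mathrm{top}^P(T,Z_n)=h_n$, and countable stability of both entropies on analytic sets forces $h_\mathrm{top}^B(T,Z)=h_\mathrm{top}^P(T,Z)=\sup_n h_n=h$. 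To forbid maximal measures, take any $\mu\in\mathcal{M}(X)$ with $\mu(Z)=1$ and, for each $n$ with $\mu(Z_n)>0$, set $\nu_n:=\mu|_{Z_n}/\mu(Z_n)$. From $\nu_n(B_n(x,\varepsilon))\le\mu(B_n(x,\varepsilon))/\mu(Z_n)$ and $\tfrac{1}{n}\log\mu(Z_n)\to 0$ one obtains $\overline{h}_{\nu_n}(x)\ge\overline{h}_\mu(x)$ for $x\in Z_n$, so, integrating and applying the packing variational principle on each $Z_n$,
\[\overline{h}_\mu(T)\ \le\ \sum_n\mu(Z_n)\,\overline{h}_{\nu_n}(T)\ \le\ \sum_n\mu(Z_n)\,h_n.\]
Picking $N$ with $s_N:=\sum_{n>N}\mu(Z_n)<1$ (possible since $\mu(Z_{n_0})>0$ for some $n_0$) and using $h_N<h$ gives
\[\sum_n\mu(Z_n)h_n\ \le\ h_N(1-s_N)+h s_N\ =\ h-(h-h_N)(1-s_N)\ <\ h;\]
hence $\overline{h}_\mu(T)<h$, and the parallel argument with $\liminf$ yields $\underline{h}_\mu(T)<h$.

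The main obstacle is the packing identity $h_\mathrm{top}^P(T,G_\mu)=h_\mu(T)$ in (1): Brin--Katok controls $\overline{h}_\mu$ but not $\overline{h}_\nu$ for a non-invariant Borel probability $\nu$ concentrated on $G_\mu$, so the $\le$ direction is the serious step. Should the identity be unavailable in the generality we need, a natural fallback is to shrink $G_\mu$ to a Borel subset of full $\mu$-measure, designed so that every Borel probability supported on it has upper entropy at most $h_\mu(T)$. A secondary, purely arithmetic subtlety is that the bound $\sum_n\mu(Z_n)h_n\le h$ from (2) is not strict by itself; the strict inequality that rules out maximal measures uses both $h_n<h$ for every $n$ and the fact that the measure $\mu$ cannot be concentrated entirely on the tail indices, which is precisely what $s_N<1$ for some finite $N$ encodes.
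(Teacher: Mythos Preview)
For part~(1), your fallback is precisely what the paper does: take $Z$ to be the Borel set where the pointwise Brin--Katok identities $\overline{h}_\mu(T,x)=\underline{h}_\mu(T,x)=h_\mu(T)$ hold, and then a Billingsley-type bound (the paper's Proposition~\ref{ent_conn_meas_subset}) forces $h_\mathrm{top}^P(T,Z)=h_\mathrm{top}^B(T,Z)=h_\mu(T)$ immediately. Your primary choice $Z=G_\mu$ is riskier for exactly the reason you flag: Brin--Katok holds only $\mu$-a.e., not pointwise on $G_\mu$, so $h_\mathrm{top}^P(T,G_\mu)\le h_\mu(T)$ is not free, and I do not see how to extract it without essentially passing to the fallback set anyway.

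For part~(2) there is a genuine gap. You invoke~\eqref{VP} to produce ergodic $\mu_n$ with $h_{\mu_n}(T)=h_n\nearrow h$ and $h_n<h$, but the variational principle only supplies ergodic entropies approaching $h_\mathrm{top}(T)$, not an arbitrary $h\le h_\mathrm{top}(T)$; for intermediate $h$ there may be no ergodic measure whatsoever with entropy in $(0,h)$. Concretely, take any uniquely ergodic TDS with positive topological entropy (such systems exist): its sole invariant measure has entropy $h_\mathrm{top}(T)$, so for $0<h<h_\mathrm{top}(T)$ your sequence $\{\mu_n\}$ cannot be built, and even for $h=h_\mathrm{top}(T)$ the strict inequality $h_n<h$ fails. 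The paper avoids this entirely by invoking an entropy-lowering result of Huang--Ye--Zhang (Lemma~\ref{lowering-entropy-thm}) which, for each $h_i<h$, produces a \emph{compact} $K_i\subset X$ with $h_\mathrm{top}(T,K_i)=h_\mathrm{top}^P(T,K_i)=h_\mathrm{top}^B(T,K_i)=h_i$ directly, with no reference to invariant measures; then $Z=\bigcup_i K_i$ works, and nonexistence of a maximal measure follows in one line from Lemma~\ref{subset}. Your strict-inequality argument via $s_N<1$ is correct once pieces with the right entropies are in hand, but manufacturing those pieces out of ergodic generic sets is not possible in this generality.
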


 Thus it arises naturally that which kind of subsets has measures of maximal entropy.

 \smallskip

 As we shall prove in later sections, the following class of analytic subsets plays an important role in our full characterizations for subsets having measures of maximal entropy.

 \begin{definition}
Let $Z\subset X$ be an analytic set. We say that the subset $Z$ has
\begin{enumerate}

\item  \emph{an increasing countable slice of packing entropy} if there exists a countable collection $\{ Z_i\}_{i\in \mathbb{N}}$ of analytic sets such that $Z=\bigcup_{i\in \mathbb{N}} Z_i$ and $h_\mathrm{top}^P(T, Z_i)<h_\mathrm{top}^P(T, Z), \forall i\in \mathbb{N}$.

\item \emph{an increasing countable slice of Bowen entropy} if there exists a countable collection $\{ Z_i\}_{i\in \mathbb{N}}$ of analytic sets such that $Z=\bigcup_{i\in \mathbb{N}} Z_i$ and $h_\mathrm{top}^B(T, Z_i)<h_\mathrm{top}^B(T, Z), \forall i\in \mathbb{N}$.
\end{enumerate}
\end{definition}

We have the following characterization for analytic sets having measures of maximal packing entropy.
Note that the notion of $h$-expansiveness was introduced by Bowen \cite{R.Bowen1972-TAMS} (see next section for its detailed definition), which provides non-trivial sufficient conditions for the existence of invariant measures of maximal entropy in a dynamical system.

\begin{theorem}\label{MME Packing h-expan}
Let $Z\subset X$ be an analytic subset with $h_\mathrm{top}^P(T, Z)>0$. Then $(1) \Rightarrow (2)$:
	\begin{enumerate}

		\item[(1)] The subset $Z$ has measures of maximal packing entropy.

		\item[(2)] The subset $Z$ has no increasing countable slice of packing entropy.
	\end{enumerate}
If, additionally, the system $(X,T)$ is $h$-expansive, then it also holds the converse $(2) \Rightarrow (1)$.
\end{theorem}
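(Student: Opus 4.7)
The direction $(1)\Rightarrow(2)$ is quick and does not use $h$-expansiveness: it combines Proposition~\ref{uniqueness} with the Feng--Huang variational principle. Argue by contradiction. If $\mu$ witnesses $(1)$ and $\{Z_i\}_{i\in\mathbb{N}}$ is an increasing countable slice of packing entropy for $Z$, then $\mu(Z)=1$ forces some $i_0$ with $\mu(Z_{i_0})>0$. Proposition~\ref{uniqueness} applied to the normalized restriction $\nu := \mu|_{Z_{i_0}}/\mu(Z_{i_0})$ tells us that $\nu$ is itself a measure of maximal packing entropy for $Z$, i.e.\ $\overline{h}_\nu(T) = h_\mathrm{top}^P(T,Z)$. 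But $\nu(Z_{i_0})=1$, so the Feng--Huang variational principle applied to the analytic set $Z_{i_0}$ forces $\overline{h}_\nu(T)\le h_\mathrm{top}^P(T,Z_{i_0}) < h_\mathrm{top}^P(T,Z)$, a contradiction.

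For the converse $(2)\Rightarrow(1)$ under $h$-expansiveness I would argue the contrapositive: assume $Z$ has no measure of maximal packing entropy, set $h := h_\mathrm{top}^P(T,Z)>0$, and produce a countable family $\{Z_k\}_{k\in\mathbb{N}}$ of analytic subsets with $Z=\bigcup_k Z_k$ and $h_\mathrm{top}^P(T,Z_k) < h$. The plan is to stratify $Z$ by a pointwise upper-entropy function. Here $h$-expansiveness enters to replace the metric Bowen-ball quantity $\overline{h}_\mu(T,x) = \lim_{\eps\to 0}\limsup_n -\tfrac1n\log\mu(B_n(x,\eps))$ by a partition-based one: fix a finite Borel partition $\xi$ of $X$ whose diameter lies below an expansive scale, and exploit the tail-entropy collapse to obtain, for every $\mu\in\mathcal{M}(X)$,
\[
\overline{h}_\mu(T) \ = \ \mu\text{-}\esssup_{x}\,\limsup_{n\to\infty}-\tfrac{1}{n}\log\mu\bigl(\xi^n(x)\bigr), \qquad \xi^n := \bigvee_{j=0}^{n-1}T^{-j}\xi.
\]
For each $k\in\mathbb{N}$ I would let $Z_k\subset Z$ be the analytic set of points whose partition-based pointwise upper entropy, taken over a countable dense family of probability measures in $\mathcal{M}(X)$, stays $\le h-1/k$. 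The bound $h_\mathrm{top}^P(T,Z_k)\le h-1/k$ then follows from Feng--Huang, since any $\mu$ with $\mu(Z_k)=1$ inherits the pointwise upper-entropy bound $\mu$-almost everywhere.

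The main obstacle is proving $Z=\bigcup_k Z_k$ from the non-attainment hypothesis. A point $x\in Z\setminus\bigcup_k Z_k$ would carry a locally maximal partition upper entropy, and one must upgrade this to a genuine $\mu\in\mathcal{M}(X)$ with $\mu(Z)=1$ and $\overline{h}_\mu(T)=h$. I would construct such a $\mu$ by averaging Dirac masses along iterates witnessing the high partition count, extracting a weak-$*$ accumulation point, invoking $h$-expansiveness to preserve $\overline{h}_\mu(T)=h$ under the limit, and combining inner regularity of analytic sets with Proposition~\ref{uniqueness} to arrange $\mu(Z)=1$. Reconciling preservation of upper entropy with concentration on $Z$ under weak-$*$ limits is the delicate heart of the argument, and is precisely where the $h$-expansive hypothesis cannot be dropped.
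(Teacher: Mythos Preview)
Your argument for $(1)\Rightarrow(2)$ is correct and coincides with the paper's: both pick $i_0$ with $\mu(Z_{i_0})>0$, normalize, and invoke the variational principle to get a contradiction (the paper packages this as Lemma~\ref{subset}).

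For $(2)\Rightarrow(1)$, however, your contrapositive stratification has a genuine gap. The quantity $\limsup_n -\tfrac{1}{n}\log\mu(\xi^n(x))$ is not weak-$*$ continuous (or even semicontinuous) in $\mu$, so controlling it over a countable dense family of measures says nothing about an arbitrary $\mu\in\mathcal{M}(X)$ with $\mu(Z_k)=1$. Hence your claimed bound $h_\mathrm{top}^P(T,Z_k)\le h-1/k$ does not follow from the definition of $Z_k$. (Also note that $\overline{h}_\mu(T)$ is defined as an integral, not an essential supremum; these agree only once you already know the local entropy is $\mu$-a.e.\ constant, as in Proposition~\ref{dynamical density thm}.) The ``main obstacle'' you identify is even more serious than stated: averaging Dirac masses along an orbit and passing to a weak-$*$ limit produces an \emph{invariant} measure, and there is no mechanism forcing such a limit to be supported on the analytic set $Z$---indeed, the whole point of the theorem is that $Z$ need not be invariant or closed.

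The paper takes an entirely different, \emph{constructive} route inspired by Joyce--Preiss and the proof of \cite[Theorem~1.3]{Feng-Huang2012}. The $h$-expansiveness is used only through Lemma~\ref{wangtao expan} to fix a single scale $\varepsilon>0$ with $h_\mathrm{top}^P(T,\,\cdot\,)=h_\mathrm{top}^P(T,\,\cdot\,,\varepsilon)$. One first refines $Z$ to an analytic $Z'\subset Z$ on which every nonempty relatively open piece retains full packing entropy and no increasing countable slice (Proposition~\ref{prop slice}(3)). Then, since $\mathcal{P}^{s_1}_\varepsilon(Z')=\infty$ for $s_1<h$, Lemma~\ref{F H Lem4.1} produces a finite disjoint family of closed Bowen balls with controlled $s_1$-weight. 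Iterating this inside smaller and smaller neighborhoods, tracking the analytic structure $Z_{n_1,\dots,n_p}=\phi(\Gamma_{n_1,\dots,n_p})$ to stay inside $Z$, one builds nested finite sets $K_i\subset Z$ and atomic measures $\mu_i$ whose weak-$*$ limit $\mu$ is supported on a compact $K\subset\bigcap_p\overline{Z_{n_1,\dots,n_p}}\subset Z$ and satisfies $\mu(B_{m_i(x)}(z,\varepsilon))\le C\,e^{-m_i(x)s_i}$, giving $\overline{h}_\mu(T,z)\ge s_i$ for every $z\in K$ and every $i$. This is a Cantor-type packing construction, not a pointwise stratification; the ``no increasing countable slice'' hypothesis is used repeatedly at every stage to guarantee $\mathcal{P}^{s_i}_\varepsilon=\infty$ on the pieces one is refining.
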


In our characterization (see Theorem \ref{MME Bowen} below) for analytic sets having measures of maximal Bowen entropy, we shall use the following variation of gauge function (also called dimension function) which has been discussed by many references (see for example \cite{Rogers1998}).

\begin{definition}
A function $b: \N\to \mathbb{R}_+$ is called a \emph{gauge function} or \emph{dimension function} if $b(n)>0$ for all $n\in\N$ and $b (n)$ is monotonously decreasing to $0$ as $n$ tends to $\infty$.
\end{definition}

Our characterization of Theorem \ref{MME Bowen} is stated as follows, where the definition of $\mathcal{M}^b(Z)$ (for a gauge function $b$ and a set $Z$) will be introduced in next section.

\begin{theorem}\label{MME Bowen}
Let $Z\subset X$ be an analytic subset with $h_\mathrm{top}^B(T, Z)>0$. Then $(1) \Rightarrow (2)$ and $(3) \Rightarrow (2)$, where
	\begin{enumerate}

		\item[(1)] The subset $Z$ has measures of maximal Bowen entropy.

		\item[(2)] The subset $Z$ has no increasing countable slice of Bowen entropy.

		\item[(3)] There exists some gauge function $b$ such that $\mathcal{M}^b(Z)>0$ and $\lim\limits_{n\to \infty}e^{ns}b(n)=0$ for any $s < h_\mathrm{top}^B (T, Z)$.
	\end{enumerate}
Furthermore, we have:
\begin{itemize}

\item If, additionally, $Z$ is compact, then $(3) \Rightarrow (1)$.

\item If, additionally, we assume again that the system $(X,T)$ is $h$-expansive, then the above three properties are equivalent for all analytic subsets $Z$ with $h_\mathrm{top}^B(T, Z)>0$.
\end{itemize}
\end{theorem}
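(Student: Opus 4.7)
The plan is to handle the four assertions of the theorem in turn, with the Feng--Huang variational principle $h_\mathrm{top}^B(T,W)=\sup\{\underline{h}_\nu(T):\nu\in\mathcal{M}(X),\,\nu(W)=1\}$ (for analytic $W$) as the backbone.

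For $(1)\Rightarrow(2)$, let $\mu$ realize maximal Bowen entropy on $Z$ and suppose for contradiction that $Z=\bigcup_i Z_i$ with $h_\mathrm{top}^B(T,Z_i)<h_\mathrm{top}^B(T,Z)$ for every $i$. Since $\mu(Z)=1$, some $\mu(Z_i)>0$. By Proposition~\ref{uniqueness}(2), the normalized restriction $\mu|_{Z_i}/\mu(Z_i)$ again realizes maximal Bowen entropy on $Z$, yet it sits on $Z_i$, contradicting the variational principle applied to $Z_i$. For $(3)\Rightarrow(2)$, in the same setup pick $s_i\in(h_\mathrm{top}^B(T,Z_i),h_\mathrm{top}^B(T,Z))$. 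By definition of Bowen entropy, for any $\delta>0$ and $N\in\mathbb N$ we can cover $Z_i$ by Bowen balls $B_{n_{i,j}}(x_{i,j},\epsilon)$ of orders $n_{i,j}\ge N$ with $\sum_j e^{-n_{i,j}s_i}<\delta$. Using $e^{ns_i}b(n)\to 0$,
$$\sum_j b(n_{i,j})\le\Bigl(\sup_{n\ge N}e^{ns_i}b(n)\Bigr)\cdot\delta\xrightarrow{N\to\infty}0,$$
so $\mathcal{M}^b(Z_i)=0$ for each $i$, and $\mathcal{M}^b(Z)\le\sum_i\mathcal{M}^b(Z_i)=0$, contradicting (3).

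For $(3)\Rightarrow(1)$ when $Z$ is compact, a Frostman-type lemma tailored to the Bowen metric produces a Borel probability $\nu$ with $\nu(Z)=1$ and $\nu(B_n(x,\epsilon))\le C(\epsilon)\,b(n)$ for every $x$ and $n$. Then for each $x\in Z$,
$$\underline{h}_\nu(T,x,\epsilon):=\liminf_{n\to\infty}\frac{-\log\nu(B_n(x,\epsilon))}{n}\ge\liminf_{n\to\infty}\frac{-\log b(n)}{n}\ge h_\mathrm{top}^B(T,Z),$$
the last inequality holding because $e^{ns}b(n)\to 0$ forces $-\log b(n)/n\ge s$ eventually, for every $s<h_\mathrm{top}^B(T,Z)$. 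Sending $\epsilon\to 0$, integrating against $\nu$, and combining with the variational principle yields $\underline{h}_\nu(T)=h_\mathrm{top}^B(T,Z)$, so $\nu$ is a measure of maximal Bowen entropy for $Z$.

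It remains to close the equivalence under $h$-expansiveness. The implication $(3)\Rightarrow(1)$ for general analytic $Z$ follows by applying Howroyd's theorem to obtain a compact $K\subseteq Z$ with $\mathcal{M}^b(K)>0$ and then invoking the compact case on $K$. For $(2)\Rightarrow(3)$ the $h$-expansive hypothesis is used to eliminate $\epsilon$-dependence of Bowen entropies at small scales, so the argument can be carried out at a single fixed $\epsilon$. The gauge $b$ is then built diagonally along a sequence $s_k\uparrow h_\mathrm{top}^B(T,Z)$ to ensure $e^{ns_k}b(n)\to 0$ for every $k$; if no admissible $b$ kept $\mathcal{M}^b(Z)>0$, the optimal covers witnessing $\mathcal{M}^{s_k}(Z)=0$ could be spliced into an analytic decomposition $Z=\bigcup_i Z_i$ with $h_\mathrm{top}^B(T,Z_i)<h_\mathrm{top}^B(T,Z)$, contradicting~(2). \emph{The main obstacle} is precisely this diagonal construction: engineering a single gauge $b$ that simultaneously dominates every $s<h_\mathrm{top}^B(T,Z)$ while preserving $\mathcal{M}^b(Z)>0$ requires delicate accounting, and $h$-expansiveness is what stops entropy from leaking through decreasing $\epsilon$ and spoiling the estimates.
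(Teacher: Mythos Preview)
Your treatment of $(1)\Rightarrow(2)$, $(3)\Rightarrow(2)$, and the compact case $(3)\Rightarrow(1)$ via a dynamical Frostman lemma is correct and matches the paper's arguments.

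The sketch for closing the equivalence under $h$-expansiveness, however, has a genuine gap. In your $(2)\Rightarrow(3)$ outline you speak of ``the optimal covers witnessing $\mathcal{M}^{s_k}(Z)=0$'', but since $s_k<h_\mathrm{top}^B(T,Z)$ one has $\mathcal{M}^{s_k}(Z)=\infty$, so no such covers exist and there is nothing to splice. The actual construction (the paper's Theorem~\ref{construction_b_nonzero_meas}, adapting Rogers' argument for Hausdorff measures) is an induction in which one tries to extend a partial gauge $d_{i-1}$ to $d_i$ by switching from $b_{i-1}$ to $b_i$ past some threshold $\ell_i$; if this always fails, one extracts---using compactness of $K$ and Hausdorff convergence of the witnessing Bowen-ball families---a cover of $K$ whose complement is $\varepsilon$-null, contradicting the inductive hypothesis. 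Compactness enters essentially here, so $(2)\Rightarrow(3)$ is proved only for compact sets.

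Consequently the paper closes the loop by a different route than yours. It proves $(2)\Rightarrow(1)$ for analytic $Z$ by lifting through a principal symbolic extension (available by asymptotic $h$-expansiveness), translating Bowen entropy into Hausdorff dimension for a tailored metric on the shift, invoking a Sion--Sjerve theorem to find a compact $K\subset Z$ with the same entropy and no increasing countable slice, and then applying the compact case to $K$. It then proves $(1)\Rightarrow(3)$ by modifying the maximizing measure (Lemma~\ref{techlem MME mu to nu}, using $h$-expansiveness to work at a single scale $\delta$) so that the ball-decay rate directly yields the gauge $b$. Your proposed Howroyd-type inner-regularity reduction for $(3)\Rightarrow(1)$ on analytic sets is a reasonable alternative idea, but you have not justified that $\mathcal{M}^b$ is inner regular on analytic sets in this setting, and the paper avoids this question entirely.
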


As a consequence of $(1) \Rightarrow (2)$ of both Theorem \ref{MME Packing h-expan} and Theorem \ref{MME Bowen}, we have:

\begin{cor}\label{cor_countable_slice}
Let $(X, T)$ be a TDS with positive topological entropy and invariant measures of maximal entropy.\footnote{\ Any $h$-expansive TDS (including any symbolic system), with positive topological entropy, satisfies the requirements of Corollary \ref{cor_countable_slice}. The definition of $h$-expansiveness will be introduced till next section.} Then, once $X$ is written as a countable union of analytic subsets $\{Z_i\}_{i\in \mathbb{N}}$, one has that $h_\mathrm{top}^B(T, Z_j) = h_\mathrm{top}^P(T, Z_j) = h_\mathrm{top} (T)$ for some $j\in \mathbb{N}$. In particular, 
 the space $X$ has neither an increasing countable slice of packing entropy nor an increasing countable slice of Bowen entropy.
\end{cor}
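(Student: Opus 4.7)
The plan is to deduce Corollary \ref{cor_countable_slice} as a direct consequence of the implications $(1) \Rightarrow (2)$ of both Theorem \ref{MME Packing h-expan} and Theorem \ref{MME Bowen} applied to the ambient set $Z = X$. The hypotheses provide exactly what is needed to feed into these theorems: by standard facts one has $h_\mathrm{top}^P(T,X) = h_\mathrm{top}^B(T,X) = h_\mathrm{top}(T) > 0$, and the Brin--Katok observation recalled right after the definition of measures of maximal packing/Bowen entropy shows that any invariant measure of maximal entropy for $(X,T)$ (which exists by assumption) is simultaneously a measure of maximal packing entropy and a measure of maximal Bowen entropy for the set $X$ itself.

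With this setup, I would first apply $(1) \Rightarrow (2)$ of Theorem \ref{MME Packing h-expan} with $Z = X$ to conclude that $X$ carries no increasing countable slice of packing entropy, and symmetrically apply $(1) \Rightarrow (2)$ of Theorem \ref{MME Bowen} with $Z = X$ to conclude that $X$ carries no increasing countable slice of Bowen entropy. This is exactly the ``in particular'' assertion of the corollary, so that part is immediate.

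For the main statement, fix any countable cover $X = \bigcup_{i\in\mathbb{N}} Z_i$ by analytic subsets. The failure of an increasing countable slice of Bowen entropy forces some $j \in \mathbb{N}$ with $h_\mathrm{top}^B(T, Z_j) \ge h_\mathrm{top}^B(T, X) = h_\mathrm{top}(T)$; monotonicity of Bowen entropy under inclusion $Z_j \subseteq X$ turns this into an equality. Then the standard chain $h_\mathrm{top}(T) = h_\mathrm{top}^B(T, Z_j) \le h_\mathrm{top}^P(T, Z_j) \le h_\mathrm{top}^P(T, X) = h_\mathrm{top}(T)$ forces $h_\mathrm{top}^P(T, Z_j) = h_\mathrm{top}(T)$ as well, at the same index $j$. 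There is essentially no obstacle here; the only point worth highlighting is that one need not run the packing-entropy and Bowen-entropy arguments in parallel to find a common index --- the Bowen side alone, combined with $h_\mathrm{top}^B \le h_\mathrm{top}^P$ and monotonicity, is enough to make the single index $j$ work simultaneously for both entropies.
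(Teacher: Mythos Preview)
Your proposal is correct and follows essentially the same approach as the paper: both use the Brin--Katok formula to see that an invariant measure of maximal entropy is a measure of maximal Bowen entropy for $X$, apply $(1)\Rightarrow(2)$ of Theorem~\ref{MME Bowen} to rule out an increasing countable slice of Bowen entropy, and then use the chain $h_\mathrm{top}^B \le h_\mathrm{top}^P$ together with monotonicity to handle the packing entropy at the same index $j$. The only cosmetic difference is that the paper derives the ``in particular'' clause at the end as a consequence of the main statement (and hence never needs to invoke Theorem~\ref{MME Packing h-expan} separately), whereas you prove it first by appealing to both theorems; your observation that the Bowen side alone suffices to pin down a common $j$ is exactly what the paper does.
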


Since the end of last century, people paid much attention to the study of dimensional version of the existence of invariant measures of maximal entropy, see for example \cite{Barreira2008, Barreira-Wolf2003, Edgar1998, Falconer1990, Kenyon-Peres1996}.
Let $\mu$ be a Borel probability measure on $\mathbb{R}^n$,
and let $\dim_{\mathcal{H}}(Z)$ be the Hausdorff dimension of a set $Z\subset \mathbb{R}^n$ whose definition is postponed till next section. Following \cite[Section 3.3]{Edgar1998} we introduce the \emph{upper Hausdorff dimension} and \emph{lower Hausdorff dimension} (or simply \emph{Hausdorff dimension}) of the measure $\mu$, respectively, as follows:
\begin{align}
\overline{\dim}_{\mathcal{H}} (\mu) &= \inf\ \{\dim_{\mathcal{H}}(E): E\subset \mathbb{R}^n\ \text{is $\mu$-measurable and}\ \mu(\mathbb{R}^n \setminus E)= 0\}, \label{add-number1} \\
\underline{\dim}_{\mathcal{H}} (\mu) &= \inf\ \{\dim_{\mathcal{H}}(E): E\subset \mathbb{R}^n\ \text{is $\mu$-measurable and}\ \mu(E)> 0\}.  \label{add-number2}
\end{align}
The set $Z$ \emph{has a measure of full lower Hausdorff dimension}, if there exists a Borel probability measure $\mu$ on $\mathbb{R}^n$ such that $\mu(Z)=1$ and $\dim_{\mathcal{H}}(Z) = \underline{\dim}_{\mathcal{H}} (\mu)$.

As a byproduct of Theorem \ref{MME Bowen}, we have the following dichotomy for analytic sets $Z\subset \mathbb{R}^n$ with positive Hausdorff dimension, where $\mathbb{R}^n$ is equipped with the Euclidean metric.

\begin{cor} \label{Hausdorff}
	Let $Z\subset \mathbb{R}^n$ be an analytic subset with $\dim_{\mathcal{H}}(Z)>0$. Then the set $Z$ has no measure of full lower Hausdorff dimension, if and only if the set $Z$ has an increasing countable slice of Hausdorff dimension, that is, there exists a countable collection $\{ Z_i\}_{i\in \mathbb{N}}$ of analytic sets such that $Z=\bigcup_{i\in \mathbb{N}} Z_i$ and $\dim_{\mathcal{H}}(Z_i)<\dim_{\mathcal{H}}(Z)$ for each $i\in \mathbb{N}$.
\end{cor}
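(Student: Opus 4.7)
The plan for the forward direction---that an increasing countable slice of Hausdorff dimension precludes a measure of full lower Hausdorff dimension---is to transcribe the proof of $(1)\Rightarrow(2)$ of Theorem \ref{MME Bowen}. Suppose $\mu(Z)=1$ and $\underline{\dim}_{\mathcal{H}}(\mu)=d:=\dim_{\mathcal{H}}(Z)$, and let $Z=\bigcup_{i}Z_i$ be the supposed slice with $\dim_{\mathcal{H}}(Z_i)<d$. Then countable additivity forces $\mu(Z_{i_0})>0$ for some $i_0$, and the definition \eqref{add-number2} immediately yields $\dim_{\mathcal{H}}(Z_{i_0})\ge\underline{\dim}_{\mathcal{H}}(\mu)=d$, contradicting $\dim_{\mathcal{H}}(Z_{i_0})<d$.

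For the converse I would rephrase the problem in terms of Bowen entropy and invoke Theorem \ref{MME Bowen}. First I would reduce to the bounded case by fixing a diffeomorphism $\psi\colon\mathbb{R}^n\to(0,1)^n$, e.g.\ coordinate-wise $x\mapsto\tfrac{\arctan x}{\pi}+\tfrac12$. Since $\psi$ is bi-Lipschitz on every compact subset, countable stability of Hausdorff dimension gives $\dim_{\mathcal{H}}(\psi(E))=\dim_{\mathcal{H}}(E)$ for every analytic $E\subset\mathbb{R}^n$, so both the no-countable-slice hypothesis and the existence of a measure of full lower Hausdorff dimension transfer between $Z$ and $\psi(Z)$ via push-forward. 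We may therefore assume $Z\subset(0,1)^n$, which we view as a subset of $\mathbb{T}^n=\mathbb{R}^n/\mathbb{Z}^n$ with the quotient metric (which agrees locally with the Euclidean one).

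I would then equip $\mathbb{T}^n$ with the doubling map $T(x)=2x\bmod 1$, which is positively expansive and hence $h$-expansive, so the full equivalence $(1)\Leftrightarrow(2)\Leftrightarrow(3)$ of Theorem \ref{MME Bowen} applies. The classical comparison $B_n(x,\varepsilon)\simeq B(x,2^{-(n-1)}\varepsilon)$, valid once $\varepsilon$ lies below the expansivity constant, translates Bowen covers of an analytic $W\subset Z$ into Euclidean covers by balls of exponentially shrinking radius, yielding
\[
h_{\mathrm{top}}^B(T,W)=(\log 2)\,\dim_{\mathcal{H}}(W).
\]
The dimension-theoretic no-countable-slice property of $Z$ is thereby identical with the Bowen-entropy no-countable-slice property of $Z$, and Theorem \ref{MME Bowen} produces a measure $\mu\in\mathcal{M}(\mathbb{T}^n)$ of maximal Bowen entropy for $Z$, constructed via a Frostman lemma applied to a gauge function $b$ satisfying $\mathcal{M}^b(Z)>0$ and $e^{ns}b(n)\to 0$ for all $s<(\log 2)d$.

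The final step will be to verify that this $\mu$ is also of full lower Hausdorff dimension. The Frostman construction produces the pointwise estimate $\mu(B_n(x,\varepsilon))\le C\,b(n)$ for every $x$, which under $r=2^{-(n-1)}\varepsilon$ becomes $\mu(B(x,r))\le C\,\phi(r)$ for a gauge $\phi$ with $\phi(r)r^{-s}\to 0$ for each $s<d$. Taking logarithms gives the pointwise bound $\underline{d}_\mu(x)\ge d$, and Billingsley's lemma then yields $\dim_{\mathcal{H}}(E)\ge d$ whenever $\mu(E)>0$, so $\underline{\dim}_{\mathcal{H}}(\mu)\ge d$; the reverse inequality is trivial because $\mu(Z)>0$ and $\dim_{\mathcal{H}}(Z)=d$, and after normalizing $\mu$ onto $Z$ (cf.\ Proposition \ref{uniqueness}) we obtain the desired measure. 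The hard part will be this last paragraph: the conclusion of Theorem \ref{MME Bowen} alone---merely the \emph{integrated} equality $\underline{h}_\mu(T)=h_{\mathrm{top}}^B(T,Z)$---is not enough, since $\underline{\dim}_{\mathcal{H}}(\mu)$ corresponds instead to the essinf of lower pointwise dimension; I must therefore extract the \emph{pointwise} Frostman bound from the proof of $(3)\Rightarrow(1)$ of Theorem \ref{MME Bowen}, which is precisely what Billingsley's criterion for $\underline{\dim}_{\mathcal{H}}$ requires.
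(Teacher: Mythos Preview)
Your outline is correct and follows the same overall architecture as the paper---reduce to a bounded set, realize Hausdorff dimension as Bowen entropy under the doubling map on the torus, and invoke Theorem \ref{MME Bowen}---but two steps diverge in execution. First, for the reduction to a bounded set the paper decomposes $Z$ into the unit cubes $Z_{\boldsymbol a}=Z\cap\prod_i[a_i,a_i+1]$ and uses the analogue of Proposition \ref{prop slice} \eqref{slice2} to select one cube $Z_{\boldsymbol b}$ with $\dim_{\mathcal H}(Z_{\boldsymbol b})=\dim_{\mathcal H}(Z)$ and no increasing countable slice; your global $\arctan$ diffeomorphism also works, but you must then check (as you implicitly do) that local bi-Lipschitzness transports the pointwise lower dimension and hence $\underline{\dim}_{\mathcal H}$ of measures. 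Second, and more importantly, the ``hard part'' you isolate---passing from the integrated identity $\underline h_\mu(T)=h^B_{\mathrm{top}}(T,Z)$ to a pointwise statement---does not require reopening the Frostman construction. The paper simply invokes Proposition \ref{dynamical density thm}, already established in \S\ref{basic}, which upgrades the integrated equality to $\underline h_\mu(T,x)=h^B_{\mathrm{top}}(T,Z)$ for $\mu$-a.e.\ $x$; a direct computation then gives $\underline d_\mu(x)=\dim_{\mathcal H}(Z)$ $\mu$-a.e., and Proposition \ref{EDG-THM} identifies $\underline{\dim}_{\mathcal H}(\mu)$ with $\operatorname{essinf}\underline d_\mu$. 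So your plan succeeds, but you are working harder than necessary: the pointwise information you need is a corollary of Proposition \ref{dynamical density thm}, not something hidden inside the proof of $(3)\Rightarrow(1)$.
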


In order to further understand the existence of invariant measures of maximal entropy in a dynamical system, Misiurewicz firstly introduced asymptotical $h$-expansiveness as a refinement of $h$-expansiveness by Bowen \cite{Misiurewicz1973}, and then introduced the concept of topological conditional entropy of a dynamical system which is an invariant of topological conjugacy \cite{Misiurewicz1976}. An $h$-expansive system is asymptotically $h$-expansive, a system is asymptotically $h$-expansive if and only if it has zero topological conditional entropy, and any TDS with zero topological conditional entropy has invariant measures of maximal entropy.

And so we have the following natural question, which remains mysterious for us till now:\footnote{\ Along the line of Misiurewicz, the concept of topological conditional entropy can be generalized directly from a TDS to any subset of the system, which is still an invariant of topological conjugacy. In particular, topological conditional entropy of any subset is bounded from above by the topological conditional entropy of the system, and so by Theorem \ref{general-system}, there are many analytic subsets with zero topological conditional entropy which have no measures of maximal packing or Bowen entropy.}

\begin{question}
Is there an invariant of topological conjugacy, defined for all analytic sets of a dynamical system, which provides sufficient conditions of ensuring the existence of measures of maximal either packing or Bowen entropy for these sets?
\end{question}

Another natural question is:

\begin{question}
In full characterizations of subsets with measures of maximal entropy in Theorem \ref{MME Packing h-expan} and Theorem \ref{MME Bowen}, is it possible to weaken (or even remove) the assumption of $h$-expansiveness for dynamical systems?
\end{question}

The paper is organized as follows. In \S \ref{Preliminaries} we make some preliminaries, including recalling the definitions of various topological and measure-theoretical entropies, and $h$-expansive and asymptotically $h$-expansive systems, and listing related properties which will be used in later discussions. In \S \ref{basic} we discuss some basic properties concerning these topological and measure-theoretic entropies of analytic sets, consequently, we prove Proposition \ref{uniqueness} that each analytic set has many measures of maximal entropy once it has at least one such measure. In \S \ref{easy} we prove firstly easier parts of our results, namely, we prove Theorem \ref{general-system}, $(1) \Rightarrow (2)$ of Theorem \ref{MME Packing h-expan}, $(1) \Rightarrow (2)$ and $(3) \Rightarrow (2)$ of Theorem \ref{MME Bowen}, and Corollary \ref{cor_countable_slice}, we also prove Corollary \ref{Hausdorff} (based on Theorem \ref{MME Bowen}).
In \S \ref{section-exam} we present the detailed construction of Example \ref{submanifold counterexample} which states that, there exists a smooth curve in a smooth surface system such that the curve have neither measures of maximal packing entropy nor measures of maximal Bowen entropy.
In \S \ref{Existence of measure of maximal Packing entropy} we finish the proof of Theorem \ref{MME Packing h-expan} by showing that, for $h$-expansive systems, an analytic set $Z$ has measures of maximal packing entropy if it has no increasing countable slice of packing entropy.
Then in last two sections we finish the proof of Theorem \ref{MME Bowen} by using either increasing countable slice of Bowen entropy or some special gauge function to characterize analytic subsets which have measures of maximal Bowen entropy, where we manage in \S \ref{compact} and \S \ref{Relationship between items (1) and (2)}, respectively, the case of compact subsets and general analytic subsets with positive Bowen entropy.

\section{Preliminariles}\label{Preliminaries}

In this section, let us make some preliminaries for later discussions.

\smallskip

For each $n \in \mathbb{N}$ we may define a new compatible metric, the $n$-th Bowen metric $d_{n}$, as
$$
d_{n}(x, y)=\max \left\{d\left(T^{k}(x), T^{k}(y)\right): k=0, \cdots, n-1\right\}\ \ \ \ \text{for all}\ x, y\in X.
$$
For every $\varepsilon>0$ and each $x\in X$, we denote by $B_{n}(x, \varepsilon)$ and $\overline{B}_{n}(x, \varepsilon)$, respectively, the open and closed ball with a radius $\varepsilon$ centered at $x$ with respect to the metric $d_{n}$, that is,
$$
B_{n}(x, \varepsilon)=\left\{y \in X: d_{n}(x, y)<\varepsilon\right\}\ \ \text{and}\ \quad \overline{B}_{n}(x, \varepsilon)=\left\{y \in X: d_{n}(x, y) \le \varepsilon\right\} .
$$
When $n=1$, the $n$-th Bowen ball degenerates into the normal open and closed ball which will be simply denoted by $B(x, \varepsilon)$ and $\overline{B}(x, \varepsilon)$, respectively.

\subsection{Packing topological entropy}
\

\smallskip

Let us recall the packing topological entropy of a subset following \cite{Feng-Huang2012} as follows.

Let $\emptyset\neq Z \subset X$. For all $s \ge 0, N \in \mathbb{N}$ and $\varepsilon>0$, we set
$$
P_{N, \varepsilon}^{s}(Z)=\sup\ \sum_{i\in \mathcal{I}} e^{-s n_{i}},
$$
where the supremum is taken over all  countable pairwise disjoint families $\left\{\overline{B}_{n_{i}}\left(x_{i}, \varepsilon\right)\right\}_{i\in \mathcal{I}}$ such that $x_{i} \in Z$ and $n_{i} \ge N$ for all $i$. Obviously, the quantity $P_{N, \varepsilon}^{s}(Z)$ increases as the parameters $N$ and $\varepsilon$ decrease, and hence we may set
$$
P_{\varepsilon}^{s}(Z)=\lim _{N \rightarrow \infty} P_{N, \varepsilon}^{s}(Z) = \inf_{N\in \mathbb{N}} P_{N, \varepsilon}^{s}(Z)
$$
which increases as the parameter $\varepsilon$ decreases. Now we define
$$
\mathcal{P}_{\varepsilon}^{s}(Z)=\inf \left\{\sum_{i=1}^{\infty} P_{\varepsilon}^{s}\left(Z_{i}\right): \bigcup_{i=1}^{\infty} Z_{i} \supset Z\right\} .
$$
It is not hard to show that there exists a critical value of the parameter $s> 0$, denoted by $h_{\mathrm {top }}^{P}(T, Z, \varepsilon)$, where the quantity $\mathcal{P}_{\varepsilon}^{s}(Z)$ jumps from $\infty$ to 0, that is,
$$
\mathcal{P}_{\varepsilon}^{s}(Z)= \begin{cases}0, & \ \ \ \ \ \ \ s>h_{\mathrm {top }}^{P}(T, Z, \varepsilon), \\ \infty, & \ \ \ \ \ \ \ s<h_{\mathrm {top }}^{P}(T, Z, \varepsilon).\end{cases}
$$
By convention we set
$$
h_{\mathrm {top }}^{P}(T, Z, \varepsilon)= \begin{cases}0, & \ \ \ \ \ \ \ \text{if}\ \mathcal{P}_{\varepsilon}^{s}(Z) = 0\ \text{for each}\ s> 0, \\ \infty, & \ \ \ \ \ \ \ \text{if}\ \mathcal{P}_{\varepsilon}^{s}(Z) = \infty\ \text{for each}\ s> 0.\end{cases}
$$
Again the quantity $h_{\mathrm {top }}^{P}(T, Z, \varepsilon)$ increases as the parameter $\varepsilon$ decreases, and so we may introduce the \emph{packing topological entropy of $Z$ with respect to $T$} or, simply, the \emph{packing topological entropy of $Z$} if there is no confusion about the map $T$, as follows
$$
h_{\mathrm {top }}^{P}(T, Z)=\lim _{\varepsilon \rightarrow 0} h_{\mathrm {top }}^{P}(T, Z, \varepsilon) = \sup_{\varepsilon > 0} h_{\mathrm {top }}^{P}(T, Z, \varepsilon).
$$
Though $h_{\mathrm {top }}^{P}(T, Z, \varepsilon)$ does depend on the metric $d$, the quantity $h_{\mathrm {top }}^{P}(T, Z)$ is independent of selection of compatible metrics over the space.
We also set $h_{\mathrm {top }}^{P}(T, \emptyset) = 0$ by convention.

We shall use the following result proved as \cite[Lemma 4.1]{Feng-Huang2012}.

\begin{lemma} \label{F H Lem4.1}
	Let $Z \subset X$ and $s, \varepsilon>0$ satisfy $P_{\varepsilon}^{s}(Z)=\infty$. If $0\le a< b< \infty$ and $N \in \mathbb{N}$, then there exists a finite disjoint collection $\left\{\overline{B}_{n_{i}}\left(x_{i}, \varepsilon\right)\right\}_{1\le i \le k}$ such that
$$a< \sum_{i= 1}^k e^{-n_{i} s} < b\ \ \ \text{and}\ \ \ x_{i} \in Z, n_{i} \ge N\ \text{for each}\ i.$$
\end{lemma}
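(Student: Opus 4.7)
The plan is to exploit the divergence $P_\varepsilon^s(Z)=\infty$ twice: first to guarantee that the individual summands $e^{-sn_i}$ can be made as small as we want by enlarging the scale parameter, and then to guarantee that the total sum overshoots $b$. Once both features are in place, a discrete intermediate-value argument on the partial sums produces one that lies in the target interval $(a,b)$.

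First I would observe that, since $P_\varepsilon^s(Z)=\inf_{M\in\mathbb{N}} P_{M,\varepsilon}^s(Z)=\infty$, one has $P_{M,\varepsilon}^s(Z)=\infty$ for every $M\in\mathbb{N}$. This monotonicity is the gateway of the proof: it lets me replace the given $N$ by any larger integer without losing infinitude. Accordingly I pick an integer $M\ge N$ large enough that $e^{-sM}<b-a$; if $e^{-sN}<b-a$ already, the choice $M=N$ suffices. From now on every admissible family satisfies both the required lower bound $n_i\ge N$ and the auxiliary upper bound $e^{-sn_i}<b-a$ on individual weights.

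Next, because $P_{M,\varepsilon}^s(Z)=\infty$, the definition of the supremum produces a countable pairwise disjoint family $\{\overline B_{n_i}(x_i,\varepsilon)\}_{i\in\mathbb{N}}$ with $x_i\in Z$, $n_i\ge M\ge N$, and $\sum_{i\in\mathbb{N}} e^{-sn_i}>b$. Set the partial sums $S_k=\sum_{i=1}^k e^{-sn_i}$, so that $S_0=0$, $S_k\to\sum_i e^{-sn_i}>b$, and each increment satisfies $S_k-S_{k-1}=e^{-sn_k}\le e^{-sM}<b-a$. Let $k$ be the least index with $S_k>a$. Then $S_{k-1}\le a$, so $S_k<a+(b-a)=b$, and the truncated family $\{\overline B_{n_i}(x_i,\varepsilon)\}_{1\le i\le k}$ is the desired finite disjoint collection.

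The only mildly delicate point is the bookkeeping in the first step, namely the simultaneous fulfillment of $n_i\ge N$ and $e^{-sn_i}<b-a$; this is resolved by taking $M=\max\{N,\lceil s^{-1}\log(1/(b-a))\rceil+1\}$ and invoking the monotonicity of $P_{\cdot,\varepsilon}^s(Z)$ in its first argument. Beyond this trivial arithmetic, the whole argument is a one-line discrete intermediate-value theorem, and no further structural input from the definitions of packing entropy or from the dynamics of $T$ is needed.
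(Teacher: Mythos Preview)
Your proof is correct and is exactly the standard argument: the paper does not supply its own proof of this lemma but cites \cite[Lemma 4.1]{Feng-Huang2012}, where the same discrete intermediate-value trick (enlarge $N$ to $M$ so that each increment $e^{-sn_i}<b-a$, then truncate the partial sums at the first index exceeding $a$) is used.
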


\subsection{Hausdorff dimension and Bowen topological entropy}\label{subs-Bowen} \label{defin}\

\smallskip

Let us start by introducing the notion of Hausdorff measure and Hausdorff dimension (see for example \cite{Pesin1997} and \cite{Rogers1998}, see also \cite{Sion-Sjerve1962}).

Let $(Y,d)$ be a complete metric space and $Z \subset Y$. We define the \emph{$s$-dimensional Hausdorff measure of $Z$} for each $s \ge 0$ by
$$
\mathcal{H}^s(Z)=\lim_{\varepsilon \rightarrow 0} \mathcal{H}^s_\varepsilon (Z),\ \ \ \text{where}\ \mathcal{H}^s_\varepsilon (Z) = \inf \sum_{i\in \mathcal{I}}(\operatorname{diam} U_i)^s,
$$
here the infimum is taken over all  countable families $\{U_i\}_{i\in \mathcal{I}}$ covering $Z$ with each $\operatorname{diam} U_i \leq \varepsilon$ (here $\operatorname{diam} U_i$ denotes the diameter of the set $U_i$).
 Then there exists a critical value of the parameter $s> 0$, denoted by $\dim_\mathcal{H} (Z)$, called
 the \emph{Hausdorff dimension of the set $Z$}, where $\mathcal{H}^s(Z)$ jumps from $\infty$ to 0, that is,
$$
\mathcal{H}^s(Z) = \begin{cases}0, & \ \ \ \ \ \ \ s>\dim_\mathcal{H} (Z), \\ \infty, & \ \ \ \ \ \ \ s<\dim_\mathcal{H} (Z).\end{cases}
$$
By convention we set
$$
\dim_\mathcal{H} (Z)= \begin{cases}0, & \ \ \ \ \ \ \ \text{if}\ \mathcal{H}^s(Z) = 0\ \text{for each}\ s> 0, \\ \infty, & \ \ \ \ \ \ \ \text{if}\ \mathcal{H}^s(Z) = \infty\ \text{for each}\ s> 0.\end{cases}
$$

The following basic fact is easy to obtain.

\begin{prop} \label{fact-old}
If $Z = \bigcup_{i=1}^{\infty} Z_i$, then
$
\dim_\mathcal{H} (Z) = \sup _{i \ge 1} \dim_\mathcal{H} (Z_i).
$
\end{prop}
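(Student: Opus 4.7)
The plan is to establish the two inequalities separately. The easy direction $\sup_{i\ge 1}\dim_\mathcal{H}(Z_i)\le \dim_\mathcal{H}(Z)$ follows from the monotonicity of Hausdorff dimension under set inclusion: since $Z_i\subset Z$, any countable cover of $Z$ by sets of diameter at most $\varepsilon$ is a cover of $Z_i$, whence $\mathcal{H}^s_\varepsilon(Z_i)\le \mathcal{H}^s_\varepsilon(Z)$ for every $s\ge 0$ and $\varepsilon>0$. Letting $\varepsilon\to 0$ and comparing the critical values gives $\dim_\mathcal{H}(Z_i)\le \dim_\mathcal{H}(Z)$ for each $i\in\mathbb{N}$, and then one takes the supremum over $i$.

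For the reverse inequality, the main ingredient is countable subadditivity of the $s$-dimensional Hausdorff measure, i.e.\ $\mathcal{H}^s\bigl(\bigcup_{i=1}^\infty Z_i\bigr)\le \sum_{i=1}^\infty \mathcal{H}^s(Z_i)$ for each $s\ge 0$. To prove this, I would fix $\varepsilon>0$ and $\eta>0$ and, for each $i$, choose a countable cover $\{U_{i,j}\}_{j}$ of $Z_i$ with $\operatorname{diam} U_{i,j}\le \varepsilon$ and
\[
\sum_{j}(\operatorname{diam} U_{i,j})^s\ \le\ \mathcal{H}^s_\varepsilon(Z_i)+\eta\cdot 2^{-i}.
\]
Then $\{U_{i,j}\}_{i,j}$ is a countable cover of $Z$ whose diameters are still at most $\varepsilon$, so $\mathcal{H}^s_\varepsilon(Z)\le \sum_i \mathcal{H}^s_\varepsilon(Z_i)+\eta\le \sum_i \mathcal{H}^s(Z_i)+\eta$; letting $\eta\to 0$ and then $\varepsilon\to 0$ yields the claimed subadditivity.

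With subadditivity in hand, the remaining step is a threshold argument. Pick any real $s>\sup_{i\ge 1}\dim_\mathcal{H}(Z_i)$. Then $s>\dim_\mathcal{H}(Z_i)$ for every $i$, so by definition of Hausdorff dimension we have $\mathcal{H}^s(Z_i)=0$ for all $i$. Subadditivity gives $\mathcal{H}^s(Z)=0$, hence $\dim_\mathcal{H}(Z)\le s$. Taking the infimum over all such $s$ yields $\dim_\mathcal{H}(Z)\le \sup_{i\ge 1}\dim_\mathcal{H}(Z_i)$, which combined with the easy direction completes the proof.

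Since the argument is a standard exercise on outer measures, no step presents a genuine obstacle; the only point requiring care is the choice of the $\eta\cdot 2^{-i}$ tolerances in the subadditivity estimate, which must be made before passing to the limits in $\varepsilon$ and $\eta$, and the separate handling of the degenerate cases (\emph{e.g.}\ when some $\dim_\mathcal{H}(Z_i)=\infty$) via the convention set just before the statement of the proposition.
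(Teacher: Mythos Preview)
Your argument is correct and entirely standard; the paper itself omits the proof, simply stating that ``the following basic fact is easy to obtain,'' so your two-inequality approach via monotonicity and countable subadditivity of $\mathcal{H}^s$ is exactly what one would supply.
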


\smallskip

More generally, let $h: [0,\infty) \to [0,\infty)$ be a continuous function defined on $[0,\infty)$ which is monotonically increasing on $[0, \infty)$, strictly positive on $(0, \infty)$ and satisfies $h(0)=0$ (we also call such a function as a \emph{gauge function}).
We can define similarly the Hausdorff measure of $Z$ with respect to $h$ by
$$
\mathcal{H}^h(Y)=\lim _{\varepsilon \rightarrow 0} \inf \sum_{i\in \mathcal{I}}h(\operatorname{diam} U_i),
$$
where the infimum is taken over all  countable families $\{U_i\}_{i\in \mathcal{I}}$ covering $Z$ with each $\operatorname{diam} U_i \leq \varepsilon$. In particular, the classical $s$-dimensional Hausdorff measure is the Hausdorff measure with respect to the gauge function $h(t) = t^s$ (for brevity, we will slightly abuse the notation and still denote it as $\mathcal{H}^s (Z)$).

The following basic fact is also easy to obtain (see for example \cite[Theorem 40]{Rogers1998}).

\begin{prop}\label{comp_gauge_Haus}
Let $h_1$ and $h_2$ be gauge functions satisfying
$\lim\limits_{t\to 0}\frac{h_2(t)}{h_1(t)}=0$.
If a set $Z$ has $\sigma$-finite $\mathcal{H}^{h_1}$-measure, then $\mathcal{H}^{h_2}(Z)=0$. Particularly, if a set $Z$ has
$\sigma$-finite $\mathcal{H}^{s}$-measure for some $s\geq 0$, then $\dim_\mathcal{H} (Y)\leq s$.
\end{prop}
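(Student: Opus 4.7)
The plan is to reduce to the case $\mathcal{H}^{h_1}(Z)<\infty$ by $\sigma$-subadditivity of Hausdorff measures, and then exploit the smallness of $h_2/h_1$ near zero to estimate the $h_2$-cost of a nearly optimal $h_1$-cover.

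First I would write $Z=\bigcup_{n\in\mathbb{N}} Z_n$ with $\mathcal{H}^{h_1}(Z_n)<\infty$ for every $n$. Since each $\varepsilon$-approximating functional in the definition of $\mathcal{H}^{h_2}$ is countably subadditive (being defined by an infimum over covers), so is $\mathcal{H}^{h_2}$, and hence it suffices to prove $\mathcal{H}^{h_2}(Z_n)=0$ for each $n$. Accordingly, without loss of generality I assume $\mathcal{H}^{h_1}(Z)<\infty$.

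Next, fix an arbitrary $\delta>0$. By the hypothesis $\lim_{t\to 0} h_2(t)/h_1(t)=0$, choose $\varepsilon_0>0$ such that $h_2(t)\leq \delta\, h_1(t)$ for every $0<t\leq \varepsilon_0$. For each $\varepsilon\in(0,\varepsilon_0]$, the definition of $\mathcal{H}^{h_1}(Z)$ as a monotone limit in $\varepsilon$ together with $\mathcal{H}^{h_1}(Z)<\infty$ furnishes a countable cover $\{U_i\}_{i\in\mathcal{I}}$ of $Z$ with $\operatorname{diam} U_i\leq\varepsilon$ for every $i$ and $\sum_{i\in\mathcal{I}} h_1(\operatorname{diam} U_i)\leq \mathcal{H}^{h_1}(Z)+1$. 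Applying the pointwise bound $h_2(t)\leq\delta\, h_1(t)$ at $t=\operatorname{diam} U_i\leq \varepsilon_0$ yields
$$\sum_{i\in\mathcal{I}} h_2(\operatorname{diam} U_i)\ \leq\ \delta\sum_{i\in\mathcal{I}} h_1(\operatorname{diam} U_i)\ \leq\ \delta\bigl(\mathcal{H}^{h_1}(Z)+1\bigr).$$
Taking the infimum over admissible covers of mesh at most $\varepsilon$ and then letting $\varepsilon\to 0$ gives $\mathcal{H}^{h_2}(Z)\leq \delta(\mathcal{H}^{h_1}(Z)+1)$, and since $\delta>0$ was arbitrary, $\mathcal{H}^{h_2}(Z)=0$.

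The particular statement about Hausdorff dimension then follows at once: for any $s'>s$ the gauges $h_1(t)=t^s$ and $h_2(t)=t^{s'}$ satisfy $t^{s'}/t^s=t^{s'-s}\to 0$ as $t\to 0$, so the first part gives $\mathcal{H}^{s'}(Z)=0$ and hence $\dim_{\mathcal{H}}(Z)\leq s'$; letting $s'\searrow s$ yields $\dim_{\mathcal{H}}(Z)\leq s$. I do not foresee a genuine obstacle here, since this is a classical comparison estimate for generalized Hausdorff measures; the only delicacy is coordinating the cover-mesh parameter $\varepsilon$ with the threshold $\varepsilon_0$ coming from the ratio hypothesis, so that the dominance $h_2\leq\delta h_1$ is simultaneously valid on every member of the chosen cover, which is handled simply by restricting $\varepsilon$ to $(0,\varepsilon_0]$ at the outset.
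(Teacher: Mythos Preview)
Your proof is correct and is precisely the standard argument. The paper does not actually supply a proof of this proposition; it simply states it as a basic fact and cites \cite[Theorem 40]{Rogers1998}. However, the paper does prove the exact discrete analogue (Lemma~\ref{com}) by the same method you use---fix a tolerance $\eta$, choose a threshold below which the ratio $b^*/b$ is small, take a near-optimal cover for the $b$-measure, and bound the $b^*$-cost termwise---so your approach is fully in line with the paper's.
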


We remark that, in general, Hausdorff measure with respect to a gauge function (including the $s$-dimensional Hausdorff measure) depends on the metric over the underlying space. And so Hausdorff dimension also depends on the metric over the topological space, however, the Hausdorff dimension is a bi-Lipschitz invariant. Note that, when we are talking about Hausdorff dimension of a set in $\mathbb{R}^n$, the space $\mathbb{R}^n$ is always assumed to be equipped with the standard Euclidean metric.

\smallskip

Now let us recall the notion of Bowen topological entropy following \cite{R.Bowen1973-TAMS}.
Recall that we have assumed $(X, T)$ to be a TDS with a compatible metric $d$.

Let $Z \subset X$. For all $s \ge 0, N \in \mathbb{N}$ and $\varepsilon>0$, we set
$$
\M_{N, \varepsilon}^{s}(Z)=\inf\ \sum_{i\in \mathcal{I}} e^{-s n_{i}},
$$
where the infimum is taken over all countable families $\left\{ B_{n_{i}}\left(x_{i}, \varepsilon\right)\right\}_{i\in \mathcal{I}}$ covering $Z$ with each $n_i\ge N$.
Again it is obvious that $\M_{N, \varepsilon}^{s}(Z)$ increases as the parameter $N$ increases and the parameter $\varepsilon$ decreases. We put
$$
\M_{\varepsilon}^{s}(Z)=\lim _{N \rightarrow \infty} \M_{N, \varepsilon}^{s}(Z) = \sup_{N\in \mathbb{N}} \M_{N, \varepsilon}^{s}(Z),
$$
which increases as the parameter $\varepsilon$ decreases. Then there exists a critical value of the parameter $s> 0$, denoted by $h_{\mathrm {top }}^{B}(T, Z, \varepsilon)$, where $\mathcal{M}_{\varepsilon}^{s}(Z)$ jumps from $\infty$ to 0, that is,
$$
\mathcal{M}_{\varepsilon}^{s}(Z)= \begin{cases}0, & \ \ \ \ \ \ \ s>h_{\mathrm {top }}^{B}(T, Z, \varepsilon), \\ \infty, & \ \ \ \ \ \ \ s<h_{\mathrm {top }}^{B}(T, Z, \varepsilon).\end{cases}
$$
By convention we set
$$
h_{\mathrm {top }}^{B}(T, Z, \varepsilon)= \begin{cases}0, & \ \ \ \ \ \ \ \text{if}\ \mathcal{M}_{\varepsilon}^{s}(Z) = 0\ \text{for each}\ s> 0, \\ \infty, & \ \ \ \ \ \ \ \text{if}\ \mathcal{M}_{\varepsilon}^{s}(Z) = \infty\ \text{for each}\ s> 0.\end{cases}
$$
Again the quantity $h_{\mathrm {top }}^{B}(T, Z, \varepsilon)$ increases as the parameter $\varepsilon$ decreases, and so we may introduce the \emph{Bowen topological entropy of $Z$ with respect to $T$} or, simply, the \emph{Bowen topological entropy of $Z$} if there is no confusion about the map $T$, as follows
$$
h_{\mathrm {top }}^{B}(T, Z)=\lim _{\varepsilon \rightarrow 0} h_{\mathrm {top }}^{B}(T, Z, \varepsilon) = \sup_{\varepsilon > 0} h_{\mathrm {top }}^{B}(T, Z, \varepsilon).
$$
Similarly, though $h_{\mathrm {top }}^{B}(T, Z, \varepsilon)$ does depend on the metric $d$, the quantity $h_{\mathrm {top }}^{B}(T, Z)$ is independent of selection of compatible metrics over the space.

More generally, we could give a definition which resembles Hausdorff measure corresponding to some gauge function as follows.
Let $b: \N\to \mathbb{R}_+$ be a gauge function and $Z\subset X$. By extending the idea of defining $\M_{N, \varepsilon}^{s}(Z)$, we define
$$
\M_{N, \varepsilon}^{b}(Z)=\inf\ \sum_{i\in \mathcal{I}} b(n_i),
$$
where the infimum is taken over all countable families $\left\{ B_{n_{i}}\left(x_{i}, \varepsilon\right)\right\}_{i\in \mathcal{I}}$ covering $Z$ with each $n_i\ge N$.
Similarly, the quantity $\M_{N, \varepsilon}^{b}(Z)$ increases as the parameter $N$ increases and the parameter $\varepsilon$ decreases, and so we could define analogously
$$
\M_{\varepsilon}^{b}(Z)=\lim _{N \rightarrow \infty} \M_{N, \varepsilon}^{b}(Z)=\sup_{N \in \mathbb{N}} \M_{N, \varepsilon}^{b}(Z), \quad \M^b(Z)=\lim_{\varepsilon\rightarrow0} \M_{\varepsilon}^{b}(Z)= \sup_{\varepsilon> 0} \M_{\varepsilon}^{b}(Z).
$$

Similar to Proposition \ref{comp_gauge_Haus}, we have the following fact:

\begin{lemma}\label{com}
Let $K\subset X, \varepsilon>0$, and let $b$ and $b^*$ be gauge functions satisfying $\lim\limits_{n\to \infty}\frac{b^*(n)}{b(n)}=0$. If $\M_{\varepsilon}^{b} (K) < + \infty$ then $\M_{\varepsilon}^{b^*} (K) = 0$. In particular, if $\M^{b} (K) < + \infty$ then $\M^{b^*} (K) = 0$.
\end{lemma}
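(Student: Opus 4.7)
The plan is to mimic the standard Rogers-type comparison argument that underlies Proposition \ref{comp_gauge_Haus}, but applied to Bowen-ball covers rather than diameter covers. The hypothesis $\lim_{n\to\infty} b^*(n)/b(n)=0$ should let us pay an arbitrarily small multiplicative price $\delta$ when switching the weight $b(n_i)$ to $b^*(n_i)$, provided the cover only uses balls with large orbit length.

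First I would fix $\delta>0$ and use the ratio assumption to pick $N_0\in\mathbb N$ with $b^*(n)\le \delta\, b(n)$ for every $n\ge N_0$. For any $N\ge N_0$ and any $\eta>0$, the hypothesis $\M^b_\varepsilon(K)<\infty$ together with $\M^b_{N,\varepsilon}(K)\le \M^b_\varepsilon(K)$ guarantees a countable cover $\{B_{n_i}(x_i,\varepsilon)\}_{i\in\mathcal I}$ of $K$ with each $n_i\ge N$ and
\[
\sum_{i\in\mathcal I} b(n_i) \;\le\; \M^b_{N,\varepsilon}(K)+\eta \;\le\; \M^b_\varepsilon(K)+\eta.
\]
Since every $n_i\ge N\ge N_0$, this very same cover is admissible for $\M^{b^*}_{N,\varepsilon}(K)$ and satisfies
\[
\M^{b^*}_{N,\varepsilon}(K) \;\le\; \sum_{i\in\mathcal I} b^*(n_i) \;\le\; \delta\sum_{i\in\mathcal I} b(n_i) \;\le\; \delta\bigl(\M^b_\varepsilon(K)+\eta\bigr).
\]
Letting $\eta\downarrow 0$ gives $\M^{b^*}_{N,\varepsilon}(K)\le \delta\,\M^b_\varepsilon(K)$ for all $N\ge N_0$. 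Now pass $N\to\infty$, using $\M^{b^*}_\varepsilon(K)=\sup_N \M^{b^*}_{N,\varepsilon}(K)$, to obtain $\M^{b^*}_\varepsilon(K)\le \delta\,\M^b_\varepsilon(K)$, and finally let $\delta\downarrow 0$ to conclude $\M^{b^*}_\varepsilon(K)=0$.

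For the ``in particular'' statement, I would simply observe that $\M^b(K)<\infty$ implies $\M^b_\varepsilon(K)\le \M^b(K)<\infty$ for every $\varepsilon>0$, so the first part applied at each $\varepsilon$ yields $\M^{b^*}_\varepsilon(K)=0$; taking the supremum over $\varepsilon>0$ then gives $\M^{b^*}(K)=\sup_{\varepsilon>0}\M^{b^*}_\varepsilon(K)=0$.

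There is no real obstacle here; the only subtlety worth flagging is the correct order of the two passages to the limit. The bound $\delta\,\M^b_\varepsilon(K)$ on $\M^{b^*}_{N,\varepsilon}(K)$ must be uniform in $N\ge N_0$, which it is because $\M^b_{N,\varepsilon}(K)$ is dominated by $\M^b_\varepsilon(K)$ for every $N$; this allows the sup over $N$ (giving $\M^{b^*}_\varepsilon$) to be taken before $\delta$ is sent to zero. The finiteness of $\M^b_\varepsilon(K)$ is used precisely to guarantee that this uniform bound is finite, so the argument breaks down without it, matching the hypothesis of the lemma.
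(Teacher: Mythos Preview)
Your proposal is correct and follows essentially the same approach as the paper's proof: both fix a threshold $N_0$ from the ratio hypothesis, pick a near-optimal $b$-cover with all $n_i\ge N_0$, and bound the $b^*$-sum by a small multiple of the finite $b$-sum. The only cosmetic difference is that the paper merges your two small parameters into one by choosing the ratio bound to be $\eta/(\M^b_\varepsilon(K)+1)$ directly, but the logic is identical.
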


\begin{proof}
Fix arbitrarily given $\eta> 0$. By the assumption $\lim\limits_{n\to \infty}\frac{b^*(n)}{b(n)}=0$, we may choose $N\in \mathbb{N}$ large enough such that $\frac{b^*(n)}{b(n)} < \frac{\eta}{\M_{\varepsilon}^{b} (K) + 1}$ for all $n\ge N$. As $\M_{\varepsilon}^{b} (K) < + \infty$, for each integer $m\ge N$, we take a countable family $\left\{ B_{n_{i}}\left(x_{i}, \varepsilon\right)\right\}_{i\in \mathcal{I}}$ covering $K$ with each $n_i\ge m$ (and then $n_i\ge N$) satisfying $\sum\limits_{i\in \mathcal{I}} b(n_i) < \M_{\varepsilon}^{b} (K) + 1$, thus we have
$$\sum_{i\in \mathcal{I}} b^*(n_i) = \sum_{i\in \mathcal{I}} b(n_i)\cdot \frac{b^* (n_i)}{b (n_i)}\le \frac{\eta}{\M_{\varepsilon}^{b} (K) + 1} \sum_{i\in \mathcal{I}} b(n_i) < \eta.$$
So we conclude $\M_{\varepsilon}^{b^*} (K) \le \eta$ from arbitrariness of $m\ge N$, and then $\M_{\varepsilon}^{b^*} (K) = 0$ from arbitrariness of $\eta> 0$. This finishes the proof.
\end{proof}

We remark that, similar to packing topological entropy, the quantity $\M^b(Z)$ with respect to a given gauge function $b$ (including the quantity $h_{\mathrm {top }}^{B}(T, Z)$) is independent of selection of compatible metrics over the compact metric space.

\subsection{Measure-theoretical upper and lower entropies}\

\smallskip

Now let us recall the concepts of measure-theoretical upper and lower entropies following \cite{Feng-Huang2012}, which is motivated by the local entropy given by Brin and Katok in \cite{Brin-Katok1981}.

Let $\mu\in \mathcal{M} (X)$ and $\varepsilon> 0$. It is easy to see that for each $n\in \mathbb{N}$ the function
$$X \rightarrow [0, 1], x\mapsto \mu(B_n(x, \varepsilon))$$
is lower semi-continuous and so Borel measurable. So the following definition makes sense, as all involved functions are Borel measurable (by convention $\log 0= - \infty$ and $0 \log 0 = 0$).

\begin{definition}
Let $\mu \in \M(X)$.
For each $x\in X$ we set
\begin{eqnarray*}
\overline{h}_{\mu}(T, x) & = & \lim _{\varepsilon \rightarrow 0}  \overline{h}_{\mu}(T, x, \varepsilon),\ \ \ \text{with}\ \overline{h}_{\mu}(T, x, \varepsilon) = \limsup _{n \rightarrow\infty}-\frac{1}{n} \log \mu\left(B_{n}(x, \varepsilon)\right), \\
	\underline{h}_{\mu}(T, x) & = & \lim _{\varepsilon \rightarrow 0} \underline{h}_{\mu}(T, x, \varepsilon),\ \ \ \text{with}\ \underline{h}_{\mu}(T, x, \varepsilon) = \liminf _{n \rightarrow\infty}-\frac{1}{n} \log \mu\left(B_{n}(x, \varepsilon)\right).
\end{eqnarray*}
Then the \emph{measure-theoretical upper and lower entropies of $\mu$} are defined respectively by
$$
\overline{h}_{\mu}(T)=\int_X \overline{h}_{\mu}(T, x) d \mu(x)\ \ \text{and}\ \ \underline{h}_{\mu}(T)=\int_X \underline{h}_{\mu}(T, x) d \mu(x).
$$
\end{definition}

In 1981 Brin and Katok proved in \cite{Brin-Katok1981} that, for any $\mu \in \M(X, T)$, $\underline{h}_{\mu}(T, x)=\overline{h}_{\mu}(T, x)$ for $\mu$-a.e. $x \in X$ which is $T$-invariant, and $\underline{h}_{\mu}(T)=\overline{h}_{\mu}(T)=h_{\mu}(T)$, where $h_{\mu}(T)$ denotes the measure-theoretical $\mu$-entropy of $(X, T)$\footnote{\ As the measure-theoretical $\mu$-entropy (for a TDS $(X, T)$ and an invariant measure $\mu\in \mathcal{M} (X, T)$) plays only a marginal role in this paper, we shall skip its detailed definition.}. In particular, if $\mu \in \M(X, T)$ is ergodic then
\begin{equation} \label{f-2}
\underline{h}_{\mu}(T, x)=\overline{h}_{\mu}(T, x)=h_{\mu}(T)\ \ \ \text{for $\mu$-a.e. $x \in X$}.
\end{equation}
It was proved in \cite{Feng-Huang2012} that, for any analytic set $\emptyset\neq Z\subset X$ in a TDS $(X, T)$, one has\footnote{\ We remark that, though the second identity in \eqref{varia.prin} was proved in \cite{Feng-Huang2012} under the additional assumption that TDS $(X,T)$ has finite topological entropy, in fact it holds for arbitrary TDS with the help of a fact, due to Downarowicz and Huczek, that every TDS has a zero-dimensional principal extension \cite{Downarowicz-Huczek2013}.}
\begin{equation} \label{varia.prin}
h_{\mathrm {top }}^P(T, Z)\ = \ \sup_{\mu \in \M(X), \mu(Z)=1} \ \overline{h}_\mu(T)\quad \text{and} \quad h_{\mathrm {top }}^B(T, Z)\ = \ \sup_{\mu \in \M(X), \mu(Z)=1} \ \underline{h}_\mu(T).
\end{equation}

\subsection{$h$-expansive system and asymptotically $h$-expansive system}
\

\smallskip

Finally let us recall the notions of $h$-expansiveness and asymptotically $h$-expansiveness from \cite{R.Bowen1972-TAMS, Misiurewicz1973}. We need firstly to define topological entropy of subsets in a dynamical system.

Let $Z \subset X$. For $n\in \mathbb{N}$ and $\varepsilon> 0$, a subset $E\subset X$ is called to \emph{$(n, \varepsilon)$-span $Z$}, if for each $x \in Z$, there is $y \in E$ satisfying $d_n (x, y) \leq \varepsilon$; denote by $r_n(\varepsilon, Z)$ the minimum cardinality of a set which $(n, \varepsilon)$-spans $Z$. And then the \emph{topological entropy of $Z$ with respect to $T$} or, simply, the \emph{topological entropy of $Z$} if there is no confusion about the map $T$, is defined as
$$
h_{\mathrm{top}}(T, Z)=\lim\limits_{\varepsilon\to 0}\limsup\limits_{n \to \infty} \frac{1}{n} \log r_n(\varepsilon, Z).
$$
We also call $h_{\mathrm{top}}(T, X)$ the \emph{topological entropy of $(X, T)$} and denote it by $h_{\mathrm{top}}(T)$.

The following basic fact was proved as \cite[Proposition 2.1]{Feng-Huang2012}, except the statement about gauge function in the second item which follows directly from the definition.

\begin{prop} \label{fact}
Let $(X, T)$ be a TDS, and $b: \N\to \mathbb{R}_+$ be a gauge function.
\begin{enumerate}

\item If $Z \subset Z'\subset X$, then
$$h_{\mathrm{top }}^P (T, Z) \le h_{\mathrm{top}}^P (T, Z'), \quad h_{\mathrm{top}}^B(T, Z) \le h_{\mathrm{top}}^B (T, Z'), \quad h_{\mathrm{top}} (T, Z) \le h_{\mathrm{top }} (T, Z^{\prime}).$$

\item Assume that $Z \subset \bigcup_{i=1}^{\infty} Z_i\subset X$  and $s \ge 0, \varepsilon>0$. Then
$$
\begin{gathered}
\mathcal{M}_\varepsilon^b(Z) \le \sum_{i=1}^{\infty} \mathcal{M}_\varepsilon^b\left(Z_i\right), \quad \mathcal{M}_\varepsilon^s(Z) \le \sum_{i=1}^{\infty} \mathcal{M}_\varepsilon^s\left(Z_i\right), \\
h_{\mathrm {top }}^P(T, Z) \le \sup _{i \ge 1} h_{\mathrm {top }}^P\left(T, Z_i\right), \quad h_{\mathrm {top }}^B(T, Z) \le \sup _{i \ge 1} h_{\mathrm {top }}^B\left(T, Z_i\right).
\end{gathered}
$$

\item For any $Z \subset X$ one has $h_{\mathrm {top }}^B(T, Z) \le h_{\mathrm{top}}^P(T, Z) \le h_{\mathrm {top }} (T, Z)$. Furthermore, if $Z$ is a compact subset satisfying $T (Z)\subset Z$, then $h_{\mathrm {top }}^B(T, Z) = h_{\mathrm{top}}^P(T, Z) = h_{\mathrm {top }} (T, Z)$.
\end{enumerate}
\end{prop}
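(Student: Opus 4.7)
The plan is as follows. Proposition~\ref{fact} is a collection of standard monotonicity, countable sub-additivity, and comparison properties of the various entropies, and as the author has already noted, all parts of it other than the gauge-function subadditivity $\mathcal{M}_\varepsilon^b(Z) \le \sum_{i=1}^\infty \mathcal{M}_\varepsilon^b(Z_i)$ in item~(2) are recorded in Feng--Huang \cite[Proposition~2.1]{Feng-Huang2012}. I would therefore invoke that reference for everything else, and give a direct proof of the gauge-function inequality by mimicking the standard argument for $\mathcal{M}_\varepsilon^s$.

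For the gauge-function subadditivity I would fix $\varepsilon>0$, $N\in\mathbb{N}$, and an auxiliary $\eta>0$, dispose first of the trivial case in which $\mathcal{M}_\varepsilon^b(Z_i)=+\infty$ for some $i$, and otherwise select for each $i$ a countable family $\{B_{n^{(i)}_j}(x^{(i)}_j,\varepsilon)\}_j$ covering $Z_i$ with every $n^{(i)}_j\ge N$ and with
$$
\sum_{j} b\bigl(n^{(i)}_j\bigr)\ \le\ \mathcal{M}_{N,\varepsilon}^b(Z_i)+\eta\cdot 2^{-i}\ \le\ \mathcal{M}_\varepsilon^b(Z_i)+\eta\cdot 2^{-i}.
$$
The concatenation $\{B_{n^{(i)}_j}(x^{(i)}_j,\varepsilon)\}_{i,j}$ is then a countable cover of $\bigcup_{i=1}^{\infty} Z_i\supset Z$ with every Bowen-scale at least $N$; summing and using $\sum_{i=1}^{\infty} 2^{-i}=1$ yields
$$
\mathcal{M}_{N,\varepsilon}^b(Z)\ \le\ \sum_{i=1}^{\infty}\mathcal{M}_\varepsilon^b(Z_i)+\eta.
$$
Letting $\eta\to 0^+$ and then $N\to\infty$, and recalling $\sup_N \mathcal{M}_{N,\varepsilon}^b(Z)=\mathcal{M}_\varepsilon^b(Z)$, produces the claimed inequality.

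No substantial obstacle is expected here: the argument is purely combinatorial and never uses the multiplicative structure of $e^{-sn}$, so the replacement $e^{-sn}\mapsto b(n)$ goes through verbatim; the only mild point of attention is the order in which the limits in $N$ and $\eta$ are taken, together with the handling of possibly infinite summands. For completeness I would also briefly indicate how the remaining items reduce to classical proofs: monotonicity in~(1) because any $(n,\varepsilon)$-spanning set of $Z'$ spans $Z$, any cover of $Z'$ covers $Z$, and any disjoint $(n,\varepsilon)$-packing of $Z$ is one of $Z'$; the critical-value inequalities in~(2) for $h_{\mathrm{top}}^B$ and $h_{\mathrm{top}}^P$ follow from the $\mathcal{M}_\varepsilon^s$- and $P_\varepsilon^s$-level subadditivities by reading off the jump point in $s$; and the chain in~(3), together with equality for compact $T$-invariant $Z$, is Bowen's classical comparison between packings, covers, and $(n,\varepsilon)$-spanning sets.
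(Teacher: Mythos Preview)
Your proposal is correct and matches the paper's approach exactly: the paper also defers everything except the gauge-function subadditivity to \cite[Proposition~2.1]{Feng-Huang2012}, and for that remaining inequality simply remarks that it ``follows directly from the definition,'' which is precisely the standard $\eta\cdot 2^{-i}$ cover-concatenation argument you wrote out. You have just made explicit what the paper leaves to the reader.
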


Let $(X, T)$ and $(Y, S)$ be TDSs. By a \emph{factor map} or an \emph{extension} we mean a continuous surjection $\pi:Y \rightarrow X$ satisfying $\pi (S y) = T (\pi y)$ for each $y\in Y$, which is also denoted by $\pi:(Y, S) \rightarrow(X, T)$. In this case, we also call $(Y, S)$ to be an \emph{extension} of $(X, T)$.

We shall also use the following result proved as \cite[Theorem 4.1]{Oprocha-Zhang2011}.

\begin{prop}\label{Bowen_ent_factor_inequality}
Let $\pi:(Y, S) \rightarrow(X, T)$ be a factor map between TDSs. Then
	$$
	h_{\mathrm {top }}^B(T, \pi(E)) \le h_{\mathrm {top }}^B(S, E) \le h_{\mathrm {top }}^B(T, \pi(E))+\sup _{x \in X} h_{\mathrm {top }}\left(S, \pi^{-1}(x)\right),\ \ \forall E \subset Y.
	$$
	
\end{prop}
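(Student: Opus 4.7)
The plan is to prove the two inequalities separately. The left one is essentially bookkeeping, while the right one carries all of the content.

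\emph{Left inequality.} Uniform continuity of $\pi$ on the compact space $Y$ furnishes, for each $\varepsilon > 0$, some $\delta > 0$ with $d(y,y') \le \delta \Rightarrow d(\pi y, \pi y') \le \varepsilon$; applied iterate-wise this yields $\pi(B_n^S(y,\delta)) \subset B_n^T(\pi y,\varepsilon)$ for every $n$. Consequently any cover of $E$ by balls $\{B_{n_i}^S(y_i,\delta)\}_{i}$ pushes forward to a cover of $\pi(E)$ by $\{B_{n_i}^T(\pi y_i,\varepsilon)\}_{i}$ with identical exponents, and so $\mathcal{M}_{N,\varepsilon}^{s}(\pi(E)) \le \mathcal{M}_{N,\delta}^{s}(E)$ for every $s,N$. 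Letting $N \to \infty$ and then $\varepsilon \to 0$ (which forces $\delta \to 0$) gives $h_{\mathrm{top}}^{B}(T,\pi(E)) \le h_{\mathrm{top}}^{B}(S,E)$.

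\emph{Right inequality.} Put $h^{*} := \sup_{x \in X} h_{\mathrm{top}}(S, \pi^{-1}(x))$, which we may assume finite. Fix $\gamma > 0$. The technical core is a uniform fiber-covering lemma: for every sufficiently small $\delta > 0$ there exist $\varepsilon > 0$ and $N_0 \in \mathbb{N}$ such that, for every $x \in X$ and every $n \ge N_0$, the set $\pi^{-1}(B_n^T(x,\varepsilon))$ is covered by at most $e^{n(h^{*}+\gamma)}$ balls of the form $B_n^S(\cdot,\delta)$. Granting this, take any cover $\{B_{m_i}^T(x_i,\varepsilon)\}$ of $\pi(E)$ with $m_i \ge N \ge N_0$; pulling back through $\pi^{-1}$ and refining via the lemma produces a cover of $E \subset \pi^{-1}(\pi(E))$ by $S$-Bowen balls of radius $\delta$, with
\[
\mathcal{M}_{N,\delta}^{s+h^{*}+\gamma}(E) \ \le\ \sum_{i} e^{m_i (h^{*}+\gamma)} \cdot e^{-(s+h^{*}+\gamma) m_i} \ =\ \sum_{i} e^{-s m_i}.
\]
Taking infima over such covers and letting $N \to \infty$ yields $\mathcal{M}_{\delta}^{s+h^{*}+\gamma}(E) \le \mathcal{M}_{\varepsilon}^{s}(\pi(E))$. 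For $s > h_{\mathrm{top}}^{B}(T,\pi(E),\varepsilon)$ the right side vanishes, hence $h_{\mathrm{top}}^{B}(S,E,\delta) \le s + h^{*} + \gamma$; sending $\delta \to 0$ (which drives $\varepsilon \to 0$) and then $\gamma \to 0$ completes the argument.

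\emph{Main obstacle.} The uniform fiber-covering lemma is the hard step. For each individual $x \in X$, the definition of $h_{\mathrm{top}}(S,\pi^{-1}(x)) \le h^{*}$ supplies $\delta_x > 0$ and $N_x$ with $r_n^S(\delta_x,\pi^{-1}(x)) \le e^{n(h^{*}+\gamma/2)}$ for $n \ge N_x$. To promote this pointwise-in-$x$ bound to a uniform one, I plan to use that any finite $(n,\delta_x/2)$-spanning set $F$ of $\pi^{-1}(x)$ has open $d_n^S$-neighborhood $W = \bigcup_{f \in F} B_n^S(f,\delta_x) \supset \pi^{-1}(x)$, and closedness of $\pi$ then produces an open $V \ni x$ with $\pi^{-1}(V) \subset W$, so $F$ in fact $(n,\delta_x)$-spans $\pi^{-1}(V)$. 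Combined with an upper-semicontinuity argument in $x$ for the fiber-spanning number, compactness of $X$, and the trivial inclusion $B_n^T(x,\varepsilon) \subset B(x,\varepsilon)$ (which, via a Lebesgue-number argument on a finite cover of $X$ by such neighborhoods $V$, converts ordinary $\varepsilon$-balls into already-handled neighborhoods), one can assemble the desired uniform cover with $\delta := \min_{j} \delta_{x_j}$ and $N_0 := \max_{j} N_{x_j}$. The genuine difficulty is that the neighborhood $V$ above depends on $n$; controlling this $n$-dependence uniformly in $x$ requires a careful stabilization argument, and is the reason the lemma is nontrivial.
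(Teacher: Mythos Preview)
The paper does not prove this proposition; it simply cites \cite[Theorem~4.1]{Oprocha-Zhang2011}. So there is no in-paper proof to compare against, and your task is really to produce a self-contained argument.

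Your left inequality is fine. For the right inequality, your outline is correct and you have correctly located the only nontrivial step: the uniform fiber-covering lemma. But you have not actually proved it, and the sketch you give does not close the gap. The obstruction you name --- that the neighbourhood $V$ for which an $(n,\delta)$-spanning set of $\pi^{-1}(x)$ still spans $\pi^{-1}(V)$ depends on $n$ --- is genuine, and it cannot be removed by an ``upper-semicontinuity argument in $x$ for the fiber-spanning number'': establishing such semicontinuity uniformly in $n$ is exactly the statement you are trying to prove. Picking $N_0 = \max_j N_{x_j}$ from a finite cover does not help, because the neighbourhoods $V_{x_j}$ you would need to intersect still shrink with $n$.

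The standard resolution (this is Bowen's original trick) is to abandon the attempt to control all $n$ with a single neighbourhood. Instead: for each $x$ choose a \emph{single} integer $m(x)$ with $r_{m(x)}(\delta, \pi^{-1}(x)) \le e^{m(x)(h^{*}+\gamma)}$, and an open $V_x \ni x$ small enough that the same $(m(x),\delta)$-spanning set still spans $\pi^{-1}(V_x)$. Extract a finite subcover $V_{x_1},\dots,V_{x_k}$ with Lebesgue number $\varepsilon$, and set $m = \max_j m(x_j)$. For arbitrary $x$ and $n$, partition $[0,n)$ into consecutive blocks $[t_l, t_{l+1})$ of lengths $m_{j_l}$ chosen so that $B(T^{t_l}x,\varepsilon) \subset V_{x_{j_l}}$. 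Then
\[
\pi^{-1}\bigl(B_n^T(x,\varepsilon)\bigr) \ \subset\ \bigcap_{l} S^{-t_l}\,\pi^{-1}(V_{x_{j_l}}),
\]
and each factor on the right is covered by $e^{m_{j_l}(h^{*}+\gamma)}$ sets of $d_{m_{j_l}}^S$-diameter $\le 2\delta$. The product of these cardinalities is at most $e^{(n+m)(h^{*}+\gamma)}$, and each resulting cell has $d_n^S$-diameter $\le 2\delta$, hence lies in some $B_n^S(\cdot,2\delta)$. This gives the lemma (with $2\delta$ in place of $\delta$ and a harmless constant prefactor), and the rest of your argument then goes through.
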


A TDS $(X, T)$ is \emph{$h$-expansive} if $h^*_T (\varepsilon) = 0$ for some $\varepsilon> 0$, where
$$h^*_T (\varepsilon) = \sup_{x\in X} h_{\mathrm{top}}(T, \Phi_\varepsilon (x))\ \ \ \ \text{with}\ \ \ \ \Phi_\varepsilon (x) =
\bigcap_{i\in \mathbb{Z}_+} T^{-i} \overline{B} (T^i x, \varepsilon);$$
and is \emph{asymptotically $h$-expansive} if $\lim_{\delta\rightarrow 0} h^*_T (\delta) = 0$. Any expansive TDS is clearly $h$-expansive and any $h$-expansive TDS is asymptotically $h$-expansive.

The following fact is not hard to obtain \cite[Corollary 1.4]{Wang2022}:

\begin{lemma} \label{wangtao expan}
Assume that TDS $(X, T)$ is $h$-expansive. Then there exists $\varepsilon>0$ such that
$$h_\mathrm{top}^P(T,Z)=h_\mathrm{top}^P(T,Z,\varepsilon)\ \ \ \text{and}\ \ \ h_\mathrm{top}^B(T,Z)=h_\mathrm{top}^B(T,Z,\varepsilon)\ \ \ \text{for any}\ Z\subset X.$$
\end{lemma}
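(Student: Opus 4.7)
The plan is to choose $\varepsilon_0 > 0$ satisfying $h^*_T(\varepsilon_0) = 0$ and then verify that any $\varepsilon \in (0, \varepsilon_0/4)$ meets the conclusion. Since both $h_\mathrm{top}^P(T, Z, \cdot)$ and $h_\mathrm{top}^B(T, Z, \cdot)$ are non-increasing in the scale parameter, the problem reduces to establishing, for every $0 < \delta < \varepsilon$ and every $Z \subset X$, the reverse inequalities
$$h_\mathrm{top}^P(T, Z, \delta) \le h_\mathrm{top}^P(T, Z, \varepsilon) \quad \text{and} \quad h_\mathrm{top}^B(T, Z, \delta) \le h_\mathrm{top}^B(T, Z, \varepsilon).$$

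The technical heart of the argument is a uniform subexponential covering estimate: for every $\eta > 0$ there exists $N_0 \in \N$ such that for all $n \ge N_0$ and every $x \in X$, the Bowen ball $\overline{B}_n(x, \varepsilon)$ is covered by at most $e^{n\eta}$ many $(n, \delta)$-Bowen balls. I would prove this by contradiction and compactness: a failure would produce a sequence $(x_k, n_k)$ with $n_k \to \infty$ and, along a subsequence, $x_k \to x_\infty$, together with maximal $(n_k, \delta)$-separated subsets $E_k \subset \overline{B}_{n_k}(x_k, \varepsilon)$ of cardinality exceeding $e^{n_k \eta}$. Using the uniform continuity of $T^i$ on each finite time window together with a diagonal extraction, one would transfer the $E_k$'s, up to an arbitrarily small loss in the separation scale, to $(n, \delta/2)$-separated subsets of $\Phi_{\varepsilon_0}(x_\infty)$ of cardinality at least $e^{n\eta}$ for arbitrarily large $n$, forcing $h_\mathrm{top}(T, \Phi_{\varepsilon_0}(x_\infty)) \ge \eta$ and contradicting $h^*_T(\varepsilon_0) = 0$.

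Granted the covering estimate, the Bowen-entropy comparison is essentially a relabeling. Fix $s > h_\mathrm{top}^B(T, Z, \varepsilon)$ and arbitrary $\eta, \kappa > 0$. Since $\M_\varepsilon^s(Z) = 0$, there exists a cover $\{B_{n_i}(y_i, \varepsilon)\}_i$ of $Z$ with $n_i \ge N_0$ for every $i$ and $\sum_i e^{-s n_i} \le \kappa$; refining each cover ball into at most $e^{n_i \eta}$ many $(n_i, \delta)$-balls via the covering estimate produces a $\delta$-scale cover of $Z$ whose total weight at exponent $s + \eta$ is bounded by $\sum_i e^{n_i \eta} \cdot e^{-(s + \eta) n_i} = \sum_i e^{-s n_i} \le \kappa$. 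Hence $\M_\delta^{s + \eta}(Z) = 0$, which forces $h_\mathrm{top}^B(T, Z, \delta) \le s + \eta$; sending $\eta \to 0$ and $s \downarrow h_\mathrm{top}^B(T, Z, \varepsilon)$ completes the Bowen case.

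The packing case is the main obstacle. Starting from a pairwise disjoint family $\{\overline{B}_{n_i}(x_i, \delta)\}_{i \in I}$ with $x_i \in Z$, the plan is to extract a subfamily pairwise disjoint at scale $\varepsilon$ by processing indices in increasing order of $n_i$ via a greedy procedure: retain $i$ in a new set $J$ precisely when $\overline{B}_{n_i}(x_i, \varepsilon)$ is disjoint from all previously retained $\overline{B}_{n_j}(x_j, \varepsilon)$. A discarded index $i$ then satisfies $x_i \in \overline{B}_{n_j}(x_j, 2\varepsilon)$ for some $j \in J$ with $n_j \le n_i$, while the discarded centers within any fixed $\overline{B}_{n_j}(x_j, 2\varepsilon)$ remain $(n_j, 2\delta)$-separated; applying the covering estimate at level $n_j$ and scale $2\varepsilon$ then bounds the number of discards attributable to each $j \in J$ by $e^{n_j \eta}$. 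Summing over levels and invoking Lemma~\ref{F H Lem4.1} to calibrate the resulting weighted sums against actual pairwise disjoint $\varepsilon$-scale families in $Z$ yields $h_\mathrm{top}^P(T, Z, \delta) \le h_\mathrm{top}^P(T, Z, \varepsilon) + c\eta$ for some constant $c$, and letting $\eta \to 0$ concludes. The hardest part is aligning the cross-level greedy selection with the covering estimate, since the separation at the later level $n_i$ and the covering bound at the earlier level $n_j$ live at different time scales and must be reconciled carefully.
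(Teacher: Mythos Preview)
The paper does not prove this lemma; it simply cites \cite[Corollary 1.4]{Wang2022}. Your covering estimate is Bowen's \cite[Proposition 2.2]{R.Bowen1972-TAMS} (invoked later in the paper, in the proof of Lemma~\ref{techlem MME mu to nu}), and your Bowen-entropy comparison built on it is correct.

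The packing argument, however, contains a genuine gap. The claim that ``the discarded centers within any fixed $\overline{B}_{n_j}(x_j, 2\varepsilon)$ remain $(n_j, 2\delta)$-separated'' is false in general: disjointness of $\overline{B}_{n_i}(x_i, \delta)$ and $\overline{B}_{n_{i'}}(x_{i'}, \delta)$ with $n_i, n_{i'} \ge n_j$ yields no lower bound on $d_{n_j}(x_i, x_{i'})$, since two orbits may stay $\delta$-close through time $n_j$ and separate only much later. The difficulty you flag as needing to be ``reconciled carefully'' is in fact fatal to the cross-level greedy scheme as stated. A clean repair avoids the greedy entirely: for any $E\subset X$, grouping a $\delta$-scale disjoint family by level gives $P^{s}_{N,\delta}(E)\le \sum_{n\ge N} M_n(E,\delta)\,e^{-sn}$, where $M_n(E,\rho)$ denotes the maximal cardinality of a disjoint $\overline{B}_n(\cdot,\rho)$-family centred in $E$; the covering estimate applied at each fixed level $n$ gives $M_n(E,\delta)\le e^{n\eta}M_n(E,\varepsilon)$, while $P^s_\varepsilon(E)<\infty$ forces $M_n(E,\varepsilon)\le C\,e^{sn}$ for all large $n$, whence $P^{s+2\eta}_\delta(E)=0$. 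Passing to $\mathcal{P}^s_\varepsilon$ through a countable cover then yields $h^P_{\mathrm{top}}(T,Z,\delta)\le h^P_{\mathrm{top}}(T,Z,\varepsilon)+2\eta$ for every $\eta>0$.
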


\section{Basic properties concerning entropy of analytic sets} \label{basic}

In this section, we discuss increasing countable slice of packing and Bowen entropy for
subsets, we also study measure-theoretical upper and lower entropies for Borel probability measures.
We prove that, given measure $\mu$ of maximal packing (Bowen, respectively) entropy, the local measure-theoretical entropy at $x$ for $\mu$-a.e. $x\in X$ is equal to the measure-theoretical entropy. Consequently, each analytic set has many
measures of maximal entropy once it has at least one such measure no matter if we consider packing or Bowen entropy (cf. Proposition \ref{uniqueness} for details), which is an easy evidence showing that the situation of considering measures of maximal entropy for analytic sets is completely different from that of the study of invariant measures of maximal entropy for dynamical systems.

\smallskip

We shall prove firstly the following easy property about increasing countable slices.

\begin{prop} \label{prop slice}
Let $\{U_i\}_{i\in \mathbb{N}}$ be a countable basis of $X$, and $Z\subset X$ be an analytic set with $h_\mathrm{top}^P(T, Z)> 0$ which has no increasing countable slice of packing entropy.
	\begin{enumerate}
	
		\item \label{slice1}
Assume that $Z\subset Z_1\subset X$ satisfies $h_\mathrm{top}^P(T, Z) = h_\mathrm{top}^P(T, Z_1)$. Then $Z_1$ has no increasing countable slice of packing entropy.

		\item \label{slice2}
Assume that $Z \subset \bigcup_{i\in \mathbb{N}} Z_i$ satisfies $h_{\mathrm {top }}^P(T, Z) = \sup_{i\in \mathbb{N}} h_{\mathrm {top }}^P (T, Z_i)$, where $\{Z_i\}_{i\in \mathbb{N}}$ is a sequence of analytic subsets of $X$. Then there exists $i\in \mathbb{N}$ such that $h_\mathrm{top}^P(T, Z)$ $= h_\mathrm{top}^P(T, Z_i)$ and $Z_i$ has no increasing countable slice of packing entropy.
		
		\item \label{slice3}
Then there exists a nonempty analytic set $Z'\subset Z$ such that
 \begin{enumerate}

 \item \label{add-1}
 for any $i\geq 1$, once $Z'\cap U_i\neq \emptyset$, we have that $h_\mathrm{top}^P(T, Z'\cap U_i) = h_\mathrm{top}^P (T, Z') = h_\mathrm{top}^P (T, Z)$ and $Z'\cap U_i$ has no increasing countable slice of packing entropy;

 \item \label{add-2} if an open set $U$ satisfies $Z'\cap U\neq \emptyset$ then $h_\mathrm{top}^P(T, Z'\cap U)  = h_\mathrm{top}^P (T, Z')$ $=~ h_\mathrm{top}^P (T, Z)$ and $Z'\cap U$ has no increasing countable slice of packing entropy.
 \end{enumerate}

	\end{enumerate}
The same conclusion holds if alternatively we consider Bowen entropy.
\end{prop}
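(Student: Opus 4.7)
The plan is to prove the three parts in sequence; all arguments for packing entropy transfer verbatim to Bowen entropy using the monotonicity and countable sub-additivity from Proposition~\ref{fact}, so I focus on packing entropy.

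For (\ref{slice1}), I argue by contradiction: any ICS $Z_1 = \bigcup_i Z_{1,i}$ with $h_\mathrm{top}^P(T, Z_{1,i}) < h_\mathrm{top}^P(T, Z_1) = h_\mathrm{top}^P(T, Z)$ pulls back by intersection to $Z = \bigcup_i (Z \cap Z_{1,i})$, a countable union of analytic sets of strictly smaller packing entropy, contradicting the hypothesis on $Z$. For (\ref{slice2}), write $Z = \bigcup_i (Z \cap Z_i)$ and split the indices according to whether $h_\mathrm{top}^P(T, Z_i) = h_\mathrm{top}^P(T, Z)$ or is strictly smaller; if the equal-entropy indices were either absent or each carried an ICS refinement, substituting these refinements into the decomposition of $Z$ would present $Z$ itself as an ICS, a contradiction, so some $Z_{i_0}$ has full entropy and no ICS.

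For (\ref{slice3}), the strategy is to delete from $Z$ all points lying in \emph{bad} basic open sets. Set
\[\mathcal{B} = \bigl\{i \in \N : Z \cap U_i \neq \emptyset,\ \text{and either}\ h_\mathrm{top}^P(T, Z \cap U_i) < h_\mathrm{top}^P(T, Z)\ \text{or}\ Z \cap U_i\ \text{has an ICS}\bigr\},\]
put $A = \bigcup_{i \in \mathcal{B}} U_i$, and let $Z' = Z \setminus A$, which is analytic (intersection of the analytic set $Z$ with the closed set $X \setminus A$). For each $i \in \mathcal{B}$ fix a countable decomposition of $Z \cap U_i$ into analytic sets of entropy strictly less than $h_\mathrm{top}^P(T, Z)$ (either its ICS, or the trivial decomposition $\{Z \cap U_i\}$ if that set already has small entropy). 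Unioning these decompositions over $i \in \mathcal{B}$ exhibits $Z \cap A$ as a countable union of small-entropy analytic sets, and then the no-ICS property of $Z$ forces $Z' \neq \emptyset$, $h_\mathrm{top}^P(T, Z') = h_\mathrm{top}^P(T, Z)$, and $Z'$ has no ICS (any ICS of $Z'$ would combine with the decomposition of $Z \cap A$ to give one of $Z$).

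For (\ref{add-1}), the condition $Z' \cap U_i \neq \emptyset$ forces $i \notin \mathcal{B}$, so $Z \cap U_i$ has full entropy and no ICS; writing $Z \cap U_i = (Z' \cap U_i) \cup \bigcup_{j \in \mathcal{B}} (Z \cap U_i \cap U_j)$ and refining each $Z \cap U_i \cap U_j$ via the already-fixed countable decomposition of $Z \cap U_j$ produces a countable decomposition to which (\ref{slice2}) applies, pinning down $Z' \cap U_i$ as the full-entropy, no-ICS piece. For (\ref{add-2}), express the open $U$ as $\bigcup_{U_j \subset U} U_j$; since $Z' \cap U \neq \emptyset$ some such $U_j$ meets $Z'$, whence $h_\mathrm{top}^P(T, Z' \cap U) \geq h_\mathrm{top}^P(T, Z' \cap U_j) = h_\mathrm{top}^P(T, Z)$, and any ICS of $Z' \cap U$ would restrict by intersection to an ICS of such a $Z' \cap U_j$, contradicting (\ref{add-1}). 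The main obstacle is the bookkeeping in (\ref{slice3}): one must commit to a single explicit countable refinement of each bad $Z \cap U_i$ and verify that intersecting it with another basic set $U_j$ preserves both countability and the strict entropy bound, so that (\ref{slice2}) can be cleanly reapplied to transfer the \emph{no ICS} conclusion down from $Z \cap U_i$ to $Z' \cap U_i$.
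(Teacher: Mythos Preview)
Your proposal is correct and follows essentially the same approach as the paper. The construction of $Z'$ by deleting the union of ``bad'' basic open sets is identical (the paper phrases it via the complementary ``good'' index set $\mathcal{I}$ and writes $Z' = Z \setminus Z^*$ with $Z^* = Z \cap \bigcup_{j\notin\mathcal{I}} U_j$), and the verification of parts (\ref{slice1}), (\ref{slice2}), (\ref{add-1}), (\ref{add-2}) proceeds by the same contradiction/refinement arguments; the paper is slightly terser in (\ref{add-1}) by applying (\ref{slice2}) directly to the two-piece cover $Z\cap U_i \subset Z_i \cup Z^*$ rather than first refining $Z^*$ into its constituent bad pieces, but this is a cosmetic difference.
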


\begin{proof}
 We only prove it for packing entropy. The same arguments work for Bowen entropy.

\eqref{slice1} Assume the contrary that $Z_1$ has an increasing countable slice of packing entropy, and so by the definition there exists a countable collection $\{Z_j^*\}_{j\in \mathbb{N}}$ of analytic sets such that $Z_1=\bigcup_{j\in \mathbb{N}} Z_j^*$ and $h_\mathrm{top}^P(T, Z_j^*)<h_\mathrm{top}^P(T, Z_1)$ for each $j\in \mathbb{N}$. Thus
$$Z=\bigcup_{j\in \mathbb{N}} Z_j^*\cap Z\ \ \text{and hence}\ \ h_\mathrm{top}^P(T, Z)= \sup_{j\in \mathbb{N}} h_\mathrm{top}^P (T, Z_j^*\cap Z)\ (\text{by Proposition \ref{fact}}).$$
Note that for each $j\in \mathbb{N}$ the subset $Z\cap Z_j^*$ is also an analytic set and
$$h_\mathrm{top}^P(T, Z)= h_\mathrm{top}^P(T, Z_1)> h_\mathrm{top}^P (T, Z_j^*) \ge h_\mathrm{top}^P (T, Z_j^*\cap Z).$$
In particular, the set $Z$ has an increasing countable slice of packing entropy, a contradiction to the assumption. Thus $Z_1$ has no increasing countable slice of packing entropy.

\eqref{slice2} As the set $Z$ has no increasing countable slice of packing entropy, by applying Proposition \ref{fact} to the assumption one has that there exists some $i\in \mathbb{N}$ such that
$$h_\mathrm{top}^P(T, Z) = h_\mathrm{top}^P(T, Z\cap Z_i)\ \ \text{and hence}\ \ h_\mathrm{top}^P(T, Z) = h_\mathrm{top}^P(T, Z_i),$$
as each $Z\cap Z_i$ is an analytic set.
If we assume the contrary that
for each $i\in \mathbb{N}$, once $h_\mathrm{top}^P(T, Z) = h_\mathrm{top}^P(T, Z_i)$ then the set $Z_i$ has an increasing countable slice of packing entropy.
Then for each $i\in \mathbb{N}$ there exists a countable collection $\{Z_{i, j}\}_{j\in \mathbb{N}}$ of analytic sets such that
$$
Z_i=\bigcup_{j\in \mathbb{N}} Z_{i, j}\ \text{and}\ h_\mathrm{top}^P(T, Z_{i, j})<h_\mathrm{top}^P(T, Z)\ \text{for each}\ j\in \mathbb{N}.
\footnote{\ If $h_\mathrm{top}^P(T, Z_i) < h_\mathrm{top}^P(T, Z)$, we take $Z_{i, j}= Z_i$ for each $j\in \mathbb{N}$; if $h_\mathrm{top}^P(T, Z_i) = h_\mathrm{top}^P(T, Z)$, then the set $Z_i$ has an increasing countable slice of packing entropy by the assumption, and so there exists a countable collection $\{Z_{i, j}\}_{j\in \mathbb{N}}$ of analytic sets such that $Z_i=\bigcup_{j\in \mathbb{N}} Z_{i, j}$ and $h_\mathrm{top}^P(T, Z_{i, j})<h_\mathrm{top}^P(T, Z_i)$ for each $j\in \mathbb{N}$.}
$$
In particular, the set $Z$ has an increasing countable slice of packing topological entropy (with a countable collection $\{Z_{i, j}\}_{i, j\in \mathbb{N}}$ of analytic sets), a contradiction to the assumption.

\eqref{add-1} Denote by $\mathcal{I}$ the set of all $i\in \mathbb{N}$ such that $Z\cap U_i$ has no increasing countable slice of packing entropy and $h_\mathrm{top}^P (T, Z\cap U_i) = h_\mathrm{top}^P (T, Z)$ (thus $Z\cap U_i\neq \emptyset$). Then $\mathcal{I}\neq \emptyset$ by item \eqref{slice2}.
For each $i\notin \mathcal{I}$ one has that, either $h_\mathrm{top}^P (T, Z\cap U_i) < h_\mathrm{top}^P (T, Z)$, or $h_\mathrm{top}^P (T, Z\cap U_i) = h_\mathrm{top}^P (T, Z)$ and $Z\cap U_i$ has an increasing countable slice of packing entropy.
Put
 $$Z^*= Z\cap \bigcup_{j\in \mathbb{N}\setminus \mathcal{I}} U_j = \bigcup_{j\in \mathbb{N}\setminus \mathcal{I}} (Z\cap U_j).$$
 Then either $h_\mathrm{top}^P (T, Z^*) < h_\mathrm{top}^P (T, Z)$, or $h_\mathrm{top}^P (T, Z^*) = h_\mathrm{top}^P (T, Z)$ and $Z^*$ has an increasing countable slice of packing entropy by the construction of the collection $\mathcal{I}$. Set $Z_i= (Z\cap U_i)\setminus Z^*$ for each $i\in \mathcal{I}$, which is clearly analytic and by the construction one has $h_\mathrm{top}^P (T, Z_i) = h_\mathrm{top}^P (T, Z\cap U_i)$ (and hence $h_\mathrm{top}^P (T, Z_i) =  h_\mathrm{top}^P (T, Z)$, in particular, $Z_i\neq \emptyset$) and $Z_i$ has no increasing countable slice of packing entropy.
 We define a nonempty analytic set as follows $$Z' = \bigcup_{i\in \mathcal{I}} Z_i.$$

 Now let us show that the subset $Z'$ is a required set. For each $i\in \mathcal{I}$, $h_\mathrm{top}^P (T, Z\cap U_i) = h_\mathrm{top}^P (T, Z)$, and $Z\cap U_i\ (\subset Z_i\cup Z^*)$ has no increasing countable slice of packing entropy and
$$h_{\mathrm {top }}^P(T, Z\cap U_i) = h_{\mathrm {top }}^P(T, Z) = \max \{h_{\mathrm {top }}^P (T, Z_i), h_{\mathrm {top }}^P (T, Z^*)\}.$$
As $Z'\cap Z^*= \emptyset$, one has that, given $i\in \mathbb{N}$, $Z'\cap U_i\neq \emptyset$ if and only if $i\in \mathcal{I}$. For each $i\in \mathcal{I}$, clearly $Z'\cap U_i = Z_i$ and hence $Z'\cap U_i$ has no increasing countable slice of packing entropy.

\eqref{add-2} Let $Z^{\prime} \subset Z$ be the nonempty analytic set constructed in \eqref{add-1}. Now assume that $U$ is an open set satisfying $Z'\cap U\neq \emptyset$. As $\{U_i\}_{i\in \mathbb{N}}$ is a countable basis of $X$, then $\emptyset\neq Z'\cap U_i\subset Z'\cap U\subset Z$ for some $i\in \mathbb{N}$, and so by the above construction \eqref{add-1} one has
$$h_\mathrm{top}^P(T, Z'\cap U_i) = h_\mathrm{top}^P(T, Z'\cap U) = h_\mathrm{top}^P (T, Z)$$ and that $Z'\cap U_i$ has no increasing countable slice of packing entropy, thus $Z'\cap U$ has no increasing countable slice of packing entropy by applying the item \eqref{slice1} to $Z'\cap U_i\subset Z'\cap U$.
 This finishes our proof.
\end{proof}

We also have the following property for measures of maximal topological entropy. Recall that $\mu\in\M(X)$ is \emph{non-atomic} if  $\mu (\{x\}) = 0$ for each $x\in X$.

\begin{prop} \label{dynamical density thm}
	Let $Z\subset X$ be an analytic set and $\mu\in\M(X)$ with $\mu(Z)=1$.
 \begin{enumerate}

 \item Assume $\overline{h}_\mu(T)=h_\mathrm{top}^P(T, Z)$. Then $\overline{h}_{\mu}(T, x) = h_\mathrm{top}^P(T, Z)$ for $\mu$-a.e. $x\in X$. In particular, if in addition $h_\mathrm{top}^P(T, Z) > 0$ then $\mu$ is non-atomic.

 \item Assume $\underline{h}_\mu(T)=h_\mathrm{top}^B(T, Z)$. Then $\underline{h}_{\mu}(T, x) = h_\mathrm{top}^B(T, Z)$ for $\mu$-a.e. $x\in X$. In particular, if in addition $h_\mathrm{top}^B(T, Z) > 0$ then $\mu$ is non-atomic.
 \end{enumerate}
\end{prop}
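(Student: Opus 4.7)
The approach is to first establish the pointwise upper bound $\overline{h}_\mu(T, x) \le h_\mathrm{top}^P(T, Z)$ for $\mu$-a.e.\ $x \in X$; combined with the hypothesis $\overline{h}_\mu(T) = \int \overline{h}_\mu(T, x) \, d\mu(x) = h_\mathrm{top}^P(T, Z)$, this would force $\mu$-a.e.\ equality $\overline{h}_\mu(T, x) = h_\mathrm{top}^P(T, Z)$. The Borel measurability of $\overline{h}_\mu(T, \cdot)$ and $\underline{h}_\mu(T, \cdot)$, tacit in the definitions of \S\ref{Preliminaries}, makes all integrations and a.e.\ statements meaningful.

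To prove the pointwise bound, I would restrict the measure and invoke the variational principle \eqref{varia.prin}. Fix any Borel set $A \subset X$ with $\mu(A) > 0$ and let $\mu_A := \mu|_A/\mu(A)$ be its normalized restriction. Since $\mu(Z) = 1$ and $Z$ is universally measurable (being analytic), $\mu_A(Z) = 1$; hence the variational principle yields $\overline{h}_{\mu_A}(T) \le h_\mathrm{top}^P(T, Z)$. The inclusion $B_n(x, \varepsilon) \cap A \subset B_n(x, \varepsilon)$ gives $\mu_A(B_n(x, \varepsilon)) \le \mu(B_n(x, \varepsilon))/\mu(A)$, and passing through $-\frac{1}{n}\log$, $\limsup_{n \to \infty}$, and $\varepsilon \to 0^+$ (the additive $\frac{\log \mu(A)}{n}$ error vanishes) yields $\overline{h}_{\mu_A}(T, x) \ge \overline{h}_\mu(T, x)$ for every $x \in X$. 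Integrating against $\mu_A$, which is supported on $A$ with density $1/\mu(A)$, then produces
\[
\frac{1}{\mu(A)} \int_A \overline{h}_\mu(T, x) \, d\mu(x) \le \overline{h}_{\mu_A}(T) \le h_\mathrm{top}^P(T, Z).
\]
By the arbitrariness of $A$, the Borel function $\overline{h}_\mu(T, \cdot)$ satisfies $\overline{h}_\mu(T, x) \le h_\mathrm{top}^P(T, Z)$ for $\mu$-a.e.\ $x$, as desired.

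For the non-atomicity assertion, any atom $x_0$ of $\mu$ satisfies $\mu(B_n(x_0, \varepsilon)) \ge \mu(\{x_0\}) > 0$ for every $n$ and $\varepsilon$, which forces $\overline{h}_\mu(T, x_0) = 0$; when $h_\mathrm{top}^P(T, Z) > 0$, the (at most countable) set of atoms then carries positive $\mu$-mass and contradicts the a.e.\ equality just established, so $\mu$ can have no atom. Part (2) follows by a verbatim repetition with $\limsup$ replaced by $\liminf$, $\overline{h}_\mu$ by $\underline{h}_\mu$, and the Bowen half of \eqref{varia.prin} in place of the packing half; the ball comparison and the vanishing of $\frac{\log \mu(A)}{n}$ preserve the direction of inequality equally well under $\liminf$. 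I do not expect a serious obstacle: the entire argument rests on the restriction trick $\mu \mapsto \mu_A$ together with the variational principle, and the only care point is the formal upgrade from a global integral inequality on every positive-measure Borel set to a pointwise a.e.\ bound.
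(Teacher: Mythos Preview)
Your proposal is correct and follows essentially the same approach as the paper: both restrict $\mu$ to a positive-measure subset, use the inequality $\overline{h}_{\mu_A}(T,x)\ge\overline{h}_\mu(T,x)$ coming from $\mu_A(B_n(x,\varepsilon))\le\mu(B_n(x,\varepsilon))/\mu(A)$, and then invoke the variational principle \eqref{varia.prin} to bound the integral. The only cosmetic difference is that the paper specializes immediately to the single set $E=\{x\in Z:\overline{h}_\mu(T,x)>h_{\mathrm{top}}^P(T,Z)\}$ and derives a strict-inequality contradiction, whereas you quantify over all Borel $A$ with $\mu(A)>0$ and extract the a.e.\ pointwise bound from the resulting family of averaged inequalities; the non-atomicity argument is identical.
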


\begin{proof}
	Similarly we shall only prove the conclusion for packing entropy.

\smallskip

Firstly let us prove $\overline{h}_{\mu}(T, x) = h_\mathrm{top}^P(T, Z)$ for $\mu$-a.e. $x\in X$.
	As $\mu (Z) = 1$ and
$$\overline{h}_{\mu}(T)=\int_X \overline{h}_{\mu}(T, x) d \mu(x) = h_\mathrm{top}^P(T, Z)$$
by the assumption,
it suffices to show that $\mu(E)=0$, where
$$E \doteq \left\{x\in Z: \overline{h}_{\mu}(T, x)> h_\mathrm{top}^P(T, Z)\right\}\ \ \text{is an analytic set}.$$
	
	Assume the contrary that $\mu(E)>0$, and let $\nu$ be the normalized measure of $\mu$ restricted onto $E$ (in particular, $\nu\in \M (X)$ and $\nu (E)= 1$). For each $x\in X$ and any $\varepsilon>0$ one has
	\begin{equation*}
		\begin{aligned}
			\limsup_{n \rightarrow\infty}-\frac{1}{n} \log \nu\left(B_{n}(x, \varepsilon)\right)
			&=\limsup_{n \rightarrow\infty}-\frac{1}{n} \log \frac{\mu\left(B_{n}(x, \varepsilon)\cap E\right) }{\mu(E)}\\
			&\geq \limsup _{n \rightarrow\infty}-\frac{1}{n} \log \frac{\mu\left(B_{n}(x, \varepsilon)\right) }{\mu(E)} = \limsup _{n \rightarrow\infty}-\frac{1}{n} \log \mu\left(B_{n}(x, \varepsilon)\right)
		\end{aligned}
	\end{equation*}
and so $\overline{h}_{\nu}(T, x) \ge \overline{h}_{\mu}(T, x)$. As $\nu$ is the normalized measure of $\mu$ restricted onto $E$, then 		\begin{equation} \label{f-1}
			\overline{h}_{\nu}(T) = \frac{1}{\mu(E)} \int_E \overline{h}_{\nu}(T, x) d \mu(x) \geq \frac{1}{\mu(E)} \int_E \overline{h}_{\mu}(T, x) d \mu(x) > h_\mathrm{top}^P(T, Z),
	\end{equation}
 where the last inequality follows from the construction of $E$. This contradicts to the variational principle \eqref{varia.prin}, and so we obtain $\mu (E) = 0$.	

 \smallskip

Now assume in addition $h_\mathrm{top}^P(T, Z) > 0$. It is easy to show that $\mu (\{x\}) = 0$ for each $x\in X$. In fact, if we assume the contrary that $\mu (\{x_0\}) > 0$ for some $x_0\in X$, by definition one has that $\overline{h}_{\mu}(T, x_0) = 0$, which is a contradiction to the conclusion that $\overline{h}_{\mu}(T, x) = h_\mathrm{top}^P(T, Z)> 0$ for $\mu$-a.e. $x\in X$. This finishes our proof.
\end{proof}

As a direct corollary of Proposition \ref{dynamical density thm}, one has the following useful fact. We remark that Proposition \ref{uniqueness} follows directly from Proposition \ref{dynamical density thm} and Lemma \ref{subset}.

\begin{lemma} \label{subset}
	Let $Y\subset Z$ both be analytic sets in $X$, and $\mu\in\M(X)$ with $\mu(Z)=1$ and $\mu (Y) > 0$. Let $\nu$ be the normalized measure of $\mu$ restricted onto $Y$.
 \begin{enumerate}

 \item Assume $\overline{h}_\mu(T)=h_\mathrm{top}^P(T, Z)$. Then $\overline{h}_\nu(T) = h_\mathrm{top}^P(T, Z) = h_\mathrm{top}^P(T, Y)$, in particular, if $h_{\mathrm {top }}^P(T, Z)> 0$ then the subset $Y$ has measures of maximal packing entropy.

 \item Assume $\underline{h}_\mu(T)=h_\mathrm{top}^B(T, Z)$. Then $\underline{h}_\nu(T) = h_\mathrm{top}^B(T, Z) = h_\mathrm{top}^B(T, Y)$, in particular, if $h_{\mathrm {top }}^B(T, Z)> 0$ then the subset $Y$ has measures of maximal Bowen entropy.
 \end{enumerate}
\end{lemma}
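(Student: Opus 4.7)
The plan is to reduce both items of the lemma to a pointwise comparison between the local upper (respectively lower) entropies of $\mu$ and of its normalization $\nu$, and then invoke Proposition \ref{dynamical density thm} together with the variational principle \eqref{varia.prin}.

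First I would exploit the elementary inequality $\nu(A)=\mu(A\cap Y)/\mu(Y)\le \mu(A)/\mu(Y)$, valid for every Borel set $A\subset X$. Applying this with $A=B_n(x,\varepsilon)$, taking $-\frac{1}{n}\log(\cdot)$ and passing to $\limsup_{n\to\infty}$ (respectively $\liminf_{n\to\infty}$), the additive term $\frac{1}{n}\log\mu(Y)$ vanishes in the limit, so one obtains $\overline{h}_\nu(T,x,\varepsilon)\ge\overline{h}_\mu(T,x,\varepsilon)$ for every $x\in X$ and $\varepsilon>0$, and analogously $\underline{h}_\nu(T,x,\varepsilon)\ge\underline{h}_\mu(T,x,\varepsilon)$. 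Letting $\varepsilon\to 0$ yields $\overline{h}_\nu(T,x)\ge\overline{h}_\mu(T,x)$ and $\underline{h}_\nu(T,x)\ge\underline{h}_\mu(T,x)$ for all $x$. This is exactly the one-line computation that already appeared in the proof of Proposition \ref{dynamical density thm}.

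Next I would apply Proposition \ref{dynamical density thm} to the hypothesis $\overline{h}_\mu(T)=h_\mathrm{top}^P(T,Z)$ (respectively $\underline{h}_\mu(T)=h_\mathrm{top}^B(T,Z)$) to conclude that $\overline{h}_\mu(T,x)=h_\mathrm{top}^P(T,Z)$ for $\mu$-a.e.\ $x\in X$, and hence for $\nu$-a.e.\ $x\in Y$ since $\nu$ is absolutely continuous with respect to $\mu$. Integrating the previous pointwise bound against $\nu$ then gives $\overline{h}_\nu(T)=\int\overline{h}_\nu(T,x)\,d\nu(x)\ge\int\overline{h}_\mu(T,x)\,d\nu(x)=h_\mathrm{top}^P(T,Z)$, and in the same way $\underline{h}_\nu(T)\ge h_\mathrm{top}^B(T,Z)$.

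For the matching upper bound I would invoke monotonicity of packing and Bowen entropies from Proposition \ref{fact}~(1), which gives $h_\mathrm{top}^P(T,Y)\le h_\mathrm{top}^P(T,Z)$ and $h_\mathrm{top}^B(T,Y)\le h_\mathrm{top}^B(T,Z)$, and then the variational principle \eqref{varia.prin} applied to $\nu$ (which satisfies $\nu(Y)=1$) to get $\overline{h}_\nu(T)\le h_\mathrm{top}^P(T,Y)$ and $\underline{h}_\nu(T)\le h_\mathrm{top}^B(T,Y)$. Chaining these inequalities collapses everything into the required equalities $\overline{h}_\nu(T)=h_\mathrm{top}^P(T,Z)=h_\mathrm{top}^P(T,Y)$ and $\underline{h}_\nu(T)=h_\mathrm{top}^B(T,Z)=h_\mathrm{top}^B(T,Y)$. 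The \emph{in particular} clauses are then immediate from the definition: if the corresponding topological entropy is strictly positive, the measure $\nu$ witnesses that $Y$ carries a measure of maximal packing (respectively Bowen) entropy. No step appears to present a genuine obstacle; the argument is essentially a short combination of Proposition \ref{dynamical density thm}, the variational principle, and the density estimate comparing $\nu$ with $\mu$.
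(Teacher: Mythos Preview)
Your proposal is correct and follows essentially the same approach as the paper: both invoke Proposition \ref{dynamical density thm} to obtain the pointwise identity $\overline{h}_\mu(T,x)=h_\mathrm{top}^P(T,Z)$ for $\mu$-a.e.\ (hence $\nu$-a.e.) $x$, combine this with the elementary comparison $\overline{h}_\nu(T,x)\ge\overline{h}_\mu(T,x)$ already established in the proof of Proposition \ref{dynamical density thm}, and then close up via the variational principle \eqref{varia.prin} and the monotonicity in Proposition \ref{fact}~(1). If anything, your write-up is slightly more explicit than the paper's, which compresses the passage from $\overline{h}_\mu(T,x)$ to $\overline{h}_\nu(T,x)$ into the phrase ``by the construction of $\nu$''.
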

\begin{proof}
Again we only prove the conclusion for packing entropy.
As $\overline{h}_\mu(T)=h_\mathrm{top}^P(T, Z)$, by Proposition \ref{dynamical density thm} one has $\overline{h}_\mu(T, x)=h_\mathrm{top}^P(T, Z)$ for $\mu$-a.e. $x\in X$, and then $\overline{h}_\nu(T, x)=h_\mathrm{top}^P (T, Z)$ for $\nu$-a.e. $x\in X$ by the construction of $\nu$. Thus $h_\mathrm{top}^P (T, Y)\ge \overline{h}_\nu(T) = h_\mathrm{top}^P (T, Z)$ by the variational principle \eqref{varia.prin}, as $\nu (Y)= 1$.
 It is trivial $h_\mathrm{top}^P(T, Z) \ge h_\mathrm{top}^P(T, Y)$, and then the identities $\overline{h}_\nu(T) = h_\mathrm{top}^P(T, Z) = h_\mathrm{top}^P(T, Y)$ follow readily.
\end{proof}

The following result shows that the converse of Proposition \ref{dynamical density thm} also holds.

\begin{prop}\label{ent_conn_meas_subset}
	Let $E\subset X$ be an analytic set and $\mu\in\M(X)$.
	\begin{enumerate}

		\item If $\overline{h}_\mu(T,x)\leq s$ for all $x\in E$, then $h_{\mathrm{top}}^P(T, E)\leq s$; if $\mu(E)>0$ and, additionally, $\overline{h}_\mu(T,x)\geq s$ for all $x\in E$, then $h_{\mathrm{top}}^P(T, E)\geq s$.

		\item If $\underline{h}_\mu(T,x)\leq s$ for all $x\in E$, then $h_{\mathrm{top}}^B(T, E)\leq s$; if $\mu(E)>0$ and, additionally, $\underline{h}_\mu(T,x)\geq s$ for all $x\in E$, then $h_{\mathrm{top}}^B(T, E)\geq s$.
	\end{enumerate}
\end{prop}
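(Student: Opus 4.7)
The proof splits naturally along two lines: upper bounds are handled by a Brin--Katok-type stratification of $E$ that uses only the definitions of packing/Bowen entropy, whereas lower bounds follow from the Feng--Huang variational principle \eqref{varia.prin} after normalizing $\mu$ onto $E$. Since the two measure-theoretic entropies lead to parallel arguments, I treat the packing case in detail and then indicate the modifications for Bowen.

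For the upper bound in (1), fix $\delta,\varepsilon>0$ and set
$$E_N = \{x \in E : \mu(B_n(x,\varepsilon)) > e^{-n(s+\delta/2)} \text{ for every } n \geq N\}.$$
Since $\overline{h}_\mu(T,x,\varepsilon) \leq \overline{h}_\mu(T,x) \leq s$ for every $x\in E$, one has $E = \bigcup_N E_N$. For any disjoint family $\{\overline{B}_{n_i}(x_i,\varepsilon)\}_i$ with $x_i\in E_N$ and $n_i\geq N'\geq N$, the open sub-balls $B_{n_i}(x_i,\varepsilon)$ remain disjoint, so $\sum_i e^{-n_i(s+\delta/2)} < \sum_i \mu(B_{n_i}(x_i,\varepsilon)) \leq 1$, which gives $P^{s+\delta/2}_{N',\varepsilon}(E_N)\leq 1$. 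Since $e^{-n_i(s+\delta)} \leq e^{-N'\delta/2}\cdot e^{-n_i(s+\delta/2)}$, this in turn yields $P^{s+\delta}_{N',\varepsilon}(E_N) \leq e^{-N'\delta/2}$, and letting $N'\to\infty$ gives $P^{s+\delta}_\varepsilon(E_N) = 0$. Consequently $\mathcal{P}^{s+\delta}_\varepsilon(E) \leq \sum_N P^{s+\delta}_\varepsilon(E_N) = 0$, forcing $h^P_\mathrm{top}(T,E,\varepsilon) \leq s+\delta$, and letting $\delta,\varepsilon\to 0$ produces $h^P_\mathrm{top}(T,E)\leq s$. For the lower bound, assume $\mu(E)>0$ and $\overline{h}_\mu(T,x)\geq s$ on $E$, and let $\nu := \mu|_E/\mu(E)$. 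The inequality $\nu(B)\leq \mu(B)/\mu(E)$ used in the proof of Proposition \ref{dynamical density thm} forces $\overline{h}_\nu(T,x)\geq \overline{h}_\mu(T,x)\geq s$ on $E$, hence $\overline{h}_\nu(T)\geq s$; since $\nu(E)=1$, \eqref{varia.prin} yields $h^P_\mathrm{top}(T,E)\geq \overline{h}_\nu(T)\geq s$.

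For part (2) the lower bound is verbatim the same argument with $\underline{h}$ replacing $\overline{h}$. The upper bound is the genuine technical obstacle: the hypothesis $\underline{h}_\mu(T,x)\leq s$ only provides, for each $x\in E$ and each $N$, \emph{some} $n(x)\geq N$ with $\mu(B_{n(x)}(x,\varepsilon))> e^{-n(x)(s+\delta)}$, so the stratification above is no longer sufficient. The remedy, which underlies the Bowen half of the Feng--Huang variational principle \cite{Feng-Huang2012}, is a Besicovitch/$5r$-type covering lemma for Bowen balls with variable time: from the cover $\{B_{n(x)}(x,\varepsilon)\}_{x\in E}$ one extracts a countable disjoint subfamily $\{B_{n(x_i)}(x_i,\varepsilon)\}$ whose $5\varepsilon$-inflations still cover $E$. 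Disjointness then gives $\sum_i e^{-n(x_i)(s+\delta)} \leq \sum_i \mu(B_{n(x_i)}(x_i,\varepsilon))\leq 1$, hence $\M^{s+\delta}_{N,5\varepsilon}(E)\leq 1$ for every $N$, so $\M^{s+\delta}_{5\varepsilon}(E) < \infty$ and $h^B_\mathrm{top}(T,E,5\varepsilon)\leq s+\delta$; letting $\delta,\varepsilon\to 0$ gives $h^B_\mathrm{top}(T,E)\leq s$. The main effort throughout is this $5r$-selection for variable-time Bowen balls; once it is granted, the remaining steps are straightforward bookkeeping against the Feng--Huang definitions.
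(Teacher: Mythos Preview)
Your proof is correct and, for part~(1), follows essentially the same line as the paper: the same stratification $E=\bigcup_N E_N$ for the upper bound (the paper stops at $P^{s+\eta}_\varepsilon(E_N)\le 1$, which already forces the critical value $\le s+\eta$, whereas you push to $P^{s+\delta}_\varepsilon(E_N)=0$; both work), and the same normalization-plus-\eqref{varia.prin} argument for the lower bound.

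For part~(2) the paper simply invokes \cite[Theorem~1.1]{Ma-Wen2008} and proves nothing further, while you sketch the mechanism behind that citation: the Vitali-type selection for variable-time Bowen balls (this is exactly Lemma~\ref{Modified Vitali covering lemma} in the paper, with inflation factor $3$ rather than $5$), followed by the disjointness/mass bound. So your treatment is in fact more self-contained than the paper's on this point, and the covering lemma you identify as the ``main effort'' is already available later in the paper.
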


\begin{proof}
	As the second item is just \cite[Theorem 1.1]{Ma-Wen2008}, it suffices to prove the first item.
	
We assume firstly that $\overline{h}_\mu(T,x)\leq s$ for all $x\in E$, and aim to prove $h_{\mathrm{top}}^P(T, E)\leq s$.
 We fix arbitrarily given $\eta>0$.
From the definition of $\overline{h}_\mu(T,x)$, it is easy to see that the
subset $E$ can be written as a countable union of $\{E_m\}_{m\in \mathbb{N}}$, where each $E_m$ is defined as
$$\bigcap_{n\geq m}\left\lbrace x\in E: \mu\left(B_{n}(x, \varepsilon)\right)> e^{-n(s+\eta)} \right\rbrace .$$
By Proposition \ref{fact} (2), it suffices to prove $h_\mathrm{top}^P(T, E_m)\leq s+\eta$ for all $m\in \mathbb{N}$. We fix each $m\in \mathbb{N}$ and any $\varepsilon> 0$. For any $N\in \mathbb{N}$ with $N\geq m$, we take arbitrarily a countable pairwise disjoint family $\left\{\overline{B}_{n_{i}}\left(x_{i}, \varepsilon\right)\right\}_{i\in \mathcal{I}}$ such that $x_{i} \in E_m$ and $n_{i} \ge N$ for all $i$, one has
$$\sum_{i\in \mathcal{I}} e^{-n_i(s+\eta)} \leq \sum_{i\in \mathcal{I}} \mu\left(B_{n_i}(x_i, \varepsilon)\right) \leq \sum_{i\in \mathcal{I}}  \mu\left(\overline{B}_{n_i}(x_i, \varepsilon)\right)\leq 1,$$
which implies $P^{s+\eta}_{N,\varepsilon}(E_m)\leq 1$. Thus $\mathcal{P}^{s+\eta}_{\varepsilon}(E_m)\leq P^{s+\eta}_{\varepsilon}(E_m)\leq 1$ and then $h_\mathrm{top}^P(T, E_m,\varepsilon)\leq s+\eta$ by the definition. We obtain $h_\mathrm{top}^P(T, E_m)\leq s+\eta$ from the arbitrariness of $\varepsilon>0$.

Now we assume that $\mu(E)>0$ and, additionally, $\overline{h}_\mu(T,x)\geq s$ for all $x\in E$. As we did for \eqref{f-1} in Proposition \ref{dynamical density thm},
if let $\nu$ be the normalized measure of $\mu$ restricted onto $E$, then
 $$\overline{h}_{\nu}(T) \geq \frac{1}{\mu(E)} \int_E \overline{h}_{\mu}(T, x) d \mu(x) \ge s.$$
 Noting that $E\subset X$ is an analytic set and that $\nu\in \M (X)$ satisfies $\nu (E) =1$, we obtain $h_\mathrm{top}^P(T, E)\geq \overline{h}_{\nu}(T)\geq s$ by the variational principle (\ref{varia.prin}). This finishes the proof.
\end{proof}

\section{Proof of easier parts of our results} \label{easy}

In this section,  we prove easier parts of our results. Namely, we prove Theorem \ref{general-system}, $(1) \Rightarrow (2)$ of Theorem \ref{MME Packing h-expan}, $(1) \Rightarrow (2)$ and $(3) \Rightarrow (2)$ of Theorem \ref{MME Bowen}, and Corollary \ref{cor_countable_slice}; we also prove Corollary \ref{Hausdorff} assuming Theorem \ref{MME Bowen}.

\smallskip

The following result is a variation of \cite[Theorem 4.4]{Huang-Ye-Zhang2014}\footnote{\ \cite[Theorem 4.4]{Huang-Ye-Zhang2014} was proved for an invertible TDS $(X,T)$, that is, $X$ is a compact metric space and $T:X\rightarrow X$ is a homeomorphism.
However, as one did in \cite[Appendix A]{Huang-Ye-Zhang2010}, \cite[Theorem 4.4]{Huang-Ye-Zhang2014}
can be generalized for any TDS $(X, T)$, with the help of Theorem \ref{Bowen_ent_factor_inequality} (that is, \cite[Theorem 4.1]{Oprocha-Zhang2011}).} (combined with Proposition \ref{fact}).

\begin{lemma}\label{lowering-entropy-thm}
For each $0 \leq h \leq h_\mathrm{top}(T,X)$ there exists a compact subset $K_h \subset X$ such that $h_\mathrm{top}\left(T, K_h\right)= h_\mathrm{top}^P\left(T, K_h\right)= h_\mathrm{top}^B\left(T, K_h\right)=h$.
	\end{lemma}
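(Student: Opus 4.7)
The plan is to reduce to the invertible case already established as \cite[Theorem 4.4]{Huang-Ye-Zhang2014} by passing to the natural extension, and then to use Proposition \ref{Bowen_ent_factor_inequality} together with Proposition \ref{fact}(3) to convert the equality of the classical topological entropy into the simultaneous equality of all three entropies required by the statement.

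First I would form the natural extension $\pi:(X^*, T^*)\to(X,T)$, where $X^*$ is the compact metric inverse limit $\varprojlim(X,T)$ and $T^*$ is the shift on $X^*$. Then $(X^*, T^*)$ is an invertible TDS with $h_\mathrm{top}(T^*) = h_\mathrm{top}(T)$, and $\pi$ is a principal extension in the sense that $\sup_{x\in X}h_\mathrm{top}(T^*, \pi^{-1}(x)) = 0$. Applying the invertible result \cite[Theorem 4.4]{Huang-Ye-Zhang2014} to $(X^*, T^*)$ yields, for each prescribed value $h\in[0, h_\mathrm{top}(T^*)]=[0, h_\mathrm{top}(T)]$, a compact $T^*$-forward-invariant subset $K_h^*\subset X^*$ with $h_\mathrm{top}(T^*, K_h^*)=h$.

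Next I would set $K_h \doteq \pi(K_h^*)$. Since $\pi$ is continuous and intertwines the maps, $K_h$ is compact with $T(K_h)\subset K_h$. Proposition \ref{Bowen_ent_factor_inequality} applied to $E=K_h^*$, together with the vanishing of the fiber-entropy term, yields
\[
h_\mathrm{top}^B(T, K_h)\ =\ h_\mathrm{top}^B(T^*, K_h^*).
\]
Because both $K_h$ and $K_h^*$ are compact and forward-invariant under the respective dynamics, Proposition \ref{fact}(3) collapses each of the packing, Bowen and classical topological entropies on either set to a single value. Chaining the identities gives
\[
h_\mathrm{top}^B(T, K_h)\ =\ h_\mathrm{top}^P(T, K_h)\ =\ h_\mathrm{top}(T, K_h)\ =\ h_\mathrm{top}(T^*, K_h^*)\ =\ h,
\]
which is exactly the claim.

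The substantive content of the argument is the invertible case, already carried out in \cite{Huang-Ye-Zhang2014}; the only step requiring genuine care is verifying that the natural extension really is a principal extension in the sense needed by Proposition \ref{Bowen_ent_factor_inequality}, i.e.\ that $\sup_{x\in X} h_\mathrm{top}(T^*, \pi^{-1}(x))=0$. This is precisely the issue resolved in \cite[Appendix A]{Huang-Ye-Zhang2010}, and it is where I would expect the main (though essentially bookkeeping) obstacle to lie when transferring the invertible result to an arbitrary TDS.
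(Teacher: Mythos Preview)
Your proposal is correct and follows essentially the same route the paper indicates: the paper states the lemma as a variation of \cite[Theorem 4.4]{Huang-Ye-Zhang2014}, notes in a footnote that the invertible assumption there is removed exactly by the natural-extension argument of \cite[Appendix A]{Huang-Ye-Zhang2010} together with Proposition~\ref{Bowen_ent_factor_inequality}, and then combines with Proposition~\ref{fact}. You have simply written out the details the paper leaves implicit.
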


Now we are ready to prove Theorem \ref{general-system}, which tells us that, each TDS with positive topological entropy contains a Borel subset such that the subset has measures that are both of maximal packing and Bowen entropies, and the system also contains many Borel subsets such that these subsets have no measures of maximal packing or Bowen entropy.

\begin{proof}[Proof of Theorem \ref{general-system}]
\eqref{exists}
	As TDS $(X, T)$ has positive topological entropy, by the classical variational principle concerning entropy \eqref{VP}, there exists an ergodic invariant measure $\mu\in \M (X)$ such that $h_\mu (T)$ is sufficiently close to $h_\mathrm{top} (T)$.
By applying to $\mu$ the well-known Brin-Katok formula \eqref{f-2}, we may find a Borel subset $Z\subset X$ such that $\mu(Z)=1$ and $\underline{h}_\mu (T, x)=\overline{h}_\mu (T, x) = h_\mu(T)$ for each $x\in Z$. And then by Proposition
\ref{ent_conn_meas_subset}, one has
$$h_\mathrm{top}^P(T, Z)= \overline{h}_\mu(T)=h_\mu(T) = \underline{h}_\mu(T)=h_\mathrm{top}^B(T, Z),$$
thus $\mu$ is the measure
that is both of maximal packing and Bowen entropies for the set $Z$.

\eqref{exists-not}
Fix arbitrarily given $0 < h \leq h_\mathrm{top} (T)$, and take $0< h_1< h_2< \cdots \nearrow h$. By Lemma \ref{lowering-entropy-thm} choose for each $i\in \mathbb{N}$ a compact subset $K_i$ with $h_\mathrm{top}\left(T, K_i\right)= h_\mathrm{top}^P\left(T, K_i\right) = h_\mathrm{top}^B\left(T, K_i\right) = h_i$. Clearly $Z\doteq \bigcup_{i\in \mathbb{N}} K_i$ is a Borel set and $h_\mathrm{top}^B(T,Z)=h_\mathrm{top}^P(T,Z)=h$ by Proposition \ref{fact}. Now we show that the set $Z$ has no measure of maximal packing or Bowen entropy.
As the proof is completely the same, we will only present it for packing entropy.
	
Assume the contrary that there exists $\mu\in \mathcal{M} (X)$ which is the measure of maximal packing entropy for $Z$. Then there exists some $i\in \mathbb{N}$ with $\mu(K_i)>0$, and so $h_\mathrm{top}^P\left(T, K_i\right) = h_\mathrm{top}^P\left(T, Z\right) = h$ by Lemma \ref{subset} (1), which is a contradiction to the above construction that $h_\mathrm{top}^P\left(T, K_i\right) = h_i < h$.
\end{proof}

It is easy to obtain the direction $(1) \Rightarrow (2)$ for both Theorem \ref{MME Packing h-expan} and Theorem \ref{MME Bowen}.

\begin{proof}[Proof of $(1) \Rightarrow (2)$ for both Theorem \ref{MME Packing h-expan} and Theorem \ref{MME Bowen}]
As the proof is completely the same, we only prove $(1) \Rightarrow (2)$ of Theorem \ref{MME Packing h-expan}.
	Assume that $Z$ has a measure of maximal packing entropy. Thus, if $\{ Z_i\}_{i\in \mathbb{N}}$ is a countable collection of analytic sets satisfying $Z=\bigcup_{i\in \mathbb{N}} Z_i$, then there exists some $i_0\in \mathbb{N}$ such that $h_\mathrm{top}^P(T, Z_{i_0}) = h_\mathrm{top}^P(T, Z)$ by Lemma \ref{subset} (as we did in the proof of Theorem \ref{general-system} (2)). That is, the subset $Z$ has no increasing countable slice of packing entropy.
\end{proof}

Now let us prove $(3) \Rightarrow (2)$ of Theorem \ref{MME Bowen} and Corollary \ref{cor_countable_slice}.

 \begin{proof}[Proof of $(3) \Rightarrow (2)$ in Theorem \ref{MME Bowen}]
 By the assumption, $Z\subset X$ is an analytic subset with $h_\mathrm{top}^B(T, Z)>0$, and
 the gauge function $b$ satisfies that $\mathcal{M}^b(Z)>0$ and $\lim\limits_{n\to \infty}e^{ns}b(n)=0$ for any $s < h_\mathrm{top}^B (T, Z)$.
Note that, once a set $E$ satisfies $h_\mathrm{top}^B (T, E)<h_\mathrm{top}^B(T, Z)$, then for any given real number $s$ satisfying $h_\mathrm{top}^B(T, E)<s<h_\mathrm{top}^B(T, Z)$, one has $\mathcal{M}^s_\varepsilon (E)=0$ for each $\varepsilon> 0$ and then $\mathcal{M}^s(E)=0$, finally $\mathcal{M}^b(E)=0$ by applying Lemma \ref{com} (as $\lim\limits_{n\to \infty}e^{ns}b(n)=0$). From this we conclude that the subset $Z$ has no increasing countable slice of Bowen entropy, else there exists a countable collection $\{ Z_i\}_{i\in \mathbb{N}}$ of analytic sets such that $Z=\bigcup_{i\in \mathbb{N}} Z_i$ and $h_\mathrm{top}^B(T, Z_i)<h_\mathrm{top}^B(T, Z)$ (and then $\mathcal{M}^b(Z_i)=0$) for each $i\in \mathbb{N}$, which implies by Proposition \ref{fact} (2) that
$$\mathcal{M}^b(Z) = \sup_{\varepsilon > 0} \mathcal{M}_\varepsilon^b (Z) \le \sup_{\varepsilon > 0} \sum_{i\in \mathbb{N}} \mathcal{M}_\varepsilon^b (Z_i) \leq \sum_{i\in \mathbb{N}} \sup_{\varepsilon > 0} \mathcal{M}_\varepsilon^b (Z_i) = \sum_{i \in \mathbb{N}} \mathcal{M}^b(Z_i)=0,$$
a contradiction to the assumption $\mathcal{M}^b(Z)>0$. This finishes the proof.
\end{proof}

\begin{proof}[Proof of Corollary \ref{cor_countable_slice}]
By the assumption we take $T$-invariant $\mu\in \mathcal{M} (X)$ with $h_\mu (T) = h_{\mathrm {top }} (T)\ (= h_{\mathrm {top }}^B(T, X))$, where the second identity follows from Proposition \ref{fact} (3). And so $\underline{h}_{\mu}(T)= h_{\mu}(T) = h_{\mathrm {top }}^B(T, X)$ by the Brin-Katok formula \cite{Brin-Katok1981}, that is, the subset $X$ has measures of maximal Bowen entropy, and then by $(1) \Rightarrow (2)$ of Theorem \ref{MME Bowen}, the subset $X$ has no increasing countable slice of Bowen entropy, i.e., once the space $X$ is written as a countable union of analytic subsets $\{Z_i\}_{i\in \mathbb{N}}$ then there exists $j\in \mathbb{N}$ such that $h_\mathrm{top}^B(T, Z_j) = h_\mathrm{top}^B(T, X) = h_\mathrm{top} (T)$. Applying again Proposition \ref{fact} (3) we have
$$h_\mathrm{top} (T) = h_\mathrm{top}^B(T, Z_j) \le h_\mathrm{top}^P(T, Z_j)\le h_\mathrm{top}^P(T, X) = h_\mathrm{top} (T),$$
thus $h_\mathrm{top}^B(T, Z_j) = h_\mathrm{top}^P(T, Z_j) = h_\mathrm{top} (T)$, and so the space $X$ has neither an increasing countable slice of packing entropy nor an increasing countable slice of Bowen entropy.
\end{proof}

Given a complete separable metric space $Y$, for each $y\in Y$ and $r> 0$ denote by $B (y, r)$ the open ball with a radius $r$ centered at $y$. Before proceeding, let us recall, for each Borel probability measure $\xi$ over $Y$, the \emph{lower pointwise dimension of $\xi$ at $y\in Y$} as follows
 $$ \underline{d}_\xi(y)=\liminf_{r\to 0}\frac{\log \xi(B(y,r))}{\log r},$$
where it was introduced via closed balls in \cite[Definition 3.3.12]{Edgar1998}
(for its equivalence to definition via open balls see last paragraph in \cite[Page 46]{Edgar1998}). We could also introduce $\overline{\dim}_{\mathcal{H}}(\xi)$ and $\underline{\dim}_{\mathcal{H}}(\xi)$ for the Borel probability measure $\xi$ similarly to \eqref{add-number1} and \eqref{add-number2}.

The following was proved in \cite[Proposition 3.3.8 and Theorem 3.3.14]{Edgar1998}:

        \begin{prop}\label{EDG-THM}
        Let $\xi$ be a Borel probability measure over a complete separable metric space $Y$, and denote by $\operatorname{esssup}$ and $\operatorname{essinf}$, respectively, the essential supremum and infimum of the given function with respect to the measure $\xi$. Then
$$\overline{\dim}_{\mathcal{H}}(\xi) = \mathop{\esssup}\limits_{y\in Y} \underline{d}_\xi(y)\ \ \text{and}\ \ \underline{\dim}_{\mathcal{H}}(\xi) = \mathop{\essinf}\limits_{y\in Y}  \underline{d}_\xi(y).$$
        \end{prop}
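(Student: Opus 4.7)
The plan is to reduce both identities to two classical Billingsley-type comparisons between the lower pointwise dimension $\underline{d}_\xi$ and the Hausdorff dimension of level sets, and then apply these comparisons to suitable sub- and super-level sets of $\underline{d}_\xi$.

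First I would prove the upper comparison: if $\underline{d}_\xi(y) \leq s$ for every $y \in F \subset Y$, then $\dim_{\mathcal{H}}(F) \leq s$. Given $t > s$ and small $\delta > 0$, for each $y \in F$ one can choose $r_y \in (0,\delta/10)$ with $\xi(B(y,r_y)) \geq r_y^t$, because $\liminf_{r\to 0}\log \xi(B(y,r))/\log r \leq s < t$. A $5r$-covering argument (valid in any metric space via a maximality extraction, with separability of $Y$ ensuring the chosen subfamily is countable) produces a pairwise disjoint subfamily $\{B(y_i,r_i)\}$ with $F \subset \bigcup_i B(y_i,5 r_i)$, and then
\[
\mathcal{H}^t_{\delta}(F) \leq \sum_i (10 r_i)^t \leq 10^t \sum_i \xi(B(y_i,r_i)) \leq 10^t.
\]
Letting $\delta \downarrow 0$ gives $\mathcal{H}^t(F) < \infty$, hence $\dim_{\mathcal{H}}(F) \leq t$, and $t \downarrow s$ concludes.

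Next I would prove the lower comparison: if $\xi(F) > 0$ and $\underline{d}_\xi(y) \geq s$ on $F$, then $\dim_{\mathcal{H}}(F) \geq s$. Fix $t < s$ and decompose $F = \bigcup_n F_n$ with $F_n = \{y \in F : \xi(B(y,r)) \leq r^t \text{ for every } 0 < r \leq 1/n\}$; some $F_n$ has positive $\xi$-mass. For any countable cover $\{U_i\}$ of $F_n$ with each $\operatorname{diam} U_i < 1/n$, every nonempty $U_i \cap F_n$ lies inside some ball $B(y_i,\operatorname{diam} U_i)$ centred at a point $y_i \in F_n$, so $\xi(U_i) \leq (\operatorname{diam} U_i)^t$; summing yields $0 < \xi(F_n) \leq \sum_i (\operatorname{diam} U_i)^t$, whence $\mathcal{H}^t(F) \geq \mathcal{H}^t(F_n) \geq \xi(F_n) > 0$ and $\dim_{\mathcal{H}}(F) \geq t$. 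Sending $t \uparrow s$ finishes the lemma.

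With both comparisons in hand, set $s^{*} = \esssup_y \underline{d}_\xi(y)$. The upper comparison applied to the full-measure set $\{\underline{d}_\xi \leq s^{*}\}$ gives $\overline{\dim}_{\mathcal{H}}(\xi) \leq s^{*}$; conversely, if $\overline{\dim}_{\mathcal{H}}(\xi) < t < s^{*}$ one could choose $E$ with $\xi(Y\setminus E) = 0$ and $\dim_{\mathcal{H}}(E) < t$, but $E \cap \{\underline{d}_\xi > t\}$ still has positive $\xi$-mass by definition of esssup, so the lower comparison forces $\dim_{\mathcal{H}}(E) \geq t$, a contradiction. Setting $s_{*} = \essinf_y \underline{d}_\xi(y)$, the two lemmas applied respectively to the positive-mass sets $\{\underline{d}_\xi \leq s_{*} + \varepsilon\}$ (for each $\varepsilon > 0$) and to the intersection of an arbitrary positive-measure $E$ with the full-in-$E$ set $\{\underline{d}_\xi \geq s_{*}\}$ yield the second identity. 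The only delicate point I anticipate is the $5r$-covering step: one must verify carefully that the radii $r_y$ can be restricted to a bounded range so that the maximality extraction returns a countable pairwise disjoint family in the complete separable metric space $Y$; once that is granted, the remainder of the argument is routine.
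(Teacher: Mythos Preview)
The paper does not give its own proof of this proposition; it simply cites \cite[Proposition 3.3.8 and Theorem 3.3.14]{Edgar1998} and moves on. Your plan is precisely the standard argument that appears in Edgar's book: establish the two Billingsley-type comparison lemmas (upper bound via a $5r$-covering extraction, lower bound via the decomposition $F = \bigcup_n F_n$), and then read off both identities by applying the lemmas to the sub- and super-level sets of $\underline{d}_\xi$. The details you sketch are correct, including the point you flag about the $5r$-covering: restricting the radii to $(0,\delta/10)$ keeps them bounded, and in a separable metric space any pairwise disjoint family of open balls is automatically countable, so the maximal disjoint subfamily extracted by Zorn or a greedy procedure is countable and the sum $\sum_i \xi(B(y_i,r_i)) \leq 1$ goes through. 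There is nothing to add.
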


Now we are ready to prove Corollary \ref{Hausdorff} based on Theorem \ref{MME Bowen}.

\begin{proof}[Proof of Corollary \ref{Hausdorff}]
Firstly, we assume that the set $Z$ has an increasing countable slice of Hausdorff dimension, that is, there exists a countable collection $\{ Z_i\}_{i\in \mathbb{N}}$ of analytic sets such that $Z=\bigcup_{i\in \mathbb{N}} Z_i$ and $\dim_{\mathcal{H}}(Z_i)<\dim_{\mathcal{H}}(Z)$ for each $i\in \mathbb{N}$.
For any Borel probability measure $\mu$ support on $Z$, clearly there exists $i_0\in \mathbb{N}$ satisfying $\mu (Z_{i_0})> 0$, then one has
$$\underline{\dim}_{\mathcal{H}} (\mu)\le \dim_{\mathcal{H}}(Z_{i_0}) <\dim_{\mathcal{H}}(Z)$$
 by the definition. Thus the set $Z$ has no
 measure of full lower Hausdorff dimension.

 Now we assume that the set $Z$ has no measure of full lower Hausdorff dimension. Let us show that the set $Z$ has an increasing countable slice of Hausdorff dimension.

We manage firstly the case that the set $Z$ is contained in some cube $\prod_{i=1}^n [a_i,a_i+1]$ with $a_i\in \mathbb{Z}$ for all $1\leq i\leq n$.
 It makes no difference to write $Z\subset [0,1]^n$ as the Hausdorff dimension is translation invariant.
We consider a TDS $(X, T)$ given via
$$X = \mathbb{R}^n/\mathbb{Z}^n\ \text{and}\ T: X \rightarrow X, x\mapsto 2\cdot x\ \text{mod}\ \mathbb{Z}^n.$$
It is direct to verify
$$h_\mathrm{top}^B (T, E) = \log2\cdot \dim_{\mathcal{H}} (E)\ \ \ \text{for any}\ E\subset X.$$
Thus, if we assume the contrary that the set $Z$ has no increasing countable slice of Hausdorff dimension, then it has no increasing countable slice of Bowen entropy.
Since TDS $(X, T)$ is $h$-expansive, by Theorem \ref{MME Bowen} one has that the set $Z$ has measures of maximal Bowen entropy, that is, there exists $\mu \in \mathcal{M} (X)$ such that $\mu (Z) = 1$ and $\underline{h}_\mu (T) = h_{\mathrm {top }}^B (T, Z)$, thus $$\underline{h}_\mu (T, x) = h_{\mathrm {top }}^B (T, Z)\ \ \text{for $\mu$-a.e.}\ x\in X$$
(by Proposition \ref{dynamical density thm}), and so
$$\dim_{\mathcal{H}}(Z) = \frac{1}{\log 2} h_\mathrm{top}^B (T, Z) = \frac{1}{\log 2} \underline{h}_\mu (T, x) = \underline{d}_\mu(x)\ (\text{by direct computation})$$ for $\mu$-a.e. $x\in X$. This implies by Proposition \ref{EDG-THM} that $\dim_{\mathcal{H}}(Z) = \underline{\dim}_{\mathcal{H}} (\mu)$, that is, the measure $\mu$ is a measure of full lower Hausdorff dimension for the set $Z$, a contraction to the assumption. Thus the set $Z$ has an increasing countable slice of Hausdorff dimension.

Now let us prove the conclusion for a general set $Z\subset\mathbb{R}^n$.
We write
$$Z=\bigcup_{\boldsymbol{a}\in\mathbb{Z}^n} Z_{\boldsymbol{a}}\ \ \ \text{where}\ Z_{\boldsymbol{a}} = Z\cap \prod_{i=1}^n [a_i,a_i+1]$$
for each $\boldsymbol{a}=(a_1,\cdots,a_n)\in\mathbb{Z}^n$.
If we assume the contrary that the set $Z$ has no increasing countable slice of Hausdorff dimension, then by the definition there exists $\boldsymbol{a}^*\in\mathbb{Z}^n$ such that $\dim_{\mathcal{H}}(Z)=\dim_{\mathcal{H}}(Z_{\boldsymbol{a}^*})$.
In fact, as we did in the proof of Proposition \ref{prop slice} (2), there exists $\boldsymbol{b}\in\mathbb{Z}^n$ such that $\dim_{\mathcal{H}}(Z)=\dim_{\mathcal{H}}(Z_{\boldsymbol{b}})$ and, additionally, $Z_{\boldsymbol{b}}$ has no increasing countable slice of Hausdorff dimension. Applying above arguments to the set $Z_{\boldsymbol{b}}$ one has that $Z_{\boldsymbol{b}}$ has a measure of full lower Hausdorff dimension (and let $\mu$ be such a measure), then the measure $\mu$ is also  a measure of full lower Hausdorff dimension for the set $Z$ by the definition and the fact that $Z_{\boldsymbol{b}} \subset Z$ satisfies $\dim_{\mathcal{H}}(Z)=\dim_{\mathcal{H}}(Z_{\boldsymbol{b}})$. This arrives again at a contradiction, and so $Z$ has an increasing countable slice of Hausdorff dimension.
\end{proof}

Obviously if the set $Z$ has a measure of full lower Hausdorff dimension, then it also has a measure of full upper Hausdorff dimension. However, the converse is not necessarily true. We end this section by constructing such an analytic subset $Z\subset \mathbb{R}^n$ with $\dim_{\mathcal{H}}(Z)>0$.

\begin{prop} \label{upper}
There exists a Borel subset $Z\subset [0, 1]$ such that $\dim_{\mathcal{H}}(Z) = 1$, and that it has no measure of full lower Hausdorff dimension, however, it has a measure $\mu$ of full upper Hausdorff dimension (that is, $\dim_{\mathcal{H}}(Z) = \overline{\dim}_{\mathcal{H}} (\mu)$).
\end{prop}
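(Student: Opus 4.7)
The plan is to mimic the structure of the proof of Theorem \ref{general-system} (2) by building $Z$ as a countable union of Cantor-type sets whose Hausdorff dimensions increase strictly to $1$. Specifically, for each $n\in\mathbb{N}$ I would fix a self-similar Cantor subset $K_n\subset [0,1]$ with $\dim_{\mathcal{H}}(K_n)=1-\tfrac{1}{n}$, together with its natural self-similar measure $\mu_n$ (supported on $K_n$), which is well known to satisfy $\underline{d}_{\mu_n}(y)=\overline{d}_{\mu_n}(y)=1-\tfrac{1}{n}$ at every point $y\in K_n$; in particular $\overline{\dim}_{\mathcal{H}}(\mu_n)=1-\tfrac{1}{n}$ by Proposition \ref{EDG-THM}. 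Setting $Z=\bigcup_{n\in\mathbb{N}}K_n$, a Borel set, one has $\dim_{\mathcal{H}}(Z)=\sup_n\dim_{\mathcal{H}}(K_n)=1$ by Proposition \ref{fact-old}.

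To see that $Z$ has no measure of full lower Hausdorff dimension, suppose $\nu$ is a Borel probability on $[0,1]$ with $\nu(Z)=1$. Countable subadditivity yields some $n$ with $\nu(K_n)>0$, so by the very definition \eqref{add-number2} of $\underline{\dim}_{\mathcal{H}}(\nu)$ we have $\underline{\dim}_{\mathcal{H}}(\nu)\leq \dim_{\mathcal{H}}(K_n)=1-\tfrac{1}{n}<1=\dim_{\mathcal{H}}(Z)$. (This is of course also exactly the obstruction produced by Corollary \ref{Hausdorff}, since the decomposition $Z=\bigcup_n K_n$ exhibits an increasing countable slice of Hausdorff dimension.)

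The one remaining point, and the only mildly tricky one, is to produce a measure of full upper Hausdorff dimension. I would take the convex combination $\mu=\sum_{n\in\mathbb{N}}2^{-n}\mu_n$, which is a Borel probability with $\mu(Z)=1$. The key observation is the domination $\mu_n\leq 2^n\mu$ for every $n$: consequently, if $E\subset [0,1]$ is any $\mu$-measurable set with $\mu(E)=1$, then $\mu_n(E)=1$ for each $n$, so by the definition \eqref{add-number1} of $\overline{\dim}_{\mathcal{H}}(\mu_n)$ one has $\dim_{\mathcal{H}}(E)\geq \overline{\dim}_{\mathcal{H}}(\mu_n)=1-\tfrac{1}{n}$. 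Letting $n\to\infty$ yields $\dim_{\mathcal{H}}(E)\geq 1$, and since $E\subset[0,1]$ the reverse inequality is automatic. Taking the infimum over such $E$ gives $\overline{\dim}_{\mathcal{H}}(\mu)=1=\dim_{\mathcal{H}}(Z)$, as required.

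The main conceptual point (rather than obstacle) is recognizing that the upper and lower Hausdorff dimensions of a convex combination behave asymmetrically: the upper dimension of $\mu=\sum 2^{-n}\mu_n$ is the supremum of the upper dimensions of the pieces (which is what allows $\overline{\dim}_{\mathcal{H}}(\mu)$ to reach $1$ even though each $\mu_n$ falls short), while the lower dimension is governed by the minimal piece that carries positive mass (which is precisely why no single measure can achieve full lower dimension on $Z$). Choosing self-similar Cantor sets with explicit natural measures sidesteps any delicate density computations.
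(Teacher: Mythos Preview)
Your proof is correct and follows the same overall architecture as the paper's: build $Z$ as a countable union of compact sets $K_n$ with $\dim_{\mathcal{H}}(K_n)\nearrow 1$, rule out a measure of full lower dimension via the increasing countable slice, and obtain a measure of full upper dimension as the convex combination $\mu=\sum 2^{-n}\mu_n$ using the domination $\mu_n\le 2^n\mu$.

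The one substantive difference is in how the measures $\mu_n$ are produced. The paper works inside the doubling system $T:x\mapsto 2x$ on $\mathbb{R}/\mathbb{Z}$, takes $T$-invariant compact sets $K_{s_n}$, and invokes the paper's own machinery ($h$-expansiveness, measures of maximal Bowen entropy, Proposition \ref{dynamical density thm}, Proposition \ref{EDG-THM}) to obtain $\mu_{s_n}$ with $\overline{\dim}_{\mathcal{H}}(\mu_{s_n})=\dim_{\mathcal{H}}(K_{s_n})$. You instead take self-similar Cantor sets with their natural self-similar measures, for which the exact pointwise dimension is a classical fact, bypassing the dynamics entirely. Your route is more elementary and self-contained; the paper's route has the advantage of illustrating that the proposition is a direct byproduct of the entropy framework developed earlier. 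One minor wording issue: in the last step you write ``since $E\subset[0,1]$'' to get $\dim_{\mathcal{H}}(E)\le 1$, but the sets $E$ in the definition \eqref{add-number1} are arbitrary $\mu$-measurable subsets of $\mathbb{R}$; the clean way to get $\overline{\dim}_{\mathcal{H}}(\mu)\le 1$ is simply to take $E=Z$ in the infimum.
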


\begin{proof}
We consider a TDS $(X, T)$ given via
$$X = \mathbb{R}/\mathbb{Z}\ \ \text{and}\ \ T: X \rightarrow X, x\mapsto 2 x\ \text{mod}\ \mathbb{Z}.$$
 It is well known that there exists for each $0<s<1$ a compact subset $K_s\subset X$ such that $T K_s= K_s$ and $\dim_{\mathcal{H}}(K_s)=s$.
Note that $(K_s, T)$ is an $h$-expansive TDS.
As we did in the proof of Corollary \ref{cor_countable_slice}, there exists $\mu_s \in \mathcal{M} (X)$ such that $\mu_s (K_s) = 1$ and $\underline{h}_{\mu_s} (T) = h_{\mathrm {top }}^B(T, K_s) = h_{\mathrm {top }} (T, K_s)$ (the last identity comes from  Proposition \ref{fact} (3)), and so by Proposition \ref{dynamical density thm} and Proposition \ref{EDG-THM} one has
\begin{equation} \label{202410091902}
\overline{\dim}_{\mathcal{H}} (\mu_s) \ge \underline{\dim}_{\mathcal{H}} (\mu_s) = \dim_{\mathcal{H}}(K_s) \ge \overline{\dim}_{\mathcal{H}} (\mu_s).
\end{equation}

Set
$Z=\bigcup_{n\geq 1} K_{s_n}$, which may be viewed naturally as a Borel subset in $[0, 1]$,
where $s_n=1-2^{-n}$ for each $n\geq 1$. Now we check that the subset $Z$ has the required properties.

On one hand, from the construction of $Z$, it is easy to see  by Proposition \ref{fact-old} that
$$\dim_{\mathcal{H}}(Z) = \sup_{n\geq 1}\dim_{\mathcal{H}}(K_{s_n}) = 1,$$
then the set $Z$ has an increasing countable slice of Hausdorff dimension as $\dim_{\mathcal{H}}(K_{s_n})=s_n< 1$ (and so it has no measure of full lower Hausdorff dimension  by Corollary \ref{Hausdorff}).

On the other hand, let us set $\mu=\sum_{n\geq 1}2^{-n}\mu_{s_n}$, which is a Borel probability measure supported on $Z$. All of the measures $\mu$ and $\mu_{s_n}$ for each $n\in \mathbb{N}$ may be viewed naturally as Borel probability measures on $\mathbb{R}$.
For any $\mu$-measurable $E\subset \mathbb{R}$ satisfying $\mu(\mathbb{R} \setminus E)= 0$, we may take a Borel set  $E_*\subset E\cap [0, 1]$ satisfying $\mu(\mathbb{R} \setminus E_*)= 0$. For each $n\geq 1$, one has by the construction $\mu_{s_n} (\mathbb{R} \setminus E_*)= 0$ and then
 by the definition
 $$1\geq \dim_{\mathcal{H}}(E) \geq \dim_{\mathcal{H}}(E_*) \geq \overline{\dim}_{\mathcal{H}} (\mu_{s_n}) = \dim_{\mathcal{H}}(K_{s_n}) \ (\text{by \eqref{202410091902}})\ = 1 - 2^{-n}.$$
This implies $\dim_{\mathcal{H}}(E) = 1$ and then $\overline{\dim}_{\mathcal{H}} (\mu) = 1= \dim_{\mathcal{H}}(Z)$ by the arbitrariness of $E$.
That is, the measure $\mu$ is a measure of full upper Hausdorff dimension for the set $Z$.
\end{proof}

\section{Construction of Example \ref{submanifold counterexample}} \label{section-exam}

In this section let us present the detailed construction of Example \ref{submanifold counterexample}.

\smallskip

Note that we have proved $(1) \Rightarrow (2)$ for both Theorem \ref{MME Packing h-expan} and Theorem \ref{MME Bowen},
that is, for any analytic set $Z \subset X$ with positive packing (and Bowen, respectively) entropy, if the set $Z$ has measures of maximal packing  (and Bowen, respectively) entropy then it has no increasing countable slice of packing (and Bowen, respectively) entropy.
Thus the idea of constructing Example \ref{submanifold counterexample} is that, we construct a $C^{\infty}$ self-map over the unit square $X=[0,1]^2$, such that the smooth curve $K\doteq \{(r, r): 0\le r < 1\}\subset X$ has an increasing countable slice
$K = \bigcup_{i\in \mathbb{N}} K_i$ of packing (and Bowen, respectively) entropy, with $K_j = \{(r, r): 0\le r \le 1-\frac{1}{j}\}$ satisfying
\begin{equation} \label{estimation}
h_\mathrm{top}^B(T, K_j) \le h_\mathrm{top}^P(T, K_j)< \log 2 =
h_\mathrm{top}^B(T, K) = h_\mathrm{top}^P(T, K)\ \text{for each $j\in \mathbb{N}$},
\end{equation}
and then, by $(1) \Rightarrow (2)$ of both Theorem \ref{MME Packing h-expan} and Theorem \ref{MME Bowen}, the smooth
 curve $K$ has neither measures of maximal packing entropy nor measures of maximal Bowen entropy.

The detailed construction of Example \ref{submanifold counterexample} relies strongly on the dynamics of interval maps. More concretely, we will consider the one-parameter family of unimodal maps
$$g_a: [0, 1]\rightarrow [0, 1], \ \ y\mapsto a y (1-y);\ \ a \in[0,4].$$
We split the unit square $X=[0,1]^2$ into fibers $\{x\}\times [0,1], x\in [0, 1]$, where the self-map $T$ acts exactly like $g_{a (x)}$ over the fiber $\{x\}\times [0,1]$ for some desired parameter $a(x)\in [0, 4]$. And then the map $T$ will be a skew product
acting on the space $[0,1]^2$ given by
\begin{equation} \label{skew}
T: [0,1]^2\rightarrow [0,1]^2,\ (x,y)\mapsto (x,g_{a(x)}(y))=(x,a(x)\cdot y(1-y)).
\end{equation}
\begin{enumerate}

\item Clearly the self-map $T$ is $C^{\infty}$ if the parameter $a(x)$ is $C^\infty$ with respect to $x\in [0, 1]$.

\item If the parameter $a(x)$ is locally constant in a small interval $\left[x_1, x_2\right]$, then the action $T$ over any inclining lines (contained in $\left[x_1, x_2\right] \times[0,1]$) behaves like the action $g_a$ over a non-degenerate interval (contained in $[0,1]$).

\item Moreover, in order to estimate topological entropy of inclining lines (contained in $\left[x_1, x_2\right] \times[0,1]$) for some small interval $\left[x_1, x_2\right]$ and even obtain the estimation \eqref{estimation}, we shall construct appropriate parameters $a (x)$ such that these inclining lines have topological entropy same to that of the action $g_a$ over the whole space $[0,1]$, and so we are interested in those locally eventually onto\footnote{\ We skip the detailed introduction of the locally eventually onto interval maps, as we shall not use its definition in later discussions. Interested readers are referred to \cite[Chapter 3]{Brucks-Bruin2004}.} quadratic maps $g_a$.
\end{enumerate}

\begin{figure}[htbp]
	\centering
	\begin{minipage}[t]{0.48\textwidth}
		\centering
		\includegraphics[width=8cm]{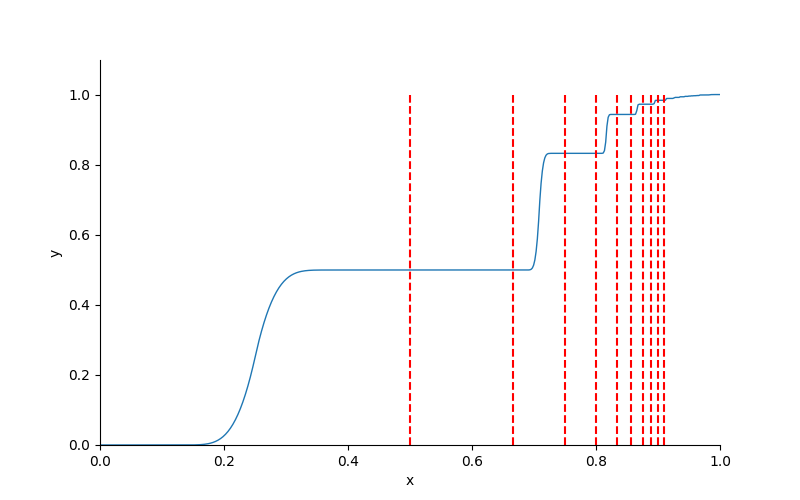}
		\caption{Function of $a(x)$}
	\end{minipage}
	\begin{minipage}[t]{0.48\textwidth}
		\centering
		\includegraphics[width=8cm]{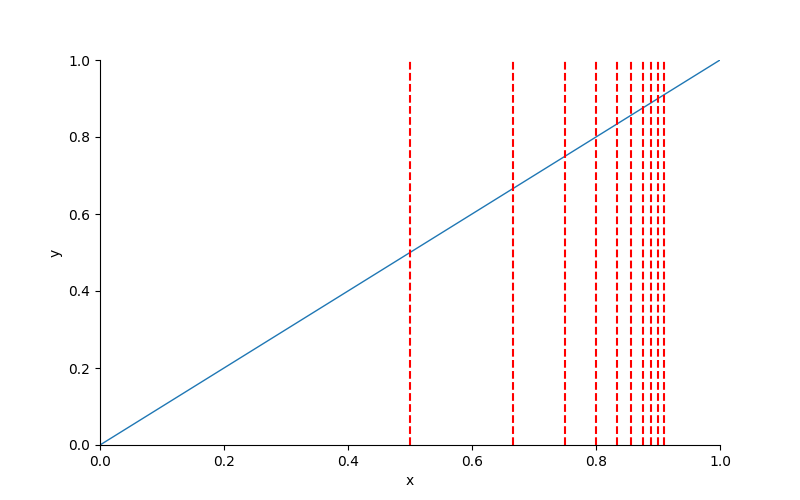}
		\caption{Diagonal set}
	\end{minipage}
\end{figure}

Denote by $\mathcal{FE}$ the set of all the parameters $a \in[0,4]$ such that $$
h_{\mathrm {top }}^B(g_a, U)=h_{\mathrm {top }}^P(g_a, U)=h_{\mathrm {top }}(g_a,[0,1])$$
for any non-degenerate interval $U\subset [0,1]$. It is well known that if the quadratic map $g_a$ is locally eventually onto then the corresponding parameter $a$ is contained in the set $\mathcal{FE}$, and that the point $a=4$ is an accumulation point of the set $\mathcal{FE}$.
For detailed introduction and further discussions the readers are encouraged to refer to \cite[Chapter 3]{Brucks-Bruin2004} and \cite{Melo-Strien1993}.

We shall use following well-known property about quadratic maps, that is, the monotonicity of topological entropy with respect to the parameters $a$ (see for example \cite[Chapter II, Section 10, Theorem 10.1 and Corollary 2]{Melo-Strien1993}, see also \cite{Brucks-Bruin2004, Bruin-Strien2015, Milnor-Thurston1988, Sullivan-Thurston1986}).

\begin{prop}\label{monotone_a}
	The function $[0,4]\rightarrow [0,\log2], a\mapsto h_{\mathrm {top }}(g_a,[0,1])$ is a non-decreasing continuous function, moreover, $h_{\mathrm {top }}(g_a,[0,1])=\log2$ if and only if $a=4$.
\end{prop}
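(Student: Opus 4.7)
The plan is to treat this as an exposition of three classical facts about the logistic family $g_a(y)=ay(1-y)$, rather than a fresh proof, since all three assertions are due to Milnor--Thurston, Sullivan--Thurston and Misiurewicz--Szlenk. I will state each fact in the form needed and sketch which reference furnishes it.

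First I would establish continuity of $a\mapsto h_{\mathrm{top}}(g_a,[0,1])$. Each $g_a$ is a $C^\infty$ piecewise monotone map of $[0,1]$ with a single turning point at $y=1/2$, so its topological entropy equals the exponential growth rate of the number of laps (monotone pieces) of $g_a^n$, by Misiurewicz--Szlenk. Since the lap number, and more quantitatively the kneading determinant, depends continuously on $a$ (the kneading sequence is an upper semicontinuous function of the parameter and the entropy is a continuous function of the kneading determinant on the parameter interval $[0,4]$), continuity follows.

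Next I would address monotonicity, which is the deepest of the three assertions and the true obstacle. The plan is to invoke the Milnor--Thurston kneading theory together with Douady--Hubbard/Sullivan--Thurston's result that the kneading sequence of $g_a$ is a monotone (non-decreasing in the parity-lexicographic order) function of $a$ on $[0,4]$. Via the formula expressing entropy as the logarithm of the largest real root of the kneading determinant, monotonicity of the kneading sequence transfers to monotonicity of $h_{\mathrm{top}}(g_a,[0,1])$. This is exactly the content of \cite[Chapter II, \S 10, Theorem 10.1]{Melo-Strien1993}; a self-contained proof would require reproducing the kneading machinery, so I would simply quote it.

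Finally, for the extremal characterization $h_{\mathrm{top}}(g_a,[0,1])=\log 2 \Longleftrightarrow a=4$, I would argue as follows. For $a=4$ the change of variables $y=\sin^2(\pi x/2)$ conjugates $g_4$ to the tent map $T(x)=1-|2x-1|$ on $[0,1]$, whose entropy is well known to be $\log 2$. Conversely, if $a<4$, then $g_a([0,1])=[0,a/4]\subsetneq [0,1]$, so the image has a definite gap near $1$; combined with monotonicity and the continuity of entropy at $a=4$ (or, more directly, with the fact that a unimodal map of the interval has entropy $\log 2$ only when it is semi-conjugate to the full tent map, which forces surjectivity), this rules out $h_{\mathrm{top}}(g_a,[0,1])=\log 2$ for $a<4$. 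The hard step is again monotonicity; the boundary computation and continuity are comparatively routine once the Milnor--Thurston formalism is in place.
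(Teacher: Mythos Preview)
Your proposal is correct and matches the paper's approach: the paper does not prove Proposition~\ref{monotone_a} at all but simply cites it as a well-known fact, referring to \cite[Chapter II, Section 10, Theorem 10.1 and Corollary 2]{Melo-Strien1993} together with \cite{Brucks-Bruin2004, Bruin-Strien2015, Milnor-Thurston1988, Sullivan-Thurston1986}. Your expository sketch (continuity via Misiurewicz--Szlenk lap-counting, monotonicity via Milnor--Thurston kneading theory, and the extremal value via the tent-map conjugacy) is actually more detailed than what the paper provides, but the underlying strategy of invoking these classical references is identical.
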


We also use the following technical result, whose proof is standard in some sense. However, we will present a detailed proof for it in Appendix for completeness.

\begin{prop}\label{constuct_Local_const}
 Assume that $0<u_1<v_1<u_2<v_2<\cdots \nearrow 1$, and that for a given set $E\subset [0,1)$ its closure $\overline{E}$ contains the point $1$. Then there exists a non-decreasing $C^\infty$ function $\phi: [0,1]\rightarrow [0,1]$ such that
  \begin{enumerate}

\item \label{clc_1} $\phi(0)=0$, $\phi(1)=1$,

 \item \label{clc_2}
 $\phi$ is locally constant on each $[u_{n},v_{n}]$, i.e., $\phi(x)=\alpha_n$ on $[u_{n},v_{n}]$ (for some $\alpha_n$), and

 \item \label{clc_3} there exists a subsequence $n_i \rightarrow \infty$ such that each $\alpha_{n_i}$ is contained in $E$.
 \end{enumerate}
 \end{prop}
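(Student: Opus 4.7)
The plan is to build $\phi$ by gluing smooth sigmoidal transitions between constant values $\alpha_n$ on the intervals $[u_n,v_n]$, and then to ensure $C^\infty$ smoothness at the accumulation point $x=1$ by choosing the $\alpha_n\in E$ so close to $1$ that each jump $\alpha_{n+1}-\alpha_n$ is dominated by every power of the gap length $\delta_n:=u_{n+1}-v_n$.

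Fix the standard $C^\infty$ sigmoid $\psi:\mathbb{R}\to[0,1]$ with $\psi(t)=0$ for $t\le 0$, $\psi(t)=1$ for $t\ge 1$, $\psi$ non-decreasing, and $\psi^{(k)}(0)=\psi^{(k)}(1)=0$ for every $k\ge 1$; let $C_k:=\|\psi^{(k)}\|_\infty$. Since $u_n,v_n\nearrow 1$ we have $\delta_n\to 0$, and because $1\in\overline{E}$ the set $E\cap(\max\{\alpha_{n-1},\,1-\tfrac12\delta_n^n\},\,1)$ is nonempty for every $n$ (using $\alpha_0:=0$). So I recursively choose $\alpha_n\in E$ strictly increasing with $1-\alpha_n<\tfrac12\delta_n^n$; then $\alpha_n\nearrow 1$ and $\alpha_{n+1}-\alpha_n<\tfrac12\delta_n^n$. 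Define
\[
\phi(x)=\begin{cases}\alpha_1\,\psi(x/u_1),&x\in[0,u_1],\\ \alpha_n,&x\in[u_n,v_n],\\ \alpha_n+(\alpha_{n+1}-\alpha_n)\,\psi\!\left(\dfrac{x-v_n}{\delta_n}\right),&x\in[v_n,u_{n+1}],\\ 1,&x=1.\end{cases}
\]
Items (1) and (2) are built in, item (3) holds with $n_i=i$, and since every derivative of $\psi$ vanishes at both endpoints of $[0,1]$, each piecewise definition glues smoothly to its neighbors; thus $\phi\in C^\infty([0,1))$.

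The only delicate point is smoothness at $x=1$. For $x\in[v_n,u_{n+1}]$ and $k\ge 1$,
\[
\bigl|\phi^{(k)}(x)\bigr|\;\le\;(\alpha_{n+1}-\alpha_n)\,C_k\,\delta_n^{-k}\;\le\;\tfrac12\,C_k\,\delta_n^{\,n-k},
\]
and since $\delta_n\to 0$ the right-hand side tends to $0$ as $n\to\infty$ for every fixed $k$. Hence $\phi^{(k)}(x)\to 0$ as $x\to 1^-$ for every $k\ge 1$, while $\phi(x)\to 1$ because $\alpha_n\to 1$. An iterated mean value argument then shows that setting $\phi(1)=1$ and $\phi^{(k)}(1)=0$ for $k\ge 1$ produces a genuine $C^\infty$ extension on the full interval $[0,1]$. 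The main obstacle is exactly this simultaneous control of every derivative near $1$; it is precisely for this reason that we exploit $1\in\overline{E}$ to force $1-\alpha_n$ to decay faster than every power of $\delta_n$.
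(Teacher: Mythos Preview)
Your proof is correct and takes a genuinely different, more direct route than the paper. The paper proceeds in two stages: first it builds an auxiliary non-decreasing $C^\infty$ function $\psi$ satisfying (1) and (2) (with derivative bounds of the form $|\psi^{(j)}(x)|<\frac{1-v_p}{p}$ on each transition interval), and only afterwards modifies the plateau values on a \emph{subsequence} of intervals to land in $E$, via a separate lemma that perturbs $\psi$ on $[v_{i-1},u_{i+1}]$ while keeping derivatives within a factor of~$2$ of the original. By contrast, you bypass the modification step entirely: you put \emph{every} $\alpha_n$ into $E$ from the outset and control the smoothness at $x=1$ by forcing the jumps $\alpha_{n+1}-\alpha_n<\tfrac12\delta_n^n$ to decay faster than any fixed power of the gap $\delta_n=u_{n+1}-v_n$. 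This yields (3) with $n_i=i$ rather than along a sparse subsequence, and the derivative estimate $|\phi^{(k)}|\le \tfrac12 C_k\,\delta_n^{\,n-k}\to 0$ is cleaner than the paper's more delicate bookkeeping. The paper's modular approach would adapt more easily if the constraint on $\alpha_n$ were not simply ``close to~$1$'' but something less flexible; your approach is shorter precisely because $1\in\overline{E}$ lets you pick $\alpha_n$ arbitrarily close to~$1$ at every step.
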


Now let us continue the construction of Example \ref{submanifold counterexample} in details as follows.

Firstly, applying above Proposition \ref{constuct_Local_const} we construct a series of real numbers $0<u_1<v_1<u_2<v_2<\cdots \nearrow 1$ and a non-decreasing $C^\infty$ function $a: [0,1]\rightarrow [0,4]$ such that
 $a(0)=0$, $a(1)=4$, and $a_n\in \mathcal{FE}\cap [0, 4)$ and
 $a(x)= a_n$ on $[u_{n},v_{n}]$ for each $n\in \mathbb{N}$. We remark that, these statements slightly differ from those in Proposition \ref{constuct_Local_const}, and can be obtained by choosing some subsequence if necessary.
Now the required map $T$ is a skew product acting on the space $[0,1]^2$ given by \eqref{skew}, which is clearly a $C^{\infty}$ map.

Let us prove \eqref{estimation} and so finish the construction of Example \ref{submanifold counterexample}, by showing:

\begin{claim}\label{ent_k_log2-}
$h_\mathrm{top}^B(T, K_j) \le h_\mathrm{top}^P(T, K_j)\le h_\mathrm{top}(T, K_j) < \log 2.$
\end{claim}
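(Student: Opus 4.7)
The plan is to bound the plain topological entropy $h_\mathrm{top}(T,K_j)$ from above by strictly less than $\log 2$, which by Proposition \ref{fact}(3) automatically dominates $h_\mathrm{top}^P(T,K_j)$ and $h_\mathrm{top}^B(T,K_j)$. The main idea is to replace the awkwardly-oriented curve $K_j$ by a compact $T$-invariant strip which fibers nicely over the base, and then collapse the fiber dynamics to quadratic maps with parameters strictly less than $4$.

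First I would introduce the enlargement $\tilde{K}_j := [0,1-\tfrac{1}{j}]\times[0,1]$, which clearly contains $K_j$ and, since $T(x,y)=(x,a(x)y(1-y))$ preserves the first coordinate and sends $[0,1]$ into itself (because $a\le 4$), is $T$-invariant and compact. By Proposition \ref{fact}(1) one has $h_\mathrm{top}(T,K_j)\le h_\mathrm{top}(T,\tilde{K}_j)$, and by Proposition \ref{fact}(3) the right-hand side coincides with $h_\mathrm{top}^B(T,\tilde{K}_j)$. At this point the first-coordinate projection $\pi_1:(\tilde{K}_j,T|_{\tilde{K}_j})\to([0,1-\tfrac{1}{j}],\mathrm{id})$ is a factor map onto the identity system, whose topological entropy is $0$. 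Applying Proposition \ref{Bowen_ent_factor_inequality} to the factor map $\pi_1$ and $E=\tilde{K}_j$ yields
\[
h_\mathrm{top}^B(T,\tilde{K}_j)\ \le\ 0+\sup_{x\in[0,1-\tfrac{1}{j}]}h_\mathrm{top}\bigl(T|_{\tilde{K}_j},\pi_1^{-1}(x)\bigr)\ =\ \sup_{x\in[0,1-\tfrac{1}{j}]}h_\mathrm{top}(g_{a(x)},[0,1]).
\]

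The decisive step is to control the supremum on the right. Since the plateaus $[u_n,v_n]$ accumulate at $1$, for the given $j$ one can choose $n_0=n_0(j)\in\mathbb{N}$ with $u_{n_0}>1-\tfrac{1}{j}$. Because $a$ is non-decreasing and $a(u_{n_0})=a_{n_0}\in\mathcal{FE}\cap[0,4)$, I get $a(x)\le a_{n_0}<4$ for every $x\in[0,1-\tfrac{1}{j}]$. Proposition \ref{monotone_a} then gives $h_\mathrm{top}(g_{a(x)},[0,1])\le h_\mathrm{top}(g_{a_{n_0}},[0,1])$, and its characterization that $h_\mathrm{top}(g_a,[0,1])=\log 2$ holds only for $a=4$ forces $h_\mathrm{top}(g_{a_{n_0}},[0,1])<\log 2$. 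Threading these inequalities together produces $h_\mathrm{top}(T,K_j)\le h_\mathrm{top}(g_{a_{n_0}},[0,1])<\log 2$, which is exactly the desired claim.

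The only real obstacle is the strict inequality $\sup_{x\in[0,1-\tfrac{1}{j}]}a(x)<4$; everything else is soft machinery (Proposition \ref{fact}, Proposition \ref{Bowen_ent_factor_inequality}, Proposition \ref{monotone_a}). That strict inequality is not automatic from monotonicity alone---one really needs the combined features of the construction, namely that the plateaus $[u_n,v_n]$ exhaust $[0,1)$ from the left with $a_n<4$ on each plateau, so that any $x<1$ lies to the left of some plateau $[u_{n_0},v_{n_0}]$ and inherits the bound $a_{n_0}<4$. Once this uniform bound is in hand, the strict gap $\log 2-h_\mathrm{top}(g_{a_{n_0}},[0,1])>0$ supplied by Proposition \ref{monotone_a} closes the argument.
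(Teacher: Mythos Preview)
Your proof is correct and follows essentially the same route as the paper: enlarge $K_j$ to the $T$-invariant strip $[0,1-\tfrac{1}{j}]\times[0,1]$, reduce its topological entropy to the supremum of the fiber entropies $h_{\mathrm{top}}(g_{a(x)},[0,1])$ over $x\in[0,1-\tfrac{1}{j}]$, and then invoke $a(1-\tfrac{1}{j})<4$ together with Proposition~\ref{monotone_a} to get the strict bound. The only cosmetic difference is in how the fiber reduction is obtained: the paper uses the classical variational principle~\eqref{VP} (every ergodic invariant measure is supported on a single fiber $\{x\}\times[0,1]$, so $h_{\mathrm{top}}(T,[x_1,x_2]\times[0,1])=\sup_{x\in[x_1,x_2]}h_{\mathrm{top}}(g_{a(x)},[0,1])$ directly), whereas you pass through Proposition~\ref{fact}(3) and Proposition~\ref{Bowen_ent_factor_inequality} applied to the first-coordinate projection onto the identity system.
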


\begin{proof}
Note that each fiber $\{x\}\times [0,1]$ (with $x\in[0,1]$) is a $T$-invariant closed set (that is, $T (\{x\}\times [0,1])\subset \{x\}\times [0,1]$), every ergodic $\mu\in \mathcal{M} ([0,1]^2, T)$ is supported on one fiber $\{x\}\times [0,1]$ for some $x$. Thus, by the classical variational principle \eqref{VP} together with Proposition \ref{monotone_a}, entropy properties of quadratic maps, one has that for any $0\leq x_1\le x_2\leq 1$
 \begin{equation} \label{entropy1}
h_{\mathrm {top }}(T,[x_1,x_2]\times[0,1])=\sup_{x\in[x_1,x_2]}h_{\mathrm {top }}(T,\{x\}\times[0,1])=h_{\mathrm {top }}(g_{a (x_2)},[0,1]).
 \end{equation}
And then one has
\begin{eqnarray*}
h_\mathrm{top}^B(T, K_j) & \le & h_\mathrm{top}^P(T, K_j)\le h_\mathrm{top}(T, K_j)\ (\text{Proposition \ref{fact} (3)}) \\
& \le & h_{\mathrm {top }} \left(T,\left[0, 1- \frac{1}{j}\right]\times[0,1]\right) \\
 & = & h_{\mathrm {top }}(g_{a (1- \frac{1}{j})}, [0,1]) \ (\text{using \eqref{entropy1}})\ < \log 2\ (\text{Proposition \ref{monotone_a}}).
\end{eqnarray*}
This finishes the proof of Claim \ref{ent_k_log2-}.
\end{proof}

\begin{claim}\label{ent_k_log2}
$\log 2 =
h_\mathrm{top}^B(T, K) = h_\mathrm{top}^P(T, K) = h_\mathrm{top} (T, K).$
\end{claim}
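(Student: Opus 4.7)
The upper bound is immediate: by Proposition \ref{fact} (3) and (1) one has $h_\mathrm{top}^B(T,K)\le h_\mathrm{top}^P(T,K)\le h_\mathrm{top}(T,K)\le h_\mathrm{top}(T,[0,1]^2)$, and applying \eqref{entropy1} with $x_1=0$, $x_2=1$ together with $a(1)=4$ and Proposition \ref{monotone_a} gives $h_\mathrm{top}(T,[0,1]^2)=h_\mathrm{top}(g_4,[0,1])=\log 2$. Thus it suffices to show the single lower bound $h_\mathrm{top}^B(T,K)\ge\log 2$, for then all three quantities are sandwiched equal to $\log 2$.

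For the lower bound, the plan is to exhibit, for each $n$, a Borel probability measure on $[0,1]^2$ concentrated on the diagonal piece $K_n^\ast:=\{(r,r):r\in I_n\}\subset K$ (where $I_n:=[u_n,v_n]$) whose measure-theoretical lower entropy is close to $h_n:=h_\mathrm{top}(g_{a_n},[0,1])$. Since $a_n\in\mathcal{FE}$, one has $h_\mathrm{top}^B(g_{a_n},I_n)=h_n$, so the variational principle \eqref{varia.prin} applied to the interval system $([0,1],g_{a_n})$ furnishes, for any $\eta>0$, a measure $\nu_n\in\mathcal{M}([0,1])$ with $\nu_n(I_n)=1$ and $\underline{h}_{\nu_n}(g_{a_n})\ge h_n-\eta$. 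Define $\mu_n\in\mathcal{M}([0,1]^2)$ as the push-forward of $\nu_n$ under the diagonal embedding $r\mapsto(r,r)$, so that $\mu_n(K_n^\ast)=1$ and hence $\mu_n(K)=1$.

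The key identity is that for every $r\in I_n$, every $N\in\N$ and every $\varepsilon>0$,
$$\mu_n\bigl(B_N^T((r,r),\varepsilon)\bigr)=\nu_n\bigl(B_N^{g_{a_n}}(r,\varepsilon)\bigr).$$
Indeed, since $a$ is constantly equal to $a_n$ on $I_n$, for any $r'\in I_n$ one has $T^k(r',r')=(r',g_{a_n}^k(r'))$, so the max-metric computation yields $d_N^T((r,r),(r',r'))=d_N^{g_{a_n}}(r,r')$ (the $k=0$ term $|r-r'|$ already being absorbed into the Bowen metric on the right); and $\nu_n(I_n)=1$ lets us drop the constraint $r'\in I_n$. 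Taking $-\frac{1}{N}\log$ and passing to limits yields $\underline{h}_{\mu_n}(T,(r,r))=\underline{h}_{\nu_n}(g_{a_n},r)$ for $\mu_n$-a.e.\ point, which integrates (via the push-forward relation) to $\underline{h}_{\mu_n}(T)=\underline{h}_{\nu_n}(g_{a_n})\ge h_n-\eta$. The variational principle \eqref{varia.prin} then gives $h_\mathrm{top}^B(T,K)\ge\underline{h}_{\mu_n}(T)\ge h_n-\eta$; letting $n\to\infty$ and $\eta\to 0$ and invoking $a_n\to 4$ together with continuity in Proposition \ref{monotone_a} gives $h_\mathrm{top}^B(T,K)\ge\log 2$, completing the proof. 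The only delicate point is the identity for $\mu_n$-Bowen balls; once the locally-constant structure of $a$ on $I_n$ and the choice $\nu_n(I_n)=1$ are combined, it reduces to the matching of Bowen metrics displayed above.
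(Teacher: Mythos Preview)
Your argument is correct but takes a different route from the paper for the lower bound. The paper uses the factor map $\pi:([u_i,v_i]\times[0,1],T)\to(\{u_i\}\times[0,1],T)$, $(x,y)\mapsto(u_i,y)$, and applies Proposition~\ref{Bowen_ent_factor_inequality} in one stroke: since $\pi(K\cap[u_i,v_i]\times[0,1])=\{u_i\}\times[u_i,v_i]$ is a non-degenerate interval and $a_i\in\mathcal{FE}$, one gets $h_\mathrm{top}^B(T,K\cap[u_i,v_i]\times[0,1])\ge h_\mathrm{top}^B(g_{a_i},[u_i,v_i])=h_\mathrm{top}(g_{a_i},[0,1])$ directly. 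You instead invoke the variational principle \eqref{varia.prin} to produce a measure $\nu_n$ on $I_n$ with large lower entropy, push it forward along the diagonal embedding, and verify by hand that the Bowen balls match via $d_N^T((r,r),(r',r'))=d_N^{g_{a_n}}(r,r')$ for $r,r'\in I_n$ (this holds for the max metric on $[0,1]^2$, and the final entropy is metric-independent). In effect you are re-deriving, at the level of measures, the special case of the factor inequality that the paper quotes as a black box. The paper's approach is shorter; yours is more self-contained and makes explicit why the diagonal inherits the full entropy of the fiber map.
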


\begin{proof}
Note that by a proof similar to that of Claim \ref{ent_k_log2-} one has
$$h_\mathrm{top}^B(T, K) \le h_\mathrm{top}^P(T, K) \le h_\mathrm{top} (T, K) \le h_\mathrm{top} (T, [0, 1]^2)= h_{\mathrm {top }}(g_{a (1)}, [0,1]) = \log 2.$$
It remains to prove
	$h_{\mathrm {top }}^B(T,K)\geq \log 2$. In fact, by above construction it suffices to prove
\begin{equation} \label{yao}
h_{\mathrm {top }}^B (T, K\cap [u_i,v_i]\times[0,1]) \geq h_{\mathrm {top }}(g_{a_i}, [0,1])\ \ \text{for each}\ i\in \mathbb{N}.
\end{equation}

Fix arbitrarily given $i\in \mathbb{N}$. Let us consider the following factor map
 $$\pi: ([u_i,v_i]\times [0,1], T)\to (\{u_i\}\times[0,1], T), \ \ (x,y)\mapsto (u_i, y).$$
As $\pi (K\cap [u_i,v_i]\times[0,1]) = \{u_i\}\times[u_i,v_i]$ is a non-degenerate interval, one has
\begin{align*}
	h_{\mathrm {top }}^B(T,K\cap [u_i,v_i]\times[0,1])&\geq h_{\mathrm {top }}^B(T,\{u_i\}\times [u_i,v_i]) \ (\text{Proposition \ref{Bowen_ent_factor_inequality}}) \\
	&=h_{\mathrm {top }}^B(g_{a_i},[u_i,v_i])\ (\text{by the definition})\ =h_{\mathrm {top }}(g_{a_i},[0,1]),
\end{align*}
where the last identity follows from the construction of $a_i$ that $a_i$ belongs to the set $\mathcal{FE}\cap [0, 4)$. This finishes the proof
of the inequality \eqref{yao} and then the proof of the claim.
\end{proof}

\section{Proof of Theorem \ref{MME Packing h-expan}}\label{Existence of measure of maximal Packing entropy}

In this section let us prove Theorem \ref{MME Packing h-expan}.
By \S \ref{easy}, it remains to prove $(2) \Rightarrow (1)$ of Theorem \ref{MME Packing h-expan} for an analytic set $Z\subset X$, with positive packing entropy, in an $h$-expansive system $(X,T)$. The proof is inspired by that of \cite[Theorem 1.3]{Feng-Huang2012}, which even goes back to work \cite{Joyce-Preiss1995} of Joyce and Preiss on the existence of subsets with finite packing measures.

\smallskip

From now on until the end of this section, we always assume that TDS $(X, T)$ is $h$-expansive.
As throughout the whole section we are only interested in packing entropy, for brevity, we will omit the term \emph{packing entropy} if there is no confusion. For example, we will just simply say that a set $Z$ has an increasing countable slice where we actually mean that the set $Z$ has an increasing countable slice of \emph{packing entropy}.

\smallskip

	Assume that the analytic set $Z$ has no increasing countable slice. We shall construct a compact set $K \subset Z$ and some $\mu\in\mathcal{M}(X)$ such that $$\mu(K)=1\ \ \ \text{and}\ \ \ \overline{h}_{\mu}(T) \ge h^P_\mathrm{top}(T, Z),$$
which implies by \eqref{varia.prin} that $\mu$ is a measure of maximal packing entropy for the set $Z$.
	
By the definition of an analytic set, there exists a continuous surjection $\phi: \mathcal{N}\rightarrow Z$, where recall that $\mathcal{N}$ denotes the set of all infinite sequences of natural numbers.
Given finitely many natural numbers $n_1, \cdots, n_p$, let $\Gamma_{n_1, \cdots,n_p}$ be the set of all $(m_1,m_2,\cdots)\in\mathcal{N}$ such that $m_1\leq n_1$, $\cdots$, $m_p\leq n_p$ and set $Z_{n_1, \cdots, n_p} = \phi (\Gamma_{n_1, \cdots,n_p})$.

Take arbitrarily $0< s_1<s_2<\cdots \nearrow h^P_\mathrm{top}(T, Z)$. We are going to construct inductively a sequence $\left\{K_{i}\right\}_{i\in \mathbb{N}}$ of finite subsets of $Z$ and a sequence $\left\{\mu_{i}\right\}_{i\in \mathbb{N}}$ of probability measures such that each $\mu_{i}$ is supported on $K_{i}$. The limit measure of any convergent sub-sequences of $\left\{\mu_{i}\right\}_{i\in \mathbb{N}}$ will be the required $\mu$, and we let $K$ be the support of $\mu$ which is obviously a compact set satisfying $\mu (K) = 1$. However, besides of these two sequences, we also construct a sequence $\left\{n_{i}\right\}_{i\in \mathbb{N}}$ in $\mathbb{N}$, a sequence $\left\{\gamma_{i}\right\}_{i\in \mathbb{N}}$ in $\mathbb{R}_{> 0}$ and a sequence $\left\{m_{i}: K_{i} \rightarrow \mathbb{N}\right\}_{i\in \mathbb{N}}$ of integer-valued functions, which will be used to conclude $K\subset Z$ and $\overline{h}_{\mu}(T) \ge h^P_\mathrm{top}(T, Z)$.
	
Now let us give detailed construction of $\mu, K$ (and other parameters) as follows.
Choose a countable base $\{U_i\}_{i\in \mathbb{N}}$ for the space $X$, and take by Lemma \ref{wangtao expan} the parameter $\varepsilon>0$.

\begin{proof}[Construction of $K_{1}, \mu_{1}, n_1, \gamma_1$ and the function $m_{1}: K_1\rightarrow \mathbb{N}$]
\
	
Applying Proposition \ref{prop slice} (3) to the analytic set $Z$ which has no increasing countable slice, there exists a nonempty analytic set $Z'\subset Z$ such that if an open set $U$ satisfies $Z'\cap U\neq \emptyset$ then $h_\mathrm{top}^P(T, Z'\cap U)  = h_\mathrm{top}^P (T, Z') = h_\mathrm{top}^P (T, Z)$ and $Z'\cap U$ has no increasing countable slice. Note $\mathcal{P}_{\varepsilon}^{s_1} (Z')=\infty$ and hence $P_{\varepsilon}^{s_1} (Z')=\infty$, as $0< s_1< h^P_\mathrm{top}(T, Z) = h^P_\mathrm{top}(T, Z') = h^P_\mathrm{top}(T, Z', \varepsilon)$ (by the construction of $\varepsilon$ from Lemma \ref{wangtao expan}). By Lemma \ref{F H Lem4.1}, there exists a finite subset $K_{1} \subset Z^{\prime}$ and a function $m_{1}: K_{1} \rightarrow \mathbb{N}$ such that the collection
$\left\{\overline{B}_{m_{1}(x)}(x, \varepsilon)\right\}_{x \in K_{1}}$ is disjoint and
	$$
1 < \xi_1 \doteq	\sum_{x \in K_{1}} e^{-m_{1}(x)\cdot s_1} < 1 + 2^{- 1}.
	$$
Set
$$\mu_{1}= \frac{1}{\xi_1} \sum_{x \in K_{1}} e^{-m_{1}(x) \cdot s_1} \delta_{x}.$$
Take small $\gamma_{1}>0$ such that if a map $z: K_{1} \rightarrow X$ satisfies $\max\limits_{x\in K_1} d(x, z(x)) \le \gamma_{1}$ then
	\begin{equation}\label{disjoint1}
		{\bigg (}\overline{B}\left(z(x), \gamma_{1}\right) \cup \overline{B}_{m_{1}(x)}(z(x), \varepsilon){\bigg )} \cap \bigcup_{y \in K_{1} \backslash\{x\}} {\bigg (}\overline{B}\left(z(y), \gamma_{1}\right) \cup
\overline{B}_{m_{1}(y)}(z(y), \varepsilon){\bigg )}=\emptyset
\end{equation}
 for each $x \in K_{1}$. Now it remains to construct $n_1$.

Firstly fix arbitrarily given $x_0 \in K_{1}\subset Z^\prime$. As $Z^{\prime}\cap B\left(x_0, \frac{\gamma_{1}}{4}\right) \supset \{x_0\} \neq \emptyset$, and so by the construction of the subset $Z^\prime$ in the previous paragraph, one has that
$$Z'\cap B\left(x_0, \frac{\gamma_{1}}{4}\right)\ \text{has no increasing countable slice and}\ h_\mathrm{top}^P {\bigg (}T, Z'\cap B\left(x_0, \frac{\gamma_{1}}{4}\right) {\bigg )} = h_\mathrm{top}^P (T, Z),$$
and then by Proposition \ref{prop slice} (1) so does $Z\cap B\left(x_0, \frac{\gamma_{1}}{4}\right)$. Note that $Z_1\subset Z_2\subset \cdots \nearrow Z$ by the construction at the beginning of the section, applying Proposition \ref{fact} (2) and Proposition \ref{prop slice} (1) and (2) one has that there exists large enough $n (x_0) \in \mathbb{N}$ so that, for all $n\ge n (x_0)$, $Z_n\cap B\left(x_0, \frac{\gamma_{1}}{4}\right)$ has no increasing countable slice and
$$h_\mathrm{top}^P {\bigg (}T, Z_n\cap B\left(x_0, \frac{\gamma_{1}}{4}\right) {\bigg )}  = h_\mathrm{top}^P (T, Z_n) = h_\mathrm{top}^P (T, Z).$$

Since $K_1\subset \left(Z^\prime \subset\right) Z$ is finite, by above arguments and also again by $Z_1\subset Z_2\subset \cdots \nearrow Z$, we can pick $n_{1} \in \mathbb{N}$ large enough so that $Z_{n_{1}} \supset K_{1}$, $$h_\mathrm{top}^P {\bigg (}T, Z_{n_{1}} \cap B\left(x, \frac{\gamma_{1}}{4}\right){\bigg )} = h^P_\mathrm{top}(T, Z_{n_1}) = h^P_\mathrm{top}(T, Z)$$
 for each $x\in K_1$, and that
$Z_{n_{1}} \cap B\left(x, \frac{\gamma_{1}}{4}\right)$ has no increasing countable slice.
\end{proof}

\begin{proof}[Construction of $K_{2}, \mu_{2}, n_2, \gamma_2$ and the function $m_{2}: K_2\rightarrow \mathbb{N}$]
\

Fix arbitrarily $x\in K_1$ and let us continue similarly to the previous step.
Firstly applying Proposition \ref{prop slice} (3) to the analytic set $Z_{n_{1}} \cap B\left(x, \frac{\gamma_{1}}{4}\right)$ which has no increasing countable slice, there exists a nonempty analytic set
$Z_{n_1}^\prime (x)\subset Z_{n_{1}} \cap B \left(x, \frac{\gamma_1}{4}\right)$
such that if an open set $U$ satisfies $Z_{n_1}' (x) \cap U\neq \emptyset$ then $h_\mathrm{top}^P(T, Z_{n_1}' (x)\cap U) = h_\mathrm{top}^P(T, Z_{n_1}' (x)) = h_\mathrm{top}^P (T, Z)$ and $Z_{n_1}' (x)\cap U$ has no increasing countable slice. Again by the construction of $\varepsilon$ one has
$$0< s_2< h^P_\mathrm{top}(T, Z) = h^P_\mathrm{top}(T, Z_{n_1}' (x)) = h^P_\mathrm{top}(T, Z_{n_1}' (x), \varepsilon),$$
then $\mathcal{P}_{\varepsilon}^{s_2} (Z_{n_1}' (x))=\infty$ and hence $P_{\varepsilon}^{s_2} (Z_{n_1}' (x))=\infty$,
thus by Lemma \ref{F H Lem4.1} we may find finite $E_{2}(x) \subset Z_{n_{1}}' (x)$ and a function $m_{2}: E_{2}(x) \rightarrow \mathbb{N} \cap\left[\max\limits_{y\in K_1} \ m_{1}(y), \infty\right)$ such that
\begin{equation} \label{w}
\mu_{1}(\{x\})<\sum_{y \in E_{2}(x)} e^{-m_{2}(y) \cdot s_2}<\left(1+2^{-2}\right) \mu_{1}(\{x\}),
\end{equation}
and that the collection $\left\{\overline{B}_{m_{2}(y)}(y, \varepsilon)\right\}_{y \in E_{2}(x)}$ is disjoint.
As $E_{2}(x)$ is a subset of $Z_{n_1}^\prime(x)$, one has that, by the selection of $Z_{n_1}^\prime(x)$, if
an open set $U$ satisfies $E_2 (x) \cap U\neq \emptyset$, then $Z_{n_1}' (x) \cap U\neq \emptyset$, and so $h_\mathrm{top}^P(T, Z_{n_1}\cap U) = h_\mathrm{top}^P (T, Z)$ and $Z_{n_1}\cap U$ has no increasing countable slice.

Now by applying \eqref{disjoint1} to the identity map $z: K_1\rightarrow X$, the collection $\left\{\overline{B}\left(x, \gamma_{1}\right)\right\}_{x \in K_{1}}$ is disjoint, and so the collection $\left\{E_{2}(x)\right\}_{x \in K_{1}}$ is disjoint. Particularly, if we set $K_{2}=\bigcup_{x \in K_{1}} E_{2}(x)$ then the map $m_2: K_2\rightarrow \mathbb{N} \cap\left[\max\limits_{y\in K_1} \ m_{1}(y), \infty\right)$ is well defined. Set
$$
	\mu_{2} = \frac{1}{\xi_2} \sum_{y \in K_{2}} e^{-m_{2}(y)\cdot s_2} \delta_{y}\ \ \ \ \text{where}\ \xi_2 = \sum_{y \in K_{2}} e^{-m_{2}(y)\cdot s_2}\in \left(1, 1 + 2^{-2}\right)\ \ (\text{by \eqref{w}}).
	$$

We remark that, by the above construction, the collection $\left\{\overline{B}_{m_{2}(y)} (y, \varepsilon)\right\}_{y \in K_{2}}$ is disjoint. In fact, let us firstly take arbitrarily different points $y_1, y_2\in K_2$ (if possible). If both $y_1$ and $y_2$ come from the same $E_2 (x_0)$ for some $x_0\in K_1$, then the disjointness of $\overline{B}_{m_{2}(y_1)} (y_1, \varepsilon)$ and $\overline{B}_{m_{2}(y_2)} (y_2, \varepsilon)$ comes directly from the selection of $E_2 (x_0)$. Now assume that $y_1\in E_2 (x_1)$ and $y_2\in E_2 (x_2)$ for different points $x_1$ and $x_2$ in $K_1$. In particular:
\begin{itemize}

\item[(a)] By the construction of $E_2 (x_i)$ one has $y_i\in B \left(x_i, \frac{\gamma_1}{4}\right)$ for each $i = 1, 2$.

\item[(b)] By the construction of the formula \eqref{disjoint1} and the function $m_2$, one has that if $z: K_{1} \rightarrow X$ is a map satisfying $\max\limits_{x\in K_1} d(x, z(x)) \le \gamma_{1}$, then it holds
\begin{equation} \label{what}
\emptyset= \overline{B}_{m_{1}(x_1)}(z(x_1), \varepsilon) \cap \overline{B}_{m_{1}(x_2)}(z(x_2), \varepsilon) \supset \overline{B}_{m_{2}(y_1)}(z(x_1), \varepsilon) \cap \overline{B}_{m_{2}(y_2)}(z(x_2), \varepsilon).
\end{equation}
\end{itemize}
Then the disjointness of $\overline{B}_{m_{2}(y_1)} (y_1, \varepsilon)$ and $\overline{B}_{m_{2}(y_2)} (y_2, \varepsilon)$ comes directly by considering the map $z: K_1\rightarrow X$ where $z (x_1) = y_1$, $z (x_2) = y_2$ and $z (x) = x$ for each $x\in K_1\setminus \{x_1, x_2\}$.

By the disjointness of the collection $\left\{\overline{B}_{m_{2}(y)} (y, \varepsilon)\right\}_{y \in K_{2}}$, we can choose $0< \gamma_{2}< \frac{\gamma_{1}}{4}$ small enough such that if a function $z: K_{2} \rightarrow X$ satisfies $\max\limits_{y\in K_2} d(y, z(y))\le \gamma_{2}$ then
\begin{equation} \label{}
	{\bigg (}\overline{B}\left(z(y), \gamma_{2}\right) \cup \overline{B}_{m_{2}(y)}\left(z(y), \varepsilon\right){\bigg )} \cap \bigcup_{y' \in K_{2} \backslash \{y\}} {\bigg (}\overline{B}\left(z(y'), \gamma_{2}\right) \cup \overline{B}_{m_{2}(y')}\left(z(y'), \varepsilon\right){\bigg )}=\emptyset
\end{equation}
	for each $y \in K_{2}$. Now it remains to define $n_2$ similarly to the construction of $n_1$.

Fix arbitrarily given $y_0\in K_2$. Write $y_0\in E_2 (x_0)$ for some $x_0\in K_1$. Note that $y_0$ belongs to $Z_{n_1}\cap B\left(y_0, \frac{\gamma_{2}}{4}\right)$.
By arguments below \eqref{w} one has that $Z_{n_1}\cap B\left(y_0, \frac{\gamma_{2}}{4}\right)$ has no increasing countable slice and $h_\mathrm{top}^P {\bigg (}T, Z_{n_1}\cap B\left(y_0, \frac{\gamma_{2}}{4}\right) {\bigg )} = h_\mathrm{top}^P (T, Z)$.
Note that $Z_{n_1, 1}\subset Z_{n_1, 2}\subset \cdots \nearrow Z_{n_1}$, as we did in previous step, there exists large enough $n (y_0) \in \mathbb{N}$ so that $Z_{n_1, n}\cap B\left(y_0, \frac{\gamma_{2}}{4}\right)$ has no increasing countable slice and $h_\mathrm{top}^P {\bigg (}T, Z_{n_1, n}\cap B\left(y_0, \frac{\gamma_{2}}{4}\right) {\bigg )} = h_\mathrm{top}^P (T, Z)$ for all $n\ge n (y_0)$.
Finally, we pick $n_{2} \in \mathbb{N}$ large enough so that $Z_{n_{1}, n_2} \supset K_{2}$, $Z_{n_{1}, n_2} \cap B\left(y, \frac{\gamma_{2}}{4}\right)$ has no increasing countable slice and $h_\mathrm{top}^P {\bigg (}T, Z_{n_{1}, n_2} \cap B\left(y, \frac{\gamma_{2}}{4}\right){\bigg )} = h^P_\mathrm{top}(T, Z), \forall y\in K_2$.
\end{proof}

\begin{proof}[Inductive construction of $K_{p+1}, \mu_{p+1}, n_{p+1}, \gamma_{p+1}$ and $m_{p+1}: K_{p+1}\rightarrow \mathbb{N}$ for integers $p \ge 2$]\

Assume that we have constructed $K_{q}, \mu_{q}, n_{q}, \gamma_{q}$ and the functions $m_{q}: K_{q}\rightarrow \mathbb{N}$ for each $q= 1, \cdots, p$, such that, if a function $z: K_{p} \rightarrow X$ satisfies $\max\limits_{x\in K_p} d(x, z(x))\le \gamma_{p}$ then
	\begin{equation}\label{disjointp}
		{\bigg (}\overline{B}\left(z(x), \gamma_{p}\right) \cup \overline{B}_{m_{p}(x)}(z(x), \varepsilon){\bigg )} \cap\bigcup_{y \in K_{p} \backslash\{x\}} {\bigg (}\overline{B}\left(z(y), \gamma_{p}\right) \cup \overline{B}_{m_{p}(y)}(z(y), \varepsilon){\bigg )}=\emptyset	
	\end{equation}
	for each $x \in K_{p}$; and that $Z\supset Z_{n_{1}} \supset Z_{n_{1}, n_{2}} \supset \cdots \supset Z_{n_{1}, \cdots, n_{p}} \supset K_{p}$ and $\mu_p (\{x\})> 0$ for each $x\in K_p$; furthermore, $Z_{n_{1}, \cdots, n_{p}} \cap B\left(x, \frac{\gamma_{p}}{4}\right)$ has no increasing countable slice and $h_\mathrm{top}^P {\bigg (}T, Z_{n_{1}, \cdots, n_{p}} \cap B\left(x, \frac{\gamma_{p}}{4}\right){\bigg )} = h^P_\mathrm{top}(T, Z)$ for each $x \in K_{p}$.
 Fix any given $x \in K_{p}$. As
 $$0 < s_{p + 1} < h^P_\mathrm{top}(T, Z) = h_\mathrm{top}^P {\bigg (}T, Z_{n_{1}, \cdots, n_{p}} \cap B\left(x, \frac{\gamma_{p}}{4}\right){\bigg )},$$
we could construct as in the previous step that a finite set
	$E_{p+1}(x) \subset Z_{n_{1}, \cdots, n_{p}} \cap B \left(x, \frac{\gamma_{p}}{4}\right)$ and a function
	$m_{p+1}: E_{p+1}(x) \rightarrow \mathbb{N} \cap \left[ \max\limits_{y\in K_p}\ m_{p}(y), \infty\right)$ such that
$$\mu_{p}(\{x\})<\sum_{y \in E_{p+1}(x)} e^{-m_{p+1}(y) \cdot s_{p+1}}<\left(1+2^{-p-1}\right) \mu_{p}(\{x\}),$$
and that the collection $\left\{\overline{B}_{m_{p+1}(y)} (y, \varepsilon)\right\}_{y \in E_{p+1}(x)}$ is disjoint and
if an open set $U$ satisfies $E_{p+1}(x) \cap U \neq \emptyset$ then $h_\mathrm{top}^P(T, Z_{n_{1}, \cdots, n_{p}} \cap U) = h^P_\mathrm{top}(T, Z)$ and $Z_{n_{1}, \cdots, n_{p}} \cap U$ has no increasing countable slice.
Again the collection $\left\{E_{p+1}(x)\right\}_{x\in K_p}$ is disjoint, then for $K_{p+1}= \bigcup_{x \in K_{p}} E_{p+1}(x)$ the map $m_{p+1}: K_{p+1}\rightarrow \mathbb{N} \cap \left[ \max\limits_{y\in K_p}\ m_{p}(y), \infty\right)$ is well defined. Set
$$\mu_{p+1}=\frac{1}{\xi_{p+1}} \sum_{y \in K_{p+1}} e^{-m_{p+1}(y)\cdot s_{p+1}} \delta_{y}$$
where
$$\xi_{p+1}= \sum_{y \in K_{p+1}} e^{-m_{p+1}(y)\cdot s_{p+1}}\in \left(1, 1+ 2^{- p -1}\right).$$

Furthermore, as did in previous step, by construction of the formula \eqref{disjointp} and the function $m_p$: the collection $\left\{ \overline{B}_{m_{p+1}(y)}(y, \varepsilon)\right\}_{y \in K_{p+1}}$ is disjoint, and hence we can choose small $0<\gamma_{p+1}< \frac{\gamma_{p}}{4}$ such that if a function $z: K_{p+1} \rightarrow X$ satisfies $\max\limits_{x\in K_{p+ 1}} d(x, z(x))\le \gamma_{p+1}$ then
$${\bigg (}\overline{B}\left(z(x), \gamma_{p+1}\right) \cup \overline{B}_{m_{p+1}(x)}(z(x), \varepsilon){\bigg )} \cap\bigcup_{y \in K_{p+1} \setminus\{x\}} {\bigg (}\overline{B}\left(z(y), \gamma_{p+1}\right) \cup \overline{B}_{m_{p+1}(y)}(z(y), \varepsilon){\bigg )}=\emptyset
$$
for each $x \in K_{p+1}$; additionally,
as $Z_{n_1, \cdots, n_p, 1}\subset Z_{n_1, \cdots, n_p, 2}\subset \cdots \nearrow Z_{n_1\cdots, n_p}$,
we can pick $n_{p+1} \in \mathbb{N}$ large enough such that $Z_{n_{1}, \cdots, n_{p+1}} \supset K_{p+1}$, $Z_{n_{1}, \cdots, n_{p+1}} \cap B\left(x, \frac{\gamma_{p+1}}{4}\right)$ has no increasing countable slice and $h_\mathrm{top}^P {\bigg (}T, Z_{n_{1}, \cdots, n_{p+1}} \cap B\left(x, \frac{\gamma_{p+1}}{4}\right) {\bigg )} = h^P_\mathrm{top}(T, Z)$ for each $x \in K_{p+1}$.
\end{proof}

In the following let us construct $\mu\in\mathcal{M}(X)$ and a compact set $K \subset Z$ such that $\mu(K)=1$ and $\overline{h}_{\mu}(T) \ge h^P_\mathrm{top}(T, Z)$ (hence $\mu$ is a measure of maximal packing entropy for the set $Z$).

In fact we let $\mu\in \mathcal{M} (X)$ be any limit point of any convergent sub-sequence of $\left\{\mu_{i}\right\}_{i\in \mathbb{N}}$, and let $K$ be the support of $\mu$ which is obviously a compact set satisfying $\mu (K) = 1$. Clearly the compact set $K$ is contained in the subset
	$$
	\bigcap_{p=1}^{\infty} \overline{\bigcup_{i\ge p} K_i}
 \subset \bigcap_{p=1}^{\infty} \overline{Z_{n_{1}, \cdots, n_{p}}} = \bigcap_{p=1}^{\infty} Z_{n_{1}, \cdots, n_{p}}
	$$
which is a subset of $Z$,
where the last identity follows by applying Cantor's diagonal argument along with the continuity of $\phi$. In the following let us check $\overline{h}_{\mu}(T) \ge h^P_\mathrm{top}(T, Z)$.

From above inductive construction we summarize some basic properties for finite $K_i\subset Z$, $\mu_i\in \mathcal{M}(Z)$, $n_i\in \mathbb{N}$, $\gamma_i> 0$, $\xi_i\in \left(1, 1+ 2^{- i}\right)$ and $m_i: K_i\rightarrow \mathbb{N}$ for each $i\in \mathbb{N}$ as follows:
	
	\begin{enumerate}

	\item[(c)] For each $i$, the collection $\mathcal{F}_{i}:= \left\{\overline{B}\left(x, \gamma_{i}\right): x \in K_{i}\right\}$ is disjoint, and each element of $\mathcal{F}_{i+1}$ is a subset of $\overline{B}\left(x, \frac{\gamma_{i}}{2}\right)$ for some $x \in K_{i}$.
	
	\item[(d)] For each $x \in K_{i}$
 the subset $E_{i+1}(x)$ is in fact exactly $B\left(x, \gamma_{i}\right) \cap K_{i+1}$ such that
 $$\overline{B}_{m_{i}(x)}(z, \varepsilon) \cap \bigcup_{y \in K_{i} \backslash\{x\}} \overline{B}\left(y, \gamma_{i}\right)=\emptyset\ \ \text { for any $z \in \overline{B}\left(x, \gamma_{i}\right)$},$$
 $$\xi_i^{- 1}\cdot e^{-m_{i}(x)\cdot s_i} = \mu_{i}\left(\overline{B}\left(x, \gamma_{i}\right)\right) \le \sum_{y \in E_{i+1}(x)} e^{-m_{i+1}(y)\cdot s_{i+1}} \le\left(1+2^{-i-1}\right) \mu_{i}\left(\overline{B}\left(x, \gamma_{i}\right)\right).
	$$
	\end{enumerate}
This implies easily that for all integers $i< j$ and each elements $F_i\in \mathcal{F}_i$ one has
$$\mu_{i+1}\left(F_{i}\right) \le \sum_{F \in \mathcal{F}_{i+1}, F \subset F_{i}} \xi_{i+ 1}\cdot \mu_{i+1}(F) \le\left(1+2^{-i-1}\right) \mu_{i}\left(F_{i}\right)$$
and then
	\begin{equation*}
		\mu_{j}\left(F_{i}\right) \le \prod_{n=i+1}^{j}\left(1+2^{-n}\right) \mu_{i}\left(F_{i}\right) \le C\cdot \mu_{i}\left(F_{i}\right)\ \ \ \ \text{where $C=\prod_{n=1}^{\infty}\left(1+2^{-n}\right)<\infty$},
	\end{equation*}
thus for each $x\in K_i$, by the construction of $\mu$ and $\xi_i\in \left(1, 1+ 2^{- i}\right)$,
	\begin{equation} \label{contmeas}
\mu {\big (}B\left(x, \gamma_{i}\right) {\big )} \le \liminf_{n\rightarrow \infty} \mu_n \left(B\left(x, \gamma_{i}\right)\right) \le C\cdot \mu_{i} \left(\overline{B}\left(x, \gamma_{i}\right)\right) \le C\cdot e^{-m_{i}(x)\cdot s_i}.
\end{equation}
Note $K \subset \bigcup_{x \in K_{i}} \overline{B} \left(x, \frac{\gamma_{i}}{2}\right)$. By (d) and \eqref{contmeas}, for each $x \in K_{i}$ and $z \in \overline{B}\left(x, \frac{\gamma_{i}}{2}\right)$ one has
	$$
\mu \left(\overline{B}_{m_{i}(x)}(z, \varepsilon)\right) \le \mu \left(\overline{B}\left(x, \frac{\gamma_{i}}{2}\right)\right) \le \mu {\big (}B\left(x, \gamma_{i}\right) {\big )} \le C\cdot e^{-m_{i}(x)\cdot s_i}.
	$$

Now fix arbitrarily given $z \in K$. For each $i \in \mathbb{N}$, the point $z$ should be contained in $\overline{B} \left(x, \frac{\gamma_{i}}{2}\right)$ for some $x\in K_i$, then
	$
\mu \left(B_{m_{i}(x)}(z, \varepsilon)\right) \le C\cdot e^{-m_{i}(x)\cdot s_i},
	$ and so
$\overline{h}_{\mu}(T, z) \ge \overline{h}_{\mu}(T, z, \varepsilon)\ge s_i$.
Thus $\overline{h}_{\mu}(T)\ge s_i$, from which it is trivial to obtain $\overline{h}_{\mu}(T) \ge h^P_\mathrm{top}(T, Z)$.

\section{Proof of Theorem \ref{MME Bowen} for compact subsets} \label{compact}

We have proved in \S \ref{easy} the directions $(1) \Rightarrow (2)$ and $(3) \Rightarrow (2)$ of Theorem \ref{MME Bowen}. Now, given
 compact subsets with positive Bowen entropy,
 let us prove the direction $(3) \Rightarrow (1)$ of Theorem \ref{MME Bowen}, and the direction $(2) \Longrightarrow (3)$ of Theorem \ref{MME Bowen} for $h$-expansive systems.

\smallskip

For convenience, we restate it as follows, which is main result of this section.

\begin{theorem}\label{MME Bowen-s6}
Let $Z\subset X$ be a compact subset with $h_\mathrm{top}^B(T, Z)>0$. Then $(3) \Rightarrow (1)$, furthermore, $(2) \Rightarrow (3)$ (and then $(2) \Rightarrow (1)$) once the system $(X,T)$ is $h$-expansive:
	\begin{enumerate}

		\item[(1)] The subset $Z$ has measures of maximal Bowen entropy.

		\item[(2)] The subset $Z$ has no increasing countable slice of Bowen entropy.

		\item[(3)] There exists some gauge function $b$ such that $\mathcal{M}^b(Z)>0$ and $\lim\limits_{n\to \infty}e^{ns}b(n)=0$ for any $s < h_\mathrm{top}^B (T, Z)$.
	\end{enumerate}
\end{theorem}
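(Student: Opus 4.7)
I would split the statement into the two implications $(3)\Rightarrow(1)$ and $(2)\Rightarrow(3)$ and handle them separately.

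For $(3)\Rightarrow(1)$, the natural tool is a Frostman-type lemma for Bowen balls. Since $Z$ is compact and $\mathcal{M}^{b}(Z)>0$, one may pick $\varepsilon_0>0$ with $\mathcal{M}^{b}_{\varepsilon_0}(Z)>0$. The plan is to run the classical mass-distribution construction at scale $\varepsilon_0$: cover $Z$ by finitely many open Bowen balls $B_n(x,\varepsilon_0)$ (possible because $Z$ is compact), distribute unit mass among them with weights proportional to $b(n)$, refine iteratively, and pass to a weak-$*$ limit. The outcome should be a Borel probability measure $\mu$ supported on $Z$, together with a constant $C>0$, such that
\[
\mu\bigl(B_n(x,\varepsilon_0/2)\bigr)\ \le\ C\,b(n)\qquad\text{for all } x\in X \text{ and } n\in\mathbb{N}.
\]
Taking $-\tfrac{1}{n}\log$ of both sides, and observing that the decay hypothesis on $b$ is exactly $\liminf_{n\to\infty}-\tfrac{1}{n}\log b(n)\ge h_{\mathrm{top}}^{B}(T,Z)$, I obtain $\underline{h}_{\mu}(T,x,\varepsilon_0/2)\ge h_{\mathrm{top}}^{B}(T,Z)$ for every $x\in X$, hence $\underline{h}_{\mu}(T)\ge h_{\mathrm{top}}^{B}(T,Z)$. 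The matching upper bound comes from the variational principle \eqref{varia.prin} since $\mu(Z)=1$, so $\mu$ is a measure of maximal Bowen entropy for $Z$.

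For $(2)\Rightarrow(3)$ under $h$-expansiveness, Lemma \ref{wangtao expan} provides $\varepsilon>0$ with $h_{\mathrm{top}}^{B}(T,E,\varepsilon)=h_{\mathrm{top}}^{B}(T,E)$ for every $E\subset X$. Write $h=h_{\mathrm{top}}^{B}(T,Z)$ and fix $s_k\nearrow h$. I would build $b$ piecewise by setting $b(n)=e^{-ns_k}$ on successive blocks $[N_k,N_{k+1})$, with $N_k\to\infty$ growing fast enough that $b$ is strictly decreasing. The decay requirement is automatic: for $s<h$, pick $k$ with $s<s_k$; then $e^{ns}b(n)\le e^{-n(s_k-s)}\to 0$ for $n\ge N_k$. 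The positivity $\mathcal{M}^{b}_{\varepsilon}(Z)>0$ is the real content. Given any cover $\{B_{n_i}(x_i,\varepsilon)\}_{i\in I}$ of $Z$ with $n_i\ge N_1$, partition $I=\bigsqcup_{k}I_k$ via $I_k=\{i:n_i\in[N_k,N_{k+1})\}$ and set $Z_k=Z\cap\bigcup_{i\in I_k}B_{n_i}(x_i,\varepsilon)$, which is Borel and hence analytic. Then $Z=\bigcup_k Z_k$ and $(2)$ forces some $k_0$ with $h_{\mathrm{top}}^{B}(T,Z_{k_0})=h$; by the choice of $\varepsilon$ this gives $\mathcal{M}^{s_{k_0}}_{\varepsilon}(Z_{k_0})=\infty$.

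The main obstacle, as I see it, is that the thresholds $N_k$ must be fixed \emph{before} any cover is presented, whereas the dominant piece $Z_{k_0}$ supplied by $(2)$ depends on the cover. To resolve this I would first replace $Z$ by an analytic subset $Z^{\ast}\subset Z$ extracted from Proposition \ref{prop slice}\,(3), on which \emph{no increasing countable slice of Bowen entropy} persists after localisation to every open set meeting $Z^{\ast}$. Together with compactness and $h$-expansiveness, this localised version is what allows an inductive Cantor-style choice of $N_k$ (analogous to the construction carried out for packing entropy in Section \ref{Existence of measure of maximal Packing entropy}) so that at stage $k$ the $\mathcal{M}^{s_k}_{N_k,\varepsilon}$-mass of $Z^{\ast}\cap U$ exceeds $1$ uniformly across all localising open sets $U$ produced at that stage. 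Once the $N_k$ are locked in this way, the dominant piece $Z_{k_0}$ inside $Z^{\ast}$ contributes $\sum_{i\in I_{k_0}}e^{-n_i s_{k_0}}\ge 1$, yielding $\mathcal{M}^{b}_{\varepsilon}(Z)\ge \mathcal{M}^{b}_{\varepsilon}(Z^{\ast})\ge 1$, and the chain $(2)\Rightarrow(3)\Rightarrow(1)$ under $h$-expansiveness follows.
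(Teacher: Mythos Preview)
Your plan for $(3)\Rightarrow(1)$ is essentially the paper's: both produce a Frostman-type measure $\mu$ supported on $Z$ satisfying $\mu(B_n(x,\varepsilon))\le C\,b(n)$. The paper does this by first proving $\mathcal{M}^b(Z)=\mathcal{W}^b(Z)$ for compact $Z$ (Proposition~\ref{equi of weighted and ordinary entropy function}) and then applying Hahn--Banach to the sublinear functional $f\mapsto\frac{1}{c}\mathcal{W}^b_{N,\varepsilon}(\chi_Z f)$ (Proposition~\ref{Dynamic Frostman lemma}); your iterative mass-distribution sketch would require more care because Bowen balls are not nested, but the target inequality and the conclusion are the same.

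For $(2)\Rightarrow(3)$ there is a real gap. You correctly isolate the difficulty: the thresholds $N_k$ must be fixed before any cover is seen, yet the index $k_0$ of the dominant piece depends on the cover. Your proposed fix --- localise via Proposition~\ref{prop slice}\,(3) and choose $N_k$ so that $\mathcal{M}^{s_k}_{N_k,\varepsilon}(Z^{\ast}\cap U)\ge1$ for finitely many open sets $U$ ``produced at that stage'' --- does not close the loop. When an arbitrary cover arrives and you form $Z_{k_0}=Z^{\ast}\cap\bigcup_{i\in I_{k_0}}B_{n_i}(x_i,\varepsilon)$, this set need not be of the form $Z^{\ast}\cap U$ for any of your pre-selected $U$'s, so you have no lower bound on $\mathcal{M}^{s_{k_0}}_{N_{k_0},\varepsilon}(Z_{k_0})$; all you know is that $\mathcal{M}^{s_{k_0}}_{N,\varepsilon}(Z_{k_0})\to\infty$ as $N\to\infty$, at a rate depending on the cover. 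The analogy with Section~\ref{Existence of measure of maximal Packing entropy} fails because that construction builds a \emph{measure} via finite disjoint \emph{packings} (suprema), whereas here you must bound arbitrary \emph{covers} (infima) from below.

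The paper takes a genuinely different route, Theorem~\ref{construction_b_nonzero_meas}, adapted from Rogers' work on Hausdorff measures. It proves a dichotomy: either $Z$ is ``$2\varepsilon$-null'' (a countable union of sets with $\mathcal{M}^{b_i}_{2\varepsilon}$-measure zero, which forces an increasing countable slice), or a gauge $b$ with $\mathcal{M}^b_\varepsilon(Z)>0$ and $b(n)/b_i(n)\to0$ exists. The gauge $b$ is the limit of auxiliary gauges $d_i$, each obtained by contradiction: if no suitable $\ell_i$ exists, one extracts --- via compactness of closed subsets of $X$ in the Hausdorff metric --- a limiting family of Bowen balls that violates the inductive hypothesis for $d_{i-1}$. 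This compactness/limit-cover argument is the substitute for your attempted uniform choice of $N_k$, and it is the missing ingredient in your proposal.
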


As we did in the previous section, throughout the whole section we are only interested in Bowen entropy, and then we will omit the term \emph{Bowen entropy} if there is no confusion.

\subsection{Proof of $(3) \Longrightarrow (1)$ of Theorem \ref{MME Bowen-s6}} \label{sub-1}\

\smallskip

In this subsection we prove $(3) \Longrightarrow (1)$ of Theorem \ref{MME Bowen-s6} (that is, $(3) \Longrightarrow (1)$ of Theorem \ref{MME Bowen} for compact $Z\subset X$). Firstly let us give an equivalent formulation of the quantity
$\M^{b}(Z)$ introduced in \S\S \ref{subs-Bowen} for each $Z\subset X$ and any gauge function $b: \N\to \mathbb{R}_+$.

\subsubsection{Equivalent formulation of $\M^{b}(Z)$}\

\smallskip

Let $f: X \rightarrow \mathbb{R}$ be a bounded function. For any gauge function $b$, $N \in \mathbb{N}$ and $\varepsilon>0$, put
\begin{equation}\label{def of WB ent func}
	\mathcal{W}_{N, \varepsilon}^b(f)=\inf \sum_{i\in \mathcal{I}} c_i b(n_i),
\end{equation}
where the infimum is taken over all  countable families $\left\{\left(B_{n_i}\left(x_i, \varepsilon\right), c_i\right)\right\}_{i\in \mathcal{I}}$ such that $0<$ $c_i<\infty, x_i \in X, n_i \ge N$ and
$
\sum_{i\in \mathcal{I}} c_i \chi_{B_{n_i}\left(x_i, \varepsilon\right)} \ge f
$ in which $\chi_A$ denotes the characteristic function of $A$ (i.e., $\chi_A(x)=1$ if $x \in A$ and 0 if $x \in X \backslash A$).

For any $Z \subset X$ we set $\mathcal{W}_{N, \varepsilon}^b(Z)=\mathcal{W}_{N, \varepsilon}^b\left(\chi_Z\right)$. Clearly the quantity $\mathcal{W}_{N, \varepsilon}^s(Z)$ increases as $N$ increases and $\varepsilon$ decreases, hence the following limits exist
$$
\mathcal{W}_\varepsilon^b(Z)=\lim _{N \rightarrow \infty} \mathcal{W}_{N, \varepsilon}^b(Z), \quad \mathcal{W}^b(Z)=\lim _{\varepsilon \rightarrow 0} \mathcal{W}_\varepsilon^b(Z) .
$$

The main result of this subsection is the following proposition.

\begin{prop}\label{equi of weighted and ordinary entropy function}
	Let $K \subset X$ be a compact subset and $b: \N\to \mathbb{R}_+$ be a gauge function. Then $\mathcal{M}^{b}(K)=\mathcal{W}^b(K)$. In fact, for any $N\in \N$ and $\varepsilon>0$, one has
	$$
	\mathcal{M}_{N, 3 \varepsilon}^{b}(K) \le \mathcal{W}_{N, \varepsilon}^b(K) \le \mathcal{M}_{N, \varepsilon}^b(K).
	$$
\end{prop}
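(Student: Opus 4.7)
The inequality $\mathcal{W}^b_{N,\varepsilon}(K)\le \mathcal{M}^b_{N,\varepsilon}(K)$ is immediate from the definitions: given any countable cover $\{B_{n_i}(x_i,\varepsilon)\}_{i\in \mathcal{I}}$ of $K$ with $n_i\ge N$, assigning the trivial weight $c_i\equiv 1$ produces a valid competitor for $\mathcal{W}^b_{N,\varepsilon}(K)$ with cost $\sum_i b(n_i)$, so the infimum over all such covers yields the claim. Letting $N\to\infty$ and $\varepsilon\to 0$ then gives the corresponding inequality $\mathcal{W}^b(K)\le \mathcal{M}^b(K)$.

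The substantive bound $\mathcal{M}^b_{N,3\varepsilon}(K)\le \mathcal{W}^b_{N,\varepsilon}(K)$ is where the real work lies. Given any weighted family $\{(B_{n_i}(x_i,\varepsilon),c_i)\}_{i\in \mathcal{I}}$ with $n_i\ge N$, $c_i>0$ and $\sum_i c_i\chi_{B_{n_i}(x_i,\varepsilon)}\ge \chi_K$, the plan is to extract a subset $\mathcal{J}\subseteq \mathcal{I}$ so that the $3\varepsilon$-inflated balls $\{B_{n_j}(x_j,3\varepsilon)\}_{j\in \mathcal{J}}$ form an ordinary cover of $K$ and satisfy $\sum_{j\in \mathcal{J}}b(n_j)\le \sum_{i\in \mathcal{I}}c_i b(n_i)$. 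Using lower semicontinuity of the superposition $\sum_i c_i\chi_{B_{n_i}(x_i,\varepsilon)}$ together with compactness of $K$, a routine approximation reduces matters to the case of finite $\mathcal{I}$.

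The main tool will be a Vitali-type selection adapted to Bowen balls, exploiting the triangle-inequality fact that if $B_{n_i}(x_i,\varepsilon)\cap B_{n_j}(x_j,\varepsilon)\ne \emptyset$ with $n_i\le n_j$ then $B_{n_j}(x_j,\varepsilon)\subseteq B_{n_i}(x_i,3\varepsilon)$ (since $d_{n_i}\le d_{n_j}$ pointwise). I would combine this with a dyadic stratification of the weights: split $\mathcal{I}$ into layers $\mathcal{I}_t=\{i:2^{-t}<c_i\le 2^{-t+1}\}$ and decompose $K=\bigcup_t K_t$ using a summable threshold (for example via the identity $\sum_{t\ge 1}1/t(t+1)=1$) so that on each $K_t$ the partial weighted sum over $\mathcal{I}_t$ already exceeds a controlled amount. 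On each $K_t$ I would then perform a greedy selection (processing balls in order of increasing $n_i$) to extract a pairwise disjoint subfamily whose $3\varepsilon$-inflations cover $K_t$; the disjointness together with the lower bound $c_i>2^{-t}$ converts the count of selected balls at level $t$ into a bound on $\sum_{\mathrm{selected}}b(n_i)$ expressed through the layer-restricted weighted sum.

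The principal obstacle is engineering the constants so that summing over all layers $t$ recovers $\sum_{i\in \mathcal{I}}c_i b(n_i)$ \emph{exactly}, rather than merely up to a multiplicative factor; this matters because the asserted sandwich is sharp, and any extraneous constant would prevent the final identity $\mathcal{M}^b(K)=\mathcal{W}^b(K)$. I expect this to require a careful choice of thresholds and bookkeeping compatible with the telescoping of weights across the dyadic layers, in the spirit of the analogous result \cite[Proposition 3.2]{Feng-Huang2012}; if needed, the dyadic splitting can be replaced by a continuous layer-cake formulation $c_i=\int_0^{c_i}dt$ in order to remove the residual factor. Once the finite-case bound is established, letting $N\to\infty$ and $\varepsilon\to 0$ and combining with the easy direction yields the identity $\mathcal{M}^b(K)=\mathcal{W}^b(K)$.
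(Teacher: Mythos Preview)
Your setup is correct (the easy direction, the reduction to a finite family via compactness and lower semicontinuity, and the Vitali-type inclusion $B_{n_j}(x_j,\varepsilon)\subset B_{n_i}(x_i,3\varepsilon)$ when the balls meet and $n_i\le n_j$), and you correctly identify that your dyadic stratification would only yield the bound up to a multiplicative constant. That is a genuine gap: a single Vitali selection inside a layer $\mathcal{I}_t$ gives you a cover of $K_t$, but there is no mechanism relating $\sum_{j\in\mathcal{J}_t} b(n_j)$ to $\sum_{i\in\mathcal{I}_t} c_i b(n_i)$ without the factor coming from the layer width, and summing over layers does not cancel it. The vague layer-cake suggestion $c_i=\int_0^{c_i}dt$ does not help by itself, because for a fixed level $t$ the balls $\{i:c_i>t\}$ need not cover $K$.

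The paper's trick, following \cite[Proposition 3.2]{Feng-Huang2012}, is different and gets the sharp constant directly. After reducing to a finite family, fix $0<t<1$, approximate the $c_i$ by rationals from above, and clear denominators so that each $c_i\in\mathbb{N}$ and the threshold becomes an integer $m\ge t$. Now run the Vitali selection \emph{iteratively}: starting from multiplicities $v_0=c$, select a disjoint subfamily $\mathcal{B}_1$ whose $3\varepsilon$-inflations cover $K$, decrement $v_0$ by $1$ on $\mathcal{B}_1$ to get $v_1$, and repeat. Disjointness of $\mathcal{B}_j$ guarantees that at most one multiplicity through each point drops, so $\sum_B v_j(B)\chi_B\ge m-j$ on $K$ and the family $\{B:v_j(B)\ge1\}$ still covers $K$; hence the process runs $m$ times. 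Each $\mathcal{B}_j$ gives $\mathcal{M}^b_{N,3\varepsilon}(K)\le\sum_{B\in\mathcal{B}_j}b(B)$, while summing over $j$ and telescoping $v_{j-1}-v_j$ shows the total selection cost is at most $\sum_i c_i b(n_i)$. Dividing by $m$ and letting $t\to1$ finishes. The missing idea in your proposal is precisely this ``peel off a Vitali layer and decrement'' iteration.
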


To prove Proposition \ref{equi of weighted and ordinary entropy function}, we need the following lemma, which resembles the proof of the classical Vitali covering lemma (for details see for example \cite[Theorem 2.1]{Mattila1995}).

\begin{lemma}\label{Modified Vitali covering lemma}
	Let $\varepsilon>0$, and let $\mathcal{B}=$ $\left\{B_{n_i}\left(x_i, \varepsilon\right)\right\}_{i \in \mathcal{I}}$ be a family of open Bowen balls in the system $(X, T)$. Then there exists in $\mathcal{B}$ a countable subfamily $\mathcal{B}^{\prime}=\left\{B_{n_i}\left(x_i, \varepsilon\right)\right\}_{i \in \mathcal{I}^{\prime}}$ of pairwise disjoint balls such that
	$$
	\bigcup_{B \in \mathcal{B}} B \subset \bigcup_{i \in \mathcal{I}^{\prime}} B_{n_i}\left(x_i, 3 \varepsilon\right)
	$$
\end{lemma}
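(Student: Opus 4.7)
The plan is to mimic the classical greedy proof of the Vitali covering lemma, treating the Bowen time $n_i$ as an inverse-radius parameter: smaller $n_i$ will play the role of a larger ball. The starting point is the following geometric observation. Suppose $B_{n_1}(x_1,\varepsilon) \cap B_{n_2}(x_2,\varepsilon) \neq \emptyset$ with $n_1 \leq n_2$. Picking $z$ in the intersection, for any $w \in B_{n_2}(x_2,\varepsilon)$ and any $0 \leq k < n_1$, three applications of the triangle inequality along the orbit give
\[
d(T^k x_1, T^k w) \leq d(T^k x_1, T^k z) + d(T^k z, T^k x_2) + d(T^k x_2, T^k w) < 3\varepsilon,
\]
which shows $B_{n_2}(x_2,\varepsilon) \subset B_{n_1}(x_1, 3\varepsilon)$.

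With this in hand I would partition $\mathcal{B}$ by Bowen time, setting $\mathcal{B}_k = \{B_{n_i}(x_i,\varepsilon) \in \mathcal{B} : n_i = k\}$ for each $k \in \mathbb{N}$, and construct disjoint subfamilies $\mathcal{B}_k' \subset \mathcal{B}_k$ recursively in $k$: using Zorn's lemma, let $\mathcal{B}_k'$ be a subfamily of $\mathcal{B}_k$ that is maximal with respect to inclusion among those whose members are pairwise disjoint and disjoint from every ball already chosen in $\mathcal{B}_1' \cup \cdots \cup \mathcal{B}_{k-1}'$. Then put $\mathcal{B}' = \bigcup_{k \geq 1} \mathcal{B}_k'$. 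Countability of $\mathcal{B}'$ comes for free: since $X$ is compact metric, hence separable, any pairwise disjoint family of nonempty open subsets of $X$ is countable, so each $\mathcal{B}_k'$, and hence the countable union $\mathcal{B}'$, is countable, regardless of the cardinality of the original index set $\mathcal{I}$.

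To verify the covering inclusion, fix any $B_n(x,\varepsilon) \in \mathcal{B}$. By maximality of $\mathcal{B}_n'$, the ball $B_n(x,\varepsilon)$ either already lies in $\mathcal{B}_n'$ or meets some ball of $\mathcal{B}_1' \cup \cdots \cup \mathcal{B}_n'$; in either case there exists $B_m(y,\varepsilon) \in \mathcal{B}'$ with $m \leq n$ and $B_n(x,\varepsilon) \cap B_m(y,\varepsilon) \neq \emptyset$, and the geometric observation above yields $B_n(x,\varepsilon) \subset B_m(y, 3\varepsilon)$, which is exactly what we want. The only mildly subtle point is the Zorn-style selection at each level when $\mathcal{I}$ is uncountable; separability rescues us by forcing any such maximal disjoint subfamily to be countable, so no further care beyond the level-by-level argument is needed.
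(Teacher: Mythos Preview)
Your proof is correct and the geometric inclusion $B_{n_2}(x_2,\varepsilon)\subset B_{n_1}(x_1,3\varepsilon)$ for $n_1\le n_2$ is exactly the heart of the matter. The organization, however, differs from the paper's. The paper runs a sequential greedy selection: at each step it picks a single ball with \emph{minimal} Bowen time among those not yet hit, removes everything it meets, and repeats; countability is then deduced from compactness of $X$, which forces $n_i\to\infty$ along the chosen sequence (only finitely many pairwise disjoint Bowen balls can have time bounded by a fixed $N$). You instead stratify by the Bowen time and apply Zorn level by level to extract a maximal disjoint subfamily, obtaining countability from separability of $X$ (any pairwise disjoint family of nonempty open sets is countable). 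Your route is a bit cleaner when $\mathcal{I}$ is uncountable and avoids the $n_i\to\infty$ argument and the ``first intersection'' bookkeeping; the paper's route is more constructive and does not invoke Zorn. Both are standard Vitali-type arguments, just packaged differently.
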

\begin{proof}
For simplicity, for each $B=B_{n_i}\left(x_i, \varepsilon\right)\in \mathcal{B}$, we denote $c(B)\doteq  x_i$ as the center of the ball $B$, and set $n(B)= n_i$. Now let us construct the required subfamily $\mathcal{B}^{\prime}$ by induction.
	
Denote $\mathcal{B}_0=\mathcal{B}$. Obviously we could choose some $B_1\in \mathcal{B}$ such that $
	n(B_1)=\min\limits_{B\in \mathcal{B}_0} n(B).$
	Put $\mathcal{B}_1= \left\lbrace B\in \mathcal{B}_0:B\cap B_1=\emptyset \right\rbrace$. 	
	If $\mathcal{B}_1=\emptyset$, we stop the induction process and set $\mathcal{B}^{\prime}=\{B_1\}$. Otherwise, we could choose $B_2\in \mathcal{B}_1$ such that $
	n(B_2)=\min\limits_{B\in \mathcal{B}_1} n(B).$

	Now assume that we have chosen $\mathcal{B}_0, \mathcal{B}_1, \cdots, \mathcal{B}_{p - 1}$ and $B_1, B_2, \cdots, B_p$, $p\in \mathbb{N}$, satisfying $$
\mathcal{B}_i = \left\lbrace B\in \mathcal{B}_{i-1}:B\cap B_i=\emptyset     \right\rbrace,\ \ \ \  \forall 1\le i\le p - 1,$$
$$n(B_i)=\min\limits_{B\in \mathcal{B}_{i-1}} n(B),\ \ \ \  \forall 1\le i\le p.$$

Put $
	\mathcal{B}_p = \left\lbrace B\in \mathcal{B}_{p-1}:B\cap B_p=\emptyset     \right\rbrace. $
	If $\mathcal{B}_p=\emptyset$, we stop the induction process and set $\mathcal{B}^{\prime}= \{B_1,\cdots, B_p\}$. Otherwise, we could choose $B_{p+1}\in \mathcal{B}_p$ such that $
	n(B_{p+1})=\min\limits_{B\in \mathcal{B}_p} n(B).$
If the above procedure never stops by this induction process, we shall get a countable subfamily from $\mathcal{B}$, denoted by $\mathcal{B}^\prime$, consisting of countably many disjoint open Bowen balls.

As the proof is completely the same, we only manage the case that $\mathcal{B}^\prime$ is an infinite family, that is, $\mathcal{B}^\prime = \{B_i: i\in \mathbb{N}\}$. Fix arbitrarily given $B\in \mathcal{B}$. We prove firstly that $B$ must intersect $B_i$ for some $i\in \mathbb{N}$.
Write $B_i = B_{n_i}\left(x_i, \varepsilon\right)$ for each $i\in \mathbb{N}$. Note that by the construction $n_1\le n_2\le \cdots$. In fact, $n_i$ tends to $\infty$ when $i$ goes to $\infty$, as $\mathcal{B}^\prime$ is an infinite pairwise disjoint family (by the above construction) in the compact set $X$. However, for each $i\in \mathbb{N}$ one has $n (B)\ge n_{i+ 1}$ once $B$ has empty intersection with all of $B_1, \cdots, B_i$, which implies that $B$ must intersect $B_i$ for some $i\in \mathbb{N}$ (otherwise, we conclude $n (B) = \infty$, impossible). Now let $i^*\in \mathbb{N}$ satisfy $B\cap B_{i^* + 1}\neq \emptyset$ and $B\cap B_j= \emptyset$ for each $1\leq j\le i^*$ (such $i^*\in \mathbb{N}$ exists uniquely), then by above construction $n(B)\geq n_{i^* + 1}$ and $c (B)\in B_{n_{i^* + 1}} (x_{i^* + 1}, 2 \varepsilon)$ (we remark that $B\cap B_{i^* + 1}\neq \emptyset$ and $n(B)\geq n_{i^* + 1}$), and so
$$
	B_{n_{i^* + 1}} (x_{i^* + 1}, 3 \varepsilon)\supset B_{n_{i^* + 1}} (c(B), \varepsilon)\supset B.$$
Finally, the conclusion follows from the arbitrariness of $B\in \mathcal{B}$.
\end{proof}

Now we are ready to prove Proposition \ref{equi of weighted and ordinary entropy function}, which originates from \cite[Proposition 3.2]{Feng-Huang2012}.

\begin{proof}[Proof of Proposition \ref{equi of weighted and ordinary entropy function}]
The direction of $\mathcal{W}_{N, \varepsilon}^b(K) \le \mathcal{M}_{N, \varepsilon}^b(K)$ follows directly from the definition (\ref{def of WB ent func}). Now let us prove the direction of $\mathcal{M}_{N, 3 \varepsilon}^{b}(K) \le \mathcal{W}_{N, \varepsilon}^b(K)$.

It suffices to prove that, for any given countable family $\left\{\left(B_{n_i}\left(x_i, \varepsilon\right), c_i\right)\right\}_{i\in \mathcal{I}}$ satisfying
$\sum_{i\in \mathcal{I}} c_i \chi_{B_{n_i}\left(x_i, \varepsilon\right)} \ge \chi_K,
$
with $0< c_i <\infty, x_i \in X, n_i \ge N$ for each $i\in \mathcal{I}$, one has
\begin{equation} \label{need-7}
\mathcal{M}_{N, 3 \varepsilon}^{b}(K) \le \sum_{i \in \mathcal{I}} c_i b(n_i).
\end{equation}

Fix arbitrarily given $0<t<1$. Observe that, for each nonempty set $\mathcal{J}\subset \mathcal{I}$, the set
$$
B_\mathcal{J}\doteq \left\{x\in X: \sum_{i\in \mathcal{J}} c_i \chi_{B_{n_i}\left(x_i, \varepsilon\right)} (x) > t\right\}
$$
is open, which covers $K$ when $\mathcal{J} = \mathcal{I}$ (by the assumption). Note that $\mathcal{I}$ is a countable index set, by the compactness of $K$, there exists finite nonempty $\mathcal{K}\subset \mathcal{I}$ satisfying $B_\mathcal{K}\supset K$.
We may assume, by approximating $c_i$'s from above, that each $c_i, i\in \mathcal{K}$ is a positive rational; furthermore, by multiplying with a common denominator on both sides, we have $c_i\in \mathbb{N}$ for each $i\in \mathcal{K}$ and then take $m$ to be the least integer with $m \geq t$.
  For simplicity, we write $\mathcal{K} = \{1, \cdots, k\}$ with $k\in \mathbb{N}$, and $B_i = B_{n_i}\left(x_i, \varepsilon\right)$ for each $i\in \mathcal{K}$. Obviously we may assume $B_1, \cdots, B_k$ are different Bowen balls. Thus, with these newly chosen parameters one has
$$
\left\{x\in X: \sum_{i = 1}^k c_i \chi_{B_i} (x) \ge m\right\}\supset K.
$$

Write $\mathcal{B}= \left\{B_1, \cdots, B_k\right\}$ and $3 B_i = B_{n_i}\left(x_i, 3 \varepsilon\right)$ for $B_i = B_{n_i}\left(x_i, \varepsilon\right)$.
 Slightly abusing the gauge function $b$, we may assume that $b$ is also defined on $\mathcal{B}$ via $b: B_i\mapsto n_i$.

 Define $u: \mathcal{B} \rightarrow \mathbb{N}, B_i\mapsto c_i$. In the following, let us start with $v_0=u$.
As the set $K$ is covered by the family $\mathcal{B}$ by above construction, we choose by Lemma \ref{Modified Vitali covering lemma} a disjoint subfamily $\mathcal{B}_1\subset \mathcal{B}$ with $K \subset \bigcup_{B \in \mathcal{B}_1} 3 B$.
By this procedure, we may define inductively, for each $j= 2, \cdots, m$, a function $v_{j- 1}: \mathcal{B} \rightarrow \mathbb{N}$ and a disjoint subfamily $\mathcal{B}_j\subset \mathcal{B}$, such that
\begin{eqnarray}
	& & v_{j-1}(B)= \begin{cases}
v_{j-2}(B)-1\ \ \ \ & \text {    if } B \in \mathcal{B}_{j-1} \\
v_{j-2}(B)\ \ \ \ & \text {    if } B \in \mathcal{B} \backslash \mathcal{B}_{j-1}
\end{cases}, \label{constr}\\
	& & \mathcal{B}_j \subset\left\{B \in \mathcal{B}: v_{j-1}(B) \geq 1\right\} \ \text{ and }\ K \subset \bigcup_{B \in \mathcal{B}_j} 3 B. \label{constr1}
\end{eqnarray}
Furthermore, we can define a function $v_m: \mathcal{B} \rightarrow \mathbb{Z}_+$ similarly to the formula \eqref{constr}.
This is possible since the family $\left\{B \in \mathcal{B}: v_j(B) \geq 1\right\}$ covers $K$, which in fact follows from
\begin{equation} \label{ineq}
\sum_{v_j(B) \geq 1} v_j(B)\cdot \chi_B (x)\ge m - j
\end{equation}
for each $j= 0, 1, \cdots, m - 1$ and any $x\in K$. From the definition of $v_0$, it is easy to obtain \eqref{ineq} for $j = 0$; and then, combined with the disjointness of the family $\mathcal{B}_{j- 1}$, it is not hard to obtain \eqref{ineq} for $j= 1, \cdots, m - 1$ by induction.
With the above construction, we have
	$$
	\begin{aligned}
		m\cdot \mathcal{M}_{N, 3 \varepsilon}^{b}(K) & \leq \sum_{j=1}^m \sum_{B \in \mathcal{B}_j} b(B) \ (\text{combining definition of $\mathcal{M}_{N, 3 \varepsilon}^{b}(K)$ with \eqref{constr1}}) \\
 & = \sum_{j=1}^m \sum_{B \in \mathcal{B}_j}{\big (}v_{j-1}(B)-v_j(B) {\big )} b(B) \ (\text{using \eqref{constr}}) \\
 & \le \sum_{j=1}^m \sum_{B \in \mathcal{B}}{\big (}v_{j-1}(B)-v_j(B) {\big )} b(B) \\
		& = \sum_{B \in \mathcal{B}} \sum_{j=1}^m {\big (}v_{j-1}(B)-v_j(B){\big )} b(B)\leq \sum_{B \in \mathcal{B}} u(B) b(B)\le \sum_{i \in \mathcal{I}} c_i b(n_i).
	\end{aligned}
	$$
That is,
$$\mathcal{M}_{N, 3 \varepsilon}^{b}(K) \leq \sum_{i \in \mathcal{I}} \frac{c_i}{m} b(n_i).$$
Note that we may assume, by approximating $c_i$'s close enough from above using positive rationals, then the inequality \eqref{need-7} follows from arbitrariness of $0<t<1$.
\end{proof}

\subsubsection{Proof of $(3) \Longrightarrow (1)$ of Theorem \ref{MME Bowen-s6}}\

\smallskip

To prove $(3) \Longrightarrow (1)$ of Theorem \ref{MME Bowen-s6}, we also need the following dynamical Frostman's lemma generalizing \cite[Lemma 3.4]{Feng-Huang2012} where they have dealt with the special case of $b(n) = e^{-ns}$ for each $n\in \mathbb{N}$. As commented in \cite{Feng-Huang2012}, the proof originates from Howroyd's elegant argument of traditional Frostman's Lemma (see for example \cite[Theorem 2]{Howroyd1995} or \cite[Theorem 8.17]{Mattila1995}). We provide here a proof of it for completeness.

\begin{prop}\label{Dynamic Frostman lemma}
Let $K \subset X$ be a compact set. Suppose that $b: \N\to \mathbb{R}_+$ is a gauge function, and
$N \in \mathbb{N}, \varepsilon>0$ satisfy $\mathcal{W}_{N, \varepsilon}^b(K)>0$. Then there exists $\mu\in \mathcal{M} (X)$ such that
	$$\mu (K) =1\ \text{and}\
	\mu\left(B_n(x, \varepsilon)\right) \le \frac{1}{c} b(n)
	$$
for any $x \in X$ and each $n \ge N$, where $c = \mathcal{W}_{N, \varepsilon}^b(K)$.
\end{prop}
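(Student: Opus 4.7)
The strategy is to adapt Howroyd's Hahn--Banach proof of the classical Frostman lemma to the dynamical setting, using $\mathcal{W}_{N,\varepsilon}^b$ in place of the classical weighted Hausdorff content. Set $c = \mathcal{W}_{N,\varepsilon}^b(K)$; this is finite (since the compact $K$ admits a finite cover by Bowen $\varepsilon$-balls of order $N$) and positive by hypothesis. For each bounded $f : X \to \mathbb{R}$, define
$$
p(f) = \frac{1}{c}\inf\left\{\sum_{i\in\mathcal{I}} c_i\, b(n_i) :\ \sum_{i\in\mathcal{I}} c_i\, \chi_{B_{n_i}(x_i,\varepsilon)} \geq f\cdot \chi_K,\ 0<c_i<\infty,\ x_i\in X,\ n_i\geq N\right\},
$$
with the convention that the empty family is admissible and gives sum $0$, so that $p(f)=0$ whenever $f\cdot\chi_K\leq 0$. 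I would first verify that $p$ is a sublinear functional on $C(X)$: positive homogeneity $p(\alpha f)=\alpha p(f)$ for $\alpha\geq 0$ follows by rescaling the coefficients $c_i$; subadditivity $p(f+g)\leq p(f)+p(g)$ follows because the union of admissible families for $f$ and $g$ is admissible for $f+g$; $p$ is monotone (covers of $g\cdot\chi_K$ cover $f\cdot\chi_K$ when $f\leq g$) and therefore finite on $C(X)$, since $p(f)\leq p(\|f\|_\infty\mathbf{1})=\|f\|_\infty p(\mathbf{1})=\|f\|_\infty$. The key normalisation is $p(\mathbf{1})=\frac{1}{c}\mathcal{W}_{N,\varepsilon}^b(\chi_K)=1$.

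\textbf{Hahn--Banach and Riesz representation.} On the one-dimensional subspace $\mathbb{R}\cdot\mathbf{1}$, define $L_0(\alpha\mathbf{1})=\alpha$; since $p\geq 0$ always, we have $L_0\leq p$ there. Hahn--Banach extends $L_0$ to a linear functional $L:C(X)\to\mathbb{R}$ with $L\leq p$. If $f\geq 0$ on $X$, then $(-f)\cdot\chi_K\leq 0$, so $p(-f)=0$, whence $-L(f)=L(-f)\leq 0$; thus $L$ is positive. Since $|L(f)|\leq\max\{p(f),p(-f)\}\leq\|f\|_\infty$, the functional $L$ is bounded, and the Riesz representation theorem yields a positive Borel measure $\mu$ on $X$ with $L(f)=\int f\,d\mu$; the normalisation $\mu(X)=L(\mathbf{1})=1$ gives $\mu\in\mathcal{M}(X)$.

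\textbf{Support in $K$ and pointwise estimate.} For $\mu(K)=1$, take any $f\in C(X)$ with $0\leq f\leq 1$ and $f|_K=0$; then $f\cdot\chi_K=0$ so $p(f)=0$, giving $\int f\,d\mu=L(f)\leq 0$, and combined with positivity $\int f\,d\mu=0$. Since $X\setminus K$ is open and $X$ is normal, Urysohn yields an increasing sequence of such $f$'s with $f_k\uparrow \chi_{X\setminus K}$, so $\mu(X\setminus K)=0$. For the pointwise bound, fix $x\in X$ and $n\geq N$; the Bowen ball $B_n(x,\varepsilon)$ is open, so there is a sequence $f_k\in C(X)$ with $0\leq f_k\leq \chi_{B_n(x,\varepsilon)}$ and $f_k\uparrow \chi_{B_n(x,\varepsilon)}$. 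Each $f_k\cdot \chi_K$ is dominated by the single-element admissible family $\{(B_n(x,\varepsilon),1)\}$, giving $\int f_k\,d\mu=L(f_k)\leq p(f_k)\leq \frac{1}{c}b(n)$; monotone convergence yields $\mu(B_n(x,\varepsilon))\leq \frac{1}{c}b(n)$, as desired.

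\textbf{Main obstacle.} The principal technical point is setting up $p$ so that sublinearity is honest on all of $C(X)$ (including sign changes on and off $K$); this is handled by the convention on empty covers, which makes $p$ monotone and allows the positivity-of-$L$ step to go through. Everything else is a standard translation of Howroyd's argument, together with the observation that Bowen balls are open in $X$, which is what allows the final approximation by continuous functions from below.
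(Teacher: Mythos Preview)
Your proof is correct and follows essentially the same approach as the paper: both define the sublinear functional $p(f)=\frac{1}{c}\mathcal{W}_{N,\varepsilon}^b(\chi_K\cdot f)$, extend the functional $\alpha\mathbf{1}\mapsto\alpha$ by Hahn--Banach, and apply Riesz representation. The only cosmetic difference is in the last two verifications: the paper approximates $B_n(x,\varepsilon)$ and $X\setminus K$ from inside by compact sets and invokes inner regularity, whereas you approximate their indicators from below by increasing sequences in $C(X)$ and invoke monotone convergence---both are standard and equivalent on a compact metric space.
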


\begin{proof}
		Note $c<\infty$ as $K \subset X$ is compact.
Let $C(X)$ be the set of all continuous real-valued functions on $X$, equipped with the supremum norm $\|\bullet\|_{\infty}$. We introduce a functional $p$ as
	$$
	p: f\mapsto \frac{1}{c}\cdot \mathcal{W}_{N, \varepsilon}^b\left(\chi_K \cdot f\right).
	$$
If let $\boldsymbol{1} \in C(X)$ denote the constant 1 function over the space $X$. It is easy to verify that
	\begin{itemize}

		\item $p(\boldsymbol{1})=1$ and $p(f+g) \le p(f)+p(g)$ for any $f, g \in C(X)$.

		\item $p(t f)=t p(f)$ for any $t \ge 0$ and each $f \in C(X)$.

		\item $0 \le p(f) \le\|f\|_{\infty}$ for any $f \in C(X)$, and $p(g)=0$ for $g \in C(X)$ with $g \le 0$.
	\end{itemize}

	By the Hahn-Banach theorem, we can extend the linear functional $t \cdot \boldsymbol{1} \mapsto t \cdot p(\mathbf{1}), t \in \mathbb{R}$ from the subspace of all constant functions, to a linear functional $L: C(X) \rightarrow \mathbb{R}$, satisfying
	$$
	L(\boldsymbol{1})=p(\boldsymbol{1})=1 \quad \text { and } \quad-p(-f) \le L(f) \le p(f) \text { for any } f \in C(X) .
	$$
	If $f \in C(X)$ satisfies $f \ge 0$, then $p(-f)=0$ and so $L(f) \ge 0$. Hence, combining with the fact that $L(\boldsymbol{1})=1$, we can use the Riesz representation theorem to find $\mu\in \mathcal{M} (X)$ with $L(f)=\int f d \mu$ for each $f \in C(X)$. Now we prove that $\mu$ is the required measure.
	
For any given $x \in X$ and $n \ge N$, we take arbitrarily compact $E \subset B_n(x, \varepsilon)$. By the Uryson lemma, there exists $f \in C(X)$ satisfying $0 \le f \le 1$ and $f(y)=1$ for any $y \in E$ and $f(y)=0$ for any $y \in X \backslash B_n(x, \varepsilon)$. Clearly $\mu(E) \le \int f d \mu = L(f) \le p(f)$. Since $f \cdot \chi_K \le \chi_{B_n(x, \varepsilon)}$ and $n \ge N$, we have by the definition $\mathcal{W}_{N, \varepsilon}^b \left(\chi_K \cdot f\right) \le b(n)$, then $p(f) \le \frac{1}{c} b(n)$ and so $\mu(E) \le \frac{1}{c} b(n)$. It follows that
$\mu\left(B_n(x, \varepsilon)\right) \le \frac{1}{c} b(n)$ by the arbitrariness of compact $E \subset B_n(x, \varepsilon)$.
Similarly, take arbitrarily compact $F \subset X \backslash K$, by the Uryson lemma we choose $g \in C(X)$ satisfying $0 \le g \le 1$ and $g(x)=1$ for any $x \in F$ and $g(x)=0$ for any $x \in K$.
Then $g \cdot \chi_K \equiv 0$ and so $p(g)=0$. Hence $\mu(F) = \int g d \mu \le L(g) \le p(g)=0$. It follows that $\mu(X \backslash K)=0$, equivalently, $\mu(K)=1$.	This finishes the proof.
\end{proof}

Now we are ready to prove $(3) \Rightarrow (1)$  of Theorem \ref{MME Bowen-s6}.

\begin{proof}[Proof of $(3) \Rightarrow (1)$ of Theorem \ref{MME Bowen-s6}]
Assume, by the item $(3)$, that there exists some gauge function $b$ such that $0< \mathcal{M}^b(Z) {\big (} = \mathcal{W}^b(Z)$ by Proposition \ref{equi of weighted and ordinary entropy function} ${\big )}$ and $\lim\limits_{n\to \infty}e^{ns}b(n)=0$ for any $s < h_\mathrm{top}^B (T, Z)$ (we fix $s$ temporarily). Then there exists $N \in \mathbb{N}, \varepsilon>0$ such that $c:=
	\mathcal{W}_{N, \varepsilon}^b(Z)>0$, and so by applying Proposition \ref{Dynamic Frostman lemma} to the compact set $Z$, there exists $\mu \in \M(X)$ such that $\mu(Z)=1$ and $\mu\left(B_n(x, \varepsilon)\right) \le \frac{1}{c} b(n)$ for any $x \in X$ and each $n \ge N$. As we have assumed $\lim\limits_{n\to \infty}e^{ns}b(n)=0$, one has that, for $\mu$-a.e. $x\in X$,
$$\underline{h}_\mu(T, x) \ge \liminf_{n \rightarrow\infty} - \frac{1}{n} \log \mu\left(B_{n}(x, \varepsilon)\right)
\ge \liminf\limits_{n \to \infty} -\frac{1}{n} \log \frac{1}{c}b(n)\geq s,$$
and so $\underline{h}_\mu(T)\ge s$. Finally, $\underline{h}_\mu(T)\ge h_\mathrm{top}^B (T, Z)$ follows from arbitrariness of $s < h_\mathrm{top}^B (T, Z)$ and then the measure $\mu$ is a measure of maximal entropy for $Z$. This ends the proof.
	\end{proof}

\subsection{Proof of $(2) \Longrightarrow (3)$ of Theorem \ref{MME Bowen-s6} for $h$-expansive systems}\

\smallskip

Note that this in fact follows from the result stated below.

\begin{theorem}\label{construction_b_nonzero_meas}
	Let $K\subset X$ be a compact subset and $\varepsilon>0$. For each $i\in \mathbb{N}$ let $b_i$ be a gauge function satisfying $\lim\limits_{n\to \infty}\frac{b_{i+1}(n)}{b_i(n)}=0$. Then one of the following statements holds:
\begin{enumerate}

\item
The set $K$ is \emph{$(2\varepsilon, \{b_i\}_{i\in \mathbb{N}})$-null} (or just simply \emph{$2\varepsilon$-null}), that is,
 the set $K$ can be expressed as the union of a sequence $\{K_i\}_{i\in \mathbb{N}}$ of analytic sets, in the form $K = \bigcup_{i\in \mathbb{N}} K_i$ with $\M_{2\varepsilon}^{b_i} (K_i) = 0$ for each $i\in \mathbb{N}$.

\item
There is a gauge function $b$ such that $\M_{\varepsilon}^{b}(K)>0$ and
$\lim\limits_{n\to\infty}\frac{b(n)}{b_i(n)}=0$ for any $i\in \mathbb{N}$.	
\end{enumerate}
\end{theorem}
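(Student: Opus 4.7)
My plan is a dichotomy argument: assume (1) fails and produce a gauge $b$ realising (2). First, I would stratify $K$ iteratively by setting $F_0=K$ and, for $i\ge 1$,
$$
F_i=\{x\in F_{i-1}:\mathcal{M}^{b_i}_{2\varepsilon}(F_{i-1}\cap U)>0\text{ for every open }U\ni x\}.
$$
Each $F_i$ is closed in $F_{i-1}$ and hence compact. Covering $F_{i-1}\setminus F_i$ by the countably many $F_{i-1}\cap U_j$ (over a countable base $\{U_j\}$ of $X$) for which $\mathcal{M}^{b_i}_{2\varepsilon}(F_{i-1}\cap U_j)=0$, the gauge-function analog of Proposition \ref{fact}(2) gives $\mathcal{M}^{b_i}_{2\varepsilon}(F_{i-1}\setminus F_i)=0$. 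If the intersection $F_\infty:=\bigcap_{i\ge 1} F_i$ were empty, then the Borel (hence analytic) decomposition $K=\bigcup_{i\ge 1}(F_{i-1}\setminus F_i)$ would witness (1), contradicting the assumption. So $F_\infty\ne\emptyset$, and every $x\in F_\infty$ enjoys the density property that $\mathcal{M}^{b_i}_{2\varepsilon}(K\cap U)>0$ for every $i$ and every open $U\ni x$.

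Next, I would mimic the Cantor-like construction of \S\ref{Existence of measure of maximal Packing entropy}: by induction on $p\ge 1$, build a strictly increasing sequence $N_0=0<N_1<N_2<\cdots$ in $\mathbb{N}$ together with, at stage $p$, a finite family $\{\overline{B}_{m_p(y)}(y,\varepsilon)\}_{y\in E_p}$ of pairwise disjoint Bowen balls centred at points of $F_\infty$ with $m_p(y)\in[N_p,N_{p+1})$, and a finite measure $\mu_p$ supported on $E_p$ with $\mu_p(\{y\})$ proportional to $b_p(m_p(y))$. The refinement from stage $p$ to stage $p+1$ is done inside a small neighborhood of each $y\in E_p$, where the density $\mathcal{M}^{b_{p+1}}_{2\varepsilon}(K\cap U)>0$ combined with Proposition \ref{equi of weighted and ordinary entropy function} (applied at scale $\varepsilon'=\tfrac{2\varepsilon}{3}$) and a Lemma \ref{F H Lem4.1}-type packing/selection argument yields the next stage's disjoint $\varepsilon$-Bowen balls with indices in $[N_{p+1},N_{p+2})$ and total mass $\sum_{y'\subset\text{child}(y)}\mu_{p+1}(\{y'\})$ equal to $\mu_p(\{y\})$ up to a factor $1\pm 2^{-p}$. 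Simultaneously I define the gauge $b$ piecewise by $b(n)=\delta_p\,b_p(n)$ on $n\in[N_p,N_{p+1})$ and choose the $\delta_p$ and $N_{p+1}$ so that, for every fixed $i$, $\delta_p\,b_p(n)/b_i(n)\to 0$ as $n\to\infty$; this is possible by the standing hypothesis $b_{p+1}/b_p\to 0$ and a telescoping argument.

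Finally, taking a weak-$*$ limit $\mu$ of $\{\mu_p\}$ yields a probability measure supported on $\bigcap_p\overline{\bigcup_{q\ge p}E_q}\subset F_\infty\subset K$. The nested disjoint structure of the stage-$p$ balls, exactly as in \S\ref{Existence of measure of maximal Packing entropy}, gives an estimate $\mu(B_n(x,\varepsilon))\le C\,b(n)$ for all $x\in X$ and all $n\ge N_1$. A direct cover-counting argument then yields $\mathcal{M}^b_\varepsilon(K)\ge 1/C>0$, establishing (2).

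The hard part will be tuning the parameters $\{N_p\}$ and $\{\delta_p\}$ to balance two competing demands: the local Frostman bound $\mu(B_n(x,\varepsilon))\le C b(n)$ requires $\delta_p$ not to be too small relative to the mass available from $\mathcal{M}^{b_{p+1}}_{2\varepsilon}(K\cap U)$, while the asymptotics $b(n)/b_i(n)\to 0$ forces $\delta_p$ not to grow too fast and $N_p$ to be chosen sufficiently large. A secondary technical obstacle is the scale gap between the $2\varepsilon$ appearing in the hypothesis of (1) and the $\varepsilon$ appearing in the conclusion (2); this is bridged by Proposition \ref{equi of weighted and ordinary entropy function} together with the Vitali-type Lemma \ref{Modified Vitali covering lemma}.
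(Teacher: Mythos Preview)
Your route is genuinely different from the paper's. The paper never builds a measure on $K$; instead it follows Rogers' 1962 technique for Hausdorff measures and constructs the gauge $b$ directly as the pointwise limit of gauge functions $d_i$ (with $d_i=b_i$ on $[\ell_{i-1},\infty)$ for a carefully chosen sequence $\ell_i\nearrow\infty$), together with auxiliary scales $2\varepsilon=\varepsilon_1>\varepsilon_2>\cdots\searrow\varepsilon$ and thresholds $1=\theta_1>\theta_2>\cdots\searrow\theta>0$. The inductive invariant (property (P3)) is purely a \emph{covering} statement: at each stage no $\varepsilon_{i-1}$-Bowen-cover with $\sum d_{i-1}(n_j)<\theta_{i-1}$ can leave only an $\varepsilon_{i-1}$-null remainder of $K$. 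The inequality $\mathcal{M}^b_\varepsilon(K)\ge\theta$ then follows by a short compactness/contradiction argument (a hypothetical cheap $\varepsilon$-cover is finite, hence its indices lie below some $\ell_M$, so $b=d_M$ on them and one contradicts (P3) at scale $\varepsilon_M>\varepsilon$).

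Your Cantor/Frostman plan has a real gap at the ``Lemma~\ref{F H Lem4.1}-type packing/selection'' step. Lemma~\ref{F H Lem4.1} relies on $P^s_\varepsilon=\infty$, a \emph{packing} quantity; there is no analog when all you know is the \emph{covering} bound $\mathcal{M}^{b_{p+1}}_{2\varepsilon}(K\cap U)>0$. The bridge you propose, Proposition~\ref{equi of weighted and ordinary entropy function} at $\varepsilon'=2\varepsilon/3$ followed by Frostman (Proposition~\ref{Dynamic Frostman lemma}), yields a measure $\nu$ with $\nu(B_n(x,2\varepsilon/3))\le c^{-1}b_{p+1}(n)$; but $2\varepsilon/3<\varepsilon$, so this gives \emph{no} control on $\nu(B_n(x,\varepsilon))$ and hence cannot produce the pairwise disjoint closed $\varepsilon$-Bowen balls with prescribed $b_{p+1}$-mass that your induction requires. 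To get Frostman at scale $\varepsilon$ you would need $\mathcal{M}^{b_{p+1}}_{3\varepsilon}>0$, which does not follow from $\mathcal{M}^{b_{p+1}}_{2\varepsilon}>0$; and passing from $2\varepsilon/3$-balls to $\varepsilon$-balls by a covering argument costs a multiplicity that is exponential in $n$ unless the system is $h$-expansive, which Theorem~\ref{construction_b_nonzero_meas} does not assume. The paper's Rogers-style proof sidesteps this obstruction entirely by never leaving the covering side and absorbing the $2\varepsilon$-to-$\varepsilon$ gap via the decreasing intermediate scales $\varepsilon_i\searrow\varepsilon$.
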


Let us prove firstly $(2) \Rightarrow (3)$ in Theorem \ref{MME Bowen-s6}, with help of Theorem \ref{construction_b_nonzero_meas}. We pick $0< s_1< s_2< \cdots \nearrow h_\mathrm{top}^B(T, Z)$, and choose the parameter $\varepsilon>0$ by Lemma \ref{wangtao expan} for $h$-expansive system $(X, T)$ such that
   $h_\mathrm{top}^B(T, E, 2 \varepsilon)=h_\mathrm{top}^B(T, E)$ for all $E\subset X$.
Now assume, by Theorem \ref{MME Bowen-s6} (2), that the set $Z$ has no increasing countable slice, and so it cannot be written as the union of a sequence of analytic sets $\bigcup_{i\in \mathbb{N}} K_i$ with $\M_{2 \varepsilon}^{s_i}(K_i)=0$ for each $i\in \mathbb{N}$ (otherwise, $h_\mathrm{top}^B(T, K_i, 2 \varepsilon)=h_\mathrm{top}^B(T, K_i) \le s_i< h_\mathrm{top}^B(T, Z)$ for each $i\in \mathbb{N}$, a contradiction to the assumption).
That is, if we define a gauge function $b_i: n\mapsto e^{- n s_i}$ for each $i\in \mathbb{N}$, then $Z$ is not $(2 \varepsilon, \{b_i\}_{i\in \mathbb{N}})$-null, and so by Theorem \ref{construction_b_nonzero_meas} there is a gauge function $b$ such that $\M_{\varepsilon}^{b} (Z)>0$ and $\lim\limits_{n\to\infty} b(n) e^{n s_i}=0$ for any $i\in \mathbb{N}$. This arrives at Theorem \ref{MME Bowen-s6} (3).

\smallskip
	
Now let us prove Theorem \ref{construction_b_nonzero_meas}.

\begin{proof}[Proof of Theorem \ref{construction_b_nonzero_meas}]
Assume that $K$ is a compact set which is not $2\varepsilon$-null. We are going to construct inductively a sequence of integers $\{\ell_i\}_{i\in \mathbb{N}}$ and a sequence of gauge functions $\{d_i\}_{i\in \mathbb{N}}$, based on two arbitrarily chosen sequences $\{\varepsilon_i\}_{i\in \mathbb{N}}$ and $\{\theta_i\}_{i\in \mathbb{N}}$ with
\begin{equation} \label{para}
1 = \theta_1> \theta_2> \cdots \searrow \theta> 0\ \ \text{ and }\ \ 2 \varepsilon = \varepsilon_1> \varepsilon_2> \cdots \searrow \varepsilon.
\end{equation}
The ideal gauge function $b$ will be defined as the limit function of $\{d_i\}_{i\in \mathbb{N}}$ with $\M_{\varepsilon}^{b}(K)\ge \theta> 0$. Our construction is inspired by Rogers' work on Hausdorff measures \cite{Rogers1962}.
	
Firstly if we assume that there exists $L\in \mathbb{N}$ such that, for any integer $\ell\geq L$, there exists a sequence of Bowen balls ${\bigg \{}F_j^{(\ell)}=B_{n_j^{(\ell)}} \left(x_j^{(\ell)},\varepsilon_1\right){\bigg \}}_{j\in \mathbb{N}}$ satisfying
$$\sum_{j\in \mathbb{N}} b_1\left(n_j^{(\ell)}\right)<\theta_1,\ \ \ \min_{j\in \mathbb{N}} n_j^{(\ell)}\geq \ell, \ \ \
K_\ell:= K\setminus \bigcup_{j\in \mathbb{N}} F_j^{(\ell)}\ \text{ is $\varepsilon_1$-null}.$$
It is easy to check that, by Proposition \ref{fact} (2), $\bigcup_{\ell\geq L}K_\ell$, as union of a countable family of $\varepsilon_1$-null sets, is also $\varepsilon_1$-null. Note that $$K_0\doteq K\setminus \bigcup_{\ell\geq L} K_\ell \subset \bigcup_{j\in \mathbb{N}} F_j^{(i)}$$
for each $i\ge L$, it follows that $\M_{\varepsilon_1}^{b_1}(K_0)\le \theta_1$ as $\sum_{j\in \mathbb{N}} b_1\left(n_j^{(i)}\right)<\theta_1$ for any $i\ge L$ and then $\M_{\varepsilon_1}^{b_2}(K_0)= 0$ by Lemma \ref{com}. Particularly, the set $K_0$ is $\varepsilon_1$-null, a contradiction to the assumption that $K = K_0\cup \bigcup_{\ell\geq L} K_\ell$ is not $\varepsilon_1$-null (reminder that $\varepsilon_1 = 2 \varepsilon$).

Thus, by above arguments and the assumption $\lim\limits_{n\to \infty}\frac{b_{2}(n)}{b_1(n)}=0$, we can pick $\ell_1\in \mathbb{N}$ large enough such that $b_1(n)>b_2(n)$ for all $n\geq \ell_1$ and, for the gauge function $d_1 := b_1$, no sequence $\{F_j=B_{n_j}(x_j,\varepsilon_1)\}_{j\in \mathbb{N}}$ of Bowen balls satisfying
$$\sum_{j\in \mathbb{N}}d_1(n_j)<\theta_1,\ \ \
\min_{j\in \mathbb{N}} n_j\geq \ell_1,\ \ \
K\setminus \bigcup_{j\in \mathbb{N}} F_j\ \text{ is $\varepsilon_1$-null}.$$

Now, for each $i\geq 2$, we assume that integers $\ell_1, \cdots, \ell_{i-1}$ and gauge functions $d_1, \cdots, d_{i-1}$ have been constructed, such that
\begin{enumerate}

		\item[(P1)] Integers $1<\ell_1< \cdots <\ell_{i-1}$ satisfy $b_{p-1} (n) > b_p (n)$
for all $n\ge \ell_{p- 1}$ and $2\le p\le i$.
		
		\item[(P2)] Gauge functions $d_1, \cdots, d_{i-1}$ satisfy that, for every $i\ge p\geq 3$,
$$\begin{aligned}
	d_{p-1}(n) = \begin{cases}d_{p-2}(n) &\text { if } 1\le n <\ell_{p-2}\\
	b_{p-1}(n)&\text { if } n \geq \ell_{p-2}\end{cases}.
\end{aligned}$$
		
		\item[(P3)] For each $i\ge p\geq 2$, no sequence $\{F_j=B_{n_j}(x_j,\varepsilon_{p-1})\}_{j\in \mathbb{N}}$ of Bowen balls satisfying
$$\sum_{j\in \mathbb{N}} d_{p-1} (n_j)< \theta_{p-1}, \ \ \ \min_{j\in \mathbb{N}} n_j\geq \ell_1,\ \ \
K\setminus \bigcup_{j\in \mathbb{N}} F_j\ \text{ is $\varepsilon_{p-1}$-null}.$$
	\end{enumerate}

\begin{claim} \label{cl}
By inductive construction, there exists integer $\ell_i$ and gauge function $d_i$ with the required properties similar to above items (P1), (P2) and (P3).
\end{claim}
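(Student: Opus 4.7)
The plan is to mimic the base case $i=1$ almost verbatim. First I would define the gauge function $d_i$ by the formula dictated by (P2):
\[
d_i(n)=\begin{cases} d_{i-1}(n) & \text{if } 1\le n<\ell_{i-1},\\ b_i(n) & \text{if } n\ge \ell_{i-1},\end{cases}
\]
so that (P2) for $p=i+1$ is built in by fiat. A short monotonicity check, using (P1) at $p=i$ (which gives $b_{i-1}(\ell_{i-1})>b_i(\ell_{i-1})$) together with the monotonicity of each $b_k$, confirms that $d_i$ is itself a gauge function.

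Next I would choose $\ell_i\in\mathbb{N}$ large enough to serve two purposes simultaneously. Property (P1) at $p=i+1$, namely $b_i(n)>b_{i+1}(n)$ for all $n\ge \ell_i$, is immediate from $\lim_n b_{i+1}(n)/b_i(n)=0$. The substantive task is (P3) for $p=i+1$: $\ell_i$ must be chosen so that no sequence $\{F_j=B_{n_j}(x_j,\varepsilon_i)\}$ of Bowen balls satisfies simultaneously $\sum_j d_i(n_j)<\theta_i$, $\min_j n_j\ge \ell_i$, and $K\setminus\bigcup_j F_j$ being $\varepsilon_i$-null.

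Existence of such an $\ell_i$ I would establish by contradiction, diagonalizing exactly as in the base case. Suppose no $\ell_i$ works; then for every $L\in\mathbb{N}$ there is a bad cover $\{F_j^{(L)}=B_{n_j^{(L)}}(x_j^{(L)},\varepsilon_i)\}$ with $\sum_j d_i(n_j^{(L)})<\theta_i$, $\min_j n_j^{(L)}\ge L$, and $K_L:=K\setminus\bigcup_j F_j^{(L)}$ being $\varepsilon_i$-null. Fix some $L_0$ and set $K_0:=K\setminus\bigcup_{L\ge L_0}K_L$; then the countable union $\bigcup_{L\ge L_0}K_L$ is $\varepsilon_i$-null, while $K_0\subset\bigcup_j F_j^{(L)}$ for every $L\ge L_0$. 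Consequently $\mathcal{M}_{L,\varepsilon_i}^{d_i}(K_0)\le \sum_j d_i(n_j^{(L)})<\theta_i$, so $\mathcal{M}_{\varepsilon_i}^{d_i}(K_0)\le \theta_i<\infty$. Since $d_i(n)=b_i(n)$ for $n\ge \ell_{i-1}$ and $b_{i+1}(n)/b_i(n)\to 0$, Lemma \ref{com} forces $\mathcal{M}_{\varepsilon_i}^{b_{i+1}}(K_0)=0$, so $K_0$ is $\varepsilon_i$-null (witnessed by index $i+1$ in the sequence $\{b_k\}$). Hence $K=K_0\cup\bigcup_{L\ge L_0}K_L$ is $\varepsilon_i$-null and therefore also $2\varepsilon$-null (since $\varepsilon_i\le 2\varepsilon=\varepsilon_1$), contradicting the standing hypothesis that $K$ is not $2\varepsilon$-null.

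The hardest step is this diagonal passage $L\to\infty$: it is precisely the tail condition $b_{i+1}/b_i\to 0$, applied via Lemma \ref{com}, that upgrades a uniform finite bound $\mathcal{M}_{\varepsilon_i}^{d_i}(K_0)\le \theta_i$ to the vanishing $\mathcal{M}_{\varepsilon_i}^{b_{i+1}}(K_0)=0$ required to declare $K_0$ null with respect to the next gauge in the sequence $\{b_k\}$.
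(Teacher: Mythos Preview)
Your proposal has a genuine gap: you have misread condition (P3). For every $p\ge 2$, (P3) demands that there be no ``bad'' sequence of Bowen balls at scale $\varepsilon_{p-1}$ with $\min_j n_j\ge \ell_1$ --- the \emph{same} threshold $\ell_1$ fixed once and for all in the base step --- not $\min_j n_j\ge \ell_{p-1}$. Your diagonal argument, which copies the base case, only produces an $\ell_i$ such that no bad cover with $\min_j n_j\ge \ell_i$ exists. That is strictly weaker: it says nothing about covers whose indices satisfy $\ell_1\le n_j<\ell_i$. And the weaker statement is genuinely insufficient for the remainder of the proof of Theorem~\ref{construction_b_nonzero_meas}: there one takes a finite cover of $K$ by balls $B_{n_j}(x_j,\varepsilon)$ with $\sum_j b(n_j)<\theta$, chooses $M$ so that $\max_j n_j<\ell_M$, and invokes (P3) for $d_M$ at scale $\varepsilon_M$. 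Since $\max_j n_j<\ell_M$, one certainly does \emph{not} have $\min_j n_j\ge \ell_M$, so your version of (P3) would not yield the contradiction.

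The paper's inductive step is not a repetition of the base case and makes essential use of the inductive hypothesis. It varies the \emph{cutoff} $\ell$ in the definition of the candidate gauge $d_i^{(\ell)}$ (keeping the threshold $\min_j n_j\ge \ell_1$ fixed), assumes every candidate fails (P3), and then passes to a limit of the bad covers using compactness of the space of closed subsets in the Hausdorff metric. The limiting balls $F_j^*$ with finite $n_j^*$ give, after a careful count, a configuration at the \emph{previous} scale $\varepsilon_{i-1}$ that violates (P3) for $d_{i-1}$ --- this is where the inductive hypothesis enters. The passage from scale $\varepsilon_i$ to $\varepsilon_{i-1}$, and from $d_i^{(\ell)}$ back to $d_{i-1}$, is the heart of the argument; a naive repetition of the $i=1$ diagonalization cannot access this.
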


Let us postpone temporarily the proof of Claim \ref{cl}, and continue firstly the proof of Theorem \ref{construction_b_nonzero_meas}.
In particular, by applying Claim \ref{cl}, there exist integers $1<\ell_1<\ell_2< \cdots$ and gauge functions $d_1, d_2, \cdots$ with properties (P1), (P2) and (P3).
Note that $d_{i+1}(n)\leq d_i(n)$ for all $i\in \mathbb{N}$ and $n\in \mathbb{N}$ by construction of integers $\ell_1, \ell_2, \cdots$ and the property (P2). And so we may define a function $b$ as the limit of the sequence $\{d_i\}_{i\in \mathbb{N}}$, which is easily verified to be a gauge function.

It remains to prove $\M^{b}_{\varepsilon}(K)\geq\theta$, which ends the proof.

Otherwise, we can find a countable family of Bowen balls $\{B_{n_j}(x_j,\varepsilon)\}_{j\in \mathcal{J}}$ covering $K$ such that $\min\limits_{j\in \mathcal{J}} n_j\geq \ell_1$ and $\sum\limits_{j\in \mathcal{J}} b(n_j)<\theta.$ Note that $K$ is a compact set, we may assume that the family $\mathcal{J}$ is finite, and then there exists $M\geq 1$ such that $\max\limits_{j\in \mathcal{J}} n_j< \ell_M$. Thus
\begin{equation} \label{r1}
\min_{j\in \mathcal{J}} n_j\geq \ell_1, \ \ \ K\setminus \bigcup_{j\in \mathcal{J}} B_{n_j} (x_j, \varepsilon_M)\ \text{ is empty and hence obviously $\varepsilon_{M}$-null}
\end{equation}
(reminder that $\varepsilon_M\ge \varepsilon$ by the construction \eqref{para});
furthermore, as $\max\limits_{j\in \mathcal{J}} n_j< \ell_M$, and so by the construction of sequence $d_1, d_2, \cdots$ of gauge functions, one has $d_M (n_j) = d_{M+ 1} (n_j) = d_{M+ 2} (n_j) = \cdots$ and hence $d_M (n_j) = b (n_j)$ for each $n_j, j\in \mathcal{J}$, which implies that
\begin{equation} \label{r2}
\sum_{j\in \mathcal{J}} d_M(n_j) = \sum_{j\in \mathcal{J}} b(n_j)<\theta<\theta_M \ (\text{by \eqref{para}}).
\end{equation}
In particular, \eqref{r1} and \eqref{r2} contradict to the construction of the property (P3) for $d_M$ (if necessary, we extend $\mathcal{J}$ to $\mathbb{N}$ by choosing $\left\{B_{n_j}(x_j,\varepsilon_{M})\right\}_{j\in \mathbb{N}\setminus \mathcal{J}}$ such that
$$\min_{j\in \mathbb{N}\setminus \mathcal{J}} n_j\geq \ell_1\ \ \text{and}\ \ \sum_{j\in \mathbb{N}\setminus \mathcal{J}} d_M(n_j) < \theta- \sum_{j\in \mathcal{J}} b(n_j) = \theta - \sum_{j\in \mathcal{J}} d_M(n_j),$$
where $\mathcal{J}$ can be viewed naturally a subset of $\mathbb{N}$; this is possible once $\min\limits_{j\in \mathbb{N}\setminus \mathcal{J}} n_j$ is large enough, as $d_M$ is a gauge function). Thus it arrives $\M^{b}_{\varepsilon}(K)\geq\theta$.
	\end{proof}

Now let us prove Claim \ref{cl}, and so finish the proof of Theorem \ref{construction_b_nonzero_meas}.

\begin{proof}[Proof of Claim \ref{cl}]
Now fix arbitrarily given integer $i\geq 2$, for which integers $\ell_1, \cdots, \ell_{i-1}$ and gauge functions $d_1, \cdots, d_{i-1}$ have been constructed with properties (P1), (P2) and (P3). As $\lim\limits_{n\to \infty} \frac{b_{i+1}(n)}{b_i(n)}=0$, we choose integer $L_i>\ell_{i-1}$ with $b_{i+1}(n)<b_i(n)$ for all $n\geq L_i$.

Let us assume that the inductive construction fails from the $(i-1)$-th step to the $i$-th step, that is,
for each $\ell\geq L_i$, if we define a gauge function
\begin{equation} \label{mq1}
		d_{i}^{(\ell)}(n) = \begin{cases}d_{i-1}(n) &\text { when } n <\ell\\
			b_{i}(n) &\text { when } n \geq \ell\end{cases},
\end{equation}
	then there exists a sequence of Bowen balls ${\bigg \{}F_j^{(\ell)}=B_{n_j^{(\ell)}} \left(x_j^{(\ell)},\varepsilon_i\right){\bigg \}}_{j\in \mathbb{N}}$ such that
\begin{equation} \label{f2}
\sum_{j\in \mathbb{N}}d_{i}^{(\ell)}(n_j^{(\ell)})<\theta_{i},\ \ \ \min_{j\in \mathbb{N}} n_j^{(\ell)}\geq \ell_1,\ \ \ \text{and}\ E_\ell: = K\setminus \bigcup_{j\in \mathbb{N}} F_j^{(\ell)}\ \text{ is $\varepsilon_{i}$-null}.
\end{equation}

It makes no difference to assume $n_1^{(\ell)}\leq n_2^{(\ell)}\leq \cdots $.
Note that the family of all nonempty compact subsets of $X$ is compact with respect to the Hausdorff distance, there exists a compact set $F_1^*$ and a subsequence $\ell_{k(1)}^*$ of integers, such that the closure of $F_1^{\left(\ell^*_{k(1)}\right)}$ tends to $F_1^*$ in the Hausdorff distance. Similarly, there exists a compact set $F_2^*$ and a subsequence $\ell_{k(2)}^*$ of $\ell_{k(1)}^*$, such that the closure of $F_2^{\left(\ell_{k(2)}^*\right)}$ tends to $F_2^*$ in the Hausdorff distance. We could proceed similarly. Then, after picking successive subsequences and using the standard diagonal argument, we can choose a sequence $\ell_k^*$ of integers such that, for every $j\in \mathbb{N}$, the closure of $F_j^{\left(\ell_{k}^*\right)}$ tends to $F_j^*$ in the Hausdorff distance as $k$ tends to $\infty$.
	Moreover, by choosing a suitable subsequence if necessary, we may assume additionally that
$$x_j^{(\ell_k^*)}\to x_j^*\in X\text{ and }n_j^{(\ell_k^*)}\to n_j^*\in\Z_+\cup\{\infty\}.$$
	Clearly, we have either $n_j^*\ (\ge \ell_1)\ \in\mathbb{N}$ ${\Big (}$in which case $F_j^*=\overline{B_{n_j^*}(x_j^*,\varepsilon_i)}$${\Big )}$ or $n_j^*=\infty$ ${\Big (}$in this case, by the definition of Hausdorff distance, one has $F_j^*\subset \bigcap_{N\geq 1}\overline{B_{N}(x_j^*,\varepsilon_i)}$${\Big )}$. Set
$$\mathcal{F}= \{j\in \mathbb{N}: n_j^*\in \mathbb{N}\}.$$
	
For each $J\in \mathbb{N}$, we choose $k= k(J)$ large enough such that $\max\limits_{1\leq j\leq J, j\in \mathcal{F}} n_j^* < \ell_k^*$ and $n_j^* = n_j^{(\ell_k^*)}$ for all $1\leq j\leq J$ with $j\in \mathcal{F}$, and hence
$$\sum_{1\leq j\leq J, j\in \mathcal{F}} d_{i-1}(n_j^*)
= \sum_{1\leq j\leq J, j\in \mathcal{F}} d_{i}^{(\ell_k^*)}(n_j^*) = \sum_{1\leq j\leq J, j\in \mathcal{F}}
d_{i}^{(\ell_k^*)}(n_j^{(\ell_k^*)})< \theta_{i}$$
by the construction \eqref{mq1} of $d_{i}^{(\ell)}(n)$,
thus by letting $J\to \infty$ we obtain
 \begin{equation} \label{finite}
\sum_{j\in \mathcal{F}} d_{i-1}(n_j^*)\leq \theta_{i}<\theta_{i-1}\ \ \ \left(\text{where, by convention,} \sum_{j\in \mathcal{F}} d_{i-1}(n_j^*)
= 0\ \text{if}\ \mathcal{F} = \emptyset\right).
	\end{equation}

As $d_{i - 1}$ is a gauge function, we can choose $m_j^*\ge \ell_1$ for each $j\in \mathbb{N}\setminus \mathcal{F}$ such that
 \begin{equation*}
\sum_{j\in \mathbb{N}\setminus \mathcal{F}} d_{i-1}(m_j^*)< \theta_{i - 1} - \sum_{j\in \mathcal{F}} d_{i-1}(n_j^*)\ (\text{using \eqref{finite}}),
	\end{equation*}
 and hence if we set $m_j^*=n_j^*$ for any $j\in \mathcal{F}$ then by the construction we have
\begin{equation} \label{2025}
\sum\limits_{j\in \mathbb{N}} d_{i-1}(m_j^*)<\theta_{i-1}\ \ \ \text{and}\ \ \ \min_{j\in \mathbb{N}} m_j^*\geq \ell_1.
\end{equation}

Notice that, once $\ell\geq L_i$ and $n\geq \ell_{i-1}$, we always have $d_i^{(\ell)}(n)= b_i(n)$ by applying the property (P2) for $p = i$ and the construction \eqref{mq1}. In fact, if $n\ge \ell$ then $d_i^{(\ell)}(n)= b_i(n)$, and if $n< \ell$ then
$d_i^{(\ell)}(n) = d_{i - 1} (n) = b_{i-1} (n)$ where the first identity follows by applying \eqref{mq1} to $n\ge \ell$ and the second identity follows by applying (P2) and \eqref{mq1} to $\ell> n\ge \ell_{i-1}$.

Now let us prove $\M^{b_i}_{\varepsilon_{i-1}} (K^*)\leq \theta_{i}$, where
$$K^*=K \setminus \left(\bigcup_{j\in \mathbb{N}} B_{m_j^*} (x_j^*, \varepsilon_{i-1}) \cup E\right)\ \ \ \text{with}\ \ E = \bigcup_{k\in \mathbb{N}} E_{\ell_k^*}.$$

In fact, in the following we show that for any fixed $N\geq \ell_{i-1}$, once $k\in \mathbb{N}$ is large enough, then the family $\left\{F_j^{(\ell_k^*)}: j\in \mathbb{N}\ \text{satisfies}\ n_j^{(\ell_k^*)}\geq N\right\}$ covers $K^*$ (and $\ell_k^*\ge L_i$), which implies
\begin{eqnarray*}
	\M^{b_i}_{N,\varepsilon_{i-1}}(K^*) & \leq &
\sum_{j\in \mathbb{N}\ \text{satisfies}\ n_j^{(\ell_k^*)}\geq N} b_i\left(n_j^{(\ell_k^*)}\right)\ (\text{note that $\varepsilon_{i- 1}> \varepsilon_i$}) \\
& = & \sum_{j\in \mathbb{N}\ \text{satisfies}\ n_j^{(\ell_k^*)}\geq N} d_i^{(\ell_k^*)}\left(n_j^{(\ell_k^*)}\right)
	\leq \sum_{j\in \mathbb{N}} d_i^{(\ell_k^*)}\left(n_j^{(\ell_k^*)}\right)<\theta_{i}.
\end{eqnarray*}

Fix $N\geq \ell_{i-1}$. Once $\ell\ge L_i$, we have shown $d_i^{(\ell)}(N)= b_i(N)$, and then
\begin{eqnarray*}
& & \#\left\{j\in \mathbb{N} : n_j^{(\ell)}<N\right\}\cdot b_i (N) \\
& = & \sum_{j\in \mathbb{N} , n_j^{(\ell)}<N} d_i^{(\ell)}(N) \leq \sum_{j\in \mathbb{N} , n_j^{(\ell)}<N} d_i^{(\ell)}\left(n_j^{(\ell)}\right)\ \ \left(\text{as}\ d_i^{(\ell)}\ \text{is a gauge function}\right) \\
& \leq & \sum_{j\in \mathbb{N}} d_i^{(\ell)} \left(n_j^{(\ell)}\right)< \theta_{i}\ \ (\text{by}\ \eqref{f2}),
\end{eqnarray*}
where $\# (\bullet)$ denotes the cardinality of a subset $\bullet$, in particular,
$$\#\left\{j\in \mathbb{N} : n_j^{(\ell)}<N\right\}< J^* := \frac{\theta_{i}}{b_i (N)}.$$
Since we have assumed $n_1^{\left(\ell\right)}\leq n_2^{\left(\ell\right)}\leq \cdots$,
thus $n_j^{(\ell)}\ge N$ for each $j> J^*$.
It is easy to check $F_j^*\subset \overline{B_{m_j^*}(x_j^*,\varepsilon_i)}\subset B_{m_j^*}(x_j^*, \varepsilon_{i-1})$
by the construction of $m_j^*$ for each $j\in \mathbb{N}$.
Reminder that the closure of $F_j^{\left(\ell_{k}^*\right)}$ tends
to $F_j^*$ in the Hausdorff distance as $k$ tends to $\infty$. Obviously, once $k$ is large enough, then for each $1\le j\le J^*$, the set $F_j^{\left(\ell_{k}^*\right)}$ is contained in the set $B_{m_j^*}(x_j^*,\varepsilon_{i-1})$.
Note that, for each $k\in \mathbb{N}$, by the definition of $E$ one has
$$E\supset E_{\ell_k^*} = K\setminus \bigcup_{j\in \mathbb{N}} F_j^{(\ell_k^*)},$$
and then  the countable family $\left\{F_j^{(\ell_k^*)}\right\}_{j\in \mathbb{N}}$ covers $K\setminus E$.
Thus, once $k\in \mathbb{N}$ is large enough, then the family $\left\{F_j^{(\ell_k^*)}: j\in \mathbb{N}\ \text{satisfies}\ n_j^{(\ell_k^*)}\geq N\right\}$ contains $\left\{F_j^{(\ell_k^*)}: j> J^*\right\}$, and so covers
$$K \setminus \left(E\cup \bigcup_{j= 1}^{J^*} B_{m_j^*}(x_j^*,\varepsilon_{i-1})\right) \supset K^*.$$

In particular, we obtain the required inequality $\M^{b_i}_{\varepsilon_{i-1}} (K^*)\leq \theta_{i}$.
Thus $\M^{b_{i+1}}_{\varepsilon_{i-1}}(K^*)=0$ by Lemma \ref{com}.
In particular, $K^*$ is $\varepsilon_{i - 1}$-null.
Note that $E_\ell = K\setminus \bigcup_{j\in \mathbb{N}} F_j^{(\ell)}$ is $\varepsilon_{i}$-null and then $\varepsilon_{i-1}$-null for any $\ell\ (\ge L_i)$ by the assumption. Recall
$$K^*=K \setminus \left(\bigcup_{j\in \mathbb{N}} B_{m_j^*} (x_j^*, \varepsilon_{i-1}) \cup E\right)\ \ \ \text{with}\ \ E = \bigcup_{k\in \mathbb{N}} E_{\ell_k^*}.$$
One has that
$K \setminus \bigcup_{j\in \mathbb{N}} B_{m_j^*} (x_j^*, \varepsilon_{i-1}) \subset K^* \cup E$
is also $\varepsilon_{i-1}$-null. This is a contraction to the inductive assumption for $d_{i - 1}$ and $\theta_{i - 1}$ by observing \eqref{2025}. Thus we can proceed the inductive construction, and so finish the proof of Claim \ref{cl}.
\end{proof}

\section{Proof of Theorem \ref{MME Bowen} for analytic subsets in $h$-expansive systems}\label{Relationship between items (1) and (2)}

As we have proved $(1) \Rightarrow (2)$ and $(3) \Rightarrow (2)$ of Theorem \ref{MME Bowen} in \S \ref{easy},
the remainder parts of
Theorem \ref{MME Bowen}
 follow from Theorem \ref{MME Bowen-s6} and Theorem \ref{MME Bowen-s7} stated as below.

\begin{theorem}\label{MME Bowen-s7}
Assume that the system $(X,T)$ is $h$-expansive. Let $Z\subset X$ be an analytic subset with $h_\mathrm{top}^B(T, Z)>0$. Then $(2) \Rightarrow (1) \Rightarrow (3)$:
	\begin{enumerate}

		\item[(1)] The subset $Z$ has measures of maximal Bowen entropy.

		\item[(2)] The subset $Z$ has no increasing countable slice of Bowen entropy.

		\item[(3)] There exists some gauge function $b$ such that $\mathcal{M}^b(Z)>0$ and $\lim\limits_{n\to \infty}e^{ns}b(n)=0$ for any $s < h_\mathrm{top}^B (T, Z)$.
	\end{enumerate}
\end{theorem}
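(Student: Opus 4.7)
The plan is to reduce Theorem~\ref{MME Bowen-s7} to Theorem~\ref{MME Bowen-s6}, which handles the compact case. Since $(1)\Rightarrow(2)$ has already been established in Section~\ref{easy}, it will suffice to prove $(2)\Rightarrow(3)$ and $(3)\Rightarrow(1)$ for analytic subsets under the $h$-expansive hypothesis; the composition $(1)\Rightarrow(2)\Rightarrow(3)$ will then yield $(1)\Rightarrow(3)$.

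For $(2)\Rightarrow(3)$, I will first observe that the inductive construction in the proof of Theorem~\ref{construction_b_nonzero_meas} uses only the compactness of the ambient space $X$---specifically, it invokes the Hausdorff-distance compactness of closed subsets of $X$---and never the compactness of the subset $K$ itself; the same argument therefore extends verbatim to any analytic $Z\subset X$. Following the derivation in Section~\ref{compact}, I will pick $s_k\nearrow h := h_\mathrm{top}^B(T,Z)$, set $b_k(n)=e^{-ns_k}$, and take $\varepsilon$ suitably small via Lemma~\ref{wangtao expan}. Under hypothesis $(2)$, if $Z$ were $2\varepsilon$-null then the decomposition $Z=\bigcup Z_i$ with $\M^{b_i}_{2\varepsilon}(Z_i)=0$ would yield $h_\mathrm{top}^B(T,Z_i)=h_\mathrm{top}^B(T,Z_i,2\varepsilon)\le s_i<h$ for every $i$, constituting an increasing countable slice of Bowen entropy---a contradiction. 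The extended form of Theorem~\ref{construction_b_nonzero_meas} therefore produces a gauge $b$ with $\M^b(Z)>0$ and $b(n)/b_k(n)\to 0$ for every $k$, which is precisely the exponential condition $e^{ns}b(n)\to 0$ for all $s<h$.

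For $(3)\Rightarrow(1)$, given such a gauge $b$ I will extract a compact $K\subset Z$ with $\M^b(K)>0$ via Choquet-type inner regularity for analytic sets. Theorem~\ref{MME Bowen-s6}'s direction $(3)\Rightarrow(1)$ then applies to this compact $K$ with the same gauge $b$ (the exponential condition for $b$ with respect to $h_\mathrm{top}^B(T,K)\le h$ is implied by the one for $h$), yielding $\mu\in\M(X)$ with $\mu(K)=1$ and a Frostman-type bound $\mu(B_n(x,\varepsilon))\le b(n)/c$ for all $x\in X$ and large $n$. The pointwise computation from Section~\ref{sub-1} then shows $\underline{h}_\mu(T,x)\ge s$ for $\mu$-a.e.\ $x$ and every $s<h$, so $\underline{h}_\mu(T)\ge h$. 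Combined with the variational principle \eqref{varia.prin} giving $\underline{h}_\mu(T)\le h_\mathrm{top}^B(T,Z)=h$ (valid since $\mu(K)=1$ forces $\mu(Z)=1$), equality holds and $\mu$ is a measure of maximal Bowen entropy for $Z$.

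The hard part will be establishing the inner regularity $\M^b(Z)=\sup\{\M^b(K):K\subset Z\text{ compact}\}$ for analytic $Z$. This should follow from Choquet's capacitability theorem applied to $\M^b_\varepsilon$: monotonicity and countable subadditivity are immediate from the cover definition, while continuity from below on increasing analytic sequences and from above on decreasing compact sequences require a careful verification exploiting the gauge property of $b$ and compactness of $X$. An alternative avenue, closer in spirit to Section~\ref{Existence of measure of maximal Packing entropy}, is to bypass inner regularity altogether by directly adapting the inductive construction there to the Bowen setting---using the weighted entropy $\mathcal{W}^b$ together with Proposition~\ref{equi of weighted and ordinary entropy function} to play the role of Lemma~\ref{F H Lem4.1} in assembling finite configurations at each stage, and building a compact $K\subset Z$ together with the desired measure $\mu$ simultaneously.
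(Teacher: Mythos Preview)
Your claim that Theorem~\ref{construction_b_nonzero_meas} ``uses only the compactness of the ambient space $X$ \ldots\ and never the compactness of the subset $K$ itself'' is false. Look at the final verification that $\M^{b}_{\varepsilon}(K)\geq\theta$: given a countable cover $\{B_{n_j}(x_j,\varepsilon)\}_{j\in\mathcal{J}}$ of $K$ with $\sum b(n_j)<\theta$, the proof explicitly invokes compactness of $K$ to reduce $\mathcal{J}$ to a finite family, and only then can it choose $M$ with $\max_{j}n_j<\ell_M$ so that $b(n_j)=d_M(n_j)$ for every $j$ and contradict property~(P3). For an analytic but non-compact $Z$, the $n_j$ can be unbounded and this step collapses. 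So your route to $(2)\Rightarrow(3)$ does not go through as stated.

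Your route to $(3)\Rightarrow(1)$ via inner regularity of $\M^b$ on analytic sets is at least plausible, but, as you yourself flag, establishing that $\M^b_\varepsilon$ is a Choquet capacity is not automatic: Hausdorff-type outer measures are typically not continuous from below on arbitrary increasing sequences, so a direct appeal to Choquet's theorem is delicate and would require substantial extra work (of the Howroyd or Sion--Sjerve flavour) that you have not supplied.

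The paper proceeds quite differently on both fronts. For $(1)\Rightarrow(3)$ it does not pass through $(2)$ at all: starting from a measure $\mu$ of maximal Bowen entropy, it uses $h$-expansiveness (via Bowen's spanning-set estimates) to modify $\mu$ into a measure $\nu$ for which the \emph{local} lower entropy $\underline{h}_\nu(T,x,\delta)$ already equals $h_\mathrm{top}^B(T,Z)$ at a single fixed scale $\delta$, and then reads off a gauge $b$ directly from the decay of $\nu(B_n(x,\delta))$. For $(2)\Rightarrow(1)$ the paper lifts $Z$ to a principal symbolic extension $(Y,\sigma)$ of $(X,T)$, equips $Y$ with an asymmetric metric under which $h_\mathrm{top}^B(\sigma,E)=\log 2\cdot\dim_{\mathcal{H}}(E)$ for every $E\subset Y$, and then invokes the Sion--Sjerve theorem on Hausdorff measures to extract a compact $K^*\subset\pi^{-1}(Z)$ with no increasing countable slice of Hausdorff dimension and $\dim_{\mathcal{H}}(K^*)=\dim_{\mathcal{H}}(\pi^{-1}Z)$; projecting $K^*$ back to $X$ gives a compact $K\subset Z$ to which Theorem~\ref{MME Bowen-s6} applies. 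In effect the paper outsources the ``hard part'' you identified to the classical Hausdorff-measure literature rather than building a capacity argument from scratch.
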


As before, we will omit the term \emph{Bowen entropy} if there is no confusion.

\subsection{Proof of $(1) \Longrightarrow (3)$ of Theorem \ref{MME Bowen-s7}}\

\smallskip

Note that in the definition of $\M^b(Z)$ (of a given gauge function $b$), the gauge function $b$ only characterize the change of $n_i$ with respect to a Bowen ball $B_{n_i}(x_i,\varepsilon)$, which contains no information about the radius $\varepsilon$. However in the proof of $(1) \Longrightarrow (3)$ in Theorem \ref{MME Bowen-s7}, we have to construct a gauge function based on a measure of maximal entropy, where in the definition we always require the radius $\varepsilon$ go to zero eventually. Therefore one of our key ingredients in the proof is to fix a radius $\delta$, and make sure the measure does not change too much with respect to such radius. This is done as follows.

\begin{lemma}\label{techlem MME mu to nu}
Under the assumptions of Theorem \ref{MME Bowen-s7}, let $\mu\in\mathcal{M}(X)$ be a measure of maximal entropy, with $\mu(Z)=1$ and $\underline{h}_\mu(T)=h_\mathrm{top}^B(T, Z)$. Then we could modify $\mu$ into another measure $\nu\in\mathcal{M}(X)$ of maximal entropy, satisfying $\nu(Z)=1$ and $\underline{h}_\nu(T)=h_\mathrm{top}^B(T, Z)$, furthermore, there exists $\delta>0$ such that
$$\underline{h}_{\nu}(T, x, \delta)=\underline{h}_{\nu}(T, x)=h_\mathrm{top}^B(T, Z)\ \ \text{for $\nu$-a.e. $x\in X$}.$$
\end{lemma}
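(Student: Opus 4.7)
The plan is as follows. First, I will apply Proposition \ref{dynamical density thm} to the given measure $\mu$ of maximal Bowen entropy to obtain $\underline{h}_\mu(T, x) = s$ for $\mu$-a.e.\ $x \in X$, where $s := h_\mathrm{top}^B(T, Z)$. Next, using the $h$-expansiveness of $(X,T)$, I will invoke Lemma \ref{wangtao expan} to fix a scale $\varepsilon_0 > 0$ such that $h_\mathrm{top}^B(T, E) = h_\mathrm{top}^B(T, E, \varepsilon_0)$ for every subset $E \subset X$. My claim is that the conclusion of the lemma already holds with $\nu := \mu$ and $\delta := \varepsilon_0$, so the real work reduces to proving
\begin{equation*}
\underline{h}_\mu(T, x, \varepsilon_0) = s \quad \text{for $\mu$-a.e.\ $x \in X$}.
\end{equation*}

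The core will be a contradiction argument. Suppose that $F := \{x \in X : \underline{h}_\mu(T, x, \varepsilon_0) < s\}$ satisfies $\mu(F) > 0$. Since $x \mapsto \mu(B_n(x, \varepsilon_0))$ is lower semicontinuous, the decomposition $F = \bigcup_{k \ge 1} F_k$ with $F_k := \{x \in X : \underline{h}_\mu(T, x, \varepsilon_0) \le s - 1/k\}$ produces a Borel set $F_{k_0}$ with $\mu(F_{k_0}) > 0$. I will then apply a local-scale refinement of Proposition \ref{ent_conn_meas_subset}(2)---obtained by running the Ma--Wen covering argument from \cite{Ma-Wen2008} at the fixed scale $\varepsilon_0$, in the same spirit as the proof of Proposition \ref{ent_conn_meas_subset}(1), which already produces the analogous bound at fixed scale---to conclude $h_\mathrm{top}^B(T, F_{k_0}, \varepsilon_0) \le s - 1/k_0$. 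Combined with the defining property of $\varepsilon_0$, this yields $h_\mathrm{top}^B(T, F_{k_0}) \le s - 1/k_0 < s$.

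The contradiction is then extracted via the normalized restriction $\nu_{k_0} := \mu|_{F_{k_0}}/\mu(F_{k_0}) \in \M(X)$. On the one hand, since $\nu_{k_0}(F_{k_0}) = 1$, the variational principle \eqref{varia.prin} gives $\underline{h}_{\nu_{k_0}}(T) \le h_\mathrm{top}^B(T, F_{k_0}) < s$. On the other hand, the elementary comparison $\nu_{k_0}(B_n(x, \varepsilon)) \le \mu(B_n(x, \varepsilon))/\mu(F_{k_0})$ yields $\underline{h}_{\nu_{k_0}}(T, x) \ge \underline{h}_\mu(T, x) = s$ for $\nu_{k_0}$-a.e.\ $x$ (precisely as in the derivation of \eqref{f-1}), and integrating gives $\underline{h}_{\nu_{k_0}}(T) \ge s$. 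The two bounds are incompatible, which establishes the claim and with it the lemma.

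The main obstacle will be the local-scale refinement of Proposition \ref{ent_conn_meas_subset}(2): namely, that $\underline{h}_\mu(T, x, \varepsilon_0) \le s$ on $E$ forces $h_\mathrm{top}^B(T, E, \varepsilon_0) \le s$, rather than the weaker conclusion $h_\mathrm{top}^B(T, E) \le s$ that one obtains only after sending $\varepsilon \to 0$. This is exactly where the $h$-expansiveness hypothesis becomes essential (through Lemma \ref{wangtao expan}), and it should be verifiable by a direct covering argument at the fixed scale $\varepsilon_0$, parallel to the packing-entropy case already treated within the proof of Proposition \ref{ent_conn_meas_subset}(1).
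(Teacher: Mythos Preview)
Your approach is correct and takes a genuinely different route from the paper's. The paper constructs $\nu$ explicitly as a two-stage restriction of $\mu$: first to a set $E=\bigcap_i E_i$ on which $\underline h_\mu(T,x,\varepsilon_i)>s_i$ for a sequence $\varepsilon_i\downarrow 0$, $s_i\nearrow s$, and then to a further set $F=\bigcap_i F_i$ where the convergence in $n$ is uniformly controlled; it then invokes Bowen's spanning-set estimate for $h$-expansive systems \cite[Proposition~2.2]{R.Bowen1972-TAMS} to transfer the small-scale bounds on $\nu(B_n(\cdot,\varepsilon_i))$ up to the single expansive scale $\delta$. Your argument instead shows that \emph{no modification is needed}: $\nu=\mu$ already works. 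You package the $h$-expansiveness through Lemma~\ref{wangtao expan} rather than through Bowen's spanning estimate, and close with a Billingsley-type contradiction via the variational principle~\eqref{varia.prin}. This is shorter and yields the sharper conclusion that the original $\mu$ itself satisfies the fixed-scale identity.

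One small caution about your ``local-scale refinement'': the parallel with the packing case in the proof of Proposition~\ref{ent_conn_meas_subset}(1) is not exact. For packing entropy, the disjoint-balls definition lets that proof conclude $h_\mathrm{top}^P(T,E_m,\varepsilon)\le s+\eta$ at the \emph{same} scale $\varepsilon$. For Bowen entropy the hypothesis $\underline h_\mu(T,x,\varepsilon_0)\le s-1/k_0$ only supplies, for each $x$, infinitely many good $n$'s, so one needs a Vitali-type selection (Lemma~\ref{Modified Vitali covering lemma}) to produce a cover, and that cover is by $3\varepsilon_0$-balls; the conclusion you actually get is $h_\mathrm{top}^B(T,F_{k_0},3\varepsilon_0)\le s-1/k_0$, not at scale $\varepsilon_0$. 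This is harmless: just take $\delta=\varepsilon_0/3$ with $\varepsilon_0$ the scale from Lemma~\ref{wangtao expan}. Then the covering bound lands at scale $\varepsilon_0$, where Lemma~\ref{wangtao expan} identifies it with $h_\mathrm{top}^B(T,F_{k_0})$, and your contradiction via $\nu_{k_0}$ and~\eqref{varia.prin} goes through unchanged.
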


\begin{proof}
	Choose a sequence $\{s_i\}_{i\in \mathbb{N}}$ of positive real numbers increasing up to $h_\mathrm{top}^B(T, Z)$. By Theorem \ref{dynamical density thm}, $\underline{h}_{\mu}(T, x)= h_\mathrm{top}^B(T, Z)$ for $\mu$-a.e. $x\in X$. Therefore for any $i\in \mathbb{N}$,
	$$\mu\left( \bigcup\limits_{\varepsilon>0} \left\lbrace x\in Z:  \underline{h}_{\mu}(T, x, \varepsilon) > s_i\right\rbrace\right) =1.
	$$
Since $\underline{h}_{\mu}(T, x, \varepsilon)$ increases to $\underline{h}_{\mu}(T, x)$ as $\varepsilon$ decreases to $0$, we may choose inductively a sequence $\varepsilon_1>\varepsilon_2>\cdots$ of positive real numbers such that $$\mu(E_i)>1-\frac{1}{2^{i+1}}\ \ \ \text{where}\ 	E_i=\left\lbrace x\in Z:  \underline{h}_{\mu}(T, x, \varepsilon_i) > s_i\right\rbrace\ \text{for each}\ i\in \mathbb{N}.$$
	
Set $E=\bigcap_{i\in \mathbb{N}} E_i$. One has $\mu(E)> \frac{1}{2}$. We let $\mu_1$ be the normalized measure of $\mu$ restricted to $E$. Therefore, for $\mu_1$-a.e. $x\in Z$, it holds that $$\underline{h}_{\mu_1}(T, x, \varepsilon_i) = \liminf _{n \rightarrow\infty}-\frac{1}{n} \log \mu_1 \left(B_{n}(x, \varepsilon_i)\right) \geq \underline{h}_{\mu}(T, x, \varepsilon_i) > s_i\ \ \ \text{for each}\ i\in \mathbb{N}.$$
Similarly, we choose inductively a sequence $N_1<N_2<\cdots$ of natural integers such that $\mu_1(F_i)>1-\frac{1}{2^{i+1}}$ where $$F_i=\bigcap_{n\geq N_i}\left\lbrace x\in Z:  -\frac{1}{n}\log\mu_1(B_n(x,\varepsilon_i)) > s_i\right\rbrace\ \text{for each}\ i\in \mathbb{N}.$$
	
Set $F=\bigcap_{i\in \mathbb{N}} F_i$. One has $\mu_1(F)> \frac{1}{2}$. We let $\nu$ be the normalized measure of $\mu_1$ restricted to $F$. Therefore, for $\nu$-a.e. $x\in Z$, it holds that, for any $i\in \mathbb{N}$ and each $n\geq N_i$,
	\begin{equation}\label{cont for nu}	
\nu(B_n(x,\varepsilon_i)) = \frac{\mu_1(B_n(x,\varepsilon_i)\cap F)}{\mu_1(F)}\leq 2\mu_1(B_n(x,\varepsilon_i))<2e^{-ns_i}.
	\end{equation}
	
	Since the system $(X,T)$ is $h$-expansive, there exists $\delta>0$ such that $h^*_T (\delta) = 0$. Therefore, by \cite[Propsition 2.2]{R.Bowen1972-TAMS}, for each $i\in \mathbb{N}$ and any $\beta>0$, there exists a finite constant $c(\varepsilon_i, \beta)$ such that $\#\Lambda_{n, x} \leq c(\varepsilon_i,\beta)\cdot e^{\beta n}$ for each $n\in \mathbb{N}$ and any $x\in X$, where $\Lambda_{n, x}$ is an $(n, \frac{\varepsilon_i}{3})$-spanning set of $B_n (x,\delta)$ with minimal cardinality.
	
For all $n\in \mathbb{N}$ and $x\in X$, we obtain a new set $\Lambda'_{n, x}\subset \Lambda_{n, x}$ by throwing away all the points $y\in\Lambda_{n, x}$ for which $\nu\left(\overline{B}_n(y,\frac{\varepsilon_i}{3})\right)=0$.
Whenever $\Lambda'_{n, x}\neq \emptyset$, for each $y\in\Lambda'_{n, x}$, $\nu\left(\overline{B}_n(y,\frac{\varepsilon_i}{3})\right) >0$ and so there exists $z(y)\in\overline{B}_n(y, \frac{\varepsilon_i}{3})$ such that \eqref{cont for nu} holds for $z (y)$ whenever $i\in \mathbb{N}$ and $n\geq N_i$. Note that $\Lambda_{n, x}$ is an $(n, \frac{\varepsilon_i}{3})$-spanning set of $B_n (x,\delta)$, $B_n (x,\delta)\subset \bigcup_{y\in \Lambda_{n, x}} \overline{B}_n(y,\frac{\varepsilon_i}{3})$, and then
clearly, for $\nu$-a.e. $x\in Z$, each set $\Lambda'_{n, x}$ is a nonempty set for each $n\in \mathbb{N}$. Thus, for $\nu$-a.e. $x\in Z$, once $n\geq N_i$, one has
	\begin{equation}
		\begin{aligned}
			\nu(B_n(x,\delta))&\leq \sum_{y\in\Lambda'_{n, x}}\nu\left(\overline{B}_n {\big (}y,\frac{\varepsilon_i}{3}{\big )}\right) \\
&\leq \sum_{y\in\Lambda'_{n, x}}\nu(B_n(z(y),\varepsilon_i))\ \left(\text{as $z(y)\in\overline{B}_n {\big (}y, \frac{\varepsilon_i}{3}{\big )}$}\right)\\
			&\leq \#\Lambda'_{n, x}\cdot 2e^{-ns_i}\ (\text{applying \eqref{cont for nu} to $z (y)$})\ \leq 2c(\varepsilon_i,\beta)\cdot e^{-n(s_i-\beta)},
		\end{aligned}
	\end{equation}
and then $\underline{h}_{\nu}(T, x, \delta)\geq s_i-\beta$. Let $i\to\infty$ and $\beta\to 0$, we obtain that $\underline{h}_{\nu}(T, x, \delta)\geq h_\mathrm{top}^B(T, Z)$ and then $\underline{h}_{\nu}(T, x, \delta)= \underline{h}_{\nu}(T, x)= h_\mathrm{top}^B(T, Z)$ for $\nu$-a.e. $x\in X$. This finishes the proof.
	\end{proof}

Now we are ready to prove $(1) \Rightarrow (3)$ of Theorem \ref{MME Bowen-s7}.

\begin{proof}[Proof of $(1) \Rightarrow (3)$ in Theorem \ref{MME Bowen-s7}]
Under the assumptions of Theorem \ref{MME Bowen-s7}, we let $\nu\in \mathcal{M} (X)$ and $\delta>0$ as in Lemma \ref{techlem MME mu to nu}. Take a sequence $\{s_i\}_{i\in \mathbb{N}}$ of positive real numbers increasing up to $h_\mathrm{top}^B(T, Z)$, we could show by induction that, there exists a measurable set $A\subset Z$ with $\nu(A)>0$ and a sequence $\{N_i\}_{i\in \mathbb{N}}$ of positive integers increasing to $\infty$, such that it holds $\nu(B_n(x,\delta))\leq e^{-ns_i}$ for each $x\in A$ and all $i\in \mathbb{N}$ and every $N_i\leq n < N_{i+1}$.

In fact, by construction of $\nu\in\mathcal{M}(X)$ and $\delta>0$ from Lemma \ref{techlem MME mu to nu}, if we set
\begin{equation*}
A_0 =\left\lbrace x\in Z:  \underline{h}_{\nu}(T, x, \delta) = h_\mathrm{top}^B(T, Z) \right\rbrace = \bigcap_{i\in \mathbb{N}} \bigcup_{N\geq 1} \bigcap_{n\geq N} \left\lbrace x\in Z: \nu\left(B_{n}(x, \delta)\right) \leq e^{-ns_i}\right\rbrace,
		\end{equation*}	
then $A_0\subset Z$ is a measurable set satisfying $\nu (A_0)=1$ and $\underline{h}_{\nu}(T, x, \delta)= h_\mathrm{top}^B(T, Z)$ for each $x\in A_0$. From this, it is trivial to see that, we may construct by induction a sequence $N_1< N_2< \cdots$ of large enough integers such that
$\nu (A_i) \geq 1-\frac{1}{2^{i+1}}$ for each $i\in \mathbb{N}$, where
$$A_i= \bigcap_{n\geq N_i} \left\lbrace x\in Z:  \nu\left(B_{n}(x, \delta)\right) \leq e^{-ns_i}    \right\rbrace  \cap A_0$$
is a measurable set.
	Finally, let $A=\bigcap_{i\in \mathbb{N}} A_i$, which is clearly a measurable set. It is direct to check that $A$ has the required properties.

Now we set $b(n) = e^{-ns_i}$ for all $i\in \mathbb{N}$ and $N_i\leq n < N_{i+1}$, which is easily shown to be a gauge function. In the following we prove that the function $b$ has required properties.

As the sequence $\{s_i\}_{i\in \mathbb{N}}$ increases up to $h_\mathrm{top}^B(T, Z)$, for any $s<h_\mathrm{top}^B(T, Z)$, there exists $i_0\in \mathbb{N}$ such that $s_i>s$ for all $i\geq i_0$. Therefore, once $n\geq N_{i_0}$, we take uniquely $i^*\ge i_0$ with $N_{i^*}\leq n < N_{i^*+1}$, and so $e^{ns}b(n) = e^{ns}\cdot e^{-ns_{i^*}} \leq e^{-n(s_{i_0}-s)}$ tends to $0$ as $n$ goes to $\infty$.

It remains to prove $\M^b(Z)>0$.
Take arbitrarily countable family $\left\{B_{n_j}\left(x_j, \frac{\delta}{2}\right)\right\}_{j\in \mathcal{J}}$ covering $A$. It makes no difference to assume that for each $j\in \mathcal{J}$, the Bowen ball $B_{n_j}\left(x_j, \frac{\delta}{2}\right)$ intersects the measurable set $A$ and say $y_j \in B_{n_j}\left(x_j, \frac{\delta}{2}\right) \cap A $. Then, by the construction of the set $A\subset Z$ and the sequence $\{N_i\}_{i\in \mathbb{N}}$ and the definition of $b$, one has
	$$\sum_{j\in \mathcal{J}} b(n_j)  \ge \sum_{j\in \mathcal{J}} \nu\left(B_{n_j}\left(y_j, \delta\right)\right)\ (\text{as $y_j\in A$})\ \ge \sum_{j\in \mathcal{J}} \nu\left(B_{n_j} {\big (}x_j, \frac{\delta}{2}{\big )}\right)
		\ge \nu\left(A\right)>0.$$
By arbitrariness of the family, we obtain $\M^b(A)\geq \nu (A)>0$ and then $\M^b(Z)>0$.
\end{proof}

\subsection{Proof of $(2) \Longrightarrow (1)$ of Theorem \ref{MME Bowen-s7}}\

\smallskip

In the proof, we shall use the concept of principal extension.

\smallskip

Let $\pi: (Y, S) \rightarrow(X, T)$ be an extension between two TDSs. Recall from \cite{Ledrappier1979} that $\pi: (Y, S) \rightarrow(X, T)$ is called a \emph{principal extension} if $h_\mathrm{top}(S,\pi^{-1}(x))=0$ for every $x\in X$.\footnote{\ Principal extension was defined in \cite{Ledrappier1979} originally via the language of relative measure-theoretic entropy, here we provide an equivalent definition of it using the concept of (relative) topological entropy (of fibers) via the so-called Outer variational principle established by \cite[Theorem 3]{Downarowicz-Serafin2002}.}
In this case, if, furthermore, there exists some $M\in \mathbb{N}\setminus \{1\}$ such that $S$ is the shift transformation $\sigma$ over $\{1, \cdots, M\}^\mathbb{Z}$ and $Y\subset \{1, \cdots, M\}^\mathbb{Z}$ is a nonempty closed invariant subset, then
TDS $(Y, S)$ is called a \emph{principal symbolic extension of TDS $(X, T)$} and $\pi: (Y, S) \rightarrow(X, T)$ is called a \emph{principal symbolic extension}.\footnote{\ As explained by Downarowicz in his book (for details see \cite[Page 273]{Downarowicz2011}, in particular, \cite[Definition 9.1.1]{Downarowicz2011}), though the setting of a topological dynamical system we consider in this paper is a compact metric space along with a continuous self-map,
we have to use a bilateral subshift when we characterize an asymptotically $h$-expansive TDS via a principal symbolic extension.}
It is well known that a system is asymptotically $h$-expansive if and only if it admits a principal symbolic extension, for details see \cite[Theorem 9.3.3]{Downarowicz2011} (see also \cite{Downarowicz2001}, \cite{Boyle-Fiebig-Fiebig2002} and \cite{Boyle-Downarowicz2004}).\footnote{\ Such a characterization was given in \cite{Downarowicz2001}, \cite{Boyle-Fiebig-Fiebig2002} and \cite{Boyle-Downarowicz2004} for a topological dynamical system in the sense of a compact metric space along with a self-homeomorphism, and then was given by \cite[Theorem 9.3.3]{Downarowicz2011} for a compact metric space along with a continuous self-map.}

We also need the following result. Recall that a set $Z$ has a \emph{increasing countable slice of Hausdorff dimension}, if there exists a countable collection $\{ Z_i\}_{i\in \mathbb{N}}$ of analytic sets such that $Z=\bigcup_{i\in \mathbb{N}} Z_i$ and $\dim_{\mathcal{H}}(Z_i)<\dim_{\mathcal{H}}(Z)$ for each $i\in \mathbb{N}$.

	\begin{prop}\label{find_compact_in_analytic_Haus}
		If an analytic set $Z$ has no increasing countable slice of Hausdorff dimension, then there exists a compact set $K\subset Z$ with $\dim_{\mathcal{H}}(K)=\dim_{\mathcal{H}}(Z)$ such that $K$ also has no increasing countable slice of Hausdorff dimension.
	\end{prop}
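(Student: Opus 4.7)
My plan is to mirror the inductive construction in Section \ref{Existence of measure of maximal Packing entropy}, with Bowen balls replaced by ordinary metric balls, the packing entropy replaced by Hausdorff dimension, and the weight $e^{-ns}$ replaced by $r^{s}$. First I would set $d = \dim_{\mathcal{H}}(Z) > 0$ and fix a sequence $0 < s_{1} < s_{2} < \cdots \nearrow d$. Next I would check that the statement of Proposition \ref{prop slice} transfers verbatim to Hausdorff dimension (using Proposition \ref{fact-old} in place of Proposition \ref{fact}(2)), so that there is a nonempty analytic $Z^{\prime} \subset Z$ with the following hereditary property: for every open $U$ meeting $Z^{\prime}$, one has $\dim_{\mathcal{H}}(Z^{\prime} \cap U) = d$ and $Z^{\prime} \cap U$ still has no increasing countable slice of Hausdorff dimension.

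The target is to produce, in parallel, a compact $K \subset Z$ and a Borel probability measure $\mu$ with $\mu(K) = 1$ and $\underline{d}_{\mu}(x) \geq d$ at $\mu$-a.e.\ $x$. Such a pair will suffice: on one hand, Proposition \ref{EDG-THM} then gives $\underline{\dim}_{\mathcal{H}}(\mu) \geq d$, hence $\dim_{\mathcal{H}}(K) \geq d$, and the reverse inequality is automatic; on the other hand, for any covering $K = \bigcup_{i \in \mathbb{N}} K_{i}$ by analytic (hence universally measurable) sets there must be some $K_{i_{0}}$ with $\mu(K_{i_{0}}) > 0$, and the normalized restriction $\nu$ of $\mu$ to $K_{i_{0}}$ will inherit $\underline{d}_{\nu}(x) \geq \underline{d}_{\mu}(x) \geq d$ at $\nu$-a.e.\ $x$ (exactly as in the proof of Proposition \ref{dynamical density thm}), so $\dim_{\mathcal{H}}(K_{i_{0}}) \geq \underline{\dim}_{\mathcal{H}}(\nu) \geq d$, showing that $K$ has no increasing countable slice of Hausdorff dimension.

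To construct $(K, \mu)$ I would fix a Souslin representation $Z = \phi(\mathcal{N})$, set $Z_{n_{1}, \ldots, n_{p}} = \phi(\Gamma_{n_{1}, \ldots, n_{p}})$, and inductively produce a finite $K_{p} \subset Z^{\prime} \cap Z_{n_{1}, \ldots, n_{p}}$, a radius function $r_{p}: K_{p} \to (0, \gamma_{p-1}/4]$ with the closed balls $\{\overline{B}(x, r_{p}(x))\}_{x \in K_{p}}$ pairwise disjoint, a scale $\gamma_{p} > 0$, and a measure $\mu_{p} = \xi_{p}^{-1} \sum_{x \in K_{p}} r_{p}(x)^{s_{p}} \delta_{x}$ with $\xi_{p} \in (1, 1 + 2^{-p})$. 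At each step the hereditary property supplies, inside every small ball $B(x, \gamma_{p-1}/4)$ with $x \in K_{p-1}$, a nonempty analytic subset having no countable slice of Hausdorff dimension, and hence of infinite $\mathcal{H}^{s_{p+1}}$-measure; a standard Vitali-type selection then delivers finitely many disjoint closed balls whose weights $r^{s_{p+1}}$ sum approximately to $\mu_{p}(\overline{B}(x, \gamma_{p-1}))$, and I would choose $n_{p+1}$ large enough so that $K_{p+1} \subset Z_{n_{1}, \ldots, n_{p+1}}$ (using $Z_{n_{1}, \ldots, n_{p}, m} \nearrow Z_{n_{1}, \ldots, n_{p}}$). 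The compact set $K = \bigcap_{p} \overline{\bigcup_{i \geq p} K_{i}}$ will lie in $\bigcap_{p} Z_{n_{1}, \ldots, n_{p}} \subset Z$ by continuity of $\phi$ and Cantor's diagonal argument, the weak-$\ast$ limit $\mu$ of $\{\mu_{p}\}$ will be supported on $K$, and the usual telescoping bound $\mu(\overline{B}(x, \gamma_{p})) \leq C \cdot r_{p}(x)^{s_{p}}$ (with $C = \prod_{n \geq 1}(1 + 2^{-n}) < \infty$) will yield $\underline{d}_{\mu}(z) \geq s_{p}$ at each $z \in K$, and therefore $\underline{d}_{\mu}(z) \geq d$ throughout $K$.

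The hardest part, just as in Section \ref{Existence of measure of maximal Packing entropy}, will be to propagate the hereditary ``no increasing countable slice'' property through every step of the induction, so that every local $\mathcal{H}^{s_{p+1}}$-measure remains infinite and a fresh Vitali-type selection of disjoint balls is available at the next scale; once this bookkeeping is arranged as there, the rest of the argument is a routine transcription from the dynamical to the purely metric setting, with the classical Vitali covering lemma playing the role of Lemma \ref{F H Lem4.1}.
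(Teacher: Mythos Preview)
There is a genuine gap. The construction in Section~\ref{Existence of measure of maximal Packing entropy} is a \emph{packing} argument: the telescoping bound there yields $\overline{h}_\mu(T,z,\varepsilon)\ge s_i$, i.e.\ a \emph{limsup} control on $\mu(B_n(z,\varepsilon))$ along the specific times $m_i(x)$. Transcribed to the metric setting, your bound $\mu(\overline{B}(x,\gamma_p))\le C\,r_p(x)^{s_p}$ only says that $\frac{\log\mu(B(z,r))}{\log r}$ is at least (roughly) $s_p$ at the particular scales $r\approx r_p(x_p)$; for $r$ between consecutive construction scales you have nothing better than $\mu(B(z,r))\le C\,r_p(x_p)^{s_p}$ with $r\ll r_p(x_p)$, and since the ratios $r_{p+1}(x_{p+1})/r_p(x_p)$ are completely uncontrolled, $\frac{\log r_p(x_p)}{\log r}$ can be arbitrarily small. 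So the construction delivers $\overline{d}_\mu(z)\ge d$ (hence $\dim_P(K)\ge d$), but \emph{not} $\underline{d}_\mu(z)\ge d$, and therefore neither $\dim_{\mathcal{H}}(K)=d$ nor the ``no countable slice of Hausdorff dimension'' conclusion follows. This is precisely why the paper treats packing entropy (Section~\ref{Existence of measure of maximal Packing entropy}) and Bowen entropy (Sections~\ref{compact}--\ref{Relationship between items (1) and (2)}) by entirely different methods: Hausdorff-type quantities require control at \emph{all} scales, which a disjoint-ball selection at successive levels does not provide.

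The paper's route avoids this obstacle altogether. It invokes \cite[Theorem~6.3]{Sion-Sjerve1962}: from the hypothesis that $Z$ cannot be written as $\bigcup_n Z_n$ with each $Z_n$ of $\sigma$-finite $\mathcal{H}^{\alpha_n}$-measure (which is immediate from ``no increasing countable slice'' via Proposition~\ref{comp_gauge_Haus}), one obtains a compact $K\subset Z$ and a gauge function $h$ with $\lim_{t\to 0}h(t)/t^{\alpha_n}=0$ for every $n$ and $\mathcal{H}^h(K)$ non-$\sigma$-finite. The single gauge $h$, lying ``above'' all powers $t^{\alpha_n}$, is exactly what replaces your missing uniform Frostman control: it forces $\dim_{\mathcal{H}}(K)=d$ and, since any piece of smaller Hausdorff dimension has $\mathcal{H}^h$-measure zero, also rules out any increasing countable slice. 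If you want a self-contained argument, the analogue of Theorem~\ref{construction_b_nonzero_meas} (rather than of Section~\ref{Existence of measure of maximal Packing entropy}) is the template to follow.
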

	
	\begin{proof}
		Let $\{\alpha_n\}_{n\in\mathbb{N}}$ be a sequence of positive real numbers strictly increasing up to $\dim_{\mathcal{H}}(Z)$. Since $Z$ has no increasing countable slice of Hausdorff dimension, it cannot be expressed as the form $\bigcup_{n\in \mathbb{N}} Z_n$ with each $Z_n$ an analytic set having $\sigma$-finite $\mathcal{H}^{\alpha_n}$-measure, where recall that $\mathcal{H}^{\alpha_n}$ is the $\alpha_n$-dimensional Hausdorff measure.
 Otherwise, according to Proposition \ref{comp_gauge_Haus}, we have that $\dim_{\mathcal{H}}(Z_n)\leq \alpha_n < \dim_{\mathcal{H}}(Z)$ for each $n\in \mathbb{N}$, which contradicts to our assumption that the set $Z$ has no increasing countable slice of Hausdorff dimension.

By applying \cite[Theorem 6.3]{Sion-Sjerve1962}, we may find a compact set $K\subset Z$
and a gauge function $h: [0,\infty) \to [0,\infty)$ (that is, a continuous function defined on $[0,\infty)$
which is monotonically increasing on $[0, \infty)$, strictly positive on $(0, \infty)$ and satisfies $h(0)=0$), such that $K$ has non-$\sigma$-finite $\mathcal{H}^{h}$-measure and  \begin{equation}\label{h_big_alp_n}
				\lim\limits_{t\to 0}\frac{h(t)}{t^{\alpha_n}}=0\ \ \ \text{for each}\ n \in \mathbb{N}.
		\end{equation}
	
Now let us check that the subset $K$ is the required compact set. Since $K$ has non-$\sigma$-finite $\mathcal{H}^{h}$-measure, in particular, $\mathcal{H}^{h}(K)>0$. For each $n\in \mathbb{N}$, concerning \eqref{h_big_alp_n} and applying Proposition \ref{comp_gauge_Haus} to the gauge functions $h$ and $t^{\alpha_n}$ we have that $\mathcal{H}^{\alpha_n}(K)>0$ and then $\dim_{\mathcal{H}}(K) \geq \alpha_n$. As $K$ is a subset of $Z$, we concludes $\dim_{\mathcal{H}}(K)=\dim_{\mathcal{H}}(Z)$. It remains to prove that the set $K$ has no increasing countable slice of Hausdorff dimension.
If we assume the contrary, equivalently, there exists a countable collection $\{ K_i\}_{i\in \mathbb{N}}$ of analytic sets such that $K=\bigcup_{i\in \mathbb{N}} K_i$ and $\dim_{\mathcal{H}}(K_i)<\dim_{\mathcal{H}}(K) = \dim_{\mathcal{H}}(Z)$ for each $i\in \mathbb{N}$. By the selection of the sequence $\{\alpha_n\}_{n\in\mathbb{N}}$, for each $i\in \mathbb{N}$ we choose $n = n(i)\in \mathbb{N}$ such that $\alpha_n>\dim_{\mathcal{H}}(K_i)$, and then $\mathcal{H}^{\alpha_n}(K_i)=0$. Again by \eqref{h_big_alp_n} and Proposition \ref{comp_gauge_Haus}, we know that $\mathcal{H}^{h}(K_i)=0$.
As each Hausdorff measure with respect to $h$ is an outer measure,
$$\mathcal{H}^{h}(K)\leq \sum_{i \in \mathbb{N}} \mathcal{H}^{h}(K_i) = 0,$$
which contradicts the construction that the set $K$ has non-$\sigma$-finite $\mathcal{H}^{h}$-measure. Thus the set $K$ has no increasing countable slice of Hausdorff dimension, which ends the proof.
	\end{proof}

\smallskip

Note that $(2) \Rightarrow (1)$ of Theorem \ref{MME Bowen-s7} has been proved by Theorem \ref{MME Bowen-s6} for a compact subset, the general case of an analytic subset follows directly from Proposition \ref{find_compact_in_analytic} below.

	\begin{prop}\label{find_compact_in_analytic}
Assume that the system $(X,T)$ is asymptotically $h$-expansive. Let $Z\subset X$ be an analytic subset with $h_\mathrm{top}^B(T, Z)>0$. If $Z$ has no increasing countable slice, then there exists a compact set $K\subset Z$ with $h_\mathrm{top}^B(T, K)=h_\mathrm{top}^B(T, Z)$ such that $K$ also has no increasing countable slice.
	\end{prop}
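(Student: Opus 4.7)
The plan is to mirror the proof of Proposition \ref{find_compact_in_analytic_Haus}, replacing Hausdorff measures by the Bowen-type outer measures $\M^b$ introduced in \S\S\ref{subs-Bowen}. Set $h := h_\mathrm{top}^B(T,Z) > 0$, fix a strictly increasing sequence $\alpha_n \nearrow h$ with $\alpha_n > 0$, and consider the gauge functions $b_n(k) := e^{-k\alpha_n}$. The first step is that, under the no-slice hypothesis, for each $n$ the set $Z$ cannot be written as a countable union of analytic sets of $\sigma$-finite $\M^{b_n}$-measure: indeed, applying Lemma~\ref{com} with the faster-decaying gauge $e^{-k(\alpha_n+\eta)}$ ($\eta>0$) to each $\sigma$-finite piece would give that piece Bowen entropy $\leq \alpha_n+\eta$, and letting $\eta\to 0$ would display an increasing countable slice of $Z$.

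To keep the Carath\'eodory construction technically clean, I would first pass to the principal symbolic extension $\pi: (Y,\sigma) \to (X,T)$ afforded by asymptotic $h$-expansiveness, with $Y \subset \{1,\ldots,M\}^{\mathbb{Z}}$. Proposition~\ref{Bowen_ent_factor_inequality} combined with principality gives $h_\mathrm{top}^B(\sigma,E) = h_\mathrm{top}^B(T,\pi(E))$ for every $E \subset Y$. Lifting $Z$ to the analytic set $\pi^{-1}(Z)$, one has $h_\mathrm{top}^B(\sigma,\pi^{-1}(Z)) = h$, and $\pi^{-1}(Z)$ still has no increasing countable slice, because any decomposition $\pi^{-1}(Z) = \bigcup_i L_i$ with $h_\mathrm{top}^B(\sigma,L_i) < h$ would push forward via $\pi$ to an increasing countable slice of $Z$ (each $\pi(L_i)$ is analytic with $h_\mathrm{top}^B(T,\pi(L_i)) = h_\mathrm{top}^B(\sigma,L_i) < h$).

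Inside the expansive subshift $(Y,\sigma)$, Bowen balls of small radius are finite cylinder sets, so $\M^b$ is a Borel-regular outer measure obtained by a Carath\'eodory construction on a countable clopen family in a compact zero-dimensional space---precisely the setting in which the Sion--Sjerve existence theorem \cite[Theorem 6.3]{Sion-Sjerve1962} applies. Transcribing its Choquet-capacitability proof to the $\M^{b_n}$ family produces a compact $K' \subset \pi^{-1}(Z)$ and a gauge $b$ such that $\M^b(K')$ is non-$\sigma$-finite (in particular $>0$) and $b(k)/b_n(k) \to 0$ for every $n$. Lemma~\ref{com} then forces $\M^{b_n}(K') = \infty$ for every $n$, whence $h_\mathrm{top}^B(\sigma,K') \geq \alpha_n$ for every $n$, giving $h_\mathrm{top}^B(\sigma,K') = h$; and if $K' = \bigcup_i L_i$ were an increasing countable slice, then for each $i$ picking $\alpha \in (h_\mathrm{top}^B(\sigma,L_i),\alpha_{n(i)})$ would yield $\M^{e^{-k\alpha}}(L_i) = 0$, the decay of $b$ would beat $e^{-k\alpha}$ as well, and Lemma~\ref{com} would force $\M^b(L_i) = 0$, with subadditivity (Proposition~\ref{fact}(2)) contradicting $\M^b(K') > 0$.

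Finally, set $K := \pi(K') \subset Z$, which is compact as the continuous image of a compact; the principal-extension identity gives $h_\mathrm{top}^B(T,K) = h_\mathrm{top}^B(\sigma,K') = h$, and the no-slice property of $K$ descends from $K'$ by pull-back (any slice $K = \bigcup_j M_j$ lifts to $K' = \bigcup_j(K' \cap \pi^{-1}(M_j))$, and the principal-extension identity makes $h_\mathrm{top}^B(\sigma,K' \cap \pi^{-1}(M_j)) \leq h_\mathrm{top}^B(T,M_j) < h$). The main obstacle is importing the Choquet-capacitability mechanism of \cite[Theorem 6.3]{Sion-Sjerve1962} into the Bowen-ball Carath\'eodory construction; passing to the symbolic extension is exactly the simplification that makes this transparent, since the covering family becomes a countable clopen basis on a compact zero-dimensional metric space, the setup most friendly to the Polish-space capacity arguments at the heart of Sion--Sjerve's proof.
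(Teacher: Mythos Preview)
Your lift to the principal symbolic extension and the push-down at the end are exactly what the paper does, and your verification that the no-slice property goes up and down through $\pi$ is correct. The divergence is in the middle step, and there you have a genuine gap.

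You assert that in the bilateral subshift $(Y,\sigma)$ the Bowen balls at small fixed radius are clopen cylinders, hence $\M^{b}$ is a Carath\'eodory outer measure on a countable clopen family to which \cite[Theorem~6.3]{Sion-Sjerve1962} applies. The difficulty is that for a \emph{bilateral} shift with the standard metric, the Bowen ball $B_n(x,\varepsilon)$ controls only the coordinates in a window of the form $[-k, n+k-1]$ (with $k$ depending on $\varepsilon$); as $n\to\infty$ these cylinders do \emph{not} shrink to points---their intersection is the uncountable set $\{y : y_m = x_m \text{ for all } m \ge -k\}$. Consequently $\M^{b}_{\varepsilon}$ is not a Method-II Carath\'eodory measure built from covers of vanishing diameter, and the Sion--Sjerve capacitability machinery, which is formulated for exactly that situation, does not apply out of the box. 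Saying ``transcribing its Choquet-capacitability proof'' hides the real work: one would need to redo the entire inner-approximation argument for a non-metric premeasure whose ``atoms'' are uncountable, and it is not clear this goes through.

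The paper's device for closing this gap is to introduce an \emph{asymmetric} compatible metric on $Y$, namely $d((x_p),(y_p)) = 2^{-N}$ with $N = \sup\{k\ge 0 : x_m = y_m \text{ for all } -\sqrt{k} < m < k\}$, and then prove (Claim~\ref{need}) that with this metric $h_{\mathrm{top}}^B(\sigma,E) = \log 2 \cdot \dim_{\mathcal H}(E)$ for every $E\subset Y$. The asymmetry is the point: a metric ball of diameter $2^{-n}$ is a cylinder of length roughly $n+\sqrt n$, which matches a Bowen ball $B_n(x,\tfrac12)$ up to a subexponential factor $M^{\sqrt n}$ that is invisible at the level of critical exponents. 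This converts the Bowen-entropy no-slice statement for $\pi^{-1}(Z)$ into a Hausdorff-dimension no-slice statement, and then Proposition~\ref{find_compact_in_analytic_Haus} (which \emph{is} a direct application of \cite[Theorem~6.3]{Sion-Sjerve1962} to genuine Hausdorff measures) delivers the compact $K^*$. Your outline is missing precisely this metric trick; without it, the appeal to Sion--Sjerve is unjustified.
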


\begin{proof}
Let us assume that TDS $(X, T)$ is asymptotically $h$-expansive.

We shall lift all subsets of the system $(X,T)$ into symbolic system via the technique of principal extension, then convert our problem of Bowen entropy into that of Hausdorff dimension, and finally deduce the conclusion with the help of Proposition \ref{find_compact_in_analytic_Haus}.

By the well-known characterization of asymptotically $h$-expansive TDS given by \cite[Theorem 9.3.3]{Downarowicz2011}, we may take a symbolic system $(Y, \sigma)$ with $Y\subset \{1, \cdots, M\}^\mathbb{Z}$ for some $M\in \mathbb{N}\setminus \{1\}$, where $\sigma$ is the shift transformation over $\{1, \cdots, M\}^\mathbb{Z}$ and $Y\subset \{1, \cdots, M\}^\mathbb{Z}$ is a nonempty closed invariant subset,
and an extension $\pi: (Y, \sigma) \rightarrow(X, T)$ between TDSs, such that the extension $\pi$ is principal, equivalently, $h_\mathrm{top}(\sigma,\pi^{-1}(x))=0$ for every $x\in X$. Applying  Proposition \ref{Bowen_ent_factor_inequality} one has that
\begin{equation} \label{eq3}
h_{\mathrm {top }}^B(\sigma, E) = h_{\mathrm {top }}^B(T, \pi(E))\ \ \ \text{for each subset}\ E\subset Y.
\end{equation}

Now assume that $Z\subset X$ is an analytic subset with $h_\mathrm{top}^B(T, Z)>0$, and that $Z$ has no increasing countable slice of Bowen entropy. Note that the analytic subsets in a Polish space $X$ are closed under continuous
images, and inverse images (see \cite[14.A]{Kechris1995}). Clearly, $\pi^{- 1} (Z)\subset Y$ is an analytic subset with $h_\mathrm{top}^B(\sigma, \pi^{- 1} Z) = h_\mathrm{top}^B(T, Z) >0$, and that $\pi^{- 1} (Z)$ has no increasing countable slice of Bowen entropy (by applying \eqref{eq3}).

We consider the following compatible metric $d$ over $Y$: set
$$
d\left(\left(x_p\right)_{p \in \mathbb{Z}},\left(y_p\right)_{p \in \mathbb{Z}}\right)=2^{-N},\ \text{where}\ N=\sup \{k \in \mathbb{Z}_{+}: x_m=y_m, \forall -\sqrt{k} <m<k\}$$
  for any sequences $\left(x_p\right)_{p \in \mathbb{Z}},\left(y_p\right)_{p \in \mathbb{Z}}$ in $Y$, where $N=0$ means $x_0 \neq y_0$ and $N=\infty$ means that $\left(x_p\right)_{p\in \mathbb{Z}}$ and $\left(y_p\right)_{p \in \mathbb{Z}}$ are the same sequences.
We remark that the metric $d$ is different from the usual one where $N$ is defined in another way as $\sup \{k \in \mathbb{Z}_{+}: x_m=y_m, \forall -k <m<k\}$.

The following result seems to be well-known, however we failed to find it in the literature, and so we provide here a proof of it for completeness.

\begin{claim} \label{need}
For the above defined metric $d$ over the system $Y$, it holds
\begin{equation}\label{eq1}
h_{\mathrm{top}}^B(\sigma, E)=\log 2 \cdot \operatorname{dim}_{\mathcal{H}}(E) \text { for each subset } E \subset Y.
\end{equation}
\end{claim}

\begin{proof}
Firstly we introduce some notations. For a finite collection of symbols $w_m\in \{1,\cdots,M\}$, $m=-i,\cdots,j$ where $i, j\in \mathbb{Z}_+$, denote $[w_{-i},\cdots,\underline{w_0},\cdots,
w_j]$ as the column set in $Y$ of length $j+i+1$, in other words, $[w_{-i},\cdots,\underline{w_0},\cdots,
w_j]$ is the collection of all points $\left(x_p\right)_{p \in \mathbb{Z}} \in Y$ satisfying $x_m=w_m$ for all $-i\leq m\leq j$ (where $\underline{*}$ denotes the symbol at position 0).
In particular, for each $F\subset Y\subset \{1,\cdots,M\}^{\mathbb{Z}}$ and $n\in \mathbb{N}$, we have $\operatorname{diam} (F)\leq 2^{-n}$ if and only if $d\left(\left(x_p\right)_{p \in \mathbb{Z}},\left(y_p\right)_{p \in \mathbb{Z}}\right)\leq 2^{-n}$ for some point $\left(x_p\right)_{p \in \mathbb{Z}} \in F$ and all $\left(y_p\right)_{p \in \mathbb{Z}}\in F$, that is, for each $\left(x_p\right)_{p \in \mathbb{Z}} \in F$,
if $n^*$ is the minimal integer strictly larger than $-\sqrt{n}$ (in particular, $n^*= -\lfloor \sqrt{n}\rfloor+1$ if $\sqrt{n}\in \mathbb{N}$ and $n^*= -\lfloor \sqrt{n}\rfloor$ if $\sqrt{n}\notin \mathbb{N}$), then
$$
F\subset [x_{n^*},\cdots,\underline{x_0},\cdots,x_{n-1}].$$

Therefore, on one hand, when discussing the Hausdorff measure $\mathcal{H}^s(\cdot)$ (for each $s> 0$) with respect to the metric $d$, we may safely assume that each covering set (in the definition of Hausdorff measure) is of the form $[x_{n^*},\cdots,\underline{x_0},\cdots,x_{n-1}]$ for some $\left(x_p\right)_{p \in \mathbb{Z}} \in Y$ and some $n\in \mathbb{N}$.
On the other hand, when concerning Bowen entropy, a basic computation shows that $h_{\text {top}}^B\left(\sigma, E\right)=h_{\text {top}}^B(\sigma, E, \frac{1}{2})$ for all $E\subset Y$. Hence, in this case, the covering Bowen ball $B_n(\left(x_p\right)_{p \in \mathbb{Z}}, \frac{1}{2})$ (in the definition of Bowen entropy) is of the form $[\underline{x_0},\cdots,x_{n-1}]$.

Clearly, $[x_{n^*},\cdots,\underline{x_0},\cdots,x_{n-1}]$ is a subset of $[\underline{x_0},\cdots,x_{n-1}]$, and then $$\mathcal{H}^s_{2^{-N}}(E)\geq \M^{s\log 2}_{N,\frac{1}{2}}(E)$$ for all $N\geq 1$, $s>0$ and $E\subset Y$, where the quantities $\mathcal{H}^s_{\varepsilon}(\cdot )$ and $\M^{s \log 2}_{N,\varepsilon}(\cdot)$ are introduced in subsection $\S\S \ref{defin}$. This implies that $h_{\mathrm{top}}^B(\sigma, E)\leq \log 2 \cdot \operatorname{dim}_{\mathcal{H}}(E)$.

Conversely, fix arbitrarily $\delta>0$, we choose $N\in \mathbb{N}$ large enough such that $e^{n\delta}\geq M^{\sqrt{n}}$ for all $n\geq N$. Note that each set $[\underline{x_0},\cdots,x_{n-1}]$ can be covered by at most $M^{\sqrt{n}}$ sets of the form $[*,\cdots,*,\underline{x_0},\cdots,x_{n-1}]$, where $[*,\cdots,*]$ is some word of length $|n^*|\ (\le \sqrt{n})$. In other words, each Bowen ball $B_{n}\left(\left(x_p\right)_{p \in \mathbb{Z}}, \frac{1}{2}\right)$  can be covered by at most $M^{\sqrt{n}}$ sets with diameters at most $2^{-n}$. Therefore, for any $E\subset Y$, given a countable family of Bowen balls $\left\{ B_{n_{i}}\left(\left(x_p\right)_{p \in \mathbb{Z}}, \frac{1}{2}\right)\right\}_{i\in \mathcal{I}}$ covering $E$ with each $n_i\ge N$, there exists for each $i\in \mathcal{I}$ a countable family $\{E_i^{(j)}\}_{j\in \mathcal{J}_i}$ of sets with diameters at most $2^{-n_i}\le 2^{- N}$ such that $\# \mathcal{J}_i\leq M^{\sqrt{n_i}}$ and the family $\{E_i^{(j)}\}_{j\in \mathcal{J}_i}$ covers
the Bowen ball $B_{n_{i}}\left(\left(x_p\right)_{p \in \mathbb{Z}}, \frac{1}{2}\right)$. This implies that
$$
\sum_{i\in \mathcal{I}} e^{-n_{i}(s\log 2-\delta)}\geq \sum_{i\in \mathcal{I}} 2^{- n_{i}s}\cdot M^{\sqrt{n_i}}\geq \sum_{i\in \mathcal{I}} \sum_{j\in \mathcal{J}_i} 2^{- n_{i}s}.$$
By the arbitrariness of the countable family of Bowen balls and the definitions, one has
$$\mathcal{H}^s_{2^{-N}}(E)\leq \M^{s\log 2-\delta}_{N,\frac{1}{2}}(E)$$
for all $N\in \mathbb{N}$ large enough (this depends on the parameter $\delta> 0$). Letting $N$ tend to infinity and noting the arbitrariness of $\delta> 0$, we have $h_{\mathrm{top}}^B(\sigma, E)\geq \log 2 \cdot \operatorname{dim}_{\mathcal{H}}(E) - \delta$ and then $h_{\mathrm{top}}^B(\sigma, E)\geq \log 2 \cdot \operatorname{dim}_{\mathcal{H}}(E)$. This ends the proof of the claim.
\end{proof}

Now let us continue the proof of Proposition \ref{find_compact_in_analytic}.
Combining \eqref{eq1} with the assumption that $\pi^{- 1} (Z)$ has no increasing countable slice of Bowen entropy, one has that the set $\pi^{- 1} (Z)$ has no increasing countable slice of Hausdorff dimension, and then by Proposition \ref{find_compact_in_analytic_Haus},
 there exists a compact subset $K^*\subset \pi^{- 1} (Z)$ such that $K^*$ has no increasing countable slice of Hausdorff dimension and $\dim_{\mathcal{H}}(K^*)=\dim_{\mathcal{H}}(\pi^{- 1} Z)$. Using again \eqref{eq1}, we know that $h_\mathrm{top}^B(\sigma, K^*)=h_\mathrm{top}^B(\sigma, \pi^{- 1} Z)$ and that $K^*$ has no increasing countable slice of Bowen entropy. Now set $K= \pi (K^*)\subset Z$, which is clearly a compact set with the required properties (by \eqref{eq3}). This finishes the proof.
	\end{proof}	

	\begin{remark}
Though we only need to assume in Proposition \ref{find_compact_in_analytic} that the system $(X,T)$ is asymptotically $h$-expansive, we can only prove the conclusion $(2) \Rightarrow (1)$ of Theorem \ref{MME Bowen-s7} for an $h$-expansive system. Because we have remarked that, here we use the case of $(2) \Rightarrow (1)$ in Theorem \ref{MME Bowen-s7} for a compact subset, which has been proved by Theorem \ref{MME Bowen-s6} under the assumption that the system $(X,T)$ is $h$-expansive.
	\end{remark}

\section*{Appendix. Proof of Proposition \ref{constuct_Local_const}}

In this section let us finish the proof of Proposition \ref{constuct_Local_const}. We have remarked that the construction may be standard, and we provide here a proof of it for completeness.

 Our proof will be divided into two parts: we construct firstly a non-decreasing $C^\infty$ function $\psi$ which satisfies conditions \eqref{clc_1} and \eqref{clc_2} of Proposition \ref{constuct_Local_const} and other properties required in later construction, and then we modify those local constants in Proposition \ref{constuct_Local_const} \eqref{clc_2} so that we obtain a non-decreasing $C^\infty$ function $\phi$ which satisfies not only conditions \eqref{clc_1} and \eqref{clc_2} of Proposition \ref{constuct_Local_const} but also condition \eqref{clc_3} of Proposition \ref{constuct_Local_const}.

\medskip
	
The main idea of the first part is to concatenate constant intervals.

We take arbitrarily a non-decreasing $C^\infty$ function $s: [0,1] \rightarrow [0,1]$ satisfying
$s(0)=0$, $s(1)=1$ and $s^{(i)}(0) = s^{(i)}(1)=0$ for all $i\in \mathbb{N}$, where $s^{(i)}(x)$ denotes the $i^{th}$ derivative of $s$ at the point $x$. Such a function is easy to construct.

In the following let us use $s$ as a concatenating function to construct $\psi$ inductively.

\medskip

Recall that $0<u_1<v_1<u_2<v_2<\cdots \nearrow 1$. Set $\gamma_0=0$.

Let us firstly connect $0$ and $u_1$, define $\psi$ on $[0,v_1]$ and define $\gamma_1$.
For each $x\in[0, u_1]$, simply let $\psi(x)=\frac{1}{2}\cdot s {\Big (}\frac{x}{u_1} {\Big )},$
which is a $C^\infty$ function sending $[0,u_1]$ to $[0,\frac{1}{2}]$.
Moreover, for each $i\in \mathbb{N}$, since $s^{(i)}(1)=0$, one has
$\psi^{(i)}(u_1-)=0$. Set $\gamma_1= \frac{1}{2}$, and define $\psi(x)=\gamma_1$ for $x\in[u_1,v_1]$. Therefore, $\psi$ is a non-decreasing $C^\infty$ function well defined on $[0,v_1].$

Assume that $\psi$ has been defined on $[0,v_{p-1}]$, $\gamma_i$ has been determined for all $1\leq i\leq p-1$, where $p\ge 2$.
Now let us connect $v_{p-1}$ and $u_p$, define $\psi$ on $[0,v_p]$ and define $\gamma_p$ in a way similar to above procedure.
That is, for each $x\in[v_{p-1},u_p]$, we simply let
$$\psi(x)=d_p\cdot s {\Big (}\frac{x-v_{p-1}}{u_p-v_{p-1}} {\Big )}+\gamma_{p-1},$$
where $d_p>0$ is small enough such that
$$
\frac{\psi(u_p)-\gamma_{p-1}}{\gamma_{p-1}-\gamma_{p-2}}< \frac{1}{2}\ \ \text{and}\ \ |\psi^{(j)}(x)|<\frac{1-v_p}{p}\ \ \ \ \ \ (\forall x\in[v_{p-1},u_p],\ \forall 1\leq j\leq p).$$
Set $\gamma_p=\psi(u_p)$ and $\psi(x)=\gamma_p$ for $x\in[u_p,v_p]$. So $\psi$ is a $C^\infty$ function well defined on $[0,v_p].$
	
We have successfully defined $\psi$ on the interval $[0,v_p]$ for all $p\in \mathbb{N}$. Then $\psi$ is a non-decreasing $C^\infty$ function defined on $[0, 1)$. Moreover, we have that for all $i\in \mathbb{N}$,
\begin{enumerate}
	
	\item[(a)] $|\psi^{(j)}(x)|<\frac{1-v_{i+1}}{i+1}\leq \frac{1}{2}$ for all $v_i\leq x\leq u_{i+1}$ and $1\leq j\leq i+1$.
	
	\item[(b)] $|\psi^{(j)}(x)|=0<\frac{1-v_{i+1}}{i+1}$ for all $u_i\leq x\leq v_i$ and $j\in \mathbb{N}$, since $\psi$ is constant on $[u_i, v_i]$.
	
	\item[(c)] $2(\gamma_{i+1}-\gamma_{i})<\gamma_{i}-\gamma_{i-1}$.
\end{enumerate}

Set $\psi(1)\doteq \lim\limits_{x\to 1-} \psi(x)$, and then
$\psi$ is a non-decreasing continuous function on $[0, 1]$, as $\psi$ is non-decreasing on $[0, 1)$ and by (a), (b) and above construction
	$$\psi(1)= \psi(v_1)+\sup_{v_1\leq t<1}\int_{v_1}^{t}\psi'(x)dx\le \frac{1}{2} + \frac{1- v_1}{2}\leq 1.$$

Below we show that $\psi$ is smooth at $x=1$ by proving $\psi^{(k)}(1)=0$ for each $k\in \mathbb{N}$.
When $k=1$, we have $\psi'(1)=0$ by applying again (a) and (b) above to obtain
 \begin{align*}
& \hskip -36 pt \limsup_{x\to 1-} {\Big |}\frac{\psi(x)-\psi(1)}{x-1} {\Big |}
		 = \limsup_{x\to 1-} {\Big |}\frac{\int_{x}^{1}\psi'(t)dt}{x-1} {\Big |}\\
		 \leq & \limsup_{i\to \infty} \sup_{v_i\leq x\leq v_{i+1}} {\Big |}\frac{(1-x)\cdot \frac{1-v_{i+1}}{i+1} }{1-x} {\Big |}=\limsup_{i\to \infty} {\Big |}\frac{1-v_{i+1}}{i+1} {\Big |}=0.	
\end{align*}
When $k\geq 2$, by induction we have $\psi^{(k)}(1)=0$ similarly to the above process as follows
\begin{equation*}
\begin{aligned}
& \limsup_{x\to 1-} {\Big |}\frac{\psi^{(k-1)}(x)- \psi^{(k-1)}(1)}{x-1} {\Big |} \\
= & \limsup_{x\to 1-} {\Big |}\frac{\psi^{(k-1)}(x)}{x-1} {\Big |} \ (\text{by induction to assume $\psi^{(k-1)}(1) = 0$}) \\
\leq & \limsup_{i\to \infty} \sup_{v_i\leq x\leq v_{i+1}}  {\Big |}\frac{ \frac{1-v_{i+1}}{i+1} }{1-x} {\Big |}\ (\text{applying (a) and (b)})\\
\leq & \limsup_{i\to \infty} {\Big |}\frac{1-v_{i+1}}{(i+1)(1-v_{i+1})} {\Big |}=0.
\end{aligned}
\end{equation*}

Clearly, $\frac{1}{\psi(1)}\cdot \psi: [0, 1]\rightarrow [0, 1]$ is an ideal function satisfying conditions \eqref{clc_1} and \eqref{clc_2} of Proposition \ref{constuct_Local_const}. For brevity, we denote it still by $\psi$ and for each $i\in \mathbb{N}$ use still $\gamma_i$ the new local constants, in particular, $\{\gamma_i\}_{i\in \mathbb{N}}$ increases strictly to $1$ and it also holds that $2(\gamma_{i+1}-\gamma_{i})<\gamma_{i}-\gamma_{i-1}$ for each $i\in \mathbb{N}$.

\medskip

Let us move forward to the second part of the proof. The following fact will be useful.
	
\begin{lemma}\label{modify_local_const}
	Let $\psi: [0, 1]\rightarrow [0, 1]$ be the function defined as above, the set $E$ be as in Proposition \ref{constuct_Local_const}, and $i\geq 2$ satisfy $E\cap [ \gamma_i,\gamma_{i+1}] \neq\emptyset$. Then there exists a non-decreasing $C^\infty$ function $\eta: [0,1] \rightarrow [0,1]$ such that
\begin{enumerate}
		\item $\eta(x)=\psi(x)$ for all $x\notin [v_{i-1},u_{i+1}]$ and $\eta(x)$ takes a constant value $c\in E$ on $[u_i,v_i]$.

		\item $|\eta^{(k)}(x)|\leq 2\cdot|\psi^{(k)}(x)|$ for all $k \in \mathbb{N}$ and $x\in[0,1]$.
	\end{enumerate}	
\end{lemma}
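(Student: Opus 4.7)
The plan is to alter $\psi$ only on $[v_{i-1}, u_{i+1}]$, replacing the two rising pieces (built via the concatenator $s$) and the intermediate flat piece with analogous pieces made from the same $s$ but rescaled so that the constant value on $[u_i, v_i]$ equals a chosen $c \in E \cap [\gamma_i, \gamma_{i+1}]$ (which is nonempty by hypothesis). Concretely, I would set $\eta(x) = \psi(x)$ for $x \notin [v_{i-1}, u_{i+1}]$, and on $[v_{i-1}, u_{i+1}]$ define
\[
\eta(x) = \begin{cases}
(c - \gamma_{i-1})\, s\!\left(\dfrac{x - v_{i-1}}{u_i - v_{i-1}}\right) + \gamma_{i-1}, & x \in [v_{i-1}, u_i], \\[2mm]
c, & x \in [u_i, v_i], \\[2mm]
(\gamma_{i+1} - c)\, s\!\left(\dfrac{x - v_i}{u_{i+1} - v_i}\right) + c, & x \in [v_i, u_{i+1}].
\end{cases}
\]
Condition (1) of the lemma is then immediate from the definition.

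The first step beyond the definition is to verify that $\eta$ is $C^\infty$ and non-decreasing on $[0,1]$. Smoothness at the four seams $v_{i-1}, u_i, v_i, u_{i+1}$ follows from $s^{(k)}(0) = s^{(k)}(1) = 0$ for every $k \in \N$, which forces all one-sided derivatives of $\eta$ at these points to vanish and hence to match the constant-valued side; the values themselves match because $\eta(v_{i-1}) = \gamma_{i-1}$, $\eta(u_i) = \eta(v_i) = c$ and $\eta(u_{i+1}) = \gamma_{i+1}$. Monotonicity holds because $s$ is non-decreasing and both scaling factors $c - \gamma_{i-1} \ge \gamma_i - \gamma_{i-1} > 0$ and $\gamma_{i+1} - c \ge 0$ are non-negative.

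For condition (2) I would compare $\eta$ and $\psi$ piecewise. Outside $[v_{i-1}, u_{i+1}]$ they agree, and on $[u_i, v_i]$ they are both constant, so the inequality is trivial there. On the two remaining subintervals the functions differ only by a multiplicative rescaling of the identical building block $s$ composed with an affine map, so for every $k \in \N$
\[
\frac{\eta^{(k)}(x)}{\psi^{(k)}(x)} = \frac{c - \gamma_{i-1}}{\gamma_i - \gamma_{i-1}}\ \text{on $[v_{i-1}, u_i]$}, \qquad \frac{\eta^{(k)}(x)}{\psi^{(k)}(x)} = \frac{\gamma_{i+1} - c}{\gamma_{i+1} - \gamma_i}\ \text{on $[v_i, u_{i+1}]$},
\]
whenever the denominator $\psi^{(k)}(x)$ is nonzero, while $\eta^{(k)}(x)=0$ at every remaining point of these subintervals. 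The second ratio is at most $1$ since $c \ge \gamma_i$. For the first, the essential input is property (c) of the previously constructed $\psi$, namely $2(\gamma_{i+1} - \gamma_i) < \gamma_i - \gamma_{i-1}$, which yields
\[
c - \gamma_{i-1} \le \gamma_{i+1} - \gamma_{i-1} = (\gamma_{i+1} - \gamma_i) + (\gamma_i - \gamma_{i-1}) < \tfrac{3}{2}(\gamma_i - \gamma_{i-1}),
\]
so the first ratio is strictly less than $3/2 < 2$. There is no real obstacle in this argument; its whole point is that the geometric-decay property (c) was introduced in the first-part construction precisely to leave the slack needed to slide the flat level on $[u_i, v_i]$ anywhere in $[\gamma_i, \gamma_{i+1}]$ at a cost of at most a factor two in derivatives.
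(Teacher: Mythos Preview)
Your proof is correct and is essentially identical to the paper's: the paper defines $\eta$ on $[v_{i-1},u_i]$ and $[v_i,u_{i+1}]$ as the affine rescalings $\frac{c-\gamma_{i-1}}{\gamma_i-\gamma_{i-1}}(\psi(x)-\gamma_{i-1})+\gamma_{i-1}$ and $\frac{\gamma_{i+1}-c}{\gamma_{i+1}-\gamma_i}(\psi(x)-\gamma_i)+c$, which unwind to exactly your formulas in terms of $s$, and the derivative comparison then reduces to bounding the same two ratios via property~(c). Your bound $\frac{c-\gamma_{i-1}}{\gamma_i-\gamma_{i-1}}<\tfrac32$ is in fact slightly sharper than the paper's $\le 2$.
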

	
\begin{proof}
Take arbitrarily $c\in E\cap [\gamma_i,\gamma_{i+1}]$. The function $\eta$ is given by condition (1) and 	
	$$
			\eta(x) = \begin{cases}\frac{c-\gamma_{i-1}}{\gamma_i-\gamma_{i-1}}\cdot (\psi(x)-\gamma_{i-1}) +\gamma_{i-1}&\text { when } x\in[v_{i-1},u_i] \\
			\frac{\gamma_{i+1}-c}{\gamma_{i+1}-\gamma_{i}}\cdot (\psi(x)-\gamma_{i}) +c &\text { when } x\in[v_i, u_{i+1}]
\end{cases}.
	$$
As $2 (\gamma_{i+1}-\gamma_{i})<\gamma_{i}-\gamma_{i-1}$ and $\gamma_i\le c\le \gamma_{i + 1}$, one has
$\gamma_i+(\gamma_i-\gamma_{i-1})> \gamma_i+(\gamma_{i + 1} -\gamma_{i}) = \gamma_{i+1}\geq c$ and then $2 (\gamma_i-\gamma_{i-1})> c - \gamma_{i - 1}$, thus
$$0\leq\frac{\gamma_{i+1}-c}{\gamma_{i+1}-\gamma_{i}}\leq 1\leq\frac{c-\gamma_{i-1}}{\gamma_i-\gamma_{i-1}}\leq 2.$$
From which and construction of $\psi$ as above, it is easy to check required properties of $\eta$.
\end{proof}

In the following, we will construct by induction a non-decreasing $C^\infty$ function $\phi$ satisfying all properties of Proposition \ref{constuct_Local_const}, where we lift a countable number of local constants,
via Lemma \ref{modify_local_const}, without destroying the smoothness of point $1$.

Let us firstly choose $c_1\in E$, $n_1\in \mathbb{N}$ and define a non-decreasing $C^\infty$ function $\eta_1: [0,1] \rightarrow [0, 1]$.
As the closure $\overline{E}$ of the subset $E\subset [0, 1)$ contains the point $1$, we can take arbitrarily $c_1\in E$ with $\gamma_2<c_1<1$, and then pick $n_1\geq 2$ such that $c_1\in E\cap [\gamma_{n_1},\gamma_{n_1+1}].$ By Lemma \ref{modify_local_const}, we may obtain the function $\eta_1$ by modifying the function $\psi$ such that
$\eta_1$ takes the constant value $c_1$ on $[u_{n_1},v_{n_1}]$ and that $|\eta_1^{(k)}(x)|\leq 2\cdot|\psi^{(k)}(x)|$ for all $k \in \mathbb{N}$ and $x\in[0,1]$.

Assume that $c_i\in E$ and $n_i\in \mathbb{N}$ have been chosen and a non-decreasing $C^\infty$ function $\eta_i: [0, 1]\rightarrow [0, 1]$ has been defined for all $1\leq i\leq p-1$ (with $p\ge 2$).
Similarly we may take arbitrarily $c_p\in E$ with $\gamma_{n_{p-1}+2}<c_p<1$, and then pick $n_p\ge n_{p-1}+2$ such that  $c_p\in E\cap [ \gamma_{n_p},\gamma_{n_p+1}].$
Similarly, by Lemma \ref{modify_local_const}, we may modify $\eta_{p - 1}$ into a non-decreasing $C^\infty$ function $\eta_p: [0, 1]\rightarrow [0, 1]$ such that $\eta_p$ takes the constant value $c_p$ on $[u_{n_p},v_{n_p}]$ and that $|\eta_p^{(k)}(x)|\leq 2\cdot|\psi^{(k)}(x)|$ for all $k \in \mathbb{N}$ and $x\in[0,1]$.

Finally, the required non-decreasing $C^\infty$ function $\phi: [0, 1]\rightarrow [0, 1]$ is defined by setting $\phi (1) = 1$ and $\phi(x)=\eta_i(x)$ for all $x\in[v_{n_{i-1}+1},v_{n_i+1}]$ and each $i\ge 2$.
After a countable steps of modifications as above, it is easy to see that $\phi$ is a non-decreasing $C^\infty$ function on $[0,1)$ and that $\phi(x)=c_i\in [\gamma_{n_i}, \gamma_{n_i + 1}]$ for all $x\in[u_{n_i},v_{n_i}]$ and each $i\ge 2$.

To finish the proof of Proposition \ref{constuct_Local_const}, it suffices to verify the smoothness of $\phi$ at $x=1$.
Note that $\{\gamma_i\}_{i\in \mathbb{N}}$ increases up to $1$, so does $\{c_i\}_{i\in \mathbb{N}}$, and then it is easy to check from $\phi(1)=1$ that $\phi$ is continuous on $[0, 1]$.
Moreover, by above construction, we know that $|\phi^{(k)}(x)|\leq 2\cdot|\psi^{(k)}(x)|$ for all $k\geq 1$ and $0 \leq x < 1$, and then it could be easily verified that $\phi^{(k)}(1)=\psi^{(k)}(1)=0$ for all $k\geq 1$. This ends the whole proof.

\section*{Acknowledgements}

We would like to thank Professors Yongluo Cao, Ercai Chen, Wen Huang, Weixiao Shen and Xiangdong Ye for valuable comments, in particular, we thank Professor Weixiao Shen for explanations about dynamics of the unimodal interval maps. We also thank Professor Theodore A. Slaman for drawing our attention to the study of Hausdorff measure with respect to a gauge function.

Guohua Zhang is supported by the National Key Research and Development Program of China (No. 2021YFA1003204). This
work has been supported by the New Cornerstone Science Foundation through the New
Cornerstone Investigator Program.

\bibliographystyle{amsplain} 

\begin{thebibliography}{10}
	
\bibitem{Barreira2008}
Luis Barreira, \emph{Dimension and recurrence in hyperbolic dynamics}, Progress
  in Mathematics, vol. 272, Birkh\"{a}user Verlag, Basel, 2008.

\bibitem{Barreira-Wolf2003}
Luis Barreira and Christian Wolf, \emph{Measures of maximal dimension for
  hyperbolic diffeomorphisms}, Comm. Math. Phys. \textbf{239} (2003), no.~1-2,
  93--113.

\bibitem{Berger2019}
Pierre Berger, \emph{Properties of the maximal entropy measure and geometry of
  {H}\'{e}non attractors}, J. Eur. Math. Soc. (JEMS) \textbf{21} (2019), no.~8,
  2233--2299.

\bibitem{R.Bowen1972-TAMS}
Rufus Bowen, \emph{Entropy-expansive maps}, Trans. Amer. Math. Soc.
  \textbf{164} (1972), 323--331.

\bibitem{R.Bowen1973-TAMS}
\bysame, \emph{Topological entropy for noncompact sets}, Trans. Amer. Math.
  Soc. \textbf{184} (1973), 125--136.

\bibitem{Boyle-Buzzi2017}
Mike Boyle and J\'{e}r\^{o}me Buzzi, \emph{The almost {B}orel structure of
  surface diffeomorphisms, {M}arkov shifts and their factors}, J. Eur. Math.
  Soc. (JEMS) \textbf{19} (2017), no.~9, 2739--2782.

\bibitem{Boyle-Downarowicz2004}
Mike Boyle and Tomasz Downarowicz, \emph{The entropy theory of symbolic
  extensions}, Invent. Math. \textbf{156} (2004), no.~1, 119--161.

\bibitem{Boyle-Fiebig-Fiebig2002}
Mike Boyle, Doris Fiebig, and Ulf Fiebig, \emph{Residual entropy, conditional
  entropy and subshift covers}, Forum Math. \textbf{14} (2002), no.~5,
  713--757.

\bibitem{Brin-Katok1981}
Michael Brin and Anatole~B. Katok, \emph{On local entropy}, Geometric dynamics
  ({R}io de {J}aneiro, 1981), Lecture Notes in Math., vol. 1007, Springer,
  Berlin, 1983, pp.~30--38.

\bibitem{Brucks-Bruin2004}
Karen~M. Brucks and Henk Bruin, \emph{Topics from one-dimensional dynamics},
  London Mathematical Society Student Texts, vol.~62, Cambridge University
  Press, Cambridge, 2004.

\bibitem{Bruin-Strien2015}
Henk Bruin and Sebastian van Strien, \emph{Monotonicity of entropy for real
  multimodal maps}, J. Amer. Math. Soc. \textbf{28} (2015), no.~1, 1--61.


\bibitem{Burguet2020-JEMS}
David Burguet, \emph{Periodic expansiveness of smooth surface diffeomorphisms
  and applications}, J. Eur. Math. Soc. (JEMS) \textbf{22} (2020), no.~2,
  413--454.

\bibitem{Burguet2024-AHP}
\bysame, \emph{Maximal measure and entropic continuity of {L}yapunov exponents
  for {$\mathcal{C}^r$} surface diffeomorphisms with large entropy}, Ann. Henri
  Poincar\'{e} \textbf{25} (2024), no.~2, 1485--1510.

\bibitem{Buzzi2014}
J\'{e}r\^{o}me Buzzi, \emph{{$C^r$} surface diffeomorphisms with no maximal
  entropy measure}, Ergodic Theory Dynam. Systems \textbf{34} (2014), no.~6,
  1770--1793.

\bibitem{Buzzi-Crovisier-Sarig2022-IM}
J\'{e}r\^{o}me Buzzi, Sylvain Crovisier, and Omri Sarig, \emph{Continuity
  properties of {L}yapunov exponents for surface diffeomorphisms}, Invent.
  Math. \textbf{230} (2022), no.~2, 767--849.

\bibitem{Buzzi-Crovisier-Sarig2022-AM}
\bysame, \emph{Measures of maximal entropy for surface diffeomorphisms}, Ann.
  of Math. (2) \textbf{195} (2022), no.~2, 421--508.

\bibitem{Buzzi-Fisher-Sarig2022}
J\'{e}r\^{o}me Buzzi, Todd Fisher, and Ali Tahzibi, \emph{A dichotomy for
  measures of maximal entropy near time-one maps of transitive {A}nosov flows},
  Ann. Sci. \'{E}c. Norm. Sup\'{e}r. (4) \textbf{55} (2022), no.~4, 969--1002.


\bibitem{Melo-Strien1993}
Welington de~Melo and Sebastian van Strien, \emph{One-dimensional dynamics},
  Ergebnisse der Mathematik und ihrer Grenzgebiete (3) [Results in Mathematics
  and Related Areas (3)], vol.~25, Springer-Verlag, Berlin, 1993.

\bibitem{Downarowicz2001}
Tomasz Downarowicz, \emph{Entropy of a symbolic extension of a dynamical
  system}, Ergodic Theory Dynam. Systems \textbf{21} (2001), no.~4, 1051--1070.


\bibitem{Downarowicz2011}
\bysame, \emph{Entropy in dynamical systems}, New Mathematical Monographs,
  vol.~18, Cambridge University Press, Cambridge, 2011.

\bibitem{Downarowicz-Huczek2013}
Tomasz Downarowicz and Dawid Huczek, \emph{Zero-dimensional principal
  extensions}, Acta Appl. Math. \textbf{126} (2013), 117--129.

\bibitem{Downarowicz-Serafin2002}
Tomasz Downarowicz and Jacek Serafin, \emph{Fiber entropy and conditional
  variational principles in compact non-metrizable spaces}, Fund. Math.
  \textbf{172} (2002), no.~3, 217--247.

\bibitem{Edgar1998}
Gerald~A. Edgar, \emph{Integral, probability, and fractal measures},
  Springer-Verlag, New York, 1998.

\bibitem{Falconer1990}
Kenneth Falconer, \emph{Fractal geometry}, John Wiley \& Sons, Ltd.,
  Chichester, 1990, Mathematical foundations and applications.

\bibitem{Feng-Huang2012}
De-Jun Feng and Wen Huang, \emph{Variational principles for topological
  entropies of subsets}, J. Funct. Anal. \textbf{263} (2012), no.~8,
  2228--2254.

\bibitem{Hochman2013}
Michael Hochman, \emph{Isomorphism and embedding of {B}orel systems on full
  sets}, Acta Appl. Math. \textbf{126} (2013), 187--201.

\bibitem{Howroyd1995}
John~D. Howroyd, \emph{On dimension and on the existence of sets of finite
  positive {H}ausdorff measure}, Proc. London Math. Soc. (3) \textbf{70}
  (1995), no.~3, 581--604.

\bibitem{Huang-Ye-Zhang2010}
Wen Huang, Xiangdong Ye, and Guohua Zhang, \emph{Lowering topological entropy
  over subsets}, Ergodic Theory Dynam. Systems \textbf{30} (2010), no.~1,
  181--209.

\bibitem{Huang-Ye-Zhang2014}
\bysame, \emph{Lowering topological entropy over subsets revisited}, Trans.
  Amer. Math. Soc. \textbf{366} (2014), no.~8, 4423--4442.

\bibitem{Joyce-Preiss1995}
Helen~J. Joyce and David Preiss, \emph{On the existence of subsets of finite
  positive packing measure}, Mathematika \textbf{42} (1995), no.~1, 15--24.


\bibitem{Kechris1995}
Alexander~S. Kechris, \emph{Classical descriptive set theory}, Graduate Texts
  in Mathematics, vol. 156, Springer-Verlag, New York, 1995.

\bibitem{Kenyon-Peres1996}
Richard~W. Kenyon and Yuval Peres, \emph{Measures of full dimension on
  affine-invariant sets}, Ergodic Theory Dynam. Systems \textbf{16} (1996),
  no.~2, 307--323.

\bibitem{Ledrappier1979}
Fran\c{c}ois Ledrappier, \emph{A variational principle for the topological
  conditional entropy}, Ergodic theory ({P}roc. {C}onf., {M}ath.
  {F}orschungsinst., {O}berwolfach, 1978), Lecture Notes in Math., vol. 729,
  Springer, Berlin, 1979, pp.~78--88.

\bibitem{Ma-Wen2008}
Ji-Hua Ma and Zhi-Ying Wen, \emph{A {B}illingsley type theorem for {B}owen
  entropy}, C. R. Math. Acad. Sci. Paris \textbf{346} (2008), no.~9-10,
  503--507.

\bibitem{Mattila1995}
Pertti Mattila, \emph{Geometry of sets and measures in {E}uclidean spaces},
  Cambridge Studies in Advanced Mathematics, vol.~44, Cambridge University
  Press, Cambridge, 1995, Fractals and rectifiability.

\bibitem{Milnor-Thurston1988}
John Milnor and William Thurston, \emph{On iterated maps of the interval},
  Dynamical systems ({C}ollege {P}ark, {MD}, 1986--87), Lecture Notes in Math.,
  vol. 1342, Springer, Berlin, 1988, pp.~465--563.

\bibitem{Misiurewicz1973}
Micha{\l} Misiurewicz, \emph{Diffeomorphism without any measure with maximal
  entropy}, Bull. Acad. Polon. Sci. S\'{e}r. Sci. Math. Astronom. Phys.
  \textbf{21} (1973), 903--910.

\bibitem{Misiurewicz1976}
\bysame, \emph{Topological conditional entropy}, Studia Math. \textbf{55}
  (1976), no.~2, 175--200.

\bibitem{Newhouse1989}
Sheldon~E. Newhouse, \emph{Continuity properties of entropy}, Ann. of Math. (2)
  \textbf{129} (1989), no.~2, 215--235.

\bibitem{Oprocha-Zhang2011}
Piotr Oprocha and Guohua Zhang, \emph{Dimensional entropy over sets and
  fibres}, Nonlinearity \textbf{24} (2011), no.~8, 2325--2346.

\bibitem{Pesin1997}
Yakov~B. Pesin, \emph{Dimension theory in dynamical systems}, Chicago Lectures
  in Mathematics, University of Chicago Press, Chicago, IL, 1997, Contemporary
  views and applications.

\bibitem{Rogers1962}
C.~Ambrose Rogers, \emph{Sets non-{$\sigma $}-finite for {H}ausdorff measures},
  Mathematika \textbf{9} (1962), 95--103.

\bibitem{Rogers1998}
\bysame, \emph{Hausdorff measures}, Cambridge Mathematical Library, Cambridge
  University Press, Cambridge, 1998, Reprint of the 1970 original, With a
  foreword by K. J. Falconer.

\bibitem{Sarig2013}
Omri~M. Sarig, \emph{Symbolic dynamics for surface diffeomorphisms with
  positive entropy}, J. Amer. Math. Soc. \textbf{26} (2013), no.~2, 341--426.


\bibitem{Sion-Sjerve1962}
Maurice Sion and Denis~K. Sjerve, \emph{Approximation properties of measures
  generated by continuous set functions}, Mathematika \textbf{9} (1962),
  145--156.

\bibitem{Sullivan-Thurston1986}
Dennis~P. Sullivan and William~P. Thurston, \emph{Extending holomorphic
  motions}, Acta Math. \textbf{157} (1986), no.~3-4, 243--257.

\bibitem{Wang2022}
Tao Wang, \emph{An entropy formula over the dynamical balls and its
  applications}, Proc. Amer. Math. Soc. \textbf{150} (2022), no.~11,
  4743--4755.
	
\end{thebibliography}

\printindex

\end{document}